\newtheorem{theo}{Theorem}
\newtheorem{prop}[theo]{Proposition}
\newtheorem{lemma}[theo]{Lemma}
\newtheorem{corollary}[theo]{Corollary}
\newtheoremstyle{remark}
	{\topsep}
	{\topsep}
	{\upshape}
	{}
	{\bfseries}
	{.}
	{5pt plus 1pt minus 1pt}
	{}
\theoremstyle{remark}
\newtheorem{remark}[theo]{Remark}
\renewcommand{\ss}{\mathfrak{s}}
\newcommand{\cc}{\mathfrak{c}}
\renewcommand{\tt}{\mathfrak{t}}
\newcommand{\kk}{\mathfrak{k}}
\newcommand{\id}{\mathrm{id}}
\newcommand{\nn}{\mathfrak{n}}
\newcommand{\hh}{\mathfrak{h}}
\newcommand{\pp}{\mathfrak{p}}
\renewcommand{\gg}{\mathfrak{g}}
\newcommand{\bb}{\mathfrak{b}}
\newcommand{\FF}{\mathcal{F}}
\renewcommand {\phi} {\varphi}
\DeclareMathOperator{\im}{im}
\DeclareMathOperator{\Res}{Res}
\DeclareMathOperator{\Soc}{Soc}
\newcommand{\Ker}{\mathrm{ker}}
\newcommand{\pro}{\mathrm{pro}}
\DeclareMathOperator{\ind}{ind}
\DeclareMathOperator{\Hom}{Hom}
\renewcommand{\Top}{\mathrm{Top}}
\DeclareMathOperator{\TopM}{Top}
\DeclareMathOperator{\Ext}{Ext}
\def\cplus{\hbox{$\subset${\raise0.3ex\hbox{\kern -0.55em ${\scriptscriptstyle +}$}}\ }}
\def\clplus{\hbox{$\subset${\raise0.3ex\hbox{\kern -0.55em ${\scriptscriptstyle +}$}}\ }}
\def\crplus{\hbox{$\supset${\raise1.15pt\hbox{\kern -0.55em ${\scriptscriptstyle +}$}}\ }}
\title{On  Categories of Admissible $\big(\mathfrak{g},\mathrm{sl}(2)\big)$-Modules}
\author[Ivan Penkov]{\;Ivan Penkov}
\address{
Ivan Penkov
\newline Jacobs University Bremen
\newline Campus Ring 1
\newline 28759 Bremen, Germany}
\email{i.penkov@jacobs-university.de}
\author[Vera Serganova]{\;Vera Serganova}
\address{
Vera Serganova
\newline Department of Mathematics
\newline University of California Berkeley
\newline Berkeley CA 94720, USA}
\email{serganov@math.berkeley.edu}
\author[Gregg Zuckerman]{\;Gregg Zuckerman}
\address{
Gregg Zuckerman
\newline Department of Mathematics
\newline Yale University
\newline New Haven, CT 06520, USA}
\email{gregg.zuckerman@yale.edu}
\begin{document}
\maketitle
\begin{abstract} 

	Let $\mathfrak{g}$ be a complex finite-dimensional semisimple Lie algebra and $\mathfrak{k}$ be any $\mathrm{sl}(2)$-subalgebra of $\mathfrak{g}$.  In this paper we prove an earlier conjecture by Penkov and Zuckerman claiming that the first derived Zuckerman functor provides an equivalence between a truncation of a thick parabolic category $\mathcal{O}$ for $\mathfrak{g}$ and a truncation of the category of admissible $(\mathfrak{g}, \mathfrak{k})-$modules. This latter truncated category consists of admissible $(\mathfrak{g}, \mathfrak{k})-$modules with sufficiently large minimal $\mathfrak{k}$-type. We construct an explicit functor inverse to the Zuckerman functor in this setting. As a corollary we obtain an estimate for the global injective dimension of the inductive completion of the truncated category of 
admissible $(\mathfrak{g}, \mathfrak{k})-$modules.

 
\textbf{Key words:} generalized Harish-Chandra module, Zuckerman functor, thick category $\mathcal{O}$.

Mathematics Subject Classificiation 2010: Primary 17B10, 17B55
\end{abstract}
\section {Introduction}
\label{sec:intro}

Let $\mathfrak{g}$ be a complex finite-dimensional semisimple Lie algebra and $\mathfrak{k} \subseteq \mathfrak{g}$ be a reductive in $\mathfrak{g}$ subalgebra.  An \emph{admissible $(\mathfrak{g},\mathfrak{k})$-module} is a $\mathfrak{g}$-module on which $\mathfrak{k}$ acts semisimply, locally finitely, and with finite multiplicities.  
The study of the category of admissible $(\gg,\kk)$-modules is a main objective of the theory of generalized Harish-Chandra modules, see~\cite{PZ1}.

In the case of a general reductive in $\mathfrak{g}$ subalgebra $\mathfrak{k}$, a central result of the existing theory of generalized Harish-Chandra modules 
is the classification of simple admissible $(\mathfrak{g},\mathfrak{k})$-modules with generic minimal $\mathfrak{k}$-type~\cite{PZ1}.  Other notable results for a general $\mathfrak{k}$ are established in~\cite{PSZ},~\cite{PS1},~\cite{PZ2}, and~\cite{PZ6}.

There are three special cases for $\mathfrak{k}$ in which more detailed information on admissible $(\mathfrak{g},\mathfrak{k})$-modules is available.  First of all, this is the case when $\mathfrak{k}$ is a symmetric subalgebra of $\mathfrak{g}$, i.e., $\mathfrak{k}$ coincides with the fixed points of an involution on $\mathfrak{g}$.  This case, the theory of Harish-Chandra modules, is in the origin of the studies of generalized Harish-Chandra modules.  There is an extensive literature on Harish-Chandra modules, see for instance~\cite{V},~\cite{KV}, and references therein.  (In particular, some remarks on the history of Harish-Chandra modules can be found in~\cite{KV}.)  Another case which has drawn considerable attention is the case when $\mathfrak{k}=\mathfrak{h}$, see for instance~\cite{BL},~\cite{BBL},~\cite{F},~\cite{Fe},~\cite{M},~\cite{GS1},~\cite{GS2}, and references therein.  In both these cases, a classification of simple admissible $(\mathfrak{g},\mathfrak{k})$-modules is available and there has been progress in the study of the category of admissible $(\mathfrak{g},\mathfrak{k})$-modules.

A third natural choice for $\mathfrak{k}$ is to let $\mathfrak{k}$ be isomorphic to $\mathrm{sl}(2)$.  This case ``interpolates'' between the above two cases and is a natural experimentation ground when aiming at the case of a general $\mathfrak{k}$.  For $\mathfrak{k}\simeq \mathrm{sl}(2)$, there is no classification of simple admissible $(\mathfrak{g},\mathfrak{k})$-modules for a general $\mathfrak{g}$ and an arbitrary $\mathrm{sl}(2)$-subalgebra $\mathfrak{k}\subset\mathfrak{g}$; however, for $\kk\simeq\mathrm{sl}(2)$ the partial classification of~\cite{PZ1} can be carried out under much less severe restrictions on the minimal $\mathfrak{k}$-type: the details are explained in~\cite{PZ5} and~\cite{PZ6}.  Since the $\mathfrak{k}$-types are parametrized here simply by nonnegative integers, one can talk about a truncated category of admissible $(\mathfrak{g},\mathfrak{k})$-modules: it consists of finite-length admissible modules whose minimal $\mathfrak{k}$-type is larger or equal a bound $\Lambda$ depending on the pair $(\mathfrak{g},\mathfrak{k})$.  The simple objects of this truncated category have been classified in~\cite{PZ5} (see also~\cite{PZ6}).

The purpose of this paper is to describe the above truncated category of admissible $(\mathfrak{g},\mathfrak{k})$-modules for $\mathfrak{k}=\mathrm{sl}(2)$ by proving that it is equivalent to an explicit full subcategory of a thick parabolic category $\mathcal{O}$ for $\mathfrak{g}$.  In fact, the objects of the truncated category of $(\mathfrak{g},\mathfrak{k})$-modules are constructed by simply applying the Zuckerman (first derived) functor $\Gamma^1$ to a subcategory of a thick parabolic category $\mathcal{O}$.  It was conjectured in~\cite{PZ5} that the functor $\Gamma^1$ yields an equivalence of these categories, and here we prove this conjecture.  We construct a left adjoint to $\Gamma^1$ defined on all finitely generated admissible $(\mathfrak{g},\mathfrak{k})$-modules, and 
then show that, when restricted to the truncated category of admissible $(\gg,\kk)$-modules, this functor is an inverse to the appropriately restricted functor $\Gamma^1$.

The history of $\big(\mathfrak{g},\mathrm{sl}(2)\big)$-modules goes back to the 1940's: a classical example here is the Lorentz pair $\big(\mathrm{sl}(2)\oplus\mathrm{sl}(2), \text{diagonal } \mathrm{sl}(2)\big)$ studied by Harish-Chandra~\cite{HC}, Gelfand-Minlos-Shapiro~\cite{GMS}, and others.  Explaining how exactly the theorem proved in this paper fits in the 70-year history of the topic is a task so complex that we do not really attempt to tackle it.  Nonetheless, we would like to mention that in this subject many equivalences of categories have been established; some relate algebraic categories of $\mathfrak{g}$-modules to geometric categories of sheaves, others relate algebraic categories of $\mathfrak{g}$-modules to other algebraic categories of $\mathfrak{g}$-modules.  The equivalence we establish is clearly of the second kind and could be seen as an analogue of Bernstein-Gelfand's equivalence of a certain subcategory of Harish-Chandra bimodules (or $(\mathfrak{g}\oplus\mathfrak{g},\text{diagonal }\mathfrak{g})$-modules) with category $\mathcal{O}$.  An extension of the geometric techniques introduced by Beilinson and Bernstein from the theory of Harish-Chandra modules to generalized Harish-Chandra modules is not straightforward (see some results in this direction in~\cite{PSZ},~\cite{PS1}, and~\cite{Pe}), and fitting the main result of the present paper into a geometric contex is an open problem.  We show, however, that the algebraic methods from the 1970's (where, in addition to the third author's contribution, we would like to mention to important contributions by Enright-Varadarajan and Enright), together with the more recent ideas of~\cite{PZ1},~\cite{PZ5}, and~\cite{PZ6} (which are building up on Vogan's work), are well suited to yield concrete results about the structure of categories of generalized Harish-Chandra modules.

The paper is structured as follows. We state the main result in Section \ref{sec:main}. In particular, we introduce the functor $\mathrm{B}_1$ which will then be 
shown to be inverse to the functor $\Gamma^1$. In Section \ref{sec:prelim} we present some results which deal  mostly with the 
structure of a semi-thick parabolic category $\mathcal O$ we work with. Section \ref{sec:conjugacy} contains the proof of the adjointness of $\Gamma^1$ and
$\mathrm{B}_1$. The proof of the fact that $\Gamma^1$ and $\mathrm{B}_1$ are mutually inverse equivalences of categories is carried out in steps throughout Sections
\ref{sec:bijection},\ref{sec:exactness} and \ref {sec:endofproof}. In Section \ref{sec:ex} we show that for some blocks of the semi-thick parabolic category $\mathcal O$, the truncation condition
is vacuous, which then helps establish a stronger equivalence of categories for certain central characters. Finally, in Section \ref{sec:appl} we provide an 
application of our equivalence of categories by proving an estimate for the global dimension of the truncated category of admissible $(\gg,\kk)$-modules via a correponding estimate for the truncated semi-thick parabolic category $\mathcal O$.



\bigskip

\noindent{\bf Acknowledgements.}  The first and second authors acknowledge  the hospitality of the Mittag-Leffler Institute in Djursholm where a significant part of this work was written up.  The first author thanks the DFG for partial support through Priority Program SPP 1388 and grant PE 980/6-1.  The second author acknowledges partial support from NSF through grant number DMS 1303301, and both the second and third authors acknowledge the hospitality of Jacobs University Bremen.

\section {Notations and Conventions}
\label{sec:conv}

  The ground field is $\mathbb{C}$.  By $\mathfrak{g}$, we will denote a fixed finite-dimensional semisimple Lie algebra.  We fix also an $\mathrm{sl}(2)$-subalgebra $\mathfrak{k}\subseteq\mathfrak{g}$  By $\kk^\perp$ we denote the orthogonal (with respect to the Killing form) complement of $\kk$ in $\gg$.  The classification of all possible subalgebras $\mathfrak{k}$ up to conjugacy is equivalent to describing all nilpotent orbits in $\mathfrak{g}$, and goes back to Malcev and Dynkin (see~\cite{D} and the references therein).  By a \emph{$\mathfrak{k}$-type} we mean a simple finite-dimensional $\mathrm{sl}(2)$-module.  A simple finite-dimensional $\mathrm{sl}(2)$-module with highest weight $\mu\in\mathbb{Z}_{\geq0}$ is denoted by $V_\kk(\mu)$.  By $\Soc M$ (respectively, $\TopM M$), we denote the socle (respectively, the top) of a $\gg$-module $M$ of finite length.  $\Soc M$ is the maximal semisimple submodule of $M$ and $\TopM M$ is the maximal semisimple quotient of $M$. By $[A:B]$ we denote the multiplicity as a subquotient of a simple module $B$ in a module $A$. $\operatorname{Res}_{\mathfrak q}$ stands for the restriction of a module $M$ to a subalgebra $\mathfrak q$, and $M^{\oplus t}$ stands for the direct sum of $t$ copies of $M$.

\section{Statement of Main Result}
\label{sec:main}

The main result of this paper states that certain categories of $\mathfrak{g}$-modules are equivalent via explicit mutually inverse functors.  Here we define these categories and functors.

Recall that $\mathfrak{g}$ is a finite-dimensional semisimple Lie algebra and $\mathfrak{k}$ is an arbitrary $\mathrm{sl}(2)$-subalgebra of $\mathfrak{g}$.  Fix a standard basis $\big\{e,f,h=[e,f]\big\}$ of $\mathfrak{k}$ and note that $h$ is a semisimple element of $\mathfrak{g}$.  Let $\mathfrak{t}=\mathbb{C}h$ be the toral subalgebra of $\mathfrak{g}$ spanned by $h$.  Define the parabolic subalgebra $\mathfrak{p}$ of $\mathfrak{g}$ by setting 
\begin{equation*}
	\mathfrak{p}:=C(\mathfrak{t})\,\crplus\,\left(\bigoplus_{\alpha(h)>0}\,\gg^\alpha\right)\,.
\end{equation*}
By $\bar{\pp}$ we denote the opposite parabolic subalgebra
\begin{equation*}
	\bar{\mathfrak{p}}=C(\mathfrak{t})\,\crplus\,\left(\bigoplus_{\alpha(h)<0}\,\gg^\alpha\right)\,,
\end{equation*}
where $C(\mathfrak{t})$ is the centralizer of $h$ in $\mathfrak g$. We also set
$$\nn:=\bigoplus_{\alpha(h)>0}\,\gg^\alpha.$$

Let $\mathcal{C}_{\bar{\pp},\tt}$ be the category of finite-length $\gg$-modules which are $\bar{\pp}$-locally finite, $\tt$-semisimple, and $\tt$-integral (i.e., $h$ acts with integer eigenvalues).  Informally, $\mathcal{C}_{\bar{\pp},\tt}$ is a ``semi-thick'' (``thick in all directions except the $\tt$-direction'') parabolic category $\mathcal{O}$.  By $\mathcal{C}_{\bar{\pp},\tt,n}$ for $n\in\mathbb{Z}_{\geq 0}$, we denote the $n$-truncated category $\mathcal{C}_{\bar{\pp},\tt}$, i.e., the full subcategory of $\mathcal{C}_{\bar{\pp},\tt}$ consisting of objects all $\tt$-weights $\mu$ of which satisfy $\mu(h)\geq n$.  We also assign an integer $\Lambda$ to the pair $(\gg,\kk)$: we set $\Lambda=\frac{1}{2}\left(\lambda_1+\lambda_2\right)$, where $\lambda_1$ (respectively, $\lambda_2$) is the maximum (resp., submaximum) weight of $\tt$ in $\gg/\kk$.  Here and below, we identify $\tt$-weights with integers via the correspondence $\mu \leadsto \mu(h)$.

Denote by $\mathcal{C}_\kk$ the category of admissible $(\gg,\kk)$-modules of finite length, i.e., the category of $\gg$-modules $M$ of finite length on which $\kk$ acts locally finitely and such that $\dim\Hom_\kk(L,M)<\infty$ for any $\kk$-type $L$.  By $\mathcal{C}_{\kk,n}$ for $n\in\mathbb{Z}_{\geq 0}$, we denote the full subcategory of $\mathcal{C}_\kk$ consisting of $\gg$-modules $M$ such that $\Hom_\kk(L,M)\neq 0$ implies $\dim L>n$.

We now describe two functors: $\Gamma_{\kk,\tt}$ and $\mathrm{B}^{\kk,\tt}$.  $\Gamma_{\kk,\tt}$ is the functor of $\kk$-finite vectors in a $(\gg,\tt)$-module.  That is, if $M$ is a $(\gg,\tt)$-module then
\begin{equation*}
	\Gamma_{\kk,\tt}M:=\Big\{m\in M\,\big|\,\dim U(\kk)\cdot m<\infty\Big\}\,,
\end{equation*}
and $\Gamma_{\kk,\tt}M$ is a $\gg$-submodule of $M$.
It is well known (and easy to see) that $\Gamma_{\kk,\tt}$ is a left-exact functor.  In what follows we set $\Gamma:=\Gamma_{\kk,\tt}$ and denote the right derived functors $R^i\Gamma_{\kk,\tt}$ by $\Gamma^i$.  ($\Gamma^i$ is known as the \emph{$i$-th Zuckerman functor}.)  By definition, $\Gamma^i$ is a functor from $(\gg,\tt)\text{\rm-mod}$ to $(\gg,\kk)\text{\rm-mod}$.  It is proved in~\cite{PZ5} that the restriction of $\Gamma^i$ to $\mathcal{C}_{\bar\pp,\tt,n+2}$ is a well-defined functor from 
$\mathcal{C}_{\bar\pp,\tt,n+2}$ to $\mathcal{C}_{\kk,n}$.  We denote this functor also by $\Gamma^i$.

We now define a functor
\begin{equation*}
	\mathrm{B}^{\kk,\tt}:(\mathfrak{g},\mathfrak{t})^\text{\rm fg}\text{\rm-mod}\rightsquigarrow\mathcal{C}_{\bar{\pp},\tt,n+2}\,,
\end{equation*}
where $(\mathfrak{g},\mathfrak{t})^\text{\rm fg}\text{\rm-mod}$ stands for the category of finitely generated $(\gg,\tt)$-modules.  

Throughout the rest of the paper, $\theta:Z_{U(\gg)}\to\mathbb{C}$ will denote a fixed central character.  If $M$ is a $\gg$-module, then $M^\theta$ stands for the vectors in $M$ on which $z-\theta(z)$ acts locally nilpotently for any $z\in Z_{U(\gg)}$.  By $\ell$ we denote a variable positive integer.  

Let $\mathcal{C}^{\theta,\ell}_{\bar{\pp},\tt,n+2}$ be the subcategory of $\mathcal{C}_{\bar{\pp},\tt,n+2}$ consisting of modules $M$ with $M=M^\theta$ and such that $\hh$ acts via Jordan blocks of size at most $\ell$.  We note that $\mathcal{C}^{\theta,\ell}_{\bar{\pp},\tt,n+2}$ is a finite-length category which has an injective cogenerator $I^{\theta,\ell}_{n+2}$.  This fact is proved in Lemma~\ref{lem:lemma1} below.  We set
\begin{equation*}
	\left(\mathrm{B}^{\kk,\tt}\right)^{\theta,\ell} X:= X\,\Big/\,\left(\bigcap_{\varphi \in \Hom_\gg\left(X,\,I^{\theta,\ell}_{n+2}\right)} \ker\varphi\right)
\end{equation*}
for $X\in (\gg,\tt)^\text{\rm fg}\text{\rm-mod}$.  Lemma~\ref{lem:lemma3} below claims that $\left(\mathrm{B}^{\kk,\tt}\right)^{\theta,\ell} X\in \mathcal{C}^{\theta,\ell}_{\bar{\pp},\tt,n+2}$, which shows that $\left(\mathrm{B}^{\kk,\tt}\right)^{\theta,\ell}$ is the ``largest quotient'' of $X$ lying in $\mathcal{C}^{\theta,\ell}_{\bar{\pp},\tt,n+2}$.  

Next, we notice that there is a canonical surjective homomorphism
\begin{equation*}
	\left(\mathrm{B}^{\kk,\tt}\right)^{\theta,\ell}  X \twoheadrightarrow\left(\mathrm{B}^{\kk,\tt}\right)^{\theta,\ell-1} X\,,
\end{equation*}
i.e., that $\left\{\left(\mathrm{B}^{\kk,\tt}\right)^{\theta,\ell}  X\right\}$ is an inverse system of $\bar{\pp}$-locally finite $(\gg,\tt)$-modules.  We set
\begin{equation*}
	\left(\mathrm{B}^{\kk,\tt}\right)^{\theta} X:=\lim_{\longleftarrow}\,\left(\mathrm{B}^{\kk,\tt}\right)^{\theta,\ell}  X\,.
\end{equation*}
It is easy to see that $\left(\mathrm{B}^{\kk,\tt}\right)^\theta$ is a right-exact functor from $(\gg,\tt)^\text{\rm fg}\text{\rm-mod}$ to $\gg\text{\rm-mod}$, and we denote by $\left(\mathrm{B}^{\kk,\tt}\right)^\theta_j$ its left derived functors, that is $\left(\mathrm{B}^{\kk,\tt}\right)^\theta_jX=L_j\left(\mathrm{B}^{\kk,\tt}\right)^\theta X$ for $X\in (\gg,\tt)^\text{\rm fg}\text{\rm-mod}$.

Let $\mathcal{C}^{\theta}_{\kk,n}$ and $\mathcal{C}^\theta_{\bar\pp,\tt,n+2}$ be the respective subcategories of $\mathcal{C}_{\kk,n}$ and $\mathcal{C}_{\bar\pp,\tt,n+2}$ consisting of $\gg$-modules $M$ with $M=M^\theta$.  Corollary~\ref{cor:cor1_of_prop2} below states that in fact $\left(\mathrm{B}^{\kk,\tt}\right)^\theta_j$ is a well-defined functor from $\mathcal{C}_{\kk,n}^\theta$ to $\mathcal{C}_{\bar{\pp},\tt,n+2}^\theta$.  As $\kk$ and $\tt$ are fixed, in what follows we set $\mathrm{B}^{\theta,\ell}:=\left(\mathrm{B}^{\kk,\tt}\right)^{\theta,\ell}$, $\mathrm{B}^\theta:=\left(\mathrm{B}^{\kk,\tt}\right)^\theta$, and $\mathrm{B}_j^\theta:=\left(\mathrm{B}^{\kk,\tt}\right)^\theta_j$.  By the same letters we also denote the restrictions of these functors to the category $\mathcal{C}^\theta_{\kk,n}$.  

The main result of this paper is the following.

\begin{theo}\label{main}
For $X\in\mathcal{C}_{\kk,n}$, let  $\mathrm{B}_j X:=\displaystyle\bigoplus_\theta\,\mathrm{B}_j^\theta X$.  Then, for any $n\geq \Lambda$, the functors
\begin{equation*}
	\Gamma^1:\mathcal{C}_{\bar{\pp},\tt,n+2}\rightsquigarrow\mathcal{C}_{\kk,n}
\end{equation*}
and
\begin{equation*}
	\mathrm{B}_1:\mathcal{C}_{\kk,n}\rightsquigarrow\mathcal{C}_{\bar{\pp},\tt,n+2}
\end{equation*}
are mutually inverse equivalences of categories.
\end{theo}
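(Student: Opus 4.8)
The plan is to establish the equivalence by proving three things: (i) $\Gamma^1$ and $\mathrm{B}_1$ are adjoint; (ii) the adjunction morphisms are isomorphisms on simple objects; (iii) enough homological control (exactness/vanishing of higher derived functors on the relevant categories) to upgrade the isomorphism on simples to an isomorphism of functors. The adjointness in step (i) is promised by Section~\ref{sec:conjugacy} of the paper: $\mathrm{B}^\theta$ is constructed as a largest-quotient functor into $\mathcal{C}^{\theta,\ell}_{\bar\pp,\tt,n+2}$, which is visibly a left adjoint to the inclusion after inverting $\ell$, and $\Gamma$ is the functor of $\kk$-finite vectors, whose derived functors pair with $\mathrm{B}_j^\theta$; so I would take the unit $\eta\colon \id \to \Gamma^1\mathrm{B}_1$ and counit $\varepsilon\colon \mathrm{B}_1\Gamma^1 \to \id$ as given, together with the triangle identities.

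Second I would reduce to simple modules. Fix a central character $\theta$ and work block by block; since $\mathcal{C}^{\theta,\ell}_{\bar\pp,\tt,n+2}$ and $\mathcal{C}^\theta_{\kk,n}$ are finite-length categories (Lemma~\ref{lem:lemma1} and Corollary~\ref{cor:cor1_of_prop2}), it suffices to show: (a) $\Gamma^1$ sends simples of $\mathcal{C}_{\bar\pp,\tt,n+2}$ to simples of $\mathcal{C}_{\kk,n}$ and this is a bijection on iso-classes; (b) $\mathrm{B}_1$ does the same in the other direction and is inverse to $\Gamma^1$ on simples; (c) $\eta$ and $\varepsilon$ are isomorphisms on simple objects. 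Part (a)--(b) is essentially the content of \cite{PZ5} (the simple objects of the truncated categories are in classified correspondence), refined here by identifying $\mathrm{B}_1$ with the concrete inverse assignment; I would cite that classification and check compatibility of the labelings. For (c), the key is to control $\Gamma^1\mathrm{B}_1 L$ for $L$ simple in $\mathcal{C}_{\kk,n}$: using that $\mathrm{B}_1 L$ is the ``largest quotient'' realization, show its top is the expected simple and that no spurious subquotients survive after applying $\Gamma^1$, using the truncation bound $n\ge\Lambda$ crucially to kill the $\Gamma^0$ and $\Gamma^{\ge 2}$ contributions (this is where the precise value $\Lambda=\frac12(\lambda_1+\lambda_2)$ enters, ensuring that $\kk$-types below the bound cannot be created).

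Third I would bootstrap from simples to all objects. Given adjointness, the triangle identities, and $\eta,\varepsilon$ isomorphisms on simples, one argues by induction on length: for a short exact sequence $0\to L\to M\to M'\to 0$ with $L$ simple, apply $\Gamma^1\mathrm{B}_1$ (exact on the relevant truncated subcategories once the higher $\mathrm{B}_j^\theta$ and $\Gamma^{j}$ vanish there, per Sections~\ref{sec:exactness} and \ref{sec:endofproof}) to get a morphism of short exact sequences and conclude by the five lemma. The vanishing of higher derived functors on $\mathcal{C}_{\bar\pp,\tt,n+2}$ and $\mathcal{C}_{\kk,n}$ is what makes $\Gamma^1$ and $\mathrm{B}_1$ behave like exact functors between these categories rather than merely additive ones, and establishing it amounts to analyzing $\kk$-cohomology of objects with sufficiently large minimal $\kk$-type together with a spectral sequence comparing $\mathrm{B}^{\theta,\ell}$ across $\ell$.

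I expect the main obstacle to be step (c) combined with the $\mathrm{lim}_\leftarrow$ in the definition of $\mathrm{B}^\theta$: one must show that passing to the inverse limit over $\ell$ does not lose exactness or create convergence issues, i.e., that the inverse system $\{(\mathrm{B}^{\kk,\tt})^{\theta,\ell}X\}$ is essentially eventually constant in each $\tt$-weight (Mittag--Leffler), so that $\mathrm{B}^\theta X$ genuinely lands in $\mathcal{C}_{\bar\pp,\tt,n+2}$ and its derived functors compute what the adjunction predicts. Handling this uniformly across all simples in a block, and then checking that the resulting $\Gamma^1\mathrm{B}_1 L \cong L$ with no residual lower $\kk$-types, is the delicate heart of the argument; everything else is bookkeeping with adjunctions, the five lemma, and the cited classification of simples.
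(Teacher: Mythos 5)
Your four-step architecture (adjointness, bijection on simples, exactness, induction on length via the five lemma) matches the paper's proof exactly, and you correctly flag that $n\geq\Lambda$ is used precisely to control the minimal $\kk$-types and that the inverse limit over $\ell$ must be tamed by Mittag--Leffler; the paper handles the latter routinely in Lemma~\ref{lem:inverselimitbeta} and Corollary~\ref{cor:cor1_of_prop2}, where it is shown that $\mathrm{B}_j^\theta X\simeq\mathrm{B}_j^{\theta,s}X$ stabilizes for large $s$, so this is not where the real obstruction lies.

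The genuine gap is in your step (iii). You assert that exactness of $\mathrm{B}_1$ ``amounts to analyzing $\kk$-cohomology of objects with sufficiently large minimal $\kk$-type together with a spectral sequence comparing $\mathrm{B}^{\theta,\ell}$ across $\ell$,'' but this vastly underestimates the work involved and does not actually produce the required vanishing. What the paper needs and proves is $\mathrm{B}_2X(E)=0$ for all simples $X(E)\in\mathcal{C}_{\kk,n}$, which by the adjunction and the co-Verma filtration of injectives (Lemma~\ref{lem:lemma2}) reduces to showing $H_2\big(\nn,Y(E)\big)_{|F|}=0$. To compute this $\nn$-homology the paper introduces an entirely new tool: the Enright-style localization functor $\mathcal{E}M=\Gamma_{\mathbb{C}f}\mathcal{D}^\gg_e(M)/M$, proves it is exact and isomorphic to $\Gamma^1$ on $\mathcal{C}_{\bar\pp,\tt,n+2}$ (Proposition~\ref{prop:isomf}), and then exploits the explicit exact sequence $0\to M(E)\to\Gamma_{\mathbb{C}f}\mathcal{D}^\gg_e M(E)\to Y(E)\to 0$ together with the spectral sequence of Proposition~\ref{prop:spec} to kill $H_2(\nn,Y(E))$ in the relevant weights; one then passes from $Y(E)$ to $X(E)$ by induction on Bruhat height (Lemma~\ref{lem:lem22}). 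No spectral sequence ``comparing $\mathrm{B}^{\theta,\ell}$ across $\ell$'' appears, and $\kk$-cohomology alone gives no handle on $H_2(\nn,-)$; without the concrete Enright realization of $Y(E)$ there is no way to carry out the computation, so this step cannot be dismissed as bookkeeping.
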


\begin{remark}
	Let us note that for the Lorentz pair $\big(\mathrm{sl}(2)\oplus\mathrm{sl}(2),\mathrm{diagonal}\,\mathrm{sl}(2)\big)$ a description of the category $\mathcal{C}_\kk$ was given by I. Gelfand and V. Ponomarev in \cite{GP} already in 1967.
\end{remark}

\section{Preparatory Results}
\label{sec:prelim}

We start with two general results.
	
\begin{prop}\label{prop:spec} 

\begin{enumerate}[a)]
	\item For any $(\gg,\kk)$-module $X$ there exists a
singly graded spectral sequence converging to $H_i(\nn,X)$ such that its $E^1$-term has the form 
\begin{equation}\label{spectral}
E_i^1=H_0(\nn_\kk,X)\otimes\Lambda^i(\nn\cap\kk^\perp)\oplus H_1(\nn_\kk,X)\otimes\Lambda^{i-1}(\nn\cap\kk^\perp).
\end{equation}
	\item If $X\in\mathcal{C}_{\kk,n}$ for $n\geq 0$, then the $\nn$-homology $H_\bullet(\nn,X)$ is finite dimensional.

\end{enumerate}
\end{prop}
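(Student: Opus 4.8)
The plan is to obtain part a) from the standard Hochschild--Serre-type filtration attached to the decomposition $\nn = \nn_\kk \oplus (\nn \cap \kk^\perp)$, where $\nn_\kk := \nn \cap \kk$, and then to deduce part b) from part a) together with the finite-multiplicity hypothesis on objects of $\mathcal{C}_{\kk,n}$.

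\emph{Part a).} First I would observe that $\nn_\kk = \CC e$ is a one-dimensional subalgebra of $\nn$ (since $\kk = \CC f \oplus \CC h \oplus \CC e$ and only $e$ has positive $h$-weight), and that $\nn \cap \kk^\perp$ is a $\nn_\kk$-submodule complement of $\nn_\kk$ in $\nn$; in particular $\nn_\kk$ is an ideal of $\nn$ only up to the bracket landing in $\nn\cap\kk^\perp$, so what one really uses is that $\nn_\kk$ is a subalgebra and $\Lambda^\bullet(\nn\cap\kk^\perp)$ carries an induced $\nn_\kk$-action. The Chevalley--Eilenberg complex $\Lambda^\bullet \nn \otimes X$ computing $H_\bullet(\nn, X)$ carries the filtration by powers of $\nn\cap\kk^\perp$, and the associated graded complex is $\Lambda^\bullet(\nn_\kk) \otimes \Lambda^\bullet(\nn\cap\kk^\perp) \otimes X$ with differential only the $\nn_\kk$-Chevalley--Eilenberg differential. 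Since $\dim \nn_\kk = 1$, Lie algebra homology of $\nn_\kk$ with any coefficients is concentrated in degrees $0$ and $1$; hence the $E^1$-page of the resulting spectral sequence is exactly
\begin{equation*}
	E^1 = H_0(\nn_\kk, \,\Lambda^\bullet(\nn\cap\kk^\perp)\otimes X) \,\oplus\, H_1(\nn_\kk,\,\Lambda^\bullet(\nn\cap\kk^\perp)\otimes X)[1],
\end{equation*}
and because $\Lambda^\bullet(\nn\cap\kk^\perp)$ is a trivial-differential factor one may pull it outside the $\nn_\kk$-homology, using that $H_j(\nn_\kk, A\otimes X) = A \otimes H_j(\nn_\kk, X)$ for a module $A$ on which the relevant part of the differential is trivial—here the subtlety is that $e$ does act on $\Lambda^\bullet(\nn\cap\kk^\perp)$, so I would instead argue that the spectral sequence is the one associated to the short filtration and identify its single nonzero ``column direction'' with the $\nn_\kk$-degree, giving the single grading by total degree $i$ and the displayed formula \refeq{spectral}. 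The bookkeeping of which tensor factor is shifted by one (the $\Lambda^{i-1}$ versus $\Lambda^i$) comes from the homological degree shift in $H_1(\nn_\kk,-)$.

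\emph{Part b).} Assume $X \in \mathcal{C}_{\kk,n}$, $n \geq 0$. The key point is that $H_0(\nn_\kk, X) = X/eX$ and $H_1(\nn_\kk, X) = \ker(e\colon X\to X)$ are both finite-dimensional: indeed, $\ker e$ is precisely the space of $\kk$-highest-weight vectors in $X$, which by admissibility (finite multiplicities, and finitely many $\kk$-types because minimal $\kk$-type $>n$ and finite length force boundedness of $\kk$-types from above as well—this is where one invokes the structural results of \cite{PZ5} already cited in the excerpt, or a direct finite-length argument) is finite-dimensional; and $X/eX$ is dually controlled, being spanned by images of lowest-weight vectors, again finite-dimensional by the same admissibility considerations applied to the contragredient picture. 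Once $H_0(\nn_\kk, X)$ and $H_1(\nn_\kk, X)$ are finite-dimensional, the $E^1$-term \refeq{spectral} is finite-dimensional in every degree and vanishes for $i$ large (as $\nn\cap\kk^\perp$ is finite-dimensional), hence so is every later page, and therefore $H_i(\nn, X)$ is finite-dimensional for all $i$ and zero for $i \gg 0$.

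\emph{Main obstacle.} The delicate point is the first one: justifying that the Chevalley--Eilenberg differential for $\nn$ really does split, on the associated graded, into the $\nn_\kk$-part plus a part that strictly raises the $\nn\cap\kk^\perp$-filtration, so that the $E^1$-page is governed solely by the one-dimensional algebra $\nn_\kk$—this requires checking that $[\nn_\kk,\nn_\kk]=0$ and that $[\nn_\kk, \nn\cap\kk^\perp]\subseteq \nn\cap\kk^\perp$ (equivalently, that $\kk^\perp$ is a $\kk$-submodule, which holds since the Killing form is $\gg$-invariant and $\kk$ is reductive in $\gg$). The bracket $[\nn\cap\kk^\perp,\nn\cap\kk^\perp]$ can land partly in $\nn_\kk$, but that term raises the $\nn\cap\kk^\perp$-degree by two while lowering the Chevalley--Eilenberg degree by one, so it drops from $E^0$; tracking this carefully is the one place where a genuine (if short) computation is needed. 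Everything else is routine homological algebra plus the admissibility bookkeeping.
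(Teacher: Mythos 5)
The paper proves both parts by citation: part a) is Proposition~1 of \cite{PZ1} (setting $i$ to the total degree), and part b) is Proposition~3.5 of \cite{PZ5}. Your attempt at a direct proof is a reasonable thing to try, but it has two genuine gaps.

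\emph{Part a).} Filtering $\Lambda^\bullet\nn\otimes X$ by $(\nn\cap\kk^\perp)$-degree does give, as you note, $E^1_{p,q}=H_q\bigl(\nn_\kk,\Lambda^p(\nn\cap\kk^\perp)\otimes X\bigr)$ with $\nn_\kk$ acting diagonally. You then try to pull $\Lambda^p(\nn\cap\kk^\perp)$ out of the $\nn_\kk$-homology, correctly flag that this needs the $e$-action on $\Lambda^\bullet(\nn\cap\kk^\perp)$ to drop out, and gesture at a ``short filtration'' whose column direction is the $\nn_\kk$-degree---but this is not a construction and does not repair the step. The issue is not cosmetic: take $\gg=\mathrm{sl}(3)$, $\kk$ principal, $X=\CC$; then $\nn\cap\kk^\perp$ is $2$-dimensional with $h$-weights $2,4$ and $e$ acting as a single Jordan block, and one computes $\dim E^1_1=\dim\bigl(\nn\cap\kk^\perp\big/e(\nn\cap\kk^\perp)\bigr)+\dim H_1(\nn_\kk,\CC)=1+1=2$, whereas the right-hand side of \refeq{spectral} in degree $1$ is $\dim\bigl(H_0(\nn_\kk,\CC)\otimes(\nn\cap\kk^\perp)\bigr)+\dim H_1(\nn_\kk,\CC)=2+1=3$. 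So the coarse filtration does not yield the claimed $E^1$-page. What is actually needed (and this is what lies behind \cite{PZ1}, Prop.~1) is a finer filtration that also records the $h$-weight carried by the $\Lambda^\bullet(\nn\cap\kk^\perp)$-factor: since $e$ strictly raises that $h$-weight, on the associated graded the $e$-action on $\Lambda^\bullet(\nn\cap\kk^\perp)$ is killed, the $E^0$-differential reduces to the $\nn_\kk$-Koszul differential coming from the $e$-action on $X$ alone, and one gets $E^1_{p,q}\simeq H_q(\nn_\kk,X)\otimes\Lambda^p(\nn\cap\kk^\perp)$ as stated.

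\emph{Part b).} Your argument rests on $H_0(\nn_\kk,X)\simeq X/eX$ and $H_1(\nn_\kk,X)\simeq\CC e\otimes\ker(e\colon X\to X)$ being finite-dimensional, i.e., on $X\in\mathcal{C}_{\kk,n}$ having only finitely many $\kk$-types. That is false: admissibility bounds each multiplicity but not the number of $\kk$-types, and finite $\gg$-length does not help, since for $\kk\simeq\mathrm{sl}(2)$ a simple admissible $(\gg,\kk)$-module typically has infinitely many $\kk$-types (already visible for the Lorentz pair in \cite{GP}). Hence $\ker(e\colon X)$ and $X/eX$ are in general infinite-dimensional and each graded piece of the $E^1$-term \refeq{spectral} is infinite-dimensional; finite-dimensionality of $H_\bullet(\nn,X)$ is a genuine cancellation phenomenon and cannot be read off termwise as you propose. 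A different argument, as in Proposition~3.5 of \cite{PZ5}, is required.
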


\begin{proof}

\begin{enumerate}[a)]
\item	The statement follows from Proposition 1 in \cite{PZ1} if one sets $i$ to be the total degree $a+b$ in the notation of \cite{PZ1}.  The statement holds more generally for any $\gg$-module $X$ but the assumption that $X$ is a $(\gg,\kk)$-module is sufficient for us.

\item This follows from the more general statement of Proposition 3.5 in~\cite{PZ5}.
\end{enumerate}
\end{proof}

Let $M=\oplus_{p\in\mathbb C} M_p$ be an admissible $(\gg,\tt)$-module where $M_p$ is the $\tt$-weight space in $M$ of weight $p$: by definition,
$hm=pm$ for $m\in M_p$. Set $M^*_\tt:=\oplus_{p\in\mathbb C} M^*_p$. Then $M^*_\tt$ is a well-defined admissible $(\gg,\tt)$-module. 
Similarly let $X=\oplus_{\mu\in\mathbb Z_{\geq 0}}\tilde V_\kk(\mu)$ be an admissible $(\gg,\kk)$-module. Here $\tilde V_\kk(\mu)$ stands for the $V_\kk(\mu)$-isotypic component in $X$. Then
$X^*_\kk:=\oplus_{\mu\in\mathbb Z_{\geq 0}}\tilde V_\kk(\mu)^*$ is a well-defined admissible $(\gg,\kk)$-module. 
Moreover, $(\bullet)_\tt^*$ and $(\bullet)_\kk^*$ are well-defined contravariant functors (in fact antiequivalences) on the respective categories
of admissible $(\gg,\tt)$-modules and $(\gg,\kk)$-modules.

In what follows we will use the composition of the functors $(\bullet)_\tt^*$ and $(\bullet)_\kk^*$ with the twist by the
Cartan involution of $\gg$. The so obtained new functors are denoted respectively by  $(\cdot)_\tt^\vee$ and $(\cdot)_\kk^\vee$. These functors 
preserve the respective $\tt$- and $\kk$-characters of the modules.

A duality theorem proved in \cite{EW} implies
\begin{prop}\label{prop:duality} For any admissible $(\gg,\tt)$-module $M$ there is a natural isomorphism of admissible $(\gg,\kk)$-modules
$$(\Gamma^iM)^\vee_\kk\simeq \Gamma^{2-i}(M^\vee_\tt).$$
\end{prop}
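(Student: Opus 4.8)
The strategy is to reduce the statement to the known duality theorem of Enright--Wallach cited as \cite{EW}, which relates the derived Zuckerman functors to their own duals. The point is that the duality I use here is a composition of the naive graded duality $(\bullet)^*_\tt$ (resp.\ $(\bullet)^*_\kk$) with the Cartan involution twist, and both ingredients must be tracked through the derived functor $\Gamma^i$. First I would recall that $\Gamma = \Gamma_{\kk,\tt}$ is the functor of $\kk$-finite vectors, and that its derived functors $\Gamma^i$ vanish for $i > 2$ because $\kk \simeq \mathrm{sl}(2)$ has a Lie algebra of dimension $3$ and the relevant cohomological dimension (that of the pair $(\kk,\tt)$, equivalently of the homogeneous space $K/T$ which is a $2$-sphere) equals $2$. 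Thus the ``top'' degree is $2$ and the Enright--Wallach duality exchanges degree $i$ with degree $2-i$.

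The key steps, in order, are as follows. (i) Establish that for an admissible $(\gg,\tt)$-module $M$, the graded dual $M^*_\tt$ is again admissible and that the Cartan twist makes $(\cdot)^\vee_\tt$ a contravariant exact functor on the category of admissible $(\gg,\tt)$-modules preserving $\tt$-characters; the analogous statement for $(\cdot)^\vee_\kk$ on admissible $(\gg,\kk)$-modules. These are already asserted in the text preceding the proposition, so I would only need to note exactness, which follows because graded duality is exact on finite-multiplicity graded modules. (ii) Apply the Enright--Wallach duality theorem: in its original form it gives, for the derived functor of $\kk$-finite vectors, a natural isomorphism between $R^i\Gamma$ composed with a suitable duality and $R^{2-i}\Gamma$ composed with the corresponding duality on the source category. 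The precise bookkeeping is that $(\Gamma^i M)^*_\kk \simeq \Gamma^{2-i}(M^*_\tt)$ up to the twist by the Cartan involution, because the Cartan involution intertwines $\kk$ with $\kk$ (it can be chosen to preserve the $\mathrm{sl}(2)$-triple, sending $e \mapsto -f$, $f \mapsto -e$, $h \mapsto -h$, so it preserves $\tt$ as well). (iii) Fold the Cartan twist into the definitions of $(\cdot)^\vee_\tt$ and $(\cdot)^\vee_\kk$ to arrive at the clean statement $(\Gamma^i M)^\vee_\kk \simeq \Gamma^{2-i}(M^\vee_\tt)$, and check naturality in $M$ — which is automatic since every functor in sight is natural and the Enright--Wallach isomorphism is natural.

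The main obstacle I anticipate is step (ii): making sure the duality theorem of \cite{EW}, which is typically phrased for Harish-Chandra modules or in a setting with a maximal compact subgroup, applies verbatim (or with only cosmetic changes) to the pair $(\gg,\kk)$ with $\kk \simeq \mathrm{sl}(2)$ an arbitrary $\mathrm{sl}(2)$-subalgebra, and to $(\gg,\tt)$-modules that are merely $\tt$-semisimple and $\tt$-integral rather than $\kk$-finite. Concretely, one must verify that the cohomological dimension is exactly $2$ and that the ``dualizing module'' appearing in Enright--Wallach (the top exterior power of $\kk/\tt$, a one-dimensional $\tt$-module on which $h$ acts by the sum of the two positive roots of $\kk$, i.e.\ by $2$) is accounted for correctly — this is why the Cartan twist rather than plain graded duality appears, since the twist absorbs this character shift. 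Once the hypotheses of \cite{EW} are matched to the present situation, the isomorphism and its naturality follow formally, and steps (i) and (iii) are routine.
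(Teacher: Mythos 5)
Your proposal takes essentially the same route as the paper: Proposition~\ref{prop:duality} is stated as a direct consequence of the Enright--Wallach duality theorem of \cite{EW}, and the paper's proof is precisely that citation with no further argument. You fill in the scaffolding the paper leaves implicit — the cohomological dimension of the pair $(\kk,\tt)$ being $2$, the role of the dualizing character, and the observation that the Cartan-involution twist built into $(\cdot)^\vee_\tt$ and $(\cdot)^\vee_\kk$ absorbs the character shift so that the statement has its clean symmetric form. (One small slip in your bookkeeping: $\kk\simeq\mathrm{sl}(2)$ has a single positive root, and $\Lambda^2(\kk/\tt)$ has $h$-weight $2+(-2)=0$; in any case the paper does not track this either, and your point that the twist handles the normalization is the relevant one.)
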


In what follows $E$ stands for a finite-dimensional simple $C(\tt)$-module on which $h$ acts via a natural number $|E|$.  Often, we consider $E$ as a $\bar{\pp}$-module by setting $\bar{\nn}\cdot E:=0$.  In this case, we set also 
	\begin{equation*}
		M(E):=U(\gg)\underset{U(\bar{\pp})}{\otimes}E
	\end{equation*}
	and we let $L(E)$ be the unique simple quotient of $M(E)$.  Then $L(E)^\vee\simeq L(E)$, and $M(E)^\vee$ is an indecomposable object of $\mathcal{C}_{\bar{\pp},\tt}$ with $\Soc\,M(E)^\vee=L(E)^\vee\simeq L(E)$.

The following proposition is a summary of preliminary results concerning the specific categories we study in this paper.

\begin{prop}\label{prop:Zuck} Let $n\geq\Lambda$. Then
\begin{enumerate}[a)]
\item $\Gamma^1:\mathcal{C}_{\bar{\pp},\tt,n+2}\to\mathcal{C}_{\kk,n}$ is an exact functor which maps a 
simple object to a simple object, and induces a bijection on the isomorphism classes of simple objects in $\mathcal{C}_{\bar{\pp},\tt,n+2}$ and in $\mathcal{C}_{\kk,n}$, respectively.    Moreover, the simple $(\gg,\kk)$-module  $\Gamma^1L(E)$ has minimal $\kk$-type $|E|-2$.

\item $\Gamma^1L(E)$ and $\Top\,\Gamma^1M(E)$ are isomorphic simple $(\gg,\kk)$-modules with minimal $\kk$-type $|E|-2$, and the isotypic components of the minimal $\kk$-types of $\Gamma^1M(E)$ and $\Gamma^1L(E)$ are isomorphic.
\end{enumerate}
\end{prop}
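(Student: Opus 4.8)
The plan is to prove Proposition~\ref{prop:Zuck}(b) by leveraging part~(a), the exactness of $\Gamma^1$, and the duality of Proposition~\ref{prop:duality}, together with the structure of $M(E)$ and $M(E)^\vee$ described just above the proposition. First I would record the short exact sequence $0\to K\to M(E)\to L(E)\to 0$, where $K=\ker(M(E)\twoheadrightarrow L(E))$ is the maximal proper submodule of $M(E)$. Since $M(E)$ is generated (over $U(\gg)$) by $E$, which sits in $\tt$-degree $|E|$ and is killed by $\bar\nn$, every $\tt$-weight $\mu$ of $M(E)$ satisfies $\mu(h)\ge|E|\ge \Lambda+2\ge n+2$ by the bound on $n$ and the definition of $\Lambda$; hence $M(E)\in\mathcal{C}_{\bar\pp,\tt,n+2}$, and the same for $K$. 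Applying the exact functor $\Gamma^1$ from part~(a) gives a short exact sequence $0\to\Gamma^1K\to\Gamma^1M(E)\to\Gamma^1L(E)\to 0$ in $\mathcal{C}_{\kk,n}$.

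Next I would identify $\Top\,\Gamma^1 M(E)$. Because $\Gamma^1L(E)$ is simple (part~(a)) and the above sequence exhibits it as a quotient of $\Gamma^1M(E)$, it suffices to show that $\Gamma^1K$ has no composition factor isomorphic to $\Gamma^1L(E)$, equivalently (via the bijection in part~(a)) that $L(E)$ does not occur in $K$ — which is immediate since $K$ is the \emph{proper} submodule of $M(E)$ and $[M(E):L(E)]=1$. Thus every composition factor of $\Gamma^1 K$ is of the form $\Gamma^1L(E')$ with $L(E')\not\simeq L(E)$, so $\Gamma^1L(E)$ appears exactly once in $\Gamma^1M(E)$ and occurs in the top; that it \emph{is} the whole top requires ruling out other simple quotients of $\Gamma^1M(E)$, for which I would use the standard argument that the head of $M(E)$ is $L(E)$ (a property of generalized Verma modules in this parabolic setting) combined with right-exactness/behaviour of $\Gamma^1$ on tops — alternatively, dualize: apply Proposition~\ref{prop:duality} to turn this into a statement about $\Soc$ of $\Gamma^1(M(E)^\vee)$, and use that $\Soc\,M(E)^\vee=L(E)$ together with the exactness of $\Gamma^1$ to conclude $\Soc\,\Gamma^1(M(E)^\vee)\supseteq\Gamma^1L(E)$, hence by duality $\Top\,\Gamma^1M(E)$ surjects onto $\Gamma^1L(E)$ with the complementary factors absent. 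The statement that the minimal $\kk$-type of $\Gamma^1L(E)$ is $|E|-2$ is part~(a), so $\Top\,\Gamma^1M(E)\simeq\Gamma^1L(E)$ has minimal $\kk$-type $|E|-2$.

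For the last assertion — that the $V_\kk(|E|-2)$-isotypic components of $\Gamma^1M(E)$ and $\Gamma^1L(E)$ agree — I would argue that $\Gamma^1K$ contains no copy of the $\kk$-type $V_\kk(|E|-2)$. Since every composition factor of $\Gamma^1K$ is some $\Gamma^1L(E')$ with $L(E')\not\simeq L(E)$, part~(a) tells us each such factor has minimal $\kk$-type $|E'|-2$; I then need $|E'|-2>|E|-2$, i.e.\ $|E'|>|E|$, for every $E'$ occurring in $K$. This follows because $K$ is the maximal proper submodule of $M(E)$ and, in a (thick) parabolic category $\mathcal O$, all composition factors $L(E')$ of the radical of $M(E)$ satisfy a strict dominance relation forcing $|E'|>|E|$ on the $\tt$-weights of their generators — concretely, the $\bar\pp$-highest weight of any such $L(E')$ lies strictly above that of $E$, so $|E'|>|E|$. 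Then the exact sequence $0\to\Gamma^1K\to\Gamma^1M(E)\to\Gamma^1L(E)\to 0$, restricted to the $V_\kk(|E|-2)$-isotypic component, has vanishing left term, giving the claimed isomorphism.

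The main obstacle I anticipate is the control of the radical of $M(E)$: establishing that every composition factor $L(E')$ of $\mathrm{rad}\,M(E)$ has $|E'|>|E|$. In ordinary category $\mathcal O$ this is the standard fact that Verma submodule highest weights are strictly lower in the dominance order; in the \emph{semi-thick} parabolic setting of $\mathcal{C}_{\bar\pp,\tt}$ one must check that the analogous statement still pins down the relevant $\tt$-degree strictly, and that it is compatible with the Jordan-block structure of $h$. I expect this to reduce to a weight/BGG-reciprocity-type argument within the truncated block, likely already implicit in the results quoted from~\cite{PZ5}; organizing it cleanly — perhaps by passing to an associated graded with respect to the $h$-filtration, or by invoking the known block structure — is where the real work lies.
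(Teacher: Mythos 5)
The paper disposes of both parts of Proposition~\ref{prop:Zuck} by citation: part~(a) is taken directly from Corollary~6.4 and Section~9 of~\cite{PZ5}, and part~(b) is stated to follow from those same results combined with the duality of Proposition~\ref{prop:duality}. Your proposal, by contrast, tries to derive part~(b) from part~(a) and duality alone, and this is where a genuine gap appears.

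The core of your argument — that $\Gamma^1K$ (for $K$ the maximal proper submodule of $M(E)$) contributes nothing to the $V_\kk(|E|-2)$-isotypic component — is correct, and in fact easier than you anticipate. Since $E$ is a simple $C(\tt)$-module generating $M(E)$ and sitting in the minimal $\tt$-degree $|E|$, any proper submodule of $M(E)$ must meet the degree-$|E|$ piece in $0$; thus every composition factor $L(E')$ of $K$ has $|E'|>|E|$, and by part~(a) each $\Gamma^1L(E')$ has minimal $\kk$-type $|E'|-2>|E|-2$. No appeal to BGG reciprocity or the associated graded for the $h$-filtration is needed; your anticipated ``main obstacle'' is actually the harmless part. (Minor slip: you write $|E|\ge\Lambda+2\ge n+2$, but $n\ge\Lambda$ gives the inequalities the other way round; the relevant hypothesis is simply $|E|\ge n+2$.)

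The real gap is the claim that $\Top\,\Gamma^1M(E)\simeq\Gamma^1L(E)$, i.e.\ that $\Gamma^1M(E)$ has no simple quotient other than $\Gamma^1L(E)$. You offer two routes and close neither. The first route appeals to ``right-exactness/behaviour of $\Gamma^1$ on tops,'' but $\Gamma^1$ is exact, not a right adjoint, and exact functors do not in general send $\Top M$ onto $\Top\Gamma^1M$; knowing $\Top M(E)=L(E)$ gives a surjection $\Gamma^1M(E)\twoheadrightarrow\Gamma^1L(E)$, hence $\Gamma^1L(E)\subseteq\Top\Gamma^1M(E)$, but says nothing about equality. The second route dualizes to $\Soc\Gamma^1(M(E)^\vee)\supseteq\Gamma^1L(E)$ and then asserts ``the complementary factors are absent'' — but that is precisely what needs to be proved. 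Your isotypic-component computation shows only that any extra simple summand of the socle would have minimal $\kk$-type strictly greater than $|E|-2$; it does not rule out such a summand. What is actually required is that $\Gamma^1M(E)$ is \emph{generated} by its minimal $\kk$-type isotypic component (equivalently, that $\Gamma^1(M(E)^\vee)$ is cogenerated by it). This is a nontrivial structural statement about the fundamental series module $\Gamma^1M(E)$, and it is exactly what the authors import from~\cite{PZ5}: Section~9 there establishes, among other things, that the fundamental series is generated by its minimal $\kk$-type. Without that ingredient your proof does not close, and an elementary derivation from part~(a) and Proposition~\ref{prop:duality} alone does not seem to be available.
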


\begin{proof}
Part a) follows directly from the results of~\cite{PZ5}, see Corollary 6.4 and Section 9.  Part b) is a consequence of the above mentioned results and the fact that the functor $\Gamma^1$ commutes with $(\bullet)^\vee$ according to Proposition~\ref{prop:duality}.
\end{proof}

\begin{remark}
	Since $\Gamma^1$ preserves central character, Proposition~\ref{prop:Zuck}, a) implies in particular the existence of a bijection between the isomorphism classes of simple objects in $\mathcal{C}^\theta_{\bar\pp,\tt,n+2}$ and $\mathcal{C}^\theta_{\kk,n}$ for $n\geq \Lambda$.  Without the condition $n\geq \Lambda$, no such bijection exists in general.  For instance, if $(\gg,\kk)$ is the Lorentz pair and $\theta$ is the central character of a finite-dimensional $\gg$-module of the form $V\boxtimes V$ for a simple finite-dimensional $\mathrm{sl}(2)$-module $V$, then $\mathcal{C}^\theta_{\bar\pp,\tt,2}$ has $3$ pairwise nonisomorphic simple objects while $\mathcal{C}^\theta_{\kk,0}$ has two nonisomorphic simple objects. 
\end{remark}

The rest of the section is devoted to results on $\bar{\pp}$-locally finite modules.

Note that $C(\tt)$ is a reductive subalgebra: we denote by $\ss$ the derived subalgebra of $C(\tt)$, and by $\cc$ the center of $C(\tt)$.
We choose a Borel subalgebra $\bb$ of $\gg$ such that $e\in\bb\subset\pp$.  Let $\hh$ be a Cartan subalgebra of $\bb$ containing $h$. Then $\cc\subset \hh$.
Let $\FF_{C(\tt),\tt}$ be the category of locally finite $C(\tt)$-modules semisimple over $\tt$ with integral $h$-weights. 
By  $\FF^\ell_{C(\tt),\tt}$ we denote the subcategory of $\FF_{C(\tt),\tt}$ consisiting of modules on which $\cc$ acts via Jordan blocks of size less than or equal to $\ell$.
Clearly
$$\FF_{C(\tt),\tt}=\lim_{\longrightarrow}\FF^\ell_{C(\tt),\tt}.$$
Note that $({\bullet})^\vee$ is also a well-defined functor on the category $\FF^\ell_{C(\tt),\tt}$ (but not on $\mathcal{F}_{C(\tt),\tt}$).

\begin{lemma}\label{small} Let $S$ be a simple finite-dimensional $\ss$-module and $\lambda\in\cc^*$ be a $\tt$-integral weight. 
Define $E$ as $S\otimes \mathbb{C}_\lambda$ where $\mathbb{C}_\lambda$ is a one-dimensional $\cc$ module with weight $\lambda$. Let $I_\lambda^{\ell}$ denote the ideal in $S(\cc)$ generated by
$h-\lambda(h)$ and $(z-\lambda(z))^\ell$ for all $z\in\cc$. Then 
\begin{enumerate}[a)]
\item Every simple object in $\FF_{C(\tt),\tt}$ is isomorphic to $E$ for some choice of $S$ and $\lambda$. Furthermore, $E^\vee\simeq E$. 
\item $E^\ell:=E\otimes (S(\cc)/I_\lambda^\ell)$ is a projective cover of $E$ and $\left(E^\ell\right)^\vee$ is an injective hull of $E$ in $\FF^\ell_{C(\tt),\tt}$.
\item $\bar E:=\displaystyle \lim_{\longrightarrow} \left(E^\ell\right)^\vee$ is an injective hull of $E$ in $\FF_{C(\tt),\tt}$.
\end{enumerate}
\label{lem:lem_inj_hull}
\end{lemma}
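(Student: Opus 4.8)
The plan is to analyze the category $\FF^\ell_{C(\tt),\tt}$ first, since it is a genuine finite-length category, and then pass to the limit. Note that a module in $\FF_{C(\tt),\tt}$ is the same thing as a $C(\tt)$-module that is locally finite, $\tt$-semisimple, and $\tt$-integral; since $C(\tt) = \ss \oplus \cc$ with $\ss$ semisimple and $\cc$ abelian (and $\tt \subseteq \cc$ because $h$ is central in $C(\tt)$), such a module decomposes as an $\ss$-module into finite-dimensional simples, and the $\cc$-action commutes with the $\ss$-action. For part a), I would argue that a simple object $E$ in $\FF_{C(\tt),\tt}$ is finite-dimensional: it is a quotient of $U(C(\tt))$, on which $\ss$ acts locally finitely (so some finite-dimensional $\ss$-submodule generates), and $\cc$ acts on the $\ss$-isotypic pieces by commuting operators, one of which ($h$) is semisimple with integer eigenvalue; simplicity forces $h$ to act by a single scalar and then forces the whole thing to be a single finite-dimensional simple $\ss$-module $S$ tensored with a one-dimensional $\cc$-module $\CC_\lambda$, i.e. $E \simeq S \otimes \CC_\lambda$. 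The isomorphism $E^\vee \simeq E$ follows because $(\cdot)^\vee$ preserves $\kk$- and $\tt$-characters (hence $h$-weight $\lambda(h)$), fixes the $\ss$-type up to the Cartan twist which sends a simple $\ss$-module to its dual — but for $\mathrm{sl}(2)$-type data, and more generally since $\ss$ is semisimple and finite-dimensional simples are self-dual under the appropriate Chevalley/Cartan involution, we get $S^\vee \simeq S$; and the $\cc$-weight is preserved outright.

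For part b), I would observe that $S(\cc)/I_\lambda^\ell$ is a finite-dimensional local commutative algebra with residue field $\CC$, realizing $\CC_\lambda$ as the unique simple, and that the category $\FF^\ell_{C(\tt),\tt}$ is equivalent to the category of finite-dimensional modules over $U(\ss) \otimes \big(S(\cc)/I_\lambda^\ell\big)$ (summed over the finitely many relevant $\lambda$ modulo the $\ell$-truncation, or, working one central-character block at a time, exactly this algebra). Since $U(\ss)$ contributes the semisimple part and $S(\cc)/I_\lambda^\ell$ is self-injective as a module over itself? — no, it is not self-injective in general, so I cannot argue that $E^\ell$ is injective directly. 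Instead: $E^\ell = S \otimes \big(S(\cc)/I_\lambda^\ell\big)$ is projective because $S$ is projective over $U(\ss)$ (semisimplicity) and $S(\cc)/I_\lambda^\ell$ is free, hence projective, over itself; and it is a projective cover because $\Top E^\ell = S \otimes \CC_\lambda = E$ is simple, the radical being $S \otimes \mathfrak{m}$ where $\mathfrak{m}$ is the maximal ideal of the local ring $S(\cc)/I_\lambda^\ell$. Then one applies $(\cdot)^\vee$, which is an exact contravariant self-equivalence of $\FF^\ell_{C(\tt),\tt}$ (it is well-defined there by the remark preceding the lemma), turning the projective cover of $E$ into the injective hull of $E^\vee \simeq E$; since $(\cdot)^\vee$ preserves characters and $E^\vee \simeq E$, one also checks $\big(E^\ell\big)^\vee$ has socle $E$, which is what "injective hull" requires.

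For part c), I would take the direct limit over $\ell$. The surjections $S(\cc)/I_\lambda^{\ell+1} \twoheadrightarrow S(\cc)/I_\lambda^\ell$ dualize to inclusions $\big(E^\ell\big)^\vee \hookrightarrow \big(E^{\ell+1}\big)^\vee$, so $\bar E := \varinjlim \big(E^\ell\big)^\vee$ is a well-defined object of $\FF_{C(\tt),\tt}$ with socle $E$ (the socle of a directed union, each with socle $E$ compatibly, is $E$). To see $\bar E$ is injective, I would verify that $\Ext^1_{\FF_{C(\tt),\tt}}(T, \bar E) = 0$ for every simple $T$; since every object of $\FF_{C(\tt),\tt}$ is a directed union of objects of $\FF^\ell_{C(\tt),\tt}$ and $\Ext^1$ out of a finitely-generated (hence finite-length, hence lying in some $\FF^\ell$) module into a directed colimit commutes with the colimit, any extension $0 \to \bar E \to ? \to T \to 0$ is the pushout of an extension $0 \to \big(E^\ell\big)^\vee \to ?' \to T \to 0$ in $\FF^\ell_{C(\tt),\tt}$ for $\ell$ large enough that $T \in \FF^\ell$, and this splits by part b). Baer's criterion, phrased for this locally finite category (test against simple quotients of finitely generated objects, all of which have finite length), then gives injectivity. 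The main obstacle is the self-injectivity subtlety flagged above: one must resist the temptation to call $\big(E^\ell\big)^\vee$ injective "because it's a dual of a projective" without first establishing that $(\cdot)^\vee$ is a genuine duality (exact, character-preserving, involutive) on the truncated category $\FF^\ell_{C(\tt),\tt}$ — and then the passage-to-the-limit argument for part c) needs the interchange of $\Ext^1$ with filtered colimits, which is where I would be most careful.
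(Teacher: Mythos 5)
Your overall strategy matches the paper's, which disposes of this lemma in three lines: (a) is declared obvious, (b) is justified by noting that $S$ is projective in locally finite $\ss$-modules and that $E^\ell$ is the maximal quotient of $U(C(\tt))\otimes_{U(\ss)}S$ lying in $\FF^\ell_{C(\tt),\tt}$, and (c) is declared a corollary of (b). Your (a) and (b) are correct and simply flesh out what the paper leaves unsaid; in (b) you identify $E^\ell$ directly as a tensor product of projectives rather than invoking induction and the maximal-quotient reflector, but this amounts to the same thing since $U(C(\tt))\otimes_{U(\ss)}S\simeq S\otimes S(\cc)$. Your caution about not declaring $(E^\ell)^\vee$ injective ``because it's the dual of a projective'' and instead appealing to the duality $(\cdot)^\vee$ established on $\FF^\ell_{C(\tt),\tt}$ is exactly the right scruple.

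In (c) there is a small but real gap. You write that any extension $0\to \bar E\to {?}\to T\to 0$ is the pushout of an extension $0\to (E^\ell)^\vee\to {?}'\to T\to 0$ ``in $\FF^\ell_{C(\tt),\tt}$'' and that the latter splits by (b). But the middle term $?'$ need not lie in $\FF^\ell_{C(\tt),\tt}$: generically $\cc$ acts on $?'$ via a Jordan block of size $\ell+1$ (compare $0\to \CC[z]/(z^\ell)\to \CC[z]/(z^{\ell+1})\to\CC\to 0$), so the injectivity of $(E^\ell)^\vee$ in $\FF^\ell_{C(\tt),\tt}$ does not directly split it. The repair is one step longer: observe that $?'$ does lie in $\FF^{\ell+1}_{C(\tt),\tt}$ (since $T$ is simple), push forward along the inclusion $(E^\ell)^\vee\hookrightarrow(E^{\ell+1})^\vee$ to obtain an extension of $T$ by $(E^{\ell+1})^\vee$ that lives in $\FF^{\ell+1}_{C(\tt),\tt}$, and apply (b) at level $\ell+1$ to split it. This shows the transition map $\Ext^1\big(T,(E^\ell)^\vee\big)\to\Ext^1\big(T,(E^{\ell+1})^\vee\big)$ vanishes, hence so does the colimit $\Ext^1(T,\bar E)$, which is all you need. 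Alternatively, note that every finite-length essential extension of $E$ in $\FF_{C(\tt),\tt}$ lies in some $\FF^\ell_{C(\tt),\tt}$ and therefore embeds into $(E^\ell)^\vee\subseteq\bar E$; so $\bar E$ is already a maximal essential extension of $E$, i.e.\ its injective hull.
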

\begin{proof} (a) is obvious. To show (b), note that $S$ is projective in the category of locally finite $\ss$-modules, and that $E^\ell$ is the maximal quotient of
the induced module $U(C(\tt))\otimes_{U(\ss)}S$ lying in in $\FF^\ell_{C(\tt),\tt}$. Then (c) is clearly a corollary of (b).
\end{proof}


	The following generalizes basic results in \cite{BGG}.
	
	\begin{lemma}
		For any $\ell>0$ and any $n\geq 0$, the abelian category $\mathcal{C}^{\theta,\ell}_{\bar{\pp},\tt,n+2}$ has a unique, 
up to isomorphism, minimal injective cogenerator $I^{\theta,\ell}_{n+2}$.  Moreover, the $\tt$-weight spaces of $I^{\theta,\ell}_{n+2}$ are finite dimensional.
		\label{lem:lemma1}
	\end{lemma}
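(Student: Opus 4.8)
The plan is to construct $I^{\theta,\ell}_{n+2}$ explicitly as a finite direct sum of truncations of dual Verma-type modules $M(E)^\vee$, mimicking the classical BGG construction of injective hulls in category $\mathcal{O}$, and then to verify minimality, cogeneration, and finite-dimensionality of weight spaces. First I would recall from Lemma~\ref{lem:lem_inj_hull} that the category $\FF^\ell_{C(\tt),\tt}$ has, for each pair $(S,\lambda)$, a finite-dimensional injective hull $(E^\ell)^\vee$ of the simple object $E = S\otimes\mathbb{C}_\lambda$. The key structural input is that every simple object of $\mathcal{C}^{\theta,\ell}_{\bar\pp,\tt,n+2}$ is of the form $L(E)$ for $E$ a simple $C(\tt)$-module with $|E|\geq n+2$ and with the $Z_{U(\gg)}$-action through $\theta$; there are only finitely many such $E$, since fixing $\theta$ bounds the $\hh$-weights, and the truncation bounds $|E|$ from below while $\theta$-finiteness bounds it from above — so $\mathcal{C}^{\theta,\ell}_{\bar\pp,\tt,n+2}$ has finitely many simple objects, call them $L(E_1),\dots,L(E_r)$.

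The main construction: for each $i$, I would build an injective hull $I(E_i)$ of $L(E_i)$ in the ambient (non-truncated, non-$\theta$) category $\mathcal{C}_{\bar\pp,\tt}$ by the standard method — take the injective hull $(E_i^\ell)^\vee$ in $\FF^\ell_{C(\tt),\tt}$, induce/coinduce appropriately to get a $\bar\pp$-locally finite $\gg$-module, i.e.\ dualize the projective $U(\gg)\otimes_{U(\bar\pp)}E_i^\ell$ to obtain $(U(\gg)\otimes_{U(\bar\pp)}E_i^\ell)^\vee$, then pass to the $\theta$-primary component and apply the truncation functor (quotient by the submodule generated by weight spaces of weight $<n+2$, or rather the maximal submodule/quotient in the truncated category). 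One shows this object, call it $I^{\theta,\ell}_{n+2}(E_i)$, is injective in $\mathcal{C}^{\theta,\ell}_{\bar\pp,\tt,n+2}$ and is an injective hull of $L(E_i)$: injectivity follows because coinduction is right-adjoint to an exact restriction and the $\theta$-component and truncation operations preserve injectivity in these finite-length settings (truncation is a quotient-type localization); the socle computation $\Soc M(E)^\vee = L(E)$ from the excerpt, together with the fact that Jordan-block-size-$\ell$ and $|E|\geq n+2$ conditions are respected, gives that the socle of $I^{\theta,\ell}_{n+2}(E_i)$ is simple $\cong L(E_i)$. Then set $I^{\theta,\ell}_{n+2} := \bigoplus_{i=1}^r I^{\theta,\ell}_{n+2}(E_i)$. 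This is injective, and since its socle $\bigoplus_i L(E_i)$ contains every simple object, it is a cogenerator; minimality (no proper injective subobject cogenerating) and uniqueness up to isomorphism follow from the standard Krull–Schmidt / Fitting argument in a finite-length abelian category once one knows each summand is indecomposable with simple socle.

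For finite-dimensionality of the $\tt$-weight spaces: each $\tt$-weight $\mu$ of $I^{\theta,\ell}_{n+2}$ lies in a fixed coset of the root lattice and satisfies $\mu(h)\geq n+2$; by $\theta$-primariness the $\hh$-weights appearing are confined to a union of finitely many Weyl-group orbits (shifted), hence finitely many; the $\ell$-bound on Jordan blocks of $\cc$ bounds the multiplicity of each generalized $\hh$-weight coming from the $S(\cc)/I_\lambda^\ell$ factor; finally $\bar\pp$-local finiteness plus finite generation over $U(\bar\nn)$ of each graded piece bounds the total multiplicity of each $\tt$-weight. Concretely it is cleanest to note $I^{\theta,\ell}_{n+2}(E_i)$ is a quotient (after truncation) of the dual of $U(\nn)\otimes E_i^\ell$ as an $\hh$-module, whose weight spaces are finite-dimensional because $E_i^\ell$ is finite-dimensional and $U(\nn)$ has finite-dimensional weight spaces.

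\textbf{Main obstacle.} The hard part will be verifying that the truncation operation — passing from an injective object of $\mathcal{C}^{\theta,\ell}_{\bar\pp,\tt}$ to the truncated category $\mathcal{C}^{\theta,\ell}_{\bar\pp,\tt,n+2}$ — actually preserves injectivity and correctly computes the injective hull, rather than merely producing \emph{some} injective object. One must check that $\Hom$ out of objects of the truncated category into the truncated injective agrees with $\Hom$ into the untruncated one (i.e.\ that morphisms automatically kill the low weight spaces), which requires knowing that objects of $\mathcal{C}^{\theta,\ell}_{\bar\pp,\tt,n+2}$ have \emph{no} $\tt$-weights below $n+2$ while the "tail" being quotiented out is supported entirely there — and that this tail is a \emph{submodule}, not just a subspace, so that the quotient is a well-defined $\gg$-module. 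This is where one genuinely uses that $\bar\nn$ lowers $h$-weights and $\nn$ raises them together with a careful bookkeeping of which weight spaces can be hit; getting the adjunction exactly right, and confirming the socle is unchanged by truncation (so the hull stays a hull), is the technical crux that the BGG-style argument in \cite{BGG} handles only in the classical thin case.
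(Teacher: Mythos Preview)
Your outline is broadly the same as the paper's (coinduce from the finite-dimensional injectives $(E^\ell)^\vee$, take the $\theta$-component, sum over the finitely many simples), but the paper handles precisely the obstacle you flag by reordering the construction: it truncates \emph{before} coinducing to $\gg$, not after. Concretely, the paper forms the $\pp$-module $\Gamma_\tt\,\pro^{\pp}_{C(\tt)}\big((E^\ell)^\vee\big)$, which is injective in $\FF^\ell_{\pp,\tt}$, and then takes the $\pp$-submodule $\big(\Gamma_\tt\,\pro^{\pp}_{C(\tt)}((E^\ell)^\vee)\big)_{\geq n+2}$. At the $\pp$-level this truncation \emph{is} a submodule (since $\nn$ only raises $h$-weights), and the functor $(\cdot)_{\geq n+2}$ is right adjoint to the exact inclusion $\FF^\ell_{\pp,\tt,n+2}\hookrightarrow\FF^\ell_{\pp,\tt}$; hence it preserves injectives with no further work. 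Only then does one apply $\Gamma_\tt\,\pro^\gg_\pp(\cdot)$ and the $\theta$-projection to obtain an injective object $J^\ell(E)$ in $\mathcal{C}^{\theta,\ell}_{\bar\pp,\tt,n+2}$. This sidesteps entirely the problem you identify of truncating a $\gg$-module.

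Two further points where your proposal diverges and would need repair. First, your suggested truncation at the $\gg$-level as ``quotient by the submodule generated by weight spaces of weight $<n+2$'' is dangerous: since $\nn\subset\gg$ raises $h$-weights, that generated submodule can be all of $M(E)^\vee$; and the alternative ``maximal quotient in the truncated category'' is a \emph{left} adjoint, which does not preserve injectives. Second, you assert that the object you build is already the injective \emph{hull} of $L(E_i)$, invoking $\Soc M(E)^\vee=L(E)$. The paper is more careful here: after the $\theta$-projection and truncation, $J^\ell(E)$ need not have simple socle, so one only claims an embedding $L(E)\hookrightarrow J^\ell(E)$ and then takes the hull $I^\ell(E)$ as a direct summand of $J^\ell(E)$. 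The finite-dimensionality of $\tt$-weight spaces is checked for $J^\ell(E)$ (your argument via finite-dimensionality of $E^\ell$ and of the weight spaces of $U(\nn)$ is essentially the same), and then inherited by the summand.
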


	\begin{proof} 
We denote by $\FF^\ell_{\pp,\tt}$ the category of $\pp$-modules whose restrictions to $C(\tt)$ belong to $\FF^\ell_{C(\tt),\tt}$, and by
$\FF^\ell_{\pp,\tt,n+2}$ the subcategory consisting of modules whose
$\tt$-weights are bounded from below by $n+2$. Let $E$ be as in Lemma~\ref{lem:lem_inj_hull}.  Endow $E$ with a $\pp$-module structure by letting $\nn$ act trivially on $E$, and consider the $\pp$-module
\begin{equation}
\Gamma_\tt \pro^{\pp}_{C(\tt)}\left(\left(E^{\ell}\right)^\vee\right)=\Gamma_\tt\Hom_{U(C(\tt))}\left(U(\pp),\left(E^{\ell}\right)^\vee\right)\,.
\label{eq:gammapromodule}
\end{equation}
Since $\pro^{\pp}_{C(\tt)}(\cdot)$ preserves injectivity and the functor of $\tt$-weight vectors $\Gamma_\tt$ is right adjoint to the inclusion of the category of $\pp$-modules semisimple
over $\tt$ into the category of all $\pp$-modules, and hence also preserves injectivity, the $\pp$-module (\ref{eq:gammapromodule})
is an injective hull of $E$ in $\FF^\ell_{\pp,\tt}$. Consequently, the truncated submodule $\left(\Gamma_\tt\pro^{\pp}_{C(\tt)}\left(\left(E^{\ell}\right)^\vee\right)\right)_{\geq n+2}$ of (\ref{eq:gammapromodule}), spanned by all 
$\tt$-weight spaces with
weights greater or equal than $n+2$, is an injective hull of $E$ in $\FF^\ell_{\pp,\tt,n+2}$. 

Set
$$J^\ell(E):=\left[\Gamma_\tt\pro^{\gg}_{\pp}\left(\left(\Gamma_\tt\pro^{\pp}_{C(\tt)}\left(\left(E^{\ell}\right)^\vee\right)\right)_{\geq n+2}\right)\right]^\theta.$$
Then, by a similar argument, $J^\ell(E)$ is injective in $\mathcal{C}^{\theta,\ell}_{\bar{\pp},\tt,n+2}$ and we have an embedding of $\gg$-modules 
$L(E)\hookrightarrow J^\ell(E)$ induced by the embedding
of $\pp$-modules $E\hookrightarrow \left(\pro^{\pp}_{C(\tt)}\left(\left(E^{\ell}\right)^\vee\right)\right)_{\geq n+2}$. Moreover, it is easy to check that
$J^\ell(E)$ has finite-dimensional $\tt$-weight spaces.

Note that, up to isomorphisms, $\mathcal{C}^{\theta,\ell}_{\bar{\pp},\tt,n+2}$ has finitely many simple objects $L(E_1),\dots, L(E_r)$. Each of them has a unique, up to isomorphism, injective hull
$I^\ell\left(E_j\right)$ which is a submodule of $J^\ell(E_j)$. Then $I^{\theta,\ell}_{n+2}$ is the direct sum $\bigoplus_{j=1}^r\,I^\ell\left(E_j\right)$.
	\end{proof}
	
	\begin{corollary}\label{cor:cor2-0}
		Let $A^{\theta,\ell}_{n+2}:=\mathrm{End}_\gg\, I^{\theta,\ell}_{n+2}$.  Then $\mathcal{C}^{\theta,\ell}_{\bar\pp,\tt,n+2}$ is equivalent to the category of finite-dimensional $A^{\theta,\ell}_{n+2}$-modules.
	\end{corollary}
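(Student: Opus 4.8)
The plan is to deduce this from Lemma~\ref{lem:lemma1} by the standard Morita-theoretic argument: an abelian category with finitely many simple objects, enough injectives (or projectives), finite-dimensional Hom-spaces, and such that every object has finite length with a finite injective resolution, is equivalent to finite-dimensional modules over the endomorphism ring of a minimal injective cogenerator. Concretely, I would set $I := I^{\theta,\ell}_{n+2}$ and $A := A^{\theta,\ell}_{n+2} = \mathrm{End}_\gg\,I$, and consider the contravariant functor $\Hom_\gg(-,I)$ from $\mathcal{C}^{\theta,\ell}_{\bar\pp,\tt,n+2}$ to right $A$-modules (or, composing with the duality $(\cdot)^\vee$, a covariant functor to left $A$-modules — but since everything here is genuinely finite-dimensional it is cleaner to phrase the equivalence directly).

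First I would record that $\mathcal{C}^{\theta,\ell}_{\bar\pp,\tt,n+2}$ is a finite-length abelian category: every object is $\bar\pp$-locally finite, $\tt$-semisimple and $\tt$-integral with weights bounded below by $n+2$, lies in a single generalized central-character block, and has $h$ acting with bounded Jordan blocks, so it has finitely many composition factors among $L(E_1),\dots,L(E_r)$; this is essentially the content of Lemma~\ref{lem:lemma1}. Next I would note that $I$ is an injective cogenerator with finite-dimensional $\tt$-weight spaces, hence $\Hom_\gg(M,I)$ is finite-dimensional for every $M$ in the category (any $M$ has finite length, so finitely many weight spaces matter, and the relevant weight spaces of $I$ are finite-dimensional). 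Therefore $\Hom_\gg(-,I)$ lands in finite-dimensional $A$-modules and is exact (as $I$ is injective) and faithful (as $I$ is a cogenerator). Then the standard argument shows it is fully faithful: on the subcategory of finite direct sums of the $I^\ell(E_j)$ it is full and faithful by definition of $A$ and additivity, and one bootstraps to all objects by taking a two-step injective copresentation $0\to M\to I^0\to I^1$ with $I^0,I^1$ direct sums of $I^\ell(E_j)$'s and applying the five lemma. Essential surjectivity follows because $A$ itself is the image of $I$, every finite-dimensional $A$-module is a cokernel (on the module side, a kernel) of a map between free modules, and exactness lets one realize it as $\Hom_\gg(M,I)$ for the corresponding $M$.

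I would also point out that the duality $(\cdot)^\vee$ of the excerpt restricts to an anti-self-equivalence of $\mathcal{C}^{\theta,\ell}_{\bar\pp,\tt,n+2}$ (it preserves $\tt$-integrality, boundedness of weights, the central character $\theta$, and the Jordan-block bound for $h$ since $\cc \subset \hh$), so the contravariant $\Hom_\gg(-,I)$ may, if desired, be turned into a covariant equivalence by precomposing with $(\cdot)^\vee$; but for the bare statement of the corollary this is optional packaging. The main obstacle — really the only nontrivial point, and it is already handled by Lemma~\ref{lem:lemma1} — is the existence of the \emph{minimal} injective cogenerator with finite-dimensional $\tt$-weight spaces; granting that, the corollary is a formal consequence of the abelian-category Morita theorem, and I would simply cite~\cite{BGG} (whose basic results Lemma~\ref{lem:lemma1} was said to generalize) for the precise version of that theorem and indicate the copresentation/five-lemma bootstrap as above rather than reproving it.
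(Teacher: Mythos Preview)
Your proposal is correct and matches the paper's approach: the paper states Corollary~\ref{cor:cor2-0} with no proof whatsoever, treating it as an immediate consequence of Lemma~\ref{lem:lemma1} via the standard Morita-type argument (implicitly deferred to~\cite{BGG}, as flagged just before the lemma). Your outline --- exactness and faithfulness of $\Hom_\gg(-,I)$ from injectivity and cogeneration, fullness on injectives and then on all objects by a two-step copresentation and the five lemma, essential surjectivity from free presentations on the $A$-module side --- is exactly the standard proof, and you correctly identify that the only nontrivial input is the existence (with finite-dimensional $\tt$-weight spaces) of the minimal injective cogenerator, which is precisely what Lemma~\ref{lem:lemma1} supplies. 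One small remark: in your list of hypotheses you mention ``finite injective resolution,'' but you neither use nor establish this, and indeed it is not needed for the equivalence; the two-step copresentation argument you actually run suffices.
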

	
	Let $\mathcal{C}^{\theta,\text{\rm ind}}_{\bar{\pp},\tt,n+2}$ be the category of inductive limits of objects from $\mathcal{C}_{\bar{\pp},\tt,n+2}^{\theta,\ell}$.

        \begin{corollary} \label{cor:cor2} For any $n$, the category $\mathcal{C}^{\theta,\text{\rm ind}}_{\bar{\pp},\tt,n+2}$ has a unique, up to isomorphism, 
minimal injective cogenerator $I^\theta_{n+2}$.  Moreover, $I^\theta_{n+2}=\displaystyle\lim\limits_{\longrightarrow}\,I^{\theta,\ell}_{n+2}$. In particular, the 
category $\mathcal{C}^{\theta,\text{\rm ind}}_{\bar{\pp},\tt,n+2}$ has 
enough injectives.
          \end{corollary}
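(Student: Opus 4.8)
The plan is to deduce everything from Lemma~\ref{lem:lemma1} and Corollary~\ref{cor:cor2-0} by passing to the inductive limit over $\ell$. First I would observe that, by construction, the categories $\mathcal{C}^{\theta,\ell}_{\bar{\pp},\tt,n+2}$ form an increasing chain of full subcategories of $\mathcal{C}^{\theta,\text{\rm ind}}_{\bar{\pp},\tt,n+2}$, with $\mathcal{C}^{\theta,\text{\rm ind}}_{\bar{\pp},\tt,n+2}=\varinjlim_\ell\mathcal{C}^{\theta,\ell}_{\bar{\pp},\tt,n+2}$ (an object killed by a power of the nilpotents involved lies in some $\mathcal{C}^{\theta,\ell}_{\bar{\pp},\tt,n+2}$, and arbitrary objects of the inductive category are filtered colimits of such). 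The compatible surjections $I^{\theta,\ell}_{n+2}\twoheadleftarrow$ (more precisely the transition maps of the system $\{I^{\theta,\ell}_{n+2}\}$) come from the fact that each $I^{\theta,\ell}_{n+2}$ is the injective hull in $\mathcal{C}^{\theta,\ell}_{\bar{\pp},\tt,n+2}$ of the fixed semisimple module $L(E_1)\oplus\cdots\oplus L(E_r)$, exactly as in Lemma~\ref{lem:lem_inj_hull}(c): the injective hull of $E$ in $\FF^\ell_{C(\tt),\tt}$ embeds into that in $\FF^{\ell+1}_{C(\tt),\tt}$, and the pro- and $\Gamma_\tt$-constructions in the proof of Lemma~\ref{lem:lemma1} are functorial, so one gets canonical embeddings $I^{\theta,\ell}_{n+2}\hookrightarrow I^{\theta,\ell+1}_{n+2}$. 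Set $I^\theta_{n+2}:=\varinjlim_\ell I^{\theta,\ell}_{n+2}$.

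Next I would check the three assertions. For injectivity of $I^\theta_{n+2}$ in $\mathcal{C}^{\theta,\text{\rm ind}}_{\bar{\pp},\tt,n+2}$: given an injection $A\hookrightarrow B$ in the inductive category and a map $A\to I^\theta_{n+2}$, both $A$ and $B$ are filtered colimits of objects of the $\mathcal{C}^{\theta,\ell}_{\bar{\pp},\tt,n+2}$; reduce to the case $A,B\in\mathcal{C}^{\theta,\ell}_{\bar{\pp},\tt,n+2}$ for some $\ell$, note that the image of $A$ in $I^\theta_{n+2}$, being a finitely generated submodule (its $\tt$-weight spaces are finite-dimensional by Lemma~\ref{lem:lemma1} and it is bounded in $\tt$-weight since it is a quotient of $A$), lands in $I^{\theta,\ell'}_{n+2}$ for $\ell'\geq\ell$ large, and then apply injectivity of $I^{\theta,\ell'}_{n+2}$ in $\mathcal{C}^{\theta,\ell'}_{\bar{\pp},\tt,n+2}$ to extend to $B$. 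That $I^\theta_{n+2}$ is a cogenerator follows because every simple object $L(E_j)$ of the inductive category already embeds in $I^{\theta,\ell}_{n+2}$, hence in $I^\theta_{n+2}$; minimality follows because at each finite level $I^{\theta,\ell}_{n+2}$ is the minimal injective cogenerator of $\mathcal{C}^{\theta,\ell}_{\bar{\pp},\tt,n+2}$ and the socle of $I^\theta_{n+2}$ is the colimit of the socles, namely $\bigoplus_j L(E_j)$ with multiplicity one each. Uniqueness up to isomorphism is then the standard argument: two minimal injective cogenerators with the same (essential) socle are isomorphic. Finally, ``enough injectives'' is immediate once one knows $\mathcal{C}^{\theta,\text{\rm ind}}_{\bar{\pp},\tt,n+2}$ is a Grothendieck category (it is a filtered union of the finite-length categories $\mathcal{C}^{\theta,\ell}_{\bar{\pp},\tt,n+2}$, hence has exact filtered colimits and a generator), so that every object embeds into a product of copies of the cogenerator $I^\theta_{n+2}$, and products of injectives are injective.

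The main obstacle, I expect, is the interchange of ``injective'' with the inductive limit: injectivity is not preserved by filtered colimits in general abelian categories, so the argument genuinely needs the finiteness built into the setup — namely that every object and every morphism involved is, after bounding the $\tt$-weights and using the finite-dimensionality of the $\tt$-weight spaces of $I^{\theta,\ell}_{n+2}$, detected at some finite level $\ell$. Making this ``compact exhaustion'' precise (that a finitely generated submodule of $I^\theta_{n+2}$ lies in some $I^{\theta,\ell}_{n+2}$, and that a map out of a level-$\ell$ object has image in a level-$\ell'$ object) is the one place where care is required; everything else is formal category theory combined with the already-established Lemma~\ref{lem:lemma1}.
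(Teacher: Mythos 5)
Your proposal is correct and matches the approach the paper implicitly intends: the paper states this corollary without proof, taking it as a direct consequence of Lemma~\ref{lem:lemma1} by passing to the inductive limit over $\ell$, which is exactly what you carry out, with the key point (that injectivity of $\varinjlim I^{\theta,\ell}_{n+2}$ can be tested at finite levels because every object of $\mathcal{C}^{\theta,\text{\rm ind}}_{\bar{\pp},\tt,n+2}$ is the union of its finite-length subobjects, each of which lies in some $\mathcal{C}^{\theta,\ell}_{\bar{\pp},\tt,n+2}$) correctly identified. The only step worth tightening is ``enough injectives'': rather than invoking products of copies of $I^\theta_{n+2}$ (which requires knowing the inductive category is Grothendieck with a generator, an assertion you pass over a bit quickly), it is cleaner to note that every object $X$ has essential socle $\bigoplus_j L(E_j)^{\oplus n_j}$ and, since direct sums of injectives are injective in a locally noetherian category, $X$ embeds into the injective $\bigoplus_j I(E_j)^{\oplus n_j}$ where $I(E_j)=\varinjlim_\ell I^\ell(E_j)$.
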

          
          In fact, if $I$ is any injective object in $\mathcal{C}^{\theta,\text{\rm ind}}_{\bar{\pp},\tt,n+2}$, then $I$ is a direct limit, $\lim\limits_{\longrightarrow}\,I^\ell$, for injective objects $I^\ell\in\mathcal{C}^{\theta,\ell}_{\bar{\pp},\tt,n+2}$.

Recall the definition of $\bar E$ from Lemma \ref{small}, and let
$$\overline{W}(E):=\Gamma_\tt\pro^{\gg}_{\pp}(\bar{E})$$

\begin{lemma}\label{injectivecoVerma} Let $\FF_{\bar{\pp},\tt}$ be the category of locally finite $\bar{\pp}$-modules such that their restrictions to $C(\tt)$ lie in $\FF_{C(\tt),\tt}$.  Then $\overline{W}(E)$ is an injective hull of $E$ in $\FF_{\bar{\pp},\tt}$.  Moreover, $\Res_{C(\tt)}\overline{W}(E)\simeq \bigoplus_{\alpha}\,\bar{F}_\alpha$ for some finite-dimensional irreducible $C(\tt)$-modules $F_\alpha$.
\end{lemma}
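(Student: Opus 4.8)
The plan is to reduce the statement to the already-established properties of the functors $\Gamma_\tt$ and $\pro^{\gg}_{\pp}$ together with Lemma~\ref{small}(c). First I would recall the two key formal facts about the building blocks: the coinduction functor $\pro^{\gg}_{\pp}(\cdot)=\Hom_{U(\pp)}(U(\gg),\cdot)$ is right adjoint to restriction $\Res_{\pp}$ (from $\bar{\pp}$-modules, or $\gg$-modules, to $\pp$-modules), hence exact and injectivity-preserving; and $\Gamma_\tt$, the functor of $\tt$-weight vectors, is right adjoint to the inclusion of $\tt$-semisimple modules into all modules, hence also left-exact and injectivity-preserving. These are exactly the properties invoked in the proof of Lemma~\ref{lem:lemma1}, so I would cite that argument. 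Since $\bar{E}$ is an injective hull of $E$ in $\FF_{C(\tt),\tt}$ by Lemma~\ref{small}(c), the composite $\overline{W}(E)=\Gamma_\tt\pro^{\gg}_{\pp}(\bar{E})$ is injective in $\FF_{\bar{\pp},\tt}$; here one must note that $\pro^{\gg}_{\pp}(\bar E)$, a priori a $\gg$-module, is automatically $\bar{\pp}$-locally finite because $\bar{\nn}\subset\pp$ acts locally nilpotently on anything coinduced from $\pp$ (or, more precisely, restrict to $\bar{\pp}$ and use that $\pro^{\gg}_{\pp}$ restricted to $\bar{\pp}$ factors appropriately), and that applying $\Gamma_\tt$ lands one in $\FF_{\bar{\pp},\tt}$ by construction. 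The adjunctions also give a $\bar{\pp}$-module embedding $E\hookrightarrow\overline{W}(E)$ coming from $E\hookrightarrow\bar E$ (with $\bar{\nn}$ acting trivially on $E$).

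Next I would promote "injective containing $E$" to "injective hull of $E$". The cleanest route is to show $\overline{W}(E)$ has simple socle, necessarily $E$. For this, compute $\Hom_{\bar{\pp}}(E',\overline{W}(E))$ for a simple object $E'\in\FF_{\bar{\pp},\tt}$: by the two adjunctions this is $\Hom_{C(\tt)}(\Res_{C(\tt)}E',\bar E)$ (since $\Res_{C(\tt)}E'$ is $\tt$-semisimple the $\Gamma_\tt$-adjunction contributes nothing, and the $\pro$-adjunction strips $U(\gg)$ down to $C(\tt)$, using that $E'$ is annihilated by the pronilpotent parts). As $\bar E$ is an injective hull of the simple $C(\tt)$-module $E$, this Hom-space is nonzero only when $E$ occurs in $\soc\Res_{C(\tt)}E'$, and then it is one-dimensional; since $E'$ is a simple object of $\FF_{\bar{\pp},\tt}$ with $\bar{\nn}$ acting trivially, $E'$ is itself a simple $C(\tt)$-module, forcing $E'\simeq E$. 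Hence $\soc\overline{W}(E)\simeq E$ and the injective $\overline{W}(E)$ is an injective hull of $E$. I expect this socle computation, i.e., carefully chasing the two adjunctions and classifying the simple objects of $\FF_{\bar{\pp},\tt}$ (they are the simple $C(\tt)$-modules with $\bar{\nn}$ acting by zero, and by Lemma~\ref{small}(a) these are the $E$'s), to be the main technical point, though it is entirely parallel to the reasoning already carried out in Lemma~\ref{lem:lemma1}.

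Finally, for the $C(\tt)$-module structure: restricting $\overline{W}(E)=\Gamma_\tt\pro^{\gg}_{\pp}(\bar E)$ to $C(\tt)$, I would use the tensor-identity/PBW decomposition $U(\gg)\simeq U(\bar{\nn})\otimes U(\pp)$ as a right $U(\pp)$-module, which gives $\pro^{\gg}_{\pp}(\bar E)\simeq\Hom_{\CC}(U(\bar{\nn}),\bar E)$ as a $C(\tt)$-module (with $C(\tt)$ acting diagonally, via its adjoint action on $U(\bar\nn)$ and its given action on $\bar E$). Applying $\Gamma_\tt$ and decomposing $U(\bar\nn)$ into finite-dimensional $\tt$-weight (hence $C(\tt)$-weight) pieces $U(\bar\nn)_{-k}$, one gets $\Res_{C(\tt)}\overline{W}(E)\simeq\bigoplus_{k\geq 0}\Hom_{\CC}(U(\bar\nn)_{-k},\bar E)$. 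Each summand is, as a $C(\tt)$-module, a finite direct sum of copies of modules of the form (finite-dimensional $C(\tt)$-module)$\,\otimes\,\bar F$ for various simple $C(\tt)$-modules $F$ — because $\bar E$ itself restricted suitably is built from $\bar F$'s (Lemma~\ref{small}) and tensoring an injective $\bar F$ with a finite-dimensional module again yields a finite direct sum of $\bar F$'s of the same central-character type. Collecting all summands over $k$ gives the asserted form $\Res_{C(\tt)}\overline{W}(E)\simeq\bigoplus_\alpha\bar F_\alpha$ with each $F_\alpha$ finite-dimensional irreducible. The only care needed here is to confirm that tensoring $\bar F$ with a finite-dimensional $C(\tt)$-module stays within $\FF_{C(\tt),\tt}$ and decomposes as a sum of injective hulls $\bar F'$; this follows from the explicit description $\bar F=\lim_{\longrightarrow}(F^\ell)^\vee$ in Lemma~\ref{small} and exactness of tensoring with a finite-dimensional module.
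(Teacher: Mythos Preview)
Your plan is correct, and for the injectivity part and the socle computation it matches the paper's reasoning (in fact you supply more detail on the socle than the paper does).  One small imprecision: in your step~2 you invoke ``the $\pro$-adjunction'' to compute $\Hom_{\bar\pp}(E',\overline{W}(E))$, but the adjunction defining $\overline{W}(E)=\Gamma_\tt\pro^\gg_\pp(\bar E)$ is between $\gg$-modules and $\pp$-modules, not $\bar\pp$-modules.  The clean fix is what the paper does at the very start: use PBW to identify $\Res_{\bar\pp}\overline{W}(E)\simeq\Gamma_\tt\pro^{\bar\pp}_{C(\tt)}\bar E$, and then the correct adjunction (for $\pro^{\bar\pp}_{C(\tt)}$) gives your formula $\Hom_{\bar\pp}(E',\overline{W}(E))\simeq\Hom_{C(\tt)}(E',\bar E)$ immediately.

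Where you genuinely diverge from the paper is in the proof of the $C(\tt)$-decomposition $\Res_{C(\tt)}\overline{W}(E)\simeq\bigoplus_\alpha\bar F_\alpha$.  You compute explicitly via PBW, writing $\Res_{C(\tt)}\overline{W}(E)\simeq\bigoplus_{k\geq 0}\Hom_\CC(U(\bar\nn)_{-k},\bar E)$ and then arguing that each summand is a finite direct sum of $\bar F$'s (this works because $\cc\subset\hh$ acts semisimply on $U(\bar\nn)$, so each $U(\bar\nn)_{-k}$ is a semisimple $C(\tt)$-module, and tensoring $\bar E$ with a simple $C(\tt)$-module just shifts the $\cc$-weight and tensors the $\ss$-part).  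The paper instead argues abstractly: it observes that $\Res_{C(\tt)}\overline{W}(E)$ is locally $C(\tt)$-finite, hence an essential extension of its $C(\tt)$-socle $S(E)=\bigoplus_\alpha F_\alpha$, and then uses that $U(C(\tt))$ is left Noetherian to conclude that the direct sum $\bigoplus_\alpha\bar F_\alpha$ is itself injective, hence an injective hull of $S(E)$, hence isomorphic to $\Res_{C(\tt)}\overline{W}(E)$ by uniqueness of injective hulls.  Your explicit computation gives more information (it tells you which $F_\alpha$ actually occur), while the paper's argument is shorter and avoids the bookkeeping of the tensor decomposition.
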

\begin{proof} By the Poincare--Birkhoff--Witt Theorem we have an isomorphism 
$$\operatorname{Res}_{\bar\pp}\overline{W}(E)\simeq \Gamma_\tt\Hom_{C(\tt)}(U(\bar\pp),\bar E)=\Gamma_\tt\pro^{\bar\pp}_{C(\tt)}\bar E.$$
Since $\bar{E}$ is an injective module in $\mathcal{F}_{C(\tt),\tt}$, $\Gamma_{\tt}\pro^{\bar\pp}_{C(\tt)}\bar{E}$ is an injective module in $\mathcal{F}_{\bar\pp,\tt}$.  Hence $\Res_{\bar\pp}\overline{W}(E)$ is an injective module in $\mathcal{F}_{\bar\pp,\tt}$.  

Let $S(E)$ be the socle of $\Res_{\bar\pp}\overline{W}(E)$ as a module over $C(\tt)$.  Since $\Res_{C(\tt)}\overline{W}(E)$ is locally $C(\tt)$-finite, it is an essential extension of $S(E)$.  We conclude that $\Res_{C(\tt)}\overline{W}(E)$ is an injective hull of $S(E)\simeq\bigoplus_{\alpha}\,F_\alpha$ for some finite-dimensional irreducible $C(\tt)$-modules $F_\alpha$.  

We now show that $\Res_{C(\tt)}\overline{W}(E)\simeq\bigoplus_\alpha\,\bar{F}_\alpha$.  Let $T(E)$ be the direct sum of the $C(\tt)$-modules $\bar{F}_\alpha$.  Then $T(E)$ is an essential extension of $S(E)$.  Because
$U\big(C(\tt)\big)$ is left Noetherian, $T(E)$ is injective.  Consequently, $T(E)$ is an injective hull of its socle.  But $\Soc T(E)=\bigoplus_{\alpha}\,F_\alpha=S(E)$.  Since $\Res_{C(\tt)}\overline{W}(E)$ is also an injective hull of $S(E)$, there is an isomorphism $\Res_{C(\tt)}\overline{W}(E)\simeq\bigoplus_{\alpha}\,\bar{F}_\alpha$.

\end{proof}

We say that an object $M\in \mathcal{C}^{\text{\rm ind}}_{\bar{\pp},\tt}$ admits a {\it co-Verma filtration} if there exists a finite filtration
$$0=M_0\subset M_1\subset\dots\subset M_t=M$$  whose successive quotients $M_{i+1}/M_i$ are isomorphic to $\overline{W}(E_i)$ for simple $C(\tt)$-modules
$E_1,\dots, E_t$.

\begin{lemma}\label{coVermafilt} Let $M$ be an object of $\mathcal{C}^{\text{\rm ind}}_{\bar{\pp},\tt}$. Then $M$ admits a co-Verma filtration if and only if
$\operatorname{Res}_{\bar\pp}M$ is injective in $\FF_{\bar{\pp},\tt}$ with socle of finite length.
\end{lemma}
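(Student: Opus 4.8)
The plan is to prove the two implications separately, exploiting the fact that $\overline{W}(E)$ is an injective object of $\FF_{\bar\pp,\tt}$ whose restriction to $C(\tt)$ is $\bigoplus_\alpha \bar F_\alpha$ (Lemma~\ref{injectivecoVerma}), so that co-Verma filtrations interact well with restriction to $\bar\pp$. For the ``only if'' direction, suppose $M$ has a co-Verma filtration $0=M_0\subset\dots\subset M_t=M$ with $M_{i+1}/M_i\simeq\overline{W}(E_i)$. Each $\Res_{\bar\pp}\overline{W}(E_i)$ is injective in $\FF_{\bar\pp,\tt}$ by Lemma~\ref{injectivecoVerma}, and a finite extension of injectives in this category is injective because the short exact sequences split (injectivity of $M_i$ in $\FF_{\bar\pp,\tt}$ gives a splitting of $0\to M_i\to M_{i+1}\to\overline{W}(E_i)\to 0$), so $\Res_{\bar\pp}M\simeq\bigoplus_i\Res_{\bar\pp}\overline{W}(E_i)$ is injective; its socle over $\bar\pp$ is $\bigoplus_i\bigoplus_{\alpha\in A_i}E_\alpha$, which is finite since each index set $A_i$ is finite (the socle of $\bar F_\alpha$ over $C(\tt)$ is $F_\alpha$, and the $\bar\nn$-action being trivial on the socle means the $\bar\pp$-socle agrees with the $C(\tt)$-socle restricted appropriately).

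For the ``if'' direction I would argue by induction on the length $t$ of the socle of $\Res_{\bar\pp}M$. If $t=0$ then $M=0$. Otherwise, pick a simple $\bar\pp$-submodule $E$ of $\Soc_{\bar\pp}M$; since $\bar\nn$ acts trivially on this simple module, $E$ is (the inflation of) a simple $C(\tt)$-module, and it carries a $\tt$-weight $\geq$ the bound implicit in $\mathcal C^{\text{ind}}_{\bar\pp,\tt}$. The inclusion $E\hookrightarrow M$ extends, by injectivity of $\overline W(E)$ in $\FF_{\bar\pp,\tt}$ together with the universal property of $\Gamma_\tt\pro^\gg_\pp$, to a $\gg$-module homomorphism $\overline W(E)\to M$ — here one uses that $\overline W(E)=\Gamma_\tt\pro^\gg_\pp(\bar E)$ and that $M$, as an object of $\mathcal C^{\text{ind}}_{\bar\pp,\tt}$, is $\bar\pp$-locally finite and $\tt$-semisimple, so $\Hom_\gg(\overline W(E),M)=\Hom_\gg(\Gamma_\tt\pro^\gg_\pp\bar E,M)\cong\Hom_\pp(\bar E,M)=\Hom_{C(\tt)}(\bar E,\Res_{C(\tt)}M)$, the latter being an injective-hull adjunction since $\bar E$ is the injective hull of $E$ in $\FF_{C(\tt),\tt}$ (Lemma~\ref{small}c). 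This homomorphism $\psi:\overline W(E)\to M$ restricts to the inclusion on $E=\Soc\,\bar E$; since $\Res_{C(\tt)}\overline W(E)$ is an injective hull of $E$, $\psi$ is injective. Set $M_1:=\psi(\overline W(E))\simeq\overline W(E)$.

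It remains to check that $M/M_1$ again lies in $\mathcal C^{\text{ind}}_{\bar\pp,\tt}$ with $\Res_{\bar\pp}(M/M_1)$ injective of socle length $t-1$, so that the induction applies and we may splice $\overline W(E)=M_1$ onto a co-Verma filtration of $M/M_1$. Injectivity of $\Res_{\bar\pp}M_1$ in $\FF_{\bar\pp,\tt}$ gives a $\bar\pp$-splitting $\Res_{\bar\pp}M\simeq\Res_{\bar\pp}M_1\oplus\Res_{\bar\pp}(M/M_1)$, whence $\Res_{\bar\pp}(M/M_1)$ is a direct summand of an injective, hence injective, and $\Soc_{\bar\pp}(M/M_1)$ has length $t-\dim\Hom_{\bar\pp}(E,\Soc_{\bar\pp}M_1)$; I expect the main technical point, and the main obstacle, to be verifying that this drop is exactly $1$ — i.e.\ that the simple $\bar\pp$-module $E$ occurs in $\Soc_{\bar\pp}\overline W(E)$ with multiplicity one and that choosing $E$ inside $\Soc_{\bar\pp}M$ does not accidentally absorb more of the socle. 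This follows from the structure statement in Lemma~\ref{injectivecoVerma}: $\Soc_{C(\tt)}\overline W(E)=\bigoplus_\alpha F_\alpha$ and among the $F_\alpha$ the module $E$ appears once (it is the distinguished socle generator coming from $\bar E$'s hull of $E$), and since $\bar\nn$ kills the $C(\tt)$-socle the $\bar\pp$-socle coincides with it. Finally, $M/M_1\in\mathcal C^{\text{ind}}_{\bar\pp,\tt}$ because the class of inductive limits of objects of $\mathcal C^{\theta,\ell}_{\bar\pp,\tt,n+2}$ is closed under quotients and the $\tt$-weight bound is inherited. Running the induction to completion produces the desired co-Verma filtration of $M$.
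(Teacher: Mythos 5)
Your ``only if'' direction is essentially the paper's: co-Verma factors restrict to injectives in $\FF_{\bar\pp,\tt}$, the $\bar\pp$-restriction of the filtration splits, and the socle is a finite sum. That part is fine.

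The ``if'' direction, however, rests on a Frobenius-reciprocity statement that is backwards, and this is a genuine gap rather than a presentational issue. You write
\[
\Hom_\gg\bigl(\Gamma_\tt\pro^\gg_\pp\bar E,\,M\bigr)\;\cong\;\Hom_\pp\bigl(\bar E,\Res_\pp M\bigr),
\]
but $\pro^\gg_\pp=\Hom_{U(\pp)}(U(\gg),-)$ is a coinduction functor, hence a \emph{right} adjoint to restriction, as is $\Gamma_\tt$; therefore $\Gamma_\tt\pro^\gg_\pp$ is right adjoint to $\Res_\pp$ and the correct reciprocity reads
\[
\Hom_\gg\bigl(M,\,\Gamma_\tt\pro^\gg_\pp Y\bigr)\;\cong\;\Hom_\pp\bigl(\Res_\pp M,\,Y\bigr).
\]
So the adjunction produces $\gg$-maps \emph{into} $\overline W(E)$, not out of it. (Your second displayed isomorphism $\Hom_\pp(\bar E,M)=\Hom_{C(\tt)}(\bar E,\Res_{C(\tt)}M)$ has a similar orientation problem: since $\bar E$ is inflated from $C(\tt)$ by a trivial $\nn$-action, inflation is left adjoint to $\nn$-invariants, giving $\Hom_\pp(\bar E,M)\cong\Hom_{C(\tt)}(\bar E,M^\nn)$, not to $\Res_{C(\tt)}M$.) Consequently the map $\psi\colon\overline W(E)\to M$ you want to build has no source in adjointness, and in fact for a general $M$ satisfying the hypothesis the co-Verma $\overline W(E)$ with \emph{minimal} $|E|$ need not embed as a $\gg$-submodule at all; it sits at the opposite end of the filtration.

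The paper's argument goes the other way around and does use the adjunction in the correct direction: starting from the projection of $\Res_\pp M$ onto $\bar E^{\oplus k}$ (where $k$ is the multiplicity of the minimal-weight simple $E$ in $\Soc\Res_{\bar\pp}M$), one obtains by the genuine Frobenius reciprocity a $\gg$-map $\tilde\varphi\colon M\to\overline W(E)^{\oplus k}$, checks (using that $\Hom_{\bar\pp}(\Res_{\bar\pp}\overline W(F),\Res_{\bar\pp}\overline W(E))=0$ for $|F|>|E|$) that $\tilde\varphi$ is surjective and splits over $\bar\pp$, and then inducts on the socle length of $\ker\tilde\varphi$. So $\overline W(E)^{\oplus k}$ appears as a \emph{quotient} of $M$, not a submodule, which assembles the co-Verma filtration from the top down. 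If you want to salvage your draft, you should reverse the direction of the morphism you build (to $M\to\overline W(E)^{\oplus k}$) and carry the induction on the kernel rather than the cokernel; the part of your argument about the $\bar\pp$-restriction being a direct summand of an injective will then transfer to $\ker\tilde\varphi$ essentially unchanged.
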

\begin{proof} If $M$ admits a co-Verma filtration, then $\operatorname{Res}_{\bar\pp}M$ is a direct sum of modules of the form $\overline{W}(F)$, and by Lemma \ref{injectivecoVerma}
$\operatorname{Res}_{\bar\pp}M$ is injective in $\FF_{\bar{\pp},\tt}$ with $\bar\pp$-socle of finite length.

To prove the opposite assertion, choose a simple $\bar\pp$-submodule $E\subset \Res_{\bar\pp}M$ with minimal $|E|$.  The existence of $E$ follows from the fact that the socle of $\Res_{\bar\pp}M$ has finite length. Let $k$ be the multiplicity of $E$ in $\Soc\Res_{\bar\pp}M$.
Then we have a surjective
morphism $\varphi:\Res_{\pp}M\to \bar E^{\oplus k}$ of $\pp$-modules ($\varphi|_{\bar{E}^{\otimes k}}$ being the identity map) which induces a morphism $\tilde\varphi: M\to \overline{W}(E)^{\oplus k}$ of $\gg$-modules by Frobenius reciprocity. 
Since $\operatorname{Res}_{\bar\pp}M$ is injective in $\mathcal{F}_{\bar\pp,\tt}$, $\operatorname{Res}_{\bar\pp}M$ is an injective hull of its socle, i.e., 
$$\operatorname{Res}_{\bar\pp}M\simeq\Res_{\bar\pp}\left(\overline{W}(E)^{\oplus k}\oplus \bigoplus_{|F|>|E|}\overline{W}(F)\right)$$
 by Lemma~\ref{injectivecoVerma}.  Moreover, $\Hom_{\bar\pp}\left(\Res_{\bar\pp}\overline{W}(F),\Res_{\bar\pp}\overline{W}(E)\right)=0$ if $|F|>|E|$.
Therefore $\tilde\varphi\Biggl(\Res_{\bar\pp}\left(\bigoplus_{|F|>|E|}\overline{W}(F)\right)\Biggr)=0,$ and $$\big.\tilde\varphi\big|_{\Res_{\bar\pp}\overline{W}(E)^{\oplus k}}:\Res_{\bar\pp}\overline{W}(E)^{\oplus k}\to \Res_{\bar\pp}\overline{W}(E)^{\oplus k}$$ is an isomorphism of $\bar\pp$-modules
since it is induced by the identity map $\varphi|_{\bar{E}^{\oplus k}}:\bar E^{\oplus k}\to \bar E^{\oplus k}$. 

Set $Q:=\Ker\,\tilde{\varphi}$. Then $\operatorname{Res}_{\bar\pp}Q$ is isomorphic to
$\Res_{\bar\pp}\left(\bigoplus_{|F|>|E|}\overline{W}(F)\right)$, and hence $Q$ satisfies all conditions of the lemma. So we can finish the proof by induction on the length of the socle of $\Res_{\bar\pp}M$. 
\end{proof}

\begin{corollary}\label{technical} Let $R=M\oplus N$ for some  $M,N\in\mathcal{C}^{\theta,\text{\rm ind}}_{\bar{\pp},\tt}$.
Suppose that $R$ admits a co-Verma filtration. Then $M$ and $N$ also admit  co-Verma 
filtrations.
\end{corollary}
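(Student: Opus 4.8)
The plan is to reduce the statement entirely to Lemma~\ref{coVermafilt}, which characterizes the objects admitting a co-Verma filtration purely in terms of their restriction to $\bar{\pp}$: an object $M\in\mathcal{C}^{\text{\rm ind}}_{\bar{\pp},\tt}$ admits a co-Verma filtration if and only if $\Res_{\bar{\pp}}M$ is injective in $\FF_{\bar{\pp},\tt}$ and has socle of finite length. Since both of these conditions are visibly inherited by direct summands, the corollary should follow with essentially no extra work; the content has already been packaged into the lemma.

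In detail, I would first note that $M,N\in\mathcal{C}^{\theta,\text{\rm ind}}_{\bar{\pp},\tt}\subseteq\mathcal{C}^{\text{\rm ind}}_{\bar{\pp},\tt}$, so Lemma~\ref{coVermafilt} applies to each of them, and that $\Res_{\bar{\pp}}$ is additive, whence $\Res_{\bar{\pp}}R\simeq\Res_{\bar{\pp}}M\oplus\Res_{\bar{\pp}}N$ in $\FF_{\bar{\pp},\tt}$. By hypothesis $R$ admits a co-Verma filtration, so the ``only if'' direction of Lemma~\ref{coVermafilt} gives that $\Res_{\bar{\pp}}R$ is injective in $\FF_{\bar{\pp},\tt}$ with socle of finite length.

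Next I would invoke the elementary fact that in any abelian category a direct summand of an injective object is injective (lift along the monomorphism using the injectivity of the whole object, then project back onto the summand). Applied to the splitting $\Res_{\bar{\pp}}R=\Res_{\bar{\pp}}M\oplus\Res_{\bar{\pp}}N$, this shows that $\Res_{\bar{\pp}}M$ and $\Res_{\bar{\pp}}N$ are each injective in $\FF_{\bar{\pp},\tt}$. Moreover the socle functor is additive, so $\Soc(\Res_{\bar{\pp}}M)$ and $\Soc(\Res_{\bar{\pp}}N)$ are direct summands of $\Soc(\Res_{\bar{\pp}}R)$ and hence of finite length. The ``if'' direction of Lemma~\ref{coVermafilt}, applied to $M$ and to $N$ separately, then produces co-Verma filtrations for both, finishing the proof.

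There is no substantial obstacle here: the only points to verify are the additivity of $\Res_{\bar{\pp}}$ and of $\Soc$ and the principle that a summand of an injective is injective, all routine. The one mild point of care is to confirm that ``finite length of the socle'' genuinely passes to summands, which it does precisely because the socles of the summands are themselves summands of a socle of finite length.
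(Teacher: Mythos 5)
Your proof is correct and follows exactly the route the paper takes: reduce to Lemma~\ref{coVermafilt}, then use that a direct summand of an injective is injective and that the socle of finite length passes to summands. The paper states this in one line; you have simply spelled out the same argument.
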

\begin{proof} A direct summand of an injective module is injective, so the statement follows from Lemma~\ref{coVermafilt}.
\end{proof}
	
	\begin{lemma} \label{lem:lemma2} Let $I(E)$ be an injective hull of $L(E)$ in $\mathcal{C}^{\theta,\text{\rm ind}}_{\bar{\pp},\tt,n+2}$.  
Then $I(E)/\overline{W}(E)$ admits a co-Verma filtration with successive quotients isomorphic to $\overline{W}(D)$ for $|D|<|E|$.
	\end{lemma}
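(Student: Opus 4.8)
The plan is to analyze the quotient $I(E)/\overline{W}(E)$ by comparing its $\bar{\pp}$-restriction with that of $\overline{W}(E)$ inside $I(E)$, and then to invoke Lemma~\ref{coVermafilt}. First I would observe that since $L(E)\hookrightarrow\overline{W}(E)$ (because $\overline{W}(E)=\Gamma_\tt\pro^{\gg}_{\pp}(\bar E)$ contains $E$ in its socle by Frobenius reciprocity, and $L(E)$ is generated by the image of $E$), and $L(E)\hookrightarrow I(E)$ as an essential extension, the embedding $L(E)\hookrightarrow I(E)$ extends to an embedding $\overline{W}(E)\hookrightarrow I(E)$ by injectivity of $I(E)$; this embedding is injective because its restriction to the socle $L(E)$ is. So the quotient $I(E)/\overline{W}(E)$ makes sense as an object of $\mathcal{C}^{\theta,\text{\rm ind}}_{\bar{\pp},\tt,n+2}$.

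Next, by Lemma~\ref{injectivecoVerma}, $\Res_{\bar\pp}\overline{W}(E)$ is injective in $\FF_{\bar\pp,\tt}$ with socle $\bigoplus_\alpha F_\alpha$ of finite length (indeed, since $E=S\otimes\CC_\lambda$ is a single simple $C(\tt)$-module, $\Soc\Res_{\bar\pp}\overline{W}(E)$ has the form $\bigoplus_\alpha F_\alpha$ over the finitely many $\alpha$). On the other hand, $\Res_{\bar\pp}I(E)$ is also injective in $\FF_{\bar\pp,\tt}$: indeed $I(E)$ is an injective object of $\mathcal{C}^{\theta,\text{\rm ind}}_{\bar{\pp},\tt,n+2}$, which by the construction in Lemma~\ref{lem:lemma1} and Corollary~\ref{cor:cor2} is a submodule of a direct limit of modules of the form $\Gamma_\tt\pro^\gg_\pp(\cdots)$ whose $\bar\pp$-restrictions are injective in $\FF_{\bar\pp,\tt}$; more cleanly, I would argue that the exactness properties of $\Gamma_\tt\pro^\gg_\pp$ and $\Gamma_\tt\pro^\pp_{C(\tt)}$ used in Lemma~\ref{lem:lemma1} show $\Res_{\bar\pp}$ of any injective in $\mathcal{C}^{\theta,\ell}_{\bar{\pp},\tt,n+2}$ is $\FF_{\bar\pp,\tt}$-injective, and pass to the limit. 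Its $\bar\pp$-socle is finite-length because $I(E)$ has finite-dimensional $\tt$-weight spaces (Lemma~\ref{lem:lemma1}) and the socle sits in weights $\geq n+2$; more precisely, $\Soc_{\bar\pp}I(E)\supseteq\Soc_{\bar\pp}\overline{W}(E)$ and any additional simple $\bar\pp$-submodule $D$ of $I(E)$ must have $|D|<|E|$, since a submodule with $|D|\geq|E|$ would, via $\bar E\hookrightarrow\overline{W}(E)$ and essentiality, force a nonzero map detecting $L(E)$ outside $\overline{W}(E)$ — here I would use that the minimal $\kk$-type / lowest $\tt$-weight considerations of Proposition~\ref{prop:Zuck} pin down $L(E)$ as the unique simple with that lowest weight $|E|$ in the relevant block, so the only simple $\bar\pp$-submodules of $I(E)/\overline{W}(E)$ have strictly smaller $|D|$.

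Given these two facts, $\Res_{\bar\pp}\big(I(E)/\overline{W}(E)\big)$ is a quotient of the injective $\Res_{\bar\pp}I(E)$ by the injective $\Res_{\bar\pp}\overline{W}(E)$; since $\Res_{\bar\pp}\overline{W}(E)$ is a direct summand of $\Res_{\bar\pp}I(E)$ as a $\bar\pp$-module (an injective submodule of an injective is a direct summand), the quotient $\Res_{\bar\pp}\big(I(E)/\overline{W}(E)\big)$ is itself injective in $\FF_{\bar\pp,\tt}$, with socle of finite length. By Lemma~\ref{coVermafilt}, $I(E)/\overline{W}(E)$ admits a co-Verma filtration; and by the socle analysis above each successive quotient $\overline{W}(D)$ has $|D|<|E|$, using Corollary~\ref{technical} to split off the summands of the co-Verma filtration one at a time and Lemma~\ref{injectivecoVerma} to identify the $C(\tt)$-socles. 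The main obstacle is the last point: rigorously showing that no co-Verma layer $\overline{W}(D)$ with $|D|\geq|E|$, $D\not\simeq E$, can occur, i.e. that $\overline{W}(E)$ already captures the entire ``$|F|\geq|E|$ part'' of the injective $\Res_{\bar\pp}I(E)$. I expect to handle this by comparing $\Soc_{\bar\pp}\overline{W}(E)$ with $\Soc_{\bar\pp}I(E)$ directly: any simple $\bar\pp$-submodule $F$ of $I(E)$ generates, under $\Gamma^1$ or via the essential extension $L(E)\hookrightarrow I(E)$, a subquotient forcing $|F|\leq|E|$, and equality $|F|=|E|$ with $F\not\simeq$ the $C(\tt)$-socle type of $\overline{W}(E)$ would contradict the minimality of $|E|$ as the lowest $\tt$-weight of $L(E)$ together with the block structure from Proposition~\ref{prop:Zuck}~a).
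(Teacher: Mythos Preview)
Your approach has real gaps at the points you yourself flag as delicate.

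First, the claim that $I(E)$ has finite-dimensional $\tt$-weight spaces is false in general: Lemma~\ref{lem:lemma1} applies to $I^{\theta,\ell}_{n+2}$, not to the direct-limit object $I(E)\in\mathcal{C}^{\theta,\text{ind}}_{\bar\pp,\tt,n+2}$. Already $\overline{W}(E)\subset I(E)$ has infinite-dimensional $\tt$-weight spaces whenever $\dim\cc>1$, since $\bar E$ does. So you cannot deduce that $\Soc_{\bar\pp}I(E)$ has finite length from weight-space finiteness. Second, your argument that $\Res_{\bar\pp}I(E)$ is injective in $\FF_{\bar\pp,\tt}$ is incomplete: being a submodule of something whose restriction is injective does not help, and the alternative ``exactness properties'' route is not spelled out (injectivity in $\mathcal{C}^{\theta,\ell}_{\bar\pp,\tt,n+2}$ does not formally imply $\bar\pp$-injectivity in $\FF_{\bar\pp,\tt}$). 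Third, and most seriously, the bound $|D|<|E|$ on the co-Verma layers is exactly what has to be proved, and your sketch via $\Gamma^1$, essential extensions, or Proposition~\ref{prop:Zuck} does not pin it down: nothing you cite rules out a simple $\bar\pp$-submodule $D$ of $I(E)$ with $|D|>|E|$.

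The paper sidesteps all three issues by constructing an explicit injective module
\[
J(E):=\left[\Gamma_\tt\pro^{\gg}_{\pp}\Bigl(\bigl(\Gamma_\tt\pro^{\pp}_{C(\tt)}(\bar{E})\bigr)_{\geq n+2}\Bigr)\right]^\theta
\]
containing $I(E)$ as a direct summand. The truncated $\pp$-module $\bigl(\Gamma_\tt\pro^{\pp}_{C(\tt)}(\bar{E})\bigr)_{\geq n+2}$ visibly has a finite filtration with $\bar E$ as a submodule and successive quotients $\bar D$ for $|D|<|E|$; applying $\Gamma_\tt\pro^\gg_\pp$ gives $J(E)/\overline{W}(E)$ a co-Verma filtration with layers $\overline{W}(D)$, $|D|<|E|$. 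Then Corollary~\ref{technical} transfers this to the direct summand $I(E)$. The point is that the bound $|D|<|E|$ and the finiteness of the filtration come for free from the explicit shape of the truncated $\pp$-module, rather than from abstract injectivity and socle arguments on $I(E)$ itself.
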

        \begin{proof} Let
$J(E):=\left[\Gamma_\tt\pro^{\gg}_{\pp}\left(\left(\Gamma_\tt\pro^{\pp}_{C(\tt)}\left(\bar{E}\right)\right)_{\geq n+2}\right)\right]^\theta$.
The $\pp$-module $\Gamma_\tt\left(\left(\pro^{\pp}_{C(\tt)}(\bar E)\right)_{\geq n+2}\right)$ has a finite filtration with successive quotients $\bar{D}$ with $|D|\leq |E|$. Moreover,
the quotient $\Biggr(\Gamma_\tt\left(\left(\pro^{\pp}_{C(\tt)}(\bar E)\right)_{\geq n+2}\right)\Biggl)/\bar{E}$ has a filtration with successive quotients $\bar{D}$ for $|D|<|E|$. Therefore
$J(E)/\overline{W}(E)$ admits a co-Verma filtration with successive quotients $\overline{W}(D)$ with  $|D|<|E|$. 

Similarly as in the proof of Lemma \ref{lem:lemma1}, $I(E)$ is a direct summand of $J(E)$. Therefore, by Lemma \ref{technical}, 
$I(E)$ has a filtration as desired.
\end{proof}

\begin{corollary}
$I^\theta_{n+2}$ admits a co-Verma filtration.
\end{corollary}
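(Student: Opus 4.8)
The plan is to deduce this corollary from Lemma~\ref{lem:lemma2} by decomposing $I^\theta_{n+2}$ into its indecomposable injective summands. First I would recall that, up to isomorphism, $\mathcal{C}^{\theta,\ell}_{\bar\pp,\tt,n+2}$ has only finitely many simple objects $L(E_1),\dots,L(E_r)$, and that by the construction in the proof of Lemma~\ref{lem:lemma1} one has $I^{\theta,\ell}_{n+2}=\bigoplus_{j=1}^{r}I^\ell(E_j)$, where $I^\ell(E_j)$ is the injective hull of $L(E_j)$ in $\mathcal{C}^{\theta,\ell}_{\bar\pp,\tt,n+2}$. Since by Corollary~\ref{cor:cor2} we have $I^\theta_{n+2}=\lim_{\longrightarrow}I^{\theta,\ell}_{n+2}$, and since direct limits commute with finite direct sums, this yields $I^\theta_{n+2}=\bigoplus_{j=1}^{r}I(E_j)$ with $I(E_j):=\lim_{\longrightarrow}I^\ell(E_j)$ an injective hull of $L(E_j)$ in $\mathcal{C}^{\theta,\mathrm{ind}}_{\bar\pp,\tt,n+2}$.

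Next I would check that each $I(E_j)$ admits a co-Verma filtration. The module $\overline{W}(E_j)$ embeds canonically in $I(E_j)$ (its image being the submodule generated from the copy of $L(E_j)$ in the socle), and Lemma~\ref{lem:lemma2} asserts that the quotient $I(E_j)/\overline{W}(E_j)$ admits a co-Verma filtration, with successive quotients $\overline{W}(D)$ for $|D|<|E_j|$. Lifting this filtration through the projection $I(E_j)\twoheadrightarrow I(E_j)/\overline{W}(E_j)$ and prepending the term $\overline{W}(E_j)$ produces a finite filtration $0\subset\overline{W}(E_j)\subset\cdots\subset I(E_j)$ all of whose successive quotients are of the form $\overline{W}(F)$ for simple $C(\tt)$-modules $F$; in other words, $I(E_j)$ is co-Verma filtered.

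Finally, I would use that a finite direct sum of co-Verma filtered modules is again co-Verma filtered: one concatenates the filtrations of the summands in the obvious way, so that the successive quotients of the resulting filtration are precisely the co-Verma modules appearing in the individual filtrations. Applying this to $I^\theta_{n+2}=\bigoplus_{j=1}^{r}I(E_j)$ completes the argument. I do not expect a genuine obstacle here: the mathematical substance is entirely contained in Lemma~\ref{lem:lemma2}, and the only things to verify are the bookkeeping fact that $I^\theta_{n+2}$ splits as the direct sum of the $I(E_j)$ --- immediate from Corollary~\ref{cor:cor2} together with the explicit form of $I^{\theta,\ell}_{n+2}$ from Lemma~\ref{lem:lemma1} --- and the elementary stability of co-Verma filtered modules under finite direct sums.
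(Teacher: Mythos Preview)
Your proposal is correct and is exactly the intended argument: the paper states this corollary without proof, since it follows immediately from Lemma~\ref{lem:lemma2} together with the decomposition $I^\theta_{n+2}=\bigoplus_{j=1}^r I(E_j)$ implicit in Lemma~\ref{lem:lemma1} and Corollary~\ref{cor:cor2}. One small remark: your parenthetical description of the embedding $\overline{W}(E_j)\hookrightarrow I(E_j)$ as ``the submodule generated from the copy of $L(E_j)$ in the socle'' is not quite accurate (the $\gg$-submodule generated by $L(E_j)$ is just $L(E_j)$ itself), but this is immaterial --- the existence of the embedding is part of the content of Lemma~\ref{lem:lemma2}, and that is all you use.
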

\section{Adjointness of $\mathrm{B}_1$ and $\Gamma^1$}
\label{sec:conjugacy}	

In this section, $n$ is an arbitrary nonnegative integer.

	\begin{lemma}
		For any $\ell\in\mathbb{Z}_{>0}$, $\mathrm{B}^{\theta,\ell}$ is a right-exact functor from $(\gg,\tt)^\text{\rm fg}\text{\rm-mod}$ into $\mathcal{C}^{\theta,\ell}_{\bar{\pp},\tt,n+2}$ (in particular, $\mathrm{B}^{\theta,\ell} X$ has finite length for $X\in (\gg,\tt)^\text{\rm fg}\text{\rm-mod}$).
		\label{lem:lemma3}
	\end{lemma}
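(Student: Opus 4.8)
The plan is to unwind the definition $\mathrm{B}^{\theta,\ell}X = X/\bigl(\bigcap_{\varphi\in\Hom_\gg(X,I^{\theta,\ell}_{n+2})}\ker\varphi\bigr)$ and verify three things in turn: (1) the quotient actually lands in $\mathcal{C}^{\theta,\ell}_{\bar\pp,\tt,n+2}$; (2) it has finite length; and (3) the assignment $X\mapsto\mathrm{B}^{\theta,\ell}X$ is right-exact. For (1), observe first that $\mathrm{B}^{\theta,\ell}X$ embeds, by construction, into a product of copies of $I^{\theta,\ell}_{n+2}$. Since $X$ is finitely generated over $(\gg,\tt)$, the space $\Hom_\gg(X,I^{\theta,\ell}_{n+2})$ is controlled by where a finite generating set of $X$ can go inside $I^{\theta,\ell}_{n+2}$; because $I^{\theta,\ell}_{n+2}$ has finite-dimensional $\tt$-weight spaces (Lemma~\ref{lem:lemma1}) and its $\tt$-weights are bounded below by $n+2$, each generator has only finitely many possible images lying in a fixed finite-dimensional weight space. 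One should argue that $\mathrm{B}^{\theta,\ell}X$ embeds into a \emph{finite} direct sum of copies of $I^{\theta,\ell}_{n+2}$: concretely, pick finitely many $\varphi_1,\dots,\varphi_s$ whose kernels already have intersection equal to $\bigcap_\varphi\ker\varphi$ — this is possible because each $\ker\varphi$ is a $\gg$-submodule and the relevant quotients live in a finite-length category once one truncates weight by weight. Then $\mathrm{B}^{\theta,\ell}X\hookrightarrow (I^{\theta,\ell}_{n+2})^{\oplus s}$, and since $I^{\theta,\ell}_{n+2}\in\mathcal{C}^{\theta,\ell}_{\bar\pp,\tt,n+2}$, which is closed under submodules, finite direct sums, and quotients, we get $\mathrm{B}^{\theta,\ell}X\in\mathcal{C}^{\theta,\ell}_{\bar\pp,\tt,n+2}$. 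This simultaneously gives (2), finite length, since $I^{\theta,\ell}_{n+2}$ is a cogenerator of a finite-length category (Corollary~\ref{cor:cor2-0} identifies $\mathcal{C}^{\theta,\ell}_{\bar\pp,\tt,n+2}$ with finite-dimensional modules over the finite-dimensional algebra $A^{\theta,\ell}_{n+2}$).

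For right-exactness, the cleanest route is the standard adjunction argument. By the very definition, $\mathrm{B}^{\theta,\ell}X$ is the largest quotient of $X$ lying in $\mathcal{C}^{\theta,\ell}_{\bar\pp,\tt,n+2}$, so for any $Y\in\mathcal{C}^{\theta,\ell}_{\bar\pp,\tt,n+2}$ the canonical map $X\twoheadrightarrow\mathrm{B}^{\theta,\ell}X$ induces a natural isomorphism $\Hom_\gg(\mathrm{B}^{\theta,\ell}X,Y)\xrightarrow{\sim}\Hom_\gg(X,Y)$; here one uses that any $\gg$-map $X\to Y$ factors through $\mathrm{B}^{\theta,\ell}X$ because its image is a quotient of $X$ inside the category, hence contained in the image of $X$ in $\mathrm{B}^{\theta,\ell}X$ after identifying — more precisely, $\ker(X\to Y)\supseteq\bigcap_\varphi\ker\varphi$ since $Y$ cogenerated-into by $I^{\theta,\ell}_{n+2}$ forces any such map to kill that intersection. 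Thus $\mathrm{B}^{\theta,\ell}$ is left adjoint to the inclusion $\mathcal{C}^{\theta,\ell}_{\bar\pp,\tt,n+2}\hookrightarrow(\gg,\tt)^{\mathrm{fg}}\text{-mod}$, and a left adjoint is automatically right-exact (it preserves cokernels, being a left adjoint it preserves all colimits that exist, in particular cokernels of maps between finitely generated modules). One only needs to check that $\mathrm{B}^{\theta,\ell}$ indeed takes values in $(\gg,\tt)^{\mathrm{fg}}$-mod so the adjunction statement makes sense — but that follows from (2), finite length implies finitely generated.

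I expect the main obstacle to be step (1), specifically the passage from the \emph{a priori} infinite intersection $\bigcap_{\varphi}\ker\varphi$ to a \emph{finite} one, i.e., showing $\mathrm{B}^{\theta,\ell}X$ embeds into a finite sum $(I^{\theta,\ell}_{n+2})^{\oplus s}$ rather than merely an infinite product, which is what makes "finite length" and "lies in $\mathcal{C}^{\theta,\ell}_{\bar\pp,\tt,n+2}$" work at all. The key input is that $X$ is finitely generated over $(\gg,\tt)$ together with the finite-dimensionality of the truncated weight spaces of $I^{\theta,\ell}_{n+2}$: a $\gg$-homomorphism out of $X$ is determined by the images of finitely many generators, each generator spans a finite-dimensional $\tt$-stable subspace, and the target weight spaces into which those can map are finite-dimensional and bounded below, so the "space of possible homomorphisms" relevant to separating any fixed pair of elements of $X$ is finite-dimensional. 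Making this precise — perhaps by noting $\Hom_\gg(X,I^{\theta,\ell}_{n+2})$ is finite-dimensional outright, using that $I^{\theta,\ell}_{n+2}$ is the injective cogenerator of a finite-length category and $X$ is Noetherian — is where the real work lies; everything after that is formal.
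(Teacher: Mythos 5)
Your proposal matches the paper's proof: the key point is exactly that $\Hom_\gg\bigl(X,I^{\theta,\ell}_{n+2}\bigr)$ is finite-dimensional (since $X$ is generated over $\gg$ by finitely many $\tt$-weight vectors and $I^{\theta,\ell}_{n+2}$ has finite-dimensional $\tt$-weight spaces), whence $\mathrm{B}^{\theta,\ell}X$ embeds in a finite direct sum of copies of $I^{\theta,\ell}_{n+2}$ and therefore lies in $\mathcal{C}^{\theta,\ell}_{\bar\pp,\tt,n+2}$ with finite length, and right-exactness follows from the adjunction you state. One caveat: your mid-proof appeal to Noetherianity of $X$ to pick a finite separating family $\varphi_1,\dots,\varphi_s$ does not by itself work, since ACC gives maximal, not minimal, elements of a downward-directed family of submodules; the finite-dimensionality of $\Hom_\gg\bigl(X,I^{\theta,\ell}_{n+2}\bigr)$, which you do name at the end, is what actually makes the finite-intersection step go through (take $\varphi_1,\dots,\varphi_s$ to be a basis).
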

	
	\begin{proof}
		Fix $X\in(\gg,\tt)^{\text{fg}}\text{-mod}$.  Then $\Hom_\gg\left(X,I^{\theta,\ell}_{n+2}\right)$ is finite dimensional.  This follows from the fact that the $\tt$-weight spaces of $I^{\theta,\ell}$ are finite dimensional.  As a consequence, $\mathrm{B}^{\theta,\ell}X$ is isomorphic to a submodule of a finite direct sum of copies of $I^{\theta,\ell}_{n+2}$.  Since $I^{\theta,\ell}_{n+2}$ has finite length, $\mathrm{B}^{\theta,\ell} X$ also has finite length, and is an object of $\mathcal{C}^{\theta,\ell}_{\bar\pp,\tt,n+2}$.
		
		The fact that $\mathrm{B}^{\theta,\ell}$ is right-exact follows from the observation that $\mathrm{B}^{\theta,\ell}$ is left adjoint to the inclusion functor $\mathcal{C}^{\theta,\ell}_{\bar{\pp},\tt,n+2} \rightsquigarrow (\gg,\tt)^\text{\rm fg}\text{\rm-mod}$, i.e., 
		\begin{equation*}
			\Hom_{\mathcal{C}^{\theta,\ell}_{\bar{\pp},\tt,n+2}}\left(\mathrm{B}^{\theta,\ell} X,M\right)\simeq\Hom_{\gg,\tt}(X,M)
		\end{equation*}
		for any $X\in (\gg,\tt)^\text{\rm fg}\text{\rm-mod}$ and $M\in\mathcal{C}^{\theta,\ell}_{\bar{\pp},\tt,n+2}$.  Indeed, a left adjoint to a left-exact functor is right-exact.
	\end{proof}
	
	\begin{lemma}
		For any $X\in(\gg,\tt)^\text{\rm fg}\text{\rm-mod}$ and $j \in \mathbb{Z}_{\geq 0}$, we have
		\begin{equation*}
			\mathrm{B}_j^\theta X\simeq\lim_{\longleftarrow}\,\mathrm{B}_j^{\theta,\ell} X\,,
		\end{equation*}
		where $\mathrm{B}_j^{\theta,\ell}$ is the $j$-th left derived functor of $\mathrm{B}^{\theta,\ell}$.
		\label{lem:inverselimitbeta}
	\end{lemma}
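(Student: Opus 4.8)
The plan is to show that the inverse system $\{\mathrm{B}^{\theta,\ell}X\}_\ell$ is well-behaved enough that the inverse limit commutes with taking left derived functors. First I would fix a projective resolution $P_\bullet \twoheadrightarrow X$ in $(\gg,\tt)^{\text{fg}}\text{-mod}$ by finitely generated modules (possible since $X$ is finitely generated and the category has enough projectives of finite type, e.g.\ generalized Verma-type modules $U(\gg)\otimes_{U(\tt)}(\text{finite-dim'l }\tt\text{-module})$, suitably completed). Then by definition $\mathrm{B}^\theta_j X = H_j\big(\mathrm{B}^\theta P_\bullet\big) = H_j\big(\lim_{\longleftarrow}\mathrm{B}^{\theta,\ell}P_\bullet\big)$, while $\lim_{\longleftarrow}\mathrm{B}^{\theta,\ell}_j X = \lim_{\longleftarrow}H_j\big(\mathrm{B}^{\theta,\ell}P_\bullet\big)$. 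So the lemma amounts to interchanging $H_j$ with $\lim_{\longleftarrow}$ for the tower of complexes $\big(\mathrm{B}^{\theta,\ell}P_\bullet\big)_\ell$.

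The key step is to verify that this tower satisfies the Mittag-Leffler condition degreewise, so that the $\lim^1_{\longleftarrow}$ obstruction term in the relevant short exact sequence
\begin{equation*}
0 \to {\lim_{\longleftarrow}}^1\, H_{j+1}\big(\mathrm{B}^{\theta,\ell}P_\bullet\big) \to H_j\big({\lim_{\longleftarrow}}\,\mathrm{B}^{\theta,\ell}P_\bullet\big) \to {\lim_{\longleftarrow}}\, H_j\big(\mathrm{B}^{\theta,\ell}P_\bullet\big) \to 0
\end{equation*}
vanishes. For a fixed finitely generated $P$, each $\mathrm{B}^{\theta,\ell}P$ is a finite-length object of $\mathcal{C}^{\theta,\ell}_{\bar\pp,\tt,n+2}$ by Lemma~\ref{lem:lemma3}, and the transition maps $\mathrm{B}^{\theta,\ell}P \twoheadrightarrow \mathrm{B}^{\theta,\ell-1}P$ are surjective by the canonical surjectivity noted in Section~\ref{sec:main}; hence in each fixed homological degree the tower $\big(\mathrm{B}^{\theta,\ell}P_j\big)_\ell$ consists of finite-length modules with surjective transitions, and thus has surjective (hence Mittag-Leffler) images. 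Applying a functor — in our case the boundary maps of the complex — preserves neither surjectivity of transition maps automatically, so the argument I would give is the standard one: a tower of complexes with surjective transition maps in each degree has surjective transition maps on cycles is false in general, but one uses instead that finite-length modules satisfy the descending chain condition, so the images of the transition maps on $H_{j+1}$ stabilize, giving the Mittag-Leffler property for $\big(H_{j+1}(\mathrm{B}^{\theta,\ell}P_\bullet)\big)_\ell$ and hence $\lim^1_{\longleftarrow} = 0$.

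The main obstacle I anticipate is the justification that one may compute $\mathrm{B}^\theta_j$ using a single projective resolution of finitely generated modules and that $\mathrm{B}^\theta$ applied to such a resolution genuinely computes the derived functor — i.e.\ that finitely generated projectives are acyclic for $\mathrm{B}^\theta$, or more precisely that $(\gg,\tt)^{\text{fg}}\text{-mod}$ has enough projectives adapted to $\mathrm{B}^\theta$ and that $\mathrm{B}^\theta = \lim_{\longleftarrow}\mathrm{B}^{\theta,\ell}$ commutes with the relevant colimits/limits on projectives. Once the resolution is in place, the degreewise finite length of $\mathrm{B}^{\theta,\ell}P_\bullet$ (Lemma~\ref{lem:lemma3}) together with the DCC argument for Mittag-Leffler makes the vanishing of $\lim^1_{\longleftarrow}$ routine, and the lemma follows.
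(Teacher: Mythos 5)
Your proof is correct and follows exactly the paper's approach: take a projective resolution $P_\bullet$ of $X$ in $(\gg,\tt)^{\text{fg}}\text{-mod}$, observe $\mathrm{B}^\theta P_\bullet = \lim_{\longleftarrow}\mathrm{B}^{\theta,\ell}P_\bullet$, and invoke the Mittag-Leffler Principle, which applies because $\mathrm{B}^{\theta,\ell}P_q$ has finite length for each $\ell, q$ (Lemma~\ref{lem:lemma3}). The concern you raise at the end is a non-issue: $\mathrm{B}^\theta_j$ is \emph{defined} as the $j$-th left derived functor $L_j\mathrm{B}^\theta$, so it is computed by projective resolutions by definition, and your Mittag-Leffler argument (degreewise finite length, hence DCC, hence ML, hence $\lim^1=0$) is the correct and complete justification.
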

	
	\begin{proof}
		Let $P_\bullet$ be a projective resolution of $X$ in the category $(\gg,\tt)^\text{fg}\text{-mod}$.  By definition, $\mathrm{B}_\bullet^\theta X$ is the homology of the complex $\mathrm{B}^\theta P_\bullet$, and $\mathrm{B}_\bullet^{\theta,\ell}X$ is the homology of the complex $\mathrm{B}^{\theta,\ell}P_\bullet$.  Moreover,
		\begin{equation*}
			\mathrm{B}^\theta P_\bullet=\lim_{\longleftarrow}\,\mathrm{B}^{\theta,\ell}P_\bullet\,.
		\end{equation*}
	By Lemma~\ref{lem:lemma3}, for every $\ell$, $j$ and $q$, $\mathrm{B}_j^{\theta,\ell}X_q$ has finite length as a $\gg$-module.  So, Lemma~\ref{lem:inverselimitbeta} follows from the Mittag-Leffler Principle.
	\end{proof}
	
	If $\left\{A^\ell\right\}$ is an inverse system of objects from $\mathcal{C}^{\theta,\ell}_{\bar{\pp},\tt}$ for variable $\ell\to\infty$, we set
	\begin{equation*}
		\Hom_{\gg}^{\text{\rm cont}}\left(\lim_{\longleftarrow}\,A^\ell,M\right) := \lim_{\longrightarrow}\,\Hom_\gg \left(A^\ell,M\right)
	\end{equation*}
	for any $\gg$-module $M$.
	
	\begin{prop}
		Let $I$ be injective in $\mathcal{C}^{\theta,\text{\rm ind}}_{\bar\pp,\tt,n+2}$.  Then, for any finitely generated $(\gg,\tt)$-module $X$ and for any $j \in \mathbb{Z}_{\geq 0}$, 
		\begin{equation*}
			\Hom_\gg^{\text{\rm cont}}\left(\mathrm{B}^\theta_j X,I\right)\simeq\Ext_{\gg,\tt}^j\left(X,I\right)\,.
		\end{equation*}
		\label{prop:prop1}
	\end{prop}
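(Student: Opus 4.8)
The plan is to establish the isomorphism by reducing to the finite-level version and then passing to the limit. First I would record the key adjunction at finite level: by the proof of Lemma~\ref{lem:lemma3}, $\mathrm{B}^{\theta,\ell}$ is left adjoint to the inclusion $\mathcal{C}^{\theta,\ell}_{\bar{\pp},\tt,n+2}\hookrightarrow(\gg,\tt)^{\text{fg}}\text{-mod}$. Since the inclusion is exact and preserves injectives (an injective object of $\mathcal{C}^{\theta,\ell}_{\bar{\pp},\tt,n+2}$ remains injective as an abstract $(\gg,\tt)$-module in the relevant sense, because of the cogenerator description in Corollary~\ref{cor:cor2-0} together with the $\Gamma_\tt\pro$ construction of Lemma~\ref{lem:lemma1}), the left adjoint $\mathrm{B}^{\theta,\ell}$ sends projectives to projectives and the standard Grothendieck-spectral-sequence / derived-adjunction argument gives, for any injective $I^\ell\in\mathcal{C}^{\theta,\ell}_{\bar{\pp},\tt,n+2}$,
\begin{equation*}
\Hom_\gg\left(\mathrm{B}_j^{\theta,\ell}X,I^\ell\right)\simeq\Ext^j_{\gg,\tt}\left(X,I^\ell\right).
\end{equation*}
Concretely: choose a projective resolution $P_\bullet\to X$ in $(\gg,\tt)^{\text{fg}}\text{-mod}$; then $\Hom_\gg(\mathrm{B}^{\theta,\ell}P_\bullet, I^\ell)\simeq\Hom_{\gg,\tt}(P_\bullet,I^\ell)$ as complexes by the adjunction, and since $I^\ell$ is injective the right-hand complex computes $\Ext^j_{\gg,\tt}(X,I^\ell)$, while the left-hand complex computes $\Hom_\gg(\mathrm{B}_j^{\theta,\ell}X,I^\ell)$ because $\mathrm{B}^{\theta,\ell}P_\bullet$ is a complex of projectives in $\mathcal{C}^{\theta,\ell}_{\bar{\pp},\tt,n+2}$ with homology $\mathrm{B}_j^{\theta,\ell}X$ and $I^\ell$ is injective there, so no higher correction terms appear.

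Next I would pass to the limit in $\ell$. By Corollary~\ref{cor:cor2}, the injective $I\in\mathcal{C}^{\theta,\text{ind}}_{\bar\pp,\tt,n+2}$ is a direct limit $I=\lim\limits_{\longrightarrow}I^\ell$ of injectives $I^\ell\in\mathcal{C}^{\theta,\ell}_{\bar{\pp},\tt,n+2}$. By Lemma~\ref{lem:inverselimitbeta}, $\mathrm{B}_j^\theta X=\lim\limits_{\longleftarrow}\mathrm{B}_j^{\theta,\ell}X$. Therefore, unwinding the definition of $\Hom^{\text{cont}}_\gg$,
\begin{equation*}
\Hom_\gg^{\text{cont}}\left(\mathrm{B}_j^\theta X,I\right)=\lim_{\longrightarrow}\,\Hom_\gg\left(\mathrm{B}_j^{\theta,\ell}X,I^\ell\right)\simeq\lim_{\longrightarrow}\,\Ext^j_{\gg,\tt}\left(X,I^\ell\right).
\end{equation*}
It remains to identify $\lim\limits_{\longrightarrow}\Ext^j_{\gg,\tt}(X,I^\ell)$ with $\Ext^j_{\gg,\tt}(X,I)$. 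Since $X$ is finitely generated, one can take a projective resolution $P_\bullet$ by finitely generated $(\gg,\tt)$-modules; then each $\Hom_{\gg,\tt}(P_q,-)$ commutes with the filtered direct limit $\lim\limits_{\longrightarrow}I^\ell$, and because filtered colimits are exact, cohomology commutes with the colimit, giving $\Ext^j_{\gg,\tt}(X,I)\simeq\lim\limits_{\longrightarrow}\Ext^j_{\gg,\tt}(X,I^\ell)$. Chaining the isomorphisms yields the assertion.

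The main obstacle I anticipate is the compatibility of the finite-level isomorphisms with the transition maps in the two directed/inverse systems — i.e., checking that the isomorphism $\Hom_\gg(\mathrm{B}_j^{\theta,\ell}X,I^\ell)\simeq\Ext^j_{\gg,\tt}(X,I^\ell)$ is natural in $\ell$, so that applying $\lim\limits_{\longrightarrow}$ is legitimate. This requires that the surjections $\mathrm{B}^{\theta,\ell}\twoheadrightarrow\mathrm{B}^{\theta,\ell-1}$ (equivalently, the counit adjunction morphisms) are compatible with the inclusions $I^{\ell-1}\hookrightarrow I^\ell$; this should follow formally from the fact that both are induced by the universal properties (largest quotient in $\mathcal{C}^{\theta,\ell}_{\bar\pp,\tt,n+2}$ on one side, minimal injective cogenerator on the other), but it needs to be spelled out carefully. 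A secondary technical point is justifying that the inclusion $\mathcal{C}^{\theta,\ell}_{\bar{\pp},\tt,n+2}\hookrightarrow(\gg,\tt)^{\text{fg}}\text{-mod}$ preserves enough injectivity to run the derived-adjunction argument; here one uses that $I^{\theta,\ell}_{n+2}$ was constructed in Lemma~\ref{lem:lemma1} via $\Gamma_\tt\pro$-functors that manifestly preserve injectivity, together with Corollary~\ref{cor:cor2-0}.
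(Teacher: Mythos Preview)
Your proposal is correct and follows essentially the same approach as the paper: both establish the adjunction isomorphism $\Hom_\gg(\mathrm{B}^{\theta,\ell}P_\bullet,I^\ell)\simeq\Hom_{\gg,\tt}(P_\bullet,I^\ell)$ for a projective resolution $P_\bullet$, use injectivity of $I^\ell$ (in $\mathcal{C}^{\theta,\ell}_{\bar\pp,\tt,n+2}$) to identify the left cohomology with $\Hom_\gg(\mathrm{B}_j^{\theta,\ell}X,I^\ell)$, and then pass to the limit using finite generation of $P_\bullet$ and exactness of filtered colimits. The paper's write-up stays at the level of complexes a bit longer before taking homology, whereas you package the finite-$\ell$ isomorphism first; this is purely organizational.

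Two minor points. First, your worry about whether the inclusion $\mathcal{C}^{\theta,\ell}_{\bar\pp,\tt,n+2}\hookrightarrow(\gg,\tt)^{\text{fg}}\text{-mod}$ preserves injectives is unnecessary: the argument only uses that $I^\ell$ is injective \emph{inside} $\mathcal{C}^{\theta,\ell}_{\bar\pp,\tt,n+2}$, so that $\Hom_\gg(-,I^\ell)$ is exact there and hence commutes with taking homology of the complex $\mathrm{B}^{\theta,\ell}P_\bullet$. Likewise, projectivity of $\mathrm{B}^{\theta,\ell}P_q$ in $\mathcal{C}^{\theta,\ell}_{\bar\pp,\tt,n+2}$ is true but not needed. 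Second, in the step $\Hom^{\text{cont}}_\gg(\mathrm{B}_j^\theta X,I)=\lim\limits_{\longrightarrow}\Hom_\gg(\mathrm{B}_j^{\theta,\ell}X,I^\ell)$, the definition of $\Hom^{\text{cont}}$ gives the limit with target $I$, not $I^\ell$; the identification $\Hom_\gg(\mathrm{B}_j^{\theta,\ell}X,I)=\Hom_\gg(\mathrm{B}_j^{\theta,\ell}X,I^\ell)$ is the extra input (any map from an object of $\mathcal{C}^{\theta,\ell}$ into $I$ lands in $I^\ell$), and the paper uses exactly this identification at the complex level without further comment.
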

	
	\begin{proof}
		Let $P_\bullet$ be as in the proof of Lemma~\ref{lem:inverselimitbeta}.  Then
		\begin{equation*}
			\Hom_\gg^{\text{\rm cont}}\left(\mathrm{B}^\theta_j X,I\right)=\Hom_{\gg}^{\text{\rm cont}}\left(H_j(\mathrm{B}^\theta P_\bullet),I\right)=\Hom^{\text{cont}}_{\gg}\left(H_j\left(\lim_{\longleftarrow}\,\mathrm{B}^{\theta,\ell}P_\bullet\right),I\right)\,.
		\end{equation*}
		Since
		\begin{equation*}
			H_j\left(\lim_{\longleftarrow}\,\mathrm{B}^{\theta,\ell} P_\bullet\right)\simeq\lim_{\longleftarrow}\,H_j\left(\mathrm{B}^{\theta,\ell} P_\bullet\right)
		\end{equation*}
		by the Mittag-Leffler Principle, we have
		\begin{equation*}
			\Hom^{\text{cont}}_{\gg}\left(H_j\left(\lim_{\longleftarrow}\,\mathrm{B}^{\theta,\ell}P_\bullet\right),I\right)\simeq\Hom_\gg^{\text{\rm cont}}\left(\lim_{\longleftarrow}\,H_j\left(\mathrm{B}^{\theta,\ell} P_\bullet\right),I\right)=\lim_{\longrightarrow}\,\Hom_\gg\Big(H_j\left(\mathrm{B}^{\theta,\ell} P_\bullet\right),I\Big)\,.
		\end{equation*}
		Next, the injectivity of $I$ in $\mathcal{C}^{\theta,\text{\rm ind}}_{\bar\pp,\tt,n+2}$ implies
		\begin{equation*}
			\Hom_\gg\Big(H_j\left(\mathrm{B}^{\theta,\ell}  P_\bullet\right),I\Big)\simeq H_j\Big(\Hom_\gg\left(\mathrm{B}^{\theta,\ell}  P_\bullet,I\right)\Big)\,.
		\end{equation*}
		Consequently,
		\begin{equation*}
			\lim_{\longrightarrow}\,\Hom_\gg\Big(H_j\left(\mathrm{B}^{\theta,\ell}  P_\bullet\right),I\Big)\simeq \lim_{\longrightarrow}\,H_j\Big(\Hom_\gg\left(\mathrm{B}^{\theta,\ell}  P_\bullet,I\right)\Big)\,,
		\end{equation*}
		and since homology commutes with direct limits, 
		\begin{equation}
			\lim_{\longrightarrow}\,H_j\Big(\Hom_\gg\left(\mathrm{B}^{\theta,\ell}  P_\bullet,I\right)\Big)\simeq H_j\Big(\lim_{\longrightarrow}\,\Hom_\gg\left(\mathrm{B}^{\theta,\ell}  P_\bullet,I\right)\Big)\,.
			\label{eq:eqhomdirectlimits1}
		\end{equation}
		
		Recalling that $I=\displaystyle\lim_{\longrightarrow} I^\ell$, we notice that
                \begin{equation}
\Hom_\gg\left(\mathrm{B}^{\theta,\ell}  P_\bullet,I\right)=\Hom_\gg\left(\mathrm{B}^{\theta,\ell}  P_\bullet,I^\ell\right)=\Hom_\gg\left(P_\bullet,I^\ell\right)\,.\label{eq:eqhomdirectlimits2}
                  \end{equation}
Furthermore, since $P_\bullet$ is finitely generated,
               \begin{equation}
			\lim_{\longrightarrow}\,\Hom_\gg\left(P_\bullet,I^\ell\right) \simeq \Hom_\gg\left(P_\bullet,I\right)\,.\label{eq:eqhomdirectlimits3}
                  \end{equation}
Therefore, (\ref{eq:eqhomdirectlimits1}), (\ref{eq:eqhomdirectlimits2}), and (\ref{eq:eqhomdirectlimits3}) yield
		\begin{equation*}
			\lim_{\longrightarrow}\,H_j\Big(\Hom_\gg\left(\mathrm{B}^{\theta,\ell}  P_\bullet,I\right)\Big) \simeq  H_j\Big(\Hom_\gg\left(P_\bullet,I\right)\Big)\,.
		\end{equation*}
		Since $H_j\Big(\Hom_\gg\left(P_\bullet,I\right)\Big)=\Ext_{\gg,\tt}^j\left(X,I\right)$, we obtain
		\begin{equation*}
			\Hom_{\gg}^{\text{\rm cont}}\left(\mathrm{B}^\theta_j X,I\right)\simeq \Ext_{\gg,\tt}^j\left(X,I\right)
		\end{equation*}
		as desired.
	\end{proof}
	
	Recall that $I^\theta_{n+2}$ is an injective cogenerator of the category $\mathcal{C}^{\theta,\text{\rm ind}}_{\bar{\pp},\tt,n+2}$.
	\begin{prop}
		For any $X\in\mathcal{C}_{\kk,n}$ and any $j \geq 0$, $\Ext_{\gg,\tt}^j\left(X,I^\theta_{n+2}\right)$ is finite dimensional.
		\label{prop:prop2}
	\end{prop}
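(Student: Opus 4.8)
First I would reduce to the co-Verma case. By the corollary following Lemma~\refle{lem:lemma2}, the injective cogenerator $I^\theta_{n+2}$ admits a finite co-Verma filtration, whose successive quotients are modules $\overline{W}(E)=\Gamma_\tt\pro^\gg_\pp(\bar E)$ for simple $C(\tt)$-modules $E$. Since $\Ext^j_{\gg,\tt}(X,-)$ turns short exact sequences into long exact sequences, it suffices to prove that $\Ext^j_{\gg,\tt}\bigl(X,\overline{W}(E)\bigr)$ is finite dimensional for each such $E$ and each $j\ge 0$.

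Next I would establish a derived Frobenius reciprocity
\[
\Ext^j_{\gg,\tt}\bigl(X,\overline{W}(E)\bigr)\;\simeq\;\Ext^j_{\pp,\tt}\bigl(X,\bar E\bigr),
\]
where $\bar E$ is regarded as a $\pp$-module with $\nn\cdot\bar E=0$. Choose a resolution $P_\bullet\to X$ in $(\gg,\tt)\text{-mod}$ whose terms are direct sums of modules $U(\gg)\otimes_{U(\tt)}\CC_\mu$; these are projective in $(\gg,\tt)\text{-mod}$ (they represent the exact functor $M\mapsto M_\mu$). A Poincar\'e--Birkhoff--Witt computation shows that $\Res_\pp\bigl(U(\gg)\otimes_{U(\tt)}\CC_\mu\bigr)$ is a direct sum of modules $U(\pp)\otimes_{U(\tt)}\CC_\nu$, which are projective in $(\pp,\tt)\text{-mod}$; hence $\Res_\pp P_\bullet\to\Res_\pp X$ is a projective resolution there. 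Combining the coinduction adjunction $\Hom_\gg(P_q,\pro^\gg_\pp\bar E)=\Hom_\pp(P_q,\bar E)$ with the fact that a $\gg$-homomorphism out of the $\tt$-semisimple module $P_q$ into $\pro^\gg_\pp\bar E$ automatically has image in $\Gamma_\tt\pro^\gg_\pp\bar E$, one obtains $\Hom_{\gg,\tt}(P_q,\overline{W}(E))=\Hom_{\pp,\tt}(\Res_\pp P_q,\bar E)$ termwise, and passing to cohomology gives the isomorphism.

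Since $\nn$ acts by zero on $\bar E$, one has $\Hom_\pp(Y,\bar E)=\Hom_{C(\tt)}\bigl(H_0(\nn,Y),\bar E\bigr)$, and $H_0(\nn,-)$ carries $(\pp,\tt)$-projectives to $(C(\tt),\tt)$-projectives; the Grothendieck spectral sequence of this composition reads
\[
E_2^{p,q}=\Ext^p_{C(\tt),\tt}\bigl(H_q(\nn,X),\bar E\bigr)\ \Longrightarrow\ \Ext^{p+q}_{\pp,\tt}\bigl(X,\bar E\bigr).
\]
By part b) of Proposition~\refprop{prop:spec} each $H_q(\nn,X)$ is finite dimensional (only $n\ge 0$ is used), so everything comes down to showing that $\Ext^p_{C(\tt),\tt}(V,\bar E)$ is finite dimensional for every finite-dimensional $C(\tt)$-module $V$ and every $p$. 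Here I would use that $C(\tt)=\ss\oplus\cc$ is reductive: with $E=S\otimes\CC_\lambda$ as in Lemma~\refle{lem:lem_inj_hull}, one identifies $\bar E\simeq S\otimes\overline{\CC_\lambda}$, where $\overline{\CC_\lambda}$ is the injective hull of $\CC_\lambda$ among $h$-semisimple $\cc$-modules, and by Matlis duality $\overline{\CC_\lambda}$ is in fact injective in all of $(\cc,\tt)\text{-mod}$. Decomposing $V$ over $\ss$ as a finite sum $\bigoplus_\sigma\sigma\otimes M_\sigma$ of isotypic components (each $M_\sigma$ a finite-dimensional $\cc$-module) and using the K\"unneth isomorphism for $\ss\oplus\cc$ yields
\[
\Ext^p_{C(\tt),\tt}(V,\bar E)\;\simeq\;\bigoplus_\sigma\Ext^p_{\ss}(\sigma,S)\otimes\Hom_{\cc,\tt}(M_\sigma,\overline{\CC_\lambda}),
\]
the higher $\cc$-Ext's vanishing by injectivity of $\overline{\CC_\lambda}$; each summand is finite dimensional, since $\Ext^p_\ss(\sigma,S)\simeq H^p(\ss,\CC)\otimes\Hom_\ss(\sigma,S)$ ($\ss$ being semisimple) and $\Hom_{\cc,\tt}(M_\sigma,\overline{\CC_\lambda})$ is finite dimensional.

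Feeding this back, every $E_2^{p,q}$ is finite dimensional and the spectral sequence has only finitely many nonzero columns (as $H_q(\nn,X)=0$ for $q>\dim\nn$), so $\Ext^j_{\pp,\tt}(X,\bar E)$, and hence $\Ext^j_{\gg,\tt}(X,\overline{W}(E))$, is finite dimensional; climbing the co-Verma filtration of $I^\theta_{n+2}$ then gives the proposition. I expect the only genuinely non-formal point to be the finite-dimensionality of $\Ext^\bullet_{C(\tt),\tt}(V,\bar E)$ --- the control of the infinite-dimensional module $\bar E$, which is not injective over $C(\tt)$ --- where reductivity of $C(\tt)$ and the explicit form of $\bar E$ from Lemma~\refle{lem:lem_inj_hull} enter; the Frobenius reciprocity and the spectral sequence are standard homological bookkeeping with restriction functors.
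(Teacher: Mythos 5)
Your proof is correct and follows essentially the same strategy as the paper's: reduce to the co-Verma modules $\overline{W}(E)$ via the filtration of $I^\theta_{n+2}$ given by Lemma~\ref{lem:lemma2}, pass to $\Ext^j_{\pp,\tt}(X,\bar E)$ by Shapiro's lemma, and then invoke finite-dimensionality of $H_\bullet(\nn,X)$ from Proposition~\ref{prop:spec}~b).

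The one place where you diverge is the last step, and your treatment is in fact the more careful one. The paper passes directly from $\Ext^j_{\pp,\tt}(X,\bar E)$ to $\Hom_{C(\tt)}\bigl(H_j(\nn,X),\bar E\bigr)$, citing injectivity of $\bar E$ as a $C(\tt)$-module; but $\bar E$ is only known to be injective in the locally finite subcategory $\FF_{C(\tt),\tt}$, while $\Ext_{\pp,\tt}$ is relative $\Ext$ in the full $(\gg,\tt)$-category, so the collapse of the Hochschild--Serre spectral sequence to the $p=0$ column is not automatic. You instead keep the whole $E_2$-page $\Ext^p_{C(\tt),\tt}\bigl(H_q(\nn,X),\bar E\bigr)$ and prove termwise finite-dimensionality via the K\"unneth decomposition $C(\tt)=\ss\oplus\cc$, Whitehead-type reduction on the semisimple part $\ss$, and Matlis-injectivity of $\overline{\CC_\lambda}$ on the abelian part $\cc$. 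This correctly allows for possibly nonzero higher terms coming from $H^p(\ss,\CC)$ (indeed $H^3(\ss,\CC)\neq 0$), yet still yields finite-dimensionality. Note that for the consequences actually used later in the paper (Lemma~\ref{lem:lem12} with $j\le 2$), the paper's shortcut is harmless since $H^1(\ss,\CC)=H^2(\ss,\CC)=0$; but as a proof of the statement as stated (all $j$), your argument is the tighter one. One small technical point you rely on implicitly: the K\"unneth isomorphism for the relative pair $(\ss\oplus\cc,\tt)$ split into $(\ss,0)\times(\cc,\tt)$ deserves a citation or a line of justification, but this is standard.
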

	
	\begin{proof}
		By Lemma~\ref{lem:lemma2}, it suffices to show that $\dim\Ext_{\gg,\tt}^j\left(X,\overline{W}(E)\right)<\infty$ for any $E$ with $\overline{W}(E) \in \mathcal{C}^{\theta,\text{\rm ind}}_{\bar\pp,\tt,n+2}$.  By the Shapiro Lemma, 
		\begin{equation*}
			\Ext_{\gg,\tt}^j\left(X,\overline{W}(E)\right)=\Ext^i_{\gg,\tt}\Big(X,\Gamma_t\mathrm{pro}_{\pp}^\gg\bar{E}\Big) \simeq \Ext_{\pp,\tt}^j\left(X,\bar{E}\right)\,.
		\end{equation*}
		Since by the injectivity of $\bar{E}$ as a $C(\tt)$-module we have $\Ext_{\pp,\tt}^j\left(X,\bar{E}\right)\simeq \Hom_{C(\tt)}\Big(H_j(\nn,X),\bar{E}\Big)$, we conclude that
		\begin{equation}
			\Ext_{\gg,\tt}^i\left(X,\overline{W}(E)\right)\simeq \Hom_{C(\tt)}\Big(H_j(\nn,X),\bar{E}\Big)\,.
			\label{eq:eq23}
		\end{equation}		
		Now the statement follows from the finite-dimensionality of $H_j(\nn,X)$, see Proposition~\ref{prop:spec}, b).
	\end{proof}
	
	\begin{corollary}
		For $X\in\mathcal{C}_{\kk,n}$ and any $j\geq 0$, we have $\mathrm{B}_j X\in\mathcal{C}_{\bar\pp,t,n+2}$.
		\label{cor:cor1_of_prop2}
	\end{corollary}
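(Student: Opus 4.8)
The plan is to reduce the statement to two facts about the individual functors $\mathrm{B}_j^\theta$ and then assemble them, using that $\mathrm{B}_j X=\bigoplus_\theta\mathrm{B}_j^\theta X$ and that $\mathcal{C}_{\bar\pp,\tt,n+2}$ consists of \emph{finite-length} modules. The two facts are: (a) for every $\theta$ the module $\mathrm{B}_j^\theta X$ has finite length and in fact lies in $\mathcal{C}^{\theta,\ell}_{\bar\pp,\tt,n+2}$ for $\ell\gg 0$, hence in $\mathcal{C}^\theta_{\bar\pp,\tt,n+2}\subseteq\mathcal{C}_{\bar\pp,\tt,n+2}$; and (b) $\mathrm{B}_j^\theta X=0$ for all but finitely many $\theta$. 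Once (a) and (b) are in place, $\mathrm{B}_j X$ is a finite direct sum of objects of $\mathcal{C}_{\bar\pp,\tt,n+2}$ and we are done.

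For (a), I would first combine \refprop{prop:prop1} (with $I=I^\theta_{n+2}$, which is injective in $\mathcal{C}^{\theta,\text{\rm ind}}_{\bar\pp,\tt,n+2}$) with \refprop{prop:prop2} to conclude that $\Hom_\gg^{\text{\rm cont}}\big(\mathrm{B}_j^\theta X,I^\theta_{n+2}\big)\simeq\Ext_{\gg,\tt}^j\big(X,I^\theta_{n+2}\big)$ is finite dimensional. By \refle{lem:inverselimitbeta} and \refle{lem:lemma3}, $\mathrm{B}_j^\theta X=\lim_{\longleftarrow}\mathrm{B}_j^{\theta,\ell}X$ is an inverse limit of finite-length modules. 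Invoking the Mittag-Leffler Principle, I would replace $\mathrm{B}_j^{\theta,\ell}X$ by its stable image $B^{(\ell)}\subseteq\mathrm{B}_j^{\theta,\ell}X$ under the projection $\mathrm{B}_j^\theta X\to\mathrm{B}_j^{\theta,\ell}X$, so that $\mathrm{B}_j^\theta X=\lim_{\longleftarrow}B^{(\ell)}$ with all transition maps $B^{(\ell+1)}\twoheadrightarrow B^{(\ell)}$ surjective and all $B^{(\ell)}$ of finite and non-decreasing length. Since $B^{(\ell)}\in\mathcal{C}^{\theta,\ell}_{\bar\pp,\tt,n+2}$ and $I^{\theta,\ell}_{n+2}$ is an injective cogenerator of that finite-length category (\refle{lem:lemma1}), a short induction on length gives $\dim\Hom_\gg\big(B^{(\ell)},I^{\theta,\ell}_{n+2}\big)\geq\operatorname{length}B^{(\ell)}$; moreover, by extending homomorphisms off $B^{(\ell)}$ using the injectivity of $I^{\theta,\ell}_{n+2}$ in $\mathcal{C}^{\theta,\ell}_{\bar\pp,\tt,n+2}$ and using the surjectivity of $\mathrm{B}_j^\theta X\to B^{(\ell)}$, one checks that $\Hom_\gg\big(B^{(\ell)},I^{\theta,\ell}_{n+2}\big)$ embeds into $\Hom_\gg^{\text{\rm cont}}\big(\mathrm{B}_j^\theta X,I^\theta_{n+2}\big)$. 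Hence $\operatorname{length}B^{(\ell)}$ is bounded, the sequence stabilizes, the maps $B^{(\ell+1)}\to B^{(\ell)}$ become isomorphisms, and $\mathrm{B}_j^\theta X=B^{(\ell)}\in\mathcal{C}^{\theta,\ell}_{\bar\pp,\tt,n+2}$ for $\ell\gg 0$.

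For (b), I would decompose the finite-length module $X$ as $X=\bigoplus_{i=1}^m X^{\theta_i}$ over the finitely many generalized central characters occurring in $X$; each summand $X^{\theta_i}$ again lies in $\mathcal{C}_{\kk,n}$, and $\mathrm{B}_j^\theta X=\bigoplus_i\mathrm{B}_j^\theta\big(X^{\theta_i}\big)$ by additivity. It then suffices to show $\mathrm{B}_j^\theta\big(X^{\theta_i}\big)=0$ whenever $\theta\neq\theta_i$. By \refprop{prop:prop1}, $\Hom_\gg^{\text{\rm cont}}\big(\mathrm{B}_j^\theta(X^{\theta_i}),I^\theta_{n+2}\big)\simeq\Ext_{\gg,\tt}^j\big(X^{\theta_i},I^\theta_{n+2}\big)$, and the right-hand side vanishes by the standard fact that $\Ext$ between modules with different generalized central characters is zero (the two induced actions of $Z_{U(\gg)}$ on the $\Ext$-space coincide, yet one is annihilated by a power of $\ker\theta_i$ and the other by a power of $\ker\theta$). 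By part (a), $\mathrm{B}_j^\theta(X^{\theta_i})$ has finite length and lies in some $\mathcal{C}^{\theta,\ell_0}_{\bar\pp,\tt,n+2}$, so the continuous Hom above equals the ordinary Hom $\Hom_\gg\big(\mathrm{B}_j^\theta(X^{\theta_i}),I^{\theta,\ell_0}_{n+2}\big)$ into the injective cogenerator of the category containing the module; its vanishing forces $\mathrm{B}_j^\theta(X^{\theta_i})=0$.

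The step I expect to be the main obstacle is the inverse-limit bookkeeping inside (a): converting the single finite-dimensional space $\Hom_\gg^{\text{\rm cont}}\big(\mathrm{B}_j^\theta X,I^\theta_{n+2}\big)=\lim_{\longrightarrow}\Hom_\gg\big(\mathrm{B}_j^{\theta,\ell}X,I^{\theta,\ell}_{n+2}\big)$ into a \emph{uniform} bound on the lengths of the stable images $B^{(\ell)}$ is exactly where the Mittag-Leffler Principle and the cogenerator property of the $I^{\theta,\ell}_{n+2}$ must be combined carefully; the remaining steps are formal.
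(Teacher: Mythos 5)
Your argument is correct and follows essentially the same route as the paper: you combine Proposition \ref{prop:prop1}, Proposition \ref{prop:prop2}, Lemma \ref{lem:lemma3}, and Lemma \ref{lem:inverselimitbeta} to show that the inverse system $\left\{\mathrm{B}_j^{\theta,\ell}X\right\}$ stabilizes by dualizing into the injective cogenerator $I^\theta_{n+2}$ and using the finite dimensionality of the continuous Hom. The paper simply asserts that $\mathrm{B}_j^{\theta'}X\neq 0$ for only finitely many $\theta'$, whereas you supply a short argument via the central-character decomposition of $X$ and the vanishing of $\Ext$ across distinct generalized central characters --- a worthwhile detail to include, but not a different method.
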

	
	\begin{proof}
		By Lemma~\ref{lem:lemma3}, $\Hom_\gg\left(\mathrm{B}_j^{\theta,\ell}X,I^\theta_{n+2}\right)$ is finite dimensional for any $\ell$.  By Propositions~\ref{prop:prop1} and~\ref{prop:prop2}, $\Hom_{\gg}^{\text{\rm cont}}\left(\mathrm{B}_j^\theta X,I^\theta_{n+2}\right)=\lim\limits_{\longrightarrow}\,\Hom_\gg\left(\mathrm{B}_j^{\theta,\ell}X,I^\theta_{n+2}\right)$ is finite dimensional.  By the definition of the direct limit functor, we have, for sufficiently large $s$,
		\begin{equation*}
			\Hom_\gg\left(\mathrm{B}_j^{\theta,s}\,X,I^\theta_{n+2}\right)\simeq \Hom_{\gg}\left(\mathrm{B}_j^{\theta,s+1}X,I^\theta_{n+2}\right)
		\end{equation*}
		under the $\gg$-module map $\alpha^s:\mathrm{B}_j^{\theta,s+1}X\to\mathrm{B}_j^{\theta,s}X$.		
		Since $I^\theta_{n+2}$ is an injective cogenerator for the category $\mathcal{C}_{\bar\pp,\tt,n+2}^{\theta,\ind}$, we conclude that the map $\alpha^s$ is an isomorphism for sufficiently large $s$.
		
		By Lemma~\ref{lem:inverselimitbeta} and the definition of the inverse limit functor, we conclude that $\mathrm{B}_j^\theta X\simeq\mathrm{B}_j^{\theta,s}X$ for large enough $s$.  Since $\mathrm{B}^{\theta'}_jX\neq 0$ for only finitely many $\theta'$, $\mathrm{B}_jX$ has finite length, or equivalently $\mathrm{B}_jX\in\mathcal{C}_{\bar\pp,\tt,n+2}$.
	\end{proof}
	
	\begin{corollary}
		For any $X\in\mathcal{C}_{\kk,n}$ and any injective object $I\in\mathcal{C}^{\theta,\text{\rm ind}}_{\bar\pp,\tt,n+2}$, 
		\begin{equation*}
			\Hom_{\gg}^{\text{\rm cont}}\left(\mathrm{B}^\theta_j X,I\right)=\Hom_\gg\left(\mathrm{B}^\theta_j X,I\right)\,.
		\end{equation*}
		\label{cor:cor2_of_prop2}
	\end{corollary}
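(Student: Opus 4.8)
The plan is to deduce Corollary~\ref{cor:cor2_of_prop2} directly from Corollary~\ref{cor:cor1_of_prop2}. The point of the previous corollary is that for $X \in \mathcal{C}_{\kk,n}$, the module $\mathrm{B}_j^\theta X$ has finite length; more precisely, the proof shows that the inverse system $\left\{\mathrm{B}_j^{\theta,\ell}X\right\}$ stabilizes, so that $\mathrm{B}_j^\theta X \simeq \mathrm{B}_j^{\theta,s}X \in \mathcal{C}^{\theta,s}_{\bar\pp,\tt,n+2}$ for all sufficiently large $s$. I would begin by recording this: fix $s$ large enough that $\alpha^t : \mathrm{B}_j^{\theta,t+1}X \to \mathrm{B}_j^{\theta,t}X$ is an isomorphism for all $t \geq s$, so the canonical projection $\mathrm{B}_j^\theta X = \lim_{\longleftarrow}\mathrm{B}_j^{\theta,\ell}X \to \mathrm{B}_j^{\theta,s}X$ is an isomorphism and $\mathrm{B}_j^\theta X$ genuinely lies in $\mathcal{C}^{\theta,s}_{\bar\pp,\tt,n+2}$.

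Next I would unwind the definition of $\Hom_\gg^{\text{\rm cont}}$. Since $I \in \mathcal{C}^{\theta,\text{\rm ind}}_{\bar\pp,\tt,n+2}$ is injective, by Corollary~\ref{cor:cor2} (and the remark following it) we may write $I = \lim_{\longrightarrow} I^\ell$ with $I^\ell$ injective in $\mathcal{C}^{\theta,\ell}_{\bar\pp,\tt,n+2}$. By definition,
\begin{equation*}
	\Hom_\gg^{\text{\rm cont}}\left(\mathrm{B}^\theta_j X,I\right) = \lim_{\longrightarrow}\,\Hom_\gg\left(\mathrm{B}_j^{\theta,\ell}X,I\right)\,,
\end{equation*}
where the structure maps of the direct system are precomposition with the $\alpha^\ell$. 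For $\ell \geq s$ the map $\alpha^\ell$ is an isomorphism, hence the entire tail of this direct system consists of isomorphisms, and therefore
\begin{equation*}
	\Hom_\gg^{\text{\rm cont}}\left(\mathrm{B}^\theta_j X,I\right) \simeq \Hom_\gg\left(\mathrm{B}_j^{\theta,s}X,I\right) \simeq \Hom_\gg\left(\mathrm{B}^\theta_j X,I\right)\,,
\end{equation*}
the last isomorphism being the one induced by $\mathrm{B}_j^\theta X \xrightarrow{\sim} \mathrm{B}_j^{\theta,s}X$. One should check that the composite of these identifications is the natural map (the one induced by the projections $\mathrm{B}_j^\theta X \to \mathrm{B}_j^{\theta,\ell} X$), but this is automatic from the way the direct limit is formed.

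The only genuine content here is the stabilization of the inverse system, and that has already been established inside the proof of Corollary~\ref{cor:cor1_of_prop2}; the present corollary is essentially a bookkeeping consequence. So I do not expect a real obstacle: the main thing to be careful about is that the stabilization index $s$ can be chosen uniformly (it depends on $X$, $j$, $\theta$, but not on $\ell$ or on $I$, since $I^\ell$ only enters through $\Hom_\gg(\mathrm{B}_j^{\theta,\ell}X, I^\ell) = \Hom_\gg(\mathrm{B}_j^{\theta,\ell}X, I)$), which is exactly what the cogenerator argument in the previous proof provides.
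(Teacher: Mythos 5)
Your proof is correct and takes essentially the same approach as the paper: the paper's proof is the single sentence ``This follows from the isomorphism $\mathrm{B}_j^\theta X\simeq \mathrm{B}_j^{\theta,s}X$ for large enough $s$,'' and you are simply unwinding that sentence, making explicit that the stabilization of the inverse system $\{\mathrm{B}_j^{\theta,\ell}X\}$ established inside the proof of Corollary~\ref{cor:cor1_of_prop2} forces the direct system $\{\Hom_\gg(\mathrm{B}_j^{\theta,\ell}X,I)\}$ to stabilize as well.
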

	
	\begin{proof}
		This follows from the isomorphism $\mathrm{B}_j^\theta X\simeq \mathrm{B}_j^{\theta,s}X$ for large enough $s$.
	\end{proof}
	
	\begin{prop}
		For any $X\in\mathcal{C}_{\kk,n}$ and $M\in\mathcal{C}_{\bar\pp,\tt,n+2}$, 
		\begin{equation}
			\Hom_\gg\left(\mathrm{B}_1 X,M\right)\simeq \Hom_\gg\left(X,\Gamma^1 M\right)\,,
			\label{eq:prop3}
		\end{equation}
		so $\mathrm{B}_1:\mathcal{C}_{\kk,n}\to\mathcal{C}_{\bar\pp,\tt,n+2}$ and $\Gamma^1:\mathcal{C}_{\bar\pp,\tt,n+2}\to\mathcal{C}_{\kk,n}$ are adjoint functors.
		\label{prop:prop3}
	\end{prop}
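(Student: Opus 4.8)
The plan is to establish the adjunction \refeq{eq:prop3} by reducing it to the already-proved adjunction-type statement of Proposition~\ref{prop:prop1} at the level of $\Ext$, combined with the finiteness results of Corollaries~\ref{cor:cor1_of_prop2} and~\ref{cor:cor2_of_prop2}. First I would fix a central character $\theta$; since $M \in \mathcal{C}_{\bar\pp,\tt,n+2}$ decomposes as a finite direct sum of its generalized central-character components $M^\theta$, and likewise $\mathrm{B}_1 X = \bigoplus_\theta \mathrm{B}_1^\theta X$, it suffices to prove $\Hom_\gg(\mathrm{B}_1^\theta X, M^\theta) \simeq \Hom_\gg(X, (\Gamma^1 M)^\theta)$ for each $\theta$, using that $\Gamma^1$ preserves generalized central character. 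So we may assume $M = M^\theta \in \mathcal{C}^\theta_{\bar\pp,\tt,n+2}$, hence $M$ lies in $\mathcal{C}^{\theta,\ell}_{\bar\pp,\tt,n+2}$ for some $\ell$, and in particular $M$ embeds in a finite direct sum of copies of the injective cogenerator $I^{\theta,\ell}_{n+2}$, which in turn is a subobject of the injective object $I^\theta_{n+2} = \varinjlim I^{\theta,\ell}_{n+2}$ of $\mathcal{C}^{\theta,\mathrm{ind}}_{\bar\pp,\tt,n+2}$.

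Next I would identify the right-hand side of \refeq{eq:prop3}. By definition $\Gamma^1 M = R^1\Gamma M$, and $\Gamma$ is the functor of $\kk$-finite vectors; the point is that $\Hom_\gg(X, \Gamma^1 M) = \Ext^1_{\gg,\tt}(X, M)$ whenever $X \in \mathcal{C}_{\kk,n}$, because for a $(\gg,\kk)$-module $X$ every $\gg$-homomorphism into a $(\gg,\tt)$-module automatically lands in the $\kk$-finite part, so $\Hom_\gg(X, \Gamma M') = \Hom_\gg(X, M')$ for all $M'$, and deriving the equality of left-exact functors $M' \mapsto \Hom_\gg(X, \Gamma M')$ and $M' \mapsto \Hom_{\gg,\tt}(X, M')$ on an injective resolution of $M$ inside $\mathcal{C}^{\theta,\mathrm{ind}}_{\bar\pp,\tt,n+2}$ gives $\Hom_\gg(X, \Gamma^i M) \simeq \Ext^i_{\gg,\tt}(X, M)$. (Here one must check that $\Gamma^i$ computed in $(\gg,\tt)\text{-mod}$ agrees with the derived functor computed against the injectives of the truncated inductive category; this is where having enough injectives from Corollary~\ref{cor:cor2} is used, together with the fact that these injectives are $\Gamma$-acyclic, which follows from Proposition~\ref{prop:duality} and the co-Verma filtration analysis — specifically \refeq{eq:eq23} shows $\Ext^j_{\gg,\tt}(-,\overline W(E))$ is controlled by $\nn$-homology.) Thus $\Hom_\gg(X, \Gamma^1 M) \simeq \Ext^1_{\gg,\tt}(X, M)$.

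Now I would handle the left-hand side. By Corollary~\ref{cor:cor2_of_prop2}, $\Hom_\gg^{\mathrm{cont}}(\mathrm{B}_1^\theta X, I) = \Hom_\gg(\mathrm{B}_1^\theta X, I)$ for any injective $I \in \mathcal{C}^{\theta,\mathrm{ind}}_{\bar\pp,\tt,n+2}$, and by Corollary~\ref{cor:cor1_of_prop2}, $\mathrm{B}_1^\theta X \in \mathcal{C}^\theta_{\bar\pp,\tt,n+2}$ stabilizes as $\mathrm{B}_1^{\theta,s}X$ for large $s$. Combining with Proposition~\ref{prop:prop1} at $j=1$ gives $\Hom_\gg(\mathrm{B}_1^\theta X, I) \simeq \Ext^1_{\gg,\tt}(X, I)$ for every injective $I$ in $\mathcal{C}^{\theta,\mathrm{ind}}_{\bar\pp,\tt,n+2}$; moreover this isomorphism is natural in $I$. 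For a general $M = M^\theta \in \mathcal{C}^\theta_{\bar\pp,\tt,n+2}$ I would choose a two-step injective copresentation $0 \to M \to I^0 \to I^1$ with $I^0, I^1$ injective in $\mathcal{C}^{\theta,\mathrm{ind}}_{\bar\pp,\tt,n+2}$ (possible since the category has enough injectives and $M$ has finite length, so $M$ actually lies in some $\mathcal{C}^{\theta,\ell}_{\bar\pp,\tt,n+2}$ where injective hulls exist). Applying the contravariant left-exact functors $\Hom_\gg(\mathrm{B}_1^\theta X, -)$ and $\Ext^1_{\gg,\tt}(X,-)$ — wait, these are not both left-exact in the right variance, so more precisely I apply $\Hom_\gg(\mathrm{B}_1^\theta X,-)$, which is left-exact, to the exact sequence $0 \to M \to I^0 \to I^1$ and compare with the long exact $\Ext$ sequence for $\Ext^\bullet_{\gg,\tt}(X,-)$; since $\Hom_\gg(\mathrm{B}_1^\theta X, I^j) \simeq \Ext^1_{\gg,\tt}(X, I^j)$ functorially and $\Ext^1_{\gg,\tt}(X, I^j)$ is not in general zero (the $I^j$ are injective in the truncated category, not in $(\gg,\tt)\text{-mod}$), I cannot directly conclude by a five-lemma argument on the naive diagram.

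The main obstacle, therefore, is precisely this discrepancy: $I^0, I^1$ are injective in $\mathcal{C}^{\theta,\mathrm{ind}}_{\bar\pp,\tt,n+2}$ but not in $(\gg,\tt)\text{-mod}$, so $\Ext^1_{\gg,\tt}(X,I^j)$ need not vanish and a plain dimension-shift is unavailable. The way around it is to observe that Proposition~\ref{prop:prop1} already encodes the correct comparison: it identifies $\Ext^j_{\gg,\tt}(X,I)$ with $\Hom_\gg^{\mathrm{cont}}(\mathrm{B}_j^\theta X, I)$ for all $j$ simultaneously and all injective $I$ in the truncated category. Hence I would instead run the argument functorially in the second variable over the \emph{full} subcategory of injectives: the functor $M \mapsto \Hom_\gg(\mathrm{B}_1^\theta X, M)$ and the functor $M \mapsto \Ext^1_{\gg,\tt}(X, M)$ are both the first right-derived functor (computed using injective resolutions inside $\mathcal{C}^{\theta,\mathrm{ind}}_{\bar\pp,\tt,n+2}$) of the same left-exact functor $M \mapsto \Hom_\gg(X, \Gamma M)$ restricted to $\mathcal{C}^\theta_{\bar\pp,\tt,n+2}$: indeed $\Hom_\gg(\mathrm{B}_0^\theta X, M) = \Hom_\gg(\mathrm{B}^\theta X, M) = \Hom_\gg(X, M) = \Hom_\gg(X, \Gamma M)$ by the adjunction of $\mathrm{B}^{\theta,\ell}$ from Lemma~\ref{lem:lemma3} passed to the limit and Corollary~\ref{cor:cor2_of_prop2}, and $\mathrm{B}_1^\theta$ agrees with the first derived functor because $\mathrm{B}^\theta$ is right-exact with left derived functors $\mathrm{B}_j^\theta$ and Proposition~\ref{prop:prop1} shows $\Hom_\gg^{\mathrm{cont}}(\mathrm{B}_j^\theta X, -)$ is effaceable in positive degrees against injectives. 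By uniqueness of derived functors, $\Hom_\gg(\mathrm{B}_1^\theta X, M) \simeq \Ext^1_{\gg,\tt}(X, M) \simeq \Hom_\gg(X, \Gamma^1 M)$, naturally in both $X$ and $M$; summing over $\theta$ finishes the proof. I would present the final write-up in this derived-functor-uniqueness form rather than via an explicit copresentation, since that cleanly sidesteps the non-vanishing of $\Ext^1_{\gg,\tt}(X, I^j)$.
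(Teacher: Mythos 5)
Your reduction to a fixed central character, your identification of both sides on injectives of $\mathcal{C}^{\theta,\text{ind}}_{\bar\pp,\tt,n+2}$ via Proposition~\ref{prop:prop1} and Corollary~\ref{cor:cor2_of_prop2}, and your diagnosis of the obstacle (that $\Ext^1_{\gg,\tt}(X,I^j)$ does not vanish because the truncated-category injectives are not injective in $(\gg,\tt)\text{-mod}$) are all on target and match the paper's setup. But your attempted repair via ``derived-functor uniqueness'' does not work, and the final write-up has a genuine gap. The claimed common base functor $M\mapsto\Hom_\gg(X,\Gamma M)$ vanishes identically on $\mathcal{C}^\theta_{\bar\pp,\tt,n+2}$, since $\Gamma M=0$ for any such $M$; its right derived functors computed from injective resolutions inside the truncated category are therefore all zero, so neither $\Hom_\gg(\mathrm{B}^\theta_1X,-)$ nor $\Ext^1_{\gg,\tt}(X,-)$ can be identified with them. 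Relatedly, $\Ext^j_{\gg,\tt}(X,-)$ is by definition a derived functor computed in $(\gg,\tt)\text{-mod}$; it is \emph{not} the derived functor computed from resolutions inside $\mathcal{C}^{\theta,\text{ind}}_{\bar\pp,\tt,n+2}$, precisely because those injectives are not $\Hom_{\gg,\tt}(X,-)$-acyclic (indeed $\Ext^1_{\gg,\tt}(X,I)=\Hom_\gg(\mathrm{B}_1X,I)\neq 0$ in general), nor are they $\Gamma$-acyclic (since $\Gamma^1 I\neq 0$). So your intermediate claim $\Hom_\gg(X,\Gamma^1M)\simeq\Ext^1_{\gg,\tt}(X,M)$ for general $M$ is never actually established, and the chain $\Hom_\gg(\mathrm{B}^\theta_1X,M)\simeq\Ext^1_{\gg,\tt}(X,M)\simeq\Hom_\gg(X,\Gamma^1M)$ breaks at the middle link.

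The ingredient you are missing, and the one the paper exploits, is the exactness of $\Gamma^1$ on $\mathcal{C}_{\bar\pp,\tt,n+2}$ (Proposition~\ref{prop:Zuck}, a)). This makes $M\mapsto\Hom_\gg(X,\Gamma^1M)$ a \emph{left-exact} functor on $\mathcal{C}^\theta_{\bar\pp,\tt,n+2}$, and $M\mapsto\Hom_\gg(\mathrm{B}_1X,M)$ is of course left-exact as well. Two left-exact functors on a category with enough injectives that agree naturally on injectives agree on all objects: concretely, take a two-step injective copresentation $0\to M\to I_0\to I_1$ inside $\mathcal{C}^{\theta,\text{ind}}_{\bar\pp,\tt,n+2}$, apply $\Gamma^1$ to obtain the exact sequence $0\to\Gamma^1M\to\Gamma^1I_0\to\Gamma^1I_1$, then $\Hom_\gg(X,-)$, and match the resulting two-term complex with $\Hom_\gg(\mathrm{B}_1X,I_0)\to\Hom_\gg(\mathrm{B}_1X,I_1)$ via the functorial isomorphisms on injectives; both kernels compute the respective Hom spaces. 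This is exactly the commutative-diagram argument in the paper, and it completely sidesteps the need to compare with $\Ext^1_{\gg,\tt}(X,M)$ for non-injective $M$ -- the step that you had correctly flagged as problematic but did not successfully circumvent.
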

	
	\begin{proof}
		It clearly suffices to prove (\ref{eq:prop3}) for $X\in\mathcal{C}^\theta_{\kk,n}$ and $M\in \mathcal{C}^\theta_{\bar\pp,\tt,n+2}$.  By Proposition 7.9
in \cite{PZ5}, 
		\begin{equation}
			\Hom_\gg\left(X,\Gamma^1I\right) \simeq \Ext_{\gg,\tt}^1(X,I)
			\label{eq:prop3_1}
		\end{equation}
		for any injective object $I \in \mathcal{C}^{\theta,\text{\rm ind}}_{\bar\pp,\tt,n+2}$.
		By Proposition~\ref{prop:prop1} and Corollary~\ref{cor:cor2_of_prop2}, 
		\begin{equation}
			\Ext_{\gg,\tt}^1(X,I)\simeq \Hom_\gg\left(\mathrm{B}_1X,I\right)\,.
			\label{eq:prop3_2}
		\end{equation}
		
		Consider a part of an injective resolution of $M$ in $\mathcal{C}^{\theta,\text{\rm ind}}_{\bar\pp,\tt}$, $0\to M\to I_0 \to I_1$.  Since $\Gamma^1$ is an exact functor (Proposition~\ref{prop:Zuck}), the following sequence is also exact: $0\to\Gamma^1M \to \Gamma^1 I_0 \to \Gamma^1 I_1$.  Next, applying $\Hom_\gg(X,\bullet)$, we obtain an exact sequence
		\begin{equation*}
			0\to \Hom_\gg\left(X,\Gamma^1M\right) \to \Hom_\gg\left(X,\Gamma^1I_0\right)\to \Hom_\gg\left(X,\Gamma^1 I_1\right)\,.
		\end{equation*}
		By (\ref{eq:prop3_1}) and (\ref{eq:prop3_2}), we have a diagram
		\begin{equation*}
			\includegraphics[width=12cm]{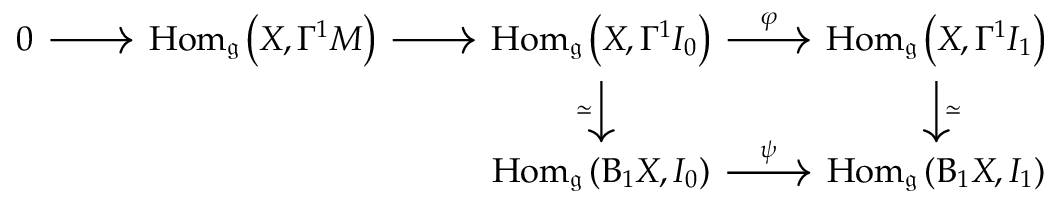}
		\end{equation*}
		which is commutative as the identifications
		\begin{equation*}
			\Hom_\gg\left(X,\Gamma^1I\right)\simeq \Ext^1_{\gg,\tt}(X,I)= \Hom_\gg\left(\mathrm{B}_1 X,I\right)
		\end{equation*}
		are functorial in $I$.  Since $\Hom_\gg\left(\mathrm{B}_1X,\bullet\right)$ is left-exact, we conclude that
		\begin{equation*}
			\ker\psi \simeq \Hom_\gg\left(\mathrm{B}_1 X,M\right)\,.
		\end{equation*}
		Finally, $\ker\varphi\simeq\ker\psi$, and we are done.
	\end{proof}
	\begin{corollary}
	      $\mathrm{B}_1:\mathcal{C}_{\kk,n}\to\mathcal{C}_{\bar\pp,\tt,n+2}$ is a right-exact functor.
	      \label{corr:corr11}
	\end{corollary}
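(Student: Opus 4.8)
The plan is short, since essentially all the work has been done. The Corollary is an immediate consequence of the adjunction established in \refprop{prop:prop3}: a functor between abelian categories which admits a right adjoint is additive and preserves all colimits, in particular preserves cokernels, hence is right-exact. Thus the first step is simply to observe that \refprop{prop:prop3} exhibits $\mathrm{B}_1:\mathcal{C}_{\kk,n}\to\mathcal{C}_{\bar\pp,\tt,n+2}$ as a left adjoint of $\Gamma^1:\mathcal{C}_{\bar\pp,\tt,n+2}\to\mathcal{C}_{\kk,n}$. Here one should note in passing that both $\mathcal{C}_{\kk,n}$ and $\mathcal{C}_{\bar\pp,\tt,n+2}$ are abelian (being Serre subcategories, closed under subquotients and extensions, of $\mathcal{C}_\kk$ and $\mathcal{C}_{\bar\pp,\tt}$ respectively), and that by \refcor{cor:cor1_of_prop2} the functor $\mathrm{B}_1$ genuinely takes values in $\mathcal{C}_{\bar\pp,\tt,n+2}$, so the statement is well posed; these are the only routine verifications.

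A second, equivalent route, which is perhaps the most natural in the present setting, is to combine \refprop{prop:prop3} with the exactness of $\Gamma^1$ recorded in \refprop{prop:Zuck}: a left adjoint of a left-exact functor is right-exact. This is exactly the homological principle already invoked in the last line of the proof of \refle{lem:lemma3} (there applied to $\mathrm{B}^{\theta,\ell}$ and the inclusion functor), so it would suffice to cite it again with $\mathrm{B}_1$ in place of $\mathrm{B}^{\theta,\ell}$ and $\Gamma^1$ in place of the inclusion.

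I do not anticipate any real obstacle here: the content of the Corollary lies entirely in \refprop{prop:prop3} and \refcor{cor:cor1_of_prop2}, and the remaining argument is the standard categorical fact that left adjoints are right-exact. If one wishes to spell out a concrete diagram chase instead of citing the abstract principle, the only mild point to mind is that $\mathcal{C}_{\kk,n}$ is a finite-length category, so only cokernels of morphisms (finite colimits) need be considered, and these are preserved by $\mathrm{B}_1$ because $\Hom_\gg(\mathrm{B}_1(\bullet),M)\simeq\Hom_\gg(\bullet,\Gamma^1 M)$ is left-exact in the first variable for every $M\in\mathcal{C}_{\bar\pp,\tt,n+2}$.
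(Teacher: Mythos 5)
Your proposal is correct and is essentially the paper's own argument: the paper's proof of Corollary~\ref{corr:corr11} is the single observation that $\mathrm{B}_1$ is a left adjoint (via Proposition~\ref{prop:prop3}) and hence right-exact. Your elaboration of why left adjoints are right-exact, and the alternative phrasing via exactness of $\Gamma^1$, are just unpacked versions of the same reasoning.
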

	\begin{proof}
	 This is a direct consequence of the fact that $\mathrm{B}_1$ is a left adjoint functor.
	\end{proof}

\section{$\mathrm{B}_1$ is a bijection on isomorphism classes of simple modules in $\mathcal{C}_{\kk,n}$ and $\mathcal{C}_{\bar\pp,\tt,n+2}$.}
\label{sec:bijection}

	As stated in Proposition~\ref{prop:Zuck}, for $n\geq\Lambda$ the functor $\Gamma^1$ induces a bijection between the sets of isomorphism classes of simple objects in the categories $\mathcal{C}_{\bar\pp,\tt,n+2}$ and $\mathcal{C}_{\kk,n}$.  The main result of this section is:
	
	\begin{prop}
		For $n\geq \Lambda$, the functor $\mathrm{B}_1$ induces a bijection of sets of isomorphism classes of simple objects of $\mathcal{C}_{\kk,n}$ and $\mathcal{C}_{\bar\pp,\tt,n+2}$, inverse to the bijection induced by $\Gamma^1$.
		\label{prop:prop11}
	\end{prop}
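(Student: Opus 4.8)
The plan is to reduce the statement to the assertion that the counit of the adjunction $(\mathrm{B}_1,\Gamma^1)$ is an isomorphism on simple objects, and then to verify this by a homological multiplicity computation. By \refprop{prop:Zuck} the functor $\Gamma^1$ sends simple objects to simple objects and induces a bijection on isomorphism classes of simples; write $\sigma$ for the inverse bijection, so $\Gamma^1\sigma(X)\simeq X$ for every simple $X\in\mathcal{C}_{\kk,n}$. Every simple object of $\mathcal{C}_{\kk,n}$ is isomorphic to $\Gamma^1 L$ for a simple $L\in\mathcal{C}_{\bar\pp,\tt,n+2}$, unique up to isomorphism, so it suffices to prove that the counit $\varepsilon_L\colon\mathrm{B}_1\Gamma^1 L\to L$ of the adjunction of \refprop{prop:prop3} is an isomorphism for every simple $L$: granting this, $\mathrm{B}_1$ carries the simple object $\Gamma^1 L$ to the simple object $L$, and the induced map on isomorphism classes is manifestly inverse to the one induced by $\Gamma^1$.

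The easy half is that $\mathrm{B}_1\Gamma^1 L$ has simple top $L$. For any simple $M\in\mathcal{C}_{\bar\pp,\tt,n+2}$, adjointness gives $\Hom_\gg(\mathrm{B}_1\Gamma^1 L,M)\simeq\Hom_\gg(\Gamma^1 L,\Gamma^1 M)$, and since $\Gamma^1 L$ and $\Gamma^1 M$ are simple this space is one-dimensional if $M\simeq L$ and zero otherwise, by Schur's lemma and the bijectivity of $\Gamma^1$ on simples. Hence $\Top(\mathrm{B}_1\Gamma^1 L)\simeq L$ with multiplicity one, the map $\varepsilon_L$ (corresponding to $\id_{\Gamma^1 L}$) is surjective, and $\ker\varepsilon_L=\rad(\mathrm{B}_1\Gamma^1 L)$. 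Applying the exact functor $\Gamma^1$ (\refprop{prop:Zuck}) to $0\to\ker\varepsilon_L\to\mathrm{B}_1\Gamma^1 L\to L\to 0$ and using the triangle identity (the composite $\Gamma^1 L\xrightarrow{\eta}\Gamma^1\mathrm{B}_1\Gamma^1 L\xrightarrow{\Gamma^1\varepsilon_L}\Gamma^1 L$ is the identity) gives $\Gamma^1\mathrm{B}_1\Gamma^1 L\simeq\Gamma^1 L\oplus\Gamma^1(\ker\varepsilon_L)$. Since $\Gamma^1$ is faithful (it is exact and kills no nonzero object, as it kills no simple object), $\varepsilon_L$ is an isomorphism if and only if $\ker\varepsilon_L=0$; thus the adjunction by itself is circular and new input is required.

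To supply it, I would compute composition multiplicities directly. Fix a central character $\theta$ and a simple $L(D)\in\mathcal{C}^\theta_{\bar\pp,\tt,n+2}$ with injective hull $I(D)$ in $\mathcal{C}^{\theta,\mathrm{ind}}_{\bar\pp,\tt,n+2}$. Then $[\mathrm{B}_1\Gamma^1 L:L(D)]=\dim\Hom_\gg(\mathrm{B}_1\Gamma^1 L,I(D))$, since $\Hom_\gg(-,I(D))$ is exact on finite-length modules and $\dim\Hom_\gg(S,I(D))=\delta_{S,L(D)}$ for simple $S$. By the adjunction extended to injective objects (as in the proof of \refprop{prop:prop3}, combining \refprop{prop:prop1}, \refprop{prop:prop2} and \refcor{cor:cor2_of_prop2}) together with Proposition 7.9 in \cite{PZ5}, $\Hom_\gg(\mathrm{B}_1\Gamma^1 L,I(D))\simeq\Hom_\gg(\Gamma^1 L,\Gamma^1 I(D))\simeq\Ext^1_{\gg,\tt}(\Gamma^1 L,I(D))$. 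Now I would feed in the co-Verma filtration of $I(D)$ provided by \refle{lem:lemma2}, whose subquotients are $\overline{W}(D')$ with $|D'|\le|D|$, and combine the resulting long exact sequences with the Shapiro-type identification $\Ext^\bullet_{\gg,\tt}\bigl(\Gamma^1 L,\overline{W}(E)\bigr)\simeq\Hom_{C(\tt)}\bigl(H_\bullet(\nn,\Gamma^1 L),\bar E\bigr)$ used in the proof of \refprop{prop:prop2}. This reduces everything to the $\nn$-homology $H_\bullet(\nn,\Gamma^1 L)$, which is finite dimensional by \refprop{prop:spec}, b) and, through the spectral sequence of \refprop{prop:spec}, a), is governed by $H_\bullet(\nn_\kk,\Gamma^1 L)$ and $\Lambda^\bullet(\nn\cap\kk^\perp)$. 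The hypothesis $n\ge\Lambda$ enters exactly here: it guarantees that the minimal $\kk$-type of $\Gamma^1 L$ is at least $n$, hence at least $\Lambda$, and the description of simple $(\gg,\kk)$-modules with such large minimal $\kk$-type from \cite{PZ5} then pins $H_0(\nn,\Gamma^1 L)$ and $H_1(\nn,\Gamma^1 L)$ down tightly enough to yield $\Ext^1_{\gg,\tt}(\Gamma^1 L,I(D))=0$ for $L(D)\not\simeq L$ and $\dim\Ext^1_{\gg,\tt}(\Gamma^1 L,I(D))=1$ for $L(D)\simeq L$. Comparing with the simple-top statement forces $\mathrm{B}_1\Gamma^1 L\simeq L$, so $\varepsilon_L$ is an isomorphism.

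I expect the last step to be the main obstacle: promoting the formal ``simple top'' statement to ``simple''. The abstract adjunction yields no more than $\Gamma^1\mathrm{B}_1\Gamma^1 L\simeq\Gamma^1 L\oplus\Gamma^1(\rad)$ and is therefore circular, so the real content lies in computing $\Ext^1_{\gg,\tt}\bigl(\Gamma^1 L,\overline{W}(D)\bigr)$ for all relevant $D$ in terms of the $\nn$-homology of $\Gamma^1 L$ and using the numerical bound $\Lambda$ together with the minimal-$\kk$-type estimates of \cite{PZ5} to annihilate every off-diagonal contribution. (One can equivalently package this via the Euler characteristic $\sum_j(-1)^j[\mathrm{B}_j\Gamma^1 L]$ and the Euler form against the injectives $I(D)$, which reduces the statement to $\chi_{\gg,\tt}(\Gamma^1 L,I(D))=\delta_{L(D),L}$, but the computational heart is the same.)
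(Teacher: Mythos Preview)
Your overall strategy is the paper's strategy, and your ``simple top'' paragraph is exactly right. The gap is in the last step: you claim that the spectral sequence plus the minimal $\kk$-type bound from \cite{PZ5} will force $\Ext^1_{\gg,\tt}\bigl(\Gamma^1L(E),I(D)\bigr)=0$ for \emph{every} $L(D)\not\simeq L(E)$. The paper's computation (\refle{lem:lem12}) only gives this for $|D|<|E|$ (and $\le 1$ for $|D|=|E|$). The reason is that the minimal $\kk$-type of $X(E)=\Gamma^1L(E)$ controls $H_1(\nn_\kk,X(E))$ and $H_0(\nn_\kk,X(E))\otimes(\nn\cap\kk^\perp)$ only in the $\tt$-weight range $n+2\le p\le |E|$; for $p>|E|$ both pieces of the $E^1$-term in \refprop{prop:spec}, a) can be nonzero, so nothing in the argument bounds $H_1(\nn,X(E))_{|D|}$ for $|D|>|E|$. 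Consequently the composition multiplicities $[\mathrm{B}_1X(E):L(D)]$ with $|D|>|E|$ are not killed by your computation, and the off-diagonal vanishing you need does not follow.

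The paper closes this gap by an indirect route that avoids ever computing $H_1(\nn,X(E))$ in high weights. It introduces $Y(E):=\Gamma^1M(E)$, uses \refle{lem:lem12} (only the range $|F|\le|E|$) together with the fact that $\TopM\,\mathrm{B}_1Y(E)\simeq L(E)$ to show $\mathrm{B}_1Y(E)\simeq M(E)$ (\refcor{cor:cor14}): any nontrivial kernel of the counit $\mathrm{B}_1Y(E)\to M(E)$ would, by non-splitting over the projective-like $M(E)$, have to contribute a constituent $L(F)$ with $|F|<|E|$, which \refle{lem:lem12} excludes. The right-exactness of $\mathrm{B}_1$ then gives a surjection $M(E)\twoheadrightarrow\mathrm{B}_1X(E)$, so $\Gamma^1\mathrm{B}_1X(E)$ is a quotient of $Y(E)$ and in particular is indecomposable with simple top $X(E)$ (\refcor{cor:cor15}). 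Now the single multiplicity statement $[\mathrm{B}_1X(E):L(E)]=1$ (again only $|F|=|E|$) gives $[\Gamma^1\mathrm{B}_1X(E):X(E)]=1$; the unit $X(E)\hookrightarrow\Gamma^1\mathrm{B}_1X(E)$ and the surjection to $X(E)$ then force $\Gamma^1\mathrm{B}_1X(E)\simeq X(E)$, whence $\mathrm{B}_1X(E)\simeq L(E)$ (\refcor{cor:cor16}). So the missing idea in your proposal is the detour through $Y(E)$ and $M(E)$, which substitutes the unavailable high-weight $\nn$-homology vanishing by the structural fact that $M(E)$ surjects onto $\mathrm{B}_1X(E)$.
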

	
	In the rest of the paper we assume that $n\geq \Lambda$.  We set $X(E):=\Gamma^1L(E)$ for $L(E)\in\mathcal{C}^\theta_{\bar\pp,\tt,n+2}$. Then $X(E)$ is a simple object of $\mathcal{C}_{\kk,n}$, and all simple objects in $\mathcal{C}_{\kk,n}$ are of this form for appropriate simple $C(\tt)$-modules.
	
	\begin{lemma}
		Let $X\in\mathcal{C}^\theta_{\kk,n}$ have the property that the isotypic component of the minimal $\kk$-type of $X$ is isomorphic to the isotypic component of the minimal $\kk$-type of $X(E)\in\mathcal{C}^\theta_{\kk,n}$.  Then
		\begin{equation*}
			\left[\mathrm{B}_1^\theta X:L(F)\right] = \left\{
				\begin{array}{ll}
					0	&	\text{for }|F|<|E|\,,
					\\
					\leq 1	&	\text{for }|F|=|E|\,.
				\end{array}
			\right.
		\end{equation*}
		\label{lem:lem12}
	\end{lemma}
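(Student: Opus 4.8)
The plan is to use the adjunction $\Hom_\gg(\mathrm{B}_1^\theta X, M)\simeq\Hom_\gg(X,\Gamma^1 M)$ from Proposition~\ref{prop:prop3} to control which simple modules $L(F)$ can occur in the top of $\mathrm{B}_1^\theta X$, and then to bound the multiplicity in the top, and finally upgrade this to a bound on the composition multiplicity for the relevant values of $|F|$. First I would observe that $[\mathrm{B}_1^\theta X:L(F)]$ for $|F|\le|E|$ is governed by the top of $\mathrm{B}_1^\theta X$: indeed, if $L(F)$ is a composition factor with $|F|$ minimal among all composition factors, then (since every simple object of $\mathcal{C}^\theta_{\bar\pp,\tt,n+2}$ that does not appear in the top sits below some simple of strictly smaller or equal $|\cdot|$ in a way compatible with the $\tt$-grading being bounded below by $n+2$) such $L(F)$ with minimal $|F|$ must actually appear in $\Top\,\mathrm{B}_1^\theta X$. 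So it suffices to show: for $|F|\le|E|$, $[\Top\,\mathrm{B}_1^\theta X:L(F)]$ is $0$ when $|F|<|E|$ and at most $1$ when $|F|=|E|$, and that no composition factor $L(F)$ with $|F|<|E|$ occurs at all.

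The key computation is $\Hom_\gg(\mathrm{B}_1^\theta X, L(F)^\vee)\simeq\Hom_\gg(X,\Gamma^1(L(F)^\vee))=\Hom_\gg(X,\Gamma^1 L(F))$, using $L(F)^\vee\simeq L(F)$ (from the discussion preceding Proposition~\ref{prop:Zuck}) and applying Proposition~\ref{prop:prop3} with $M=L(F)$. By Proposition~\ref{prop:Zuck}, $\Gamma^1 L(F)=X(F)$ is the simple $(\gg,\kk)$-module with minimal $\kk$-type $|F|-2$. Now $\Hom_\gg(X,X(F))\ne 0$ forces $X(F)$ to be a quotient of $X$; since $X$ has a minimal $\kk$-type, $X(F)$ being a simple quotient means the minimal $\kk$-type of $X(F)$, namely $|F|-2$, is $\ge$ the minimal $\kk$-type of $X$, which by hypothesis equals $|E|-2$. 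Hence $|F|\ge|E|$, which already kills all $L(F)$ with $|F|<|E|$ appearing in the top. When $|F|=|E|$, the module $X(F)$ has the same minimal $\kk$-type $|E|-2$ as $X$, and I would argue that a nonzero map $X\twoheadrightarrow X(F)$ is determined up to scalar by its restriction to the minimal $\kk$-type isotypic component — because that component generates $X(F)$ and the hypothesis pins down the isotypic component of $X$ there (it is isomorphic to that of $X(E)$, which is the isotypic component of a single copy of the relevant simple $C(\tt)$-module structure, so $\Hom_\kk$ of the minimal $\kk$-type into $X(F)$ is one-dimensional). This gives $\dim\Hom_\gg(X,X(F))\le 1$, hence $[\Top\,\mathrm{B}_1^\theta X:L(F)]\le 1$ for $|F|=|E|$.

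The remaining point is to rule out composition factors $L(F)$ with $|F|<|E|$ altogether (not just in the top). Here I would use the grading by $|\cdot|$: among all composition factors of $\mathrm{B}_1^\theta X$, pick one, $L(F_0)$, with $|F_0|$ minimal. I claim $L(F_0)$ must appear in the socle-complement-free part, i.e., occurs in the top. If not, there is a submodule $N\subsetneq\mathrm{B}_1^\theta X$ and a nonsplit extension realizing $L(F_0)$ below some $L(F_1)$ with $|F_1|$ among the composition factors, so $|F_1|\ge|F_0|$; but in $\mathcal{C}_{\bar\pp,\tt}$ a nonsplit self-extension-type occurrence forces — via the structure of $M(E)^\vee$ having socle $L(E)$ and the fact (preceding Proposition~\ref{prop:Zuck}) that co-standard objects $M(E)^\vee$ control extensions — that $L(F_0)$ with minimal $|F_0|$ is in fact a quotient. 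More cleanly, I would invoke that $\mathrm{B}_1^\theta X$, being a quotient of $\mathrm{B}^{\theta,\ell}X$ which is a submodule of a sum of copies of $I^{\theta,\ell}_{n+2}$, and using that $\Hom_\gg(\mathrm{B}_1^\theta X,L(F))\ne0$ iff $L(F)\in\Top$, reduce the composition-factor statement to the top statement by applying the argument to each layer of a $|\cdot|$-minimal filtration. The main obstacle I anticipate is exactly this last reduction: making rigorous that a composition factor of $|\cdot|$-minimal value necessarily shows up in the top. This should follow from a careful use of the adjunction together with exactness of $\Gamma^1$ and the bijection on simples: apply $\Gamma^1$ to $\mathrm{B}_1^\theta X$, use $\Gamma^1\mathrm{B}_1^\theta X\to X$ (the counit) and Proposition~\ref{prop:Zuck}(a) that $\Gamma^1$ sends simples to simples and is exact, so $[\mathrm{B}_1^\theta X:L(F)]=[\Gamma^1\mathrm{B}_1^\theta X:X(F)]$; then the minimal $\kk$-type of $\Gamma^1\mathrm{B}_1^\theta X$ must match that of $X$, forcing every composition factor $X(F)$ to have $|F|-2\ge|E|-2$, i.e., $|F|\ge|E|$, which immediately gives the vanishing for $|F|<|E|$, and the top argument above handles $|F|=|E|$.
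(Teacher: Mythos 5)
Your approach via the adjunction $\Hom_\gg(\mathrm{B}_1^\theta X,L(F))\simeq\Hom_\gg(X,X(F))$ controls only $\TopM\mathrm{B}_1^\theta X$, i.e., surjections onto simples — not arbitrary composition factors. You correctly identify this as the main obstacle, but both of your proposed fixes fail, and this is a genuine gap.

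Your first proposed fix is the claim that a composition factor $L(F_0)$ with $|F_0|$ minimal must appear in the top. This is false in $\mathcal{C}_{\bar\pp,\tt,n+2}$: the module $M(E)^\vee$ has socle $L(E)$, and $|E|$ is the strict minimum of $|F|$ over its composition factors (since $M(E)\simeq U(\nn)\otimes E$ has all $\tt$-weights $\geq|E|$, so does any composition factor, forcing $|F|\geq|E|$); yet when $M(E)$ is not irreducible, $L(E)$ is not in $\TopM\,M(E)^\vee$. Your second proposed fix says the minimal $\kk$-type of $\Gamma^1\mathrm{B}_1^\theta X$ ``must match'' that of $X$; but the minimal $\kk$-type of $\Gamma^1 M$ equals $\mu_0-2$ where $\mu_0$ is the lowest $\tt$-weight of $M$, so this is equivalent to asserting that every composition factor $L(F)$ of $\mathrm{B}_1^\theta X$ has $|F|\geq|E|$ — exactly what you are trying to prove. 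Neither the unit $X\to\Gamma^1\mathrm{B}_1^\theta X$ nor the existence of a surjection onto $X$ gives the needed lower bound on the minimal $\kk$-type of $\Gamma^1\mathrm{B}_1^\theta X$.

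The paper circumvents this entirely. It does not argue via the top at all: instead it computes $\left[\mathrm{B}_1^\theta X:L(F)\right]=\dim\Hom_\gg\left(\mathrm{B}_1^\theta X,I(F)\right)$ using the injective hull $I(F)$ of $L(F)$ — injectivity makes maps into $I(F)$ detect every occurrence of $L(F)$ as a composition factor, not just those in the top. Proposition~\ref{prop:prop1} (plus Corollary~\ref{cor:cor2_of_prop2}) converts this to $\dim\Ext^1_{\gg,\tt}\left(X,I(F)\right)$, and then the co-Verma filtration of $I(F)$ from Lemma~\ref{lem:lemma2} combined with \refeq{eq:eq23} reduces everything to a weight-space estimate on $H_1(\nn,X)$, which is obtained from Kostant's theorem for $\nn_\kk$ and the spectral sequence \refeq{spectral}. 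The $n\geq\Lambda$ hypothesis enters precisely in ruling out contributions from $H_0(\nn_\kk,X)\otimes(\nn\cap\kk^\perp)$. To repair your argument you would need to import exactly this injective-cogenerator/$\nn$-homology machinery; the adjunction-only route cannot see composition factors outside the top.
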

	
	\begin{proof}
		Observe first that the $\tt$-weights of $H_0\left(\nn_\kk,X\right)$ are less than or equal to $-\big(|E|-2\big)$.  Therefore the $\tt$-weights of $H_0\left(\nn_\kk,X\right)\otimes \left(\nn_\kk\cap \kk^\perp\right)$ are less than or equal to $-|E|+2+\lambda_1$.  This shows that $\Big(H_0\left(\nn_\kk,X\right)\otimes\left(\nn_\kk\cap\kk^\perp\right)\Big)_p=0$ for $n+2\leq p <|E|$.   Indeed, the inequalities $n+2\leq p<|E|$ and $p\leq -|E|+2+\lambda_1$ yield $|E|\leq \frac{\lambda_1}{2}-\frac{\lambda_2}{2}\leq \frac{\lambda_1}{2}$, which contradicts our assumption that $|E|\geq 2+\frac{\lambda_1+\lambda_2}{2}\geq 2+\frac{\lambda_1}{2}$.
		
		Next, note that Kostant's Theorem applied to $\kk$ gives
		\begin{equation*}
			H_1\left(\nn_\kk,X\right)_p=\left\{
				\begin{array}{ll}
					0	&	\text{for }p<|E|\,,
					\\
					\mathbb{C}	&	\text{for }p=|E|\,.
				\end{array}
			\right.
		\end{equation*}
		Therefore the spectral sequence (\ref{spectral}) of Proposition \ref{prop:spec} implies
\begin{equation*}
(E^1_1)_p=\begin{cases} 0\,\,\text{for}\,\,n+2\leq p<|E|\\ \mathbb C \,\,\text{for}\,\, p=|E|\end{cases}
,\,(E^1_0)_{|E|}=(E^1_2)_{|E|}=0\,.
\end{equation*}
Hence,
		\begin{equation*}
			H_1\left(\nn,X\right)_p=\left\{
				\begin{array}{ll}
					0	&	\text{for }n+2\leq p<|E|\,,
					\\
					\mathbb{C}	&	\text{for }p=|E|\,.
				\end{array}
			\right.
		\end{equation*}
		Furthermore, for any $D$
		\begin{equation*}
			\dim\Ext^1_{\gg,\tt}\left(X,\overline{W}(D)\right)=\dim\Hom_{C(\tt)}\big(H_1(\nn,X),\bar{D}\big) \leq \dim H_1(\nn,X)_{|D|}
		\end{equation*}
		by (\ref{eq:eq23}).  This yields
		\begin{equation*}
			\dim\Ext^1_{\gg,\tt}\left(X,\overline{W}(D)\right)= \left\{
				\begin{array}{ll}
					0	&	\text{for }|D|<|E|\,,
					\\
					\leq 1	&	\text{for }|D|=|E|\,.
				\end{array}
			\right.
		\end{equation*}
		Consequently, since the injective hull $I(F)$ of $L(F)$ in $\mathcal{C}^{\theta,\textrm{ind}}_{\bar\pp,\tt,n+2}$ admits a co-Verma filtration with successive quotients isomorphic to $\overline{W}(D)$ for $|D|\leq |F|$, and $\overline{W}(E)$ enters $I(E)$ with multiplicity $1$, we have
		\begin{equation*}
			\dim\Ext^1_{\gg,\tt}\big(X,I(F)\big)= \left\{
				\begin{array}{ll}
					0	&	\text{for }|F|<|E|\,,
					\\
					\leq 1	&	\text{for }|F|=|E|\,.
				\end{array}
			\right.
		\end{equation*}
		Finally, 
		\begin{equation*}
			\dim \Ext_{\gg,\tt}^1\big(X,I(F)\big)=\dim \Hom_\gg \left(\mathrm{B}_1^\theta X,I(F)\right)=\left[\mathrm{B}_1^\theta X:L(F)\right]\,,
		\end{equation*}
		and the lemma is proved.
	\end{proof}
	
	\begin{corollary}
		Set $Y(E):=\Gamma^1M(E)$ under the assumption that $M(E)\in\mathcal{C}^\theta_{\bar\pp,\tt,n+2}$.  Then
		\begin{equation*}
			\left[\mathrm{B}_1Y(E):L(F)\right]=\left[\mathrm{B}_1X(E):L(F)\right]=\left\{
				\begin{array}{ll}
					0	&	\text{for }|F|<|E|\,,
					\\
					1	&	\text{for }F\simeq E\,.
				\end{array}
			\right.
		\end{equation*}
		\label{cor:cor13}
	\end{corollary}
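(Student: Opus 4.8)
The plan is to combine Lemma \ref{lem:lem12} with Proposition \ref{prop:Zuck}, b) to pin down the two multiplicities. First I would observe that both $X(E) = \Gamma^1 L(E)$ and $Y(E) = \Gamma^1 M(E)$ lie in $\mathcal{C}^\theta_{\kk,n}$, and that by Proposition \ref{prop:Zuck}, b) they have isomorphic isotypic components of their (common) minimal $\kk$-type, namely $|E|-2$. Hence both modules satisfy the hypothesis of Lemma \ref{lem:lem12}, so we already get $[\mathrm{B}_1^\theta X(E):L(F)] = [\mathrm{B}_1^\theta Y(E):L(F)] = 0$ for $|F| < |E|$, and $[\mathrm{B}_1^\theta X(E):L(F)], [\mathrm{B}_1^\theta Y(E):L(F)] \leq 1$ for $|F| = |E|$. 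Since $\mathrm{B}_j^{\theta'}$ vanishes unless $\theta' = \theta$ on these modules, the same holds for $\mathrm{B}_1$ in place of $\mathrm{B}_1^\theta$. It remains to show the multiplicity is exactly $1$ when $F \simeq E$.

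The key step is to compute $\dim \Ext^1_{\gg,\tt}\big(X, \overline{W}(E)\big)$ exactly (not just $\leq 1$) for $X = X(E)$ or $X = Y(E)$, using the refinement already established inside the proof of Lemma \ref{lem:lem12}, namely the identification $\Ext^1_{\gg,\tt}(X,\overline{W}(D)) \simeq \Hom_{C(\tt)}(H_1(\nn,X),\bar D)$ from \eqref{eq:eq23}, together with $H_1(\nn,X)_{|E|} = \mathbb{C}$. So it suffices to check that this one-dimensional space contributes a nonzero $\Hom_{C(\tt)}$ to $\bar E$, equivalently that the $C(\tt)$-module $H_1(\nn,X)_{|E|}$ is isomorphic (as a $C(\tt)$-module, using the simple top $C(\tt)$-constituent picked out by the spectral sequence) to $E$. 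For $X(E) = \Gamma^1 L(E)$, I would invoke Kostant's theorem applied to $\kk$ together with the spectral sequence \eqref{spectral}: the degree-one term $(E^1_1)_{|E|} = H_1(\nn_\kk, X)_{|E|} \otimes \Lambda^0(\nn\cap\kk^\perp)$ carries exactly the $C(\tt)$-type $E$, because the minimal $\kk$-type $|E|-2$ of $X(E)$ has isotypic component isomorphic to $E$ as a $C(\tt)$-module (this is precisely the content of Proposition \ref{prop:Zuck}, a) and b): the lowest weight space of $V_\kk(|E|-2)$ inside $X(E)$ is $E$ as a $C(\tt)$-module). Hence $\Ext^1_{\gg,\tt}(X(E),\overline{W}(E)) \simeq \Hom_{C(\tt)}(E,\bar E) \neq 0$, which forces the multiplicity to be exactly $1$. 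Then, since $I(E)$ contains $\overline{W}(E)$ as a subquotient with multiplicity exactly $1$ in its co-Verma filtration and no $\overline{W}(D)$ with $|D| = |E|$, $D \not\simeq E$ (the other co-Verma subquotients of $I(E)$ have $|D| < |E|$ by Lemma \ref{lem:lemma2}), we get $\dim\Ext^1_{\gg,\tt}(X(E), I(E)) = 1$, i.e. $[\mathrm{B}_1 X(E):L(E)] = 1$.

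For $Y(E) = \Gamma^1 M(E)$ the same argument applies verbatim: Proposition \ref{prop:Zuck}, b) says the isotypic component of the minimal $\kk$-type $|E|-2$ of $\Gamma^1 M(E)$ is isomorphic to that of $\Gamma^1 L(E)$, hence is $E$ as a $C(\tt)$-module, and Kostant's theorem for $\kk$ again gives $H_1(\nn_\kk, Y(E))_{|E|} = \mathbb{C}$ with $C(\tt)$-type $E$; the spectral sequence degeneration in the relevant degree was already checked in Lemma \ref{lem:lem12}. So $\Ext^1_{\gg,\tt}(Y(E),\overline{W}(E)) \simeq \Hom_{C(\tt)}(E,\bar E) \neq 0$ and $[\mathrm{B}_1 Y(E):L(E)] = 1$ as well. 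The main obstacle I anticipate is the bookkeeping needed to justify that the nonzero $\Hom_{C(\tt)}$-contribution from $H_1(\nn,X)_{|E|}$ survives — that is, that the relevant differentials in the spectral sequence \eqref{spectral} entering and leaving $(E^1_1)_{|E|}$ vanish, so that this term actually contributes to $H_1(\nn,X)_{|E|}$ rather than being killed; but this is exactly the vanishing $(E^1_0)_{|E|} = (E^1_2)_{|E|} = 0$ already established in the proof of Lemma \ref{lem:lem12}, so in the end the corollary follows by assembling statements already proved.
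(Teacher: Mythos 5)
Your treatment of the case $|F|<|E|$ matches the paper exactly: apply Lemma~\ref{lem:lem12} to both $X(E)$ and $Y(E)$, using Proposition~\ref{prop:Zuck}, b) to verify the hypothesis for $Y(E)$. For the case $F\simeq E$, however, you take a genuinely different route, and it has a gap. The paper obtains $\left[\mathrm{B}_1 X(E):L(E)\right]\geq 1$ in one line from the adjunction of Proposition~\ref{prop:prop3}: $\Hom_\gg\left(\mathrm{B}_1 X(E),L(E)\right)\simeq\Hom_\gg\left(X(E),\Gamma^1 L(E)\right)=\Hom_\gg\left(X(E),X(E)\right)\neq 0$, and then the surjection $\mathrm{B}_1 Y(E)\twoheadrightarrow \mathrm{B}_1 X(E)$ (from right-exactness of $\mathrm{B}_1$ applied to $Y(E)\twoheadrightarrow X(E)$) gives $\left[\mathrm{B}_1 Y(E):L(E)\right]\geq 1$ as well. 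Combined with the upper bound $\leq 1$ from Lemma~\ref{lem:lem12}, this finishes the proof without ever needing to identify the $C(\tt)$-module structure of any homology group.

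You instead try to pin down $\dim\Ext^1_{\gg,\tt}\left(X(E),\overline{W}(E)\right)$ directly by asserting that $H_1\left(\nn,X(E)\right)_{|E|}$ is isomorphic to $E$ as a $C(\tt)$-module, so that $\Hom_{C(\tt)}\left(H_1(\nn,X(E)),\bar E\right)\neq 0$. This is the step I don't see justified. Proposition~\ref{prop:Zuck}, a) and b) tell you the minimal $\kk$-type is $|E|-2$ and that the isotypic components of that $\kk$-type in $\Gamma^1 L(E)$ and $\Gamma^1 M(E)$ agree; they do not identify that multiplicity space as the $C(\tt)$-module $E$, nor do they track how the $C(\tt)$-action passes through the spectral sequence~\eqref{spectral} to $H_1(\nn,X)_{|E|}$. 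Note that Lemma~\ref{lem:lem12} and its proof only control $\tt$-weights (dimensions of weight spaces), never the $C(\tt)$-isomorphism type, and indeed for $\dim E>1$ the precise $C(\tt)$-module structure of the $\kk$-highest-weight space sitting in $\tt$-weight $|E|$ is exactly the sort of fine structural information the adjunction argument lets you bypass. So the plan is not wrong in spirit, but as written it leans on an unproved (and nontrivial) structural claim; you should either supply an argument identifying that $C(\tt)$-module, or, more economically, replace this step with the adjunction computation above.
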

	
	\begin{proof}
		For $|F|<|E|$, the statement follows directly from Lemma~\ref{lem:lem12} as the isotypic components of the minimal $\kk$-types of $Y(E)$ and $X(E)$ are isomorphic by Proposition~\ref{prop:Zuck}, b).  If $F\simeq E$, then 
		\begin{equation*}
		\Hom_\gg\left(\mathrm{B}_1X(E),L(E)\right)\simeq \Hom_\gg\big(X(E),X(E)\big)\,,
		\end{equation*} 
		so the identity homomorphism $X(E)\to X(E)$ provides a nonzero homomorphism $\mathrm{B}_1X(E)\to L(E)$.  Since $\Gamma^1$ is exact and $\mathrm{B}_1$ is right-exact, this homomorphism is in fact a composition of surjections $\mathrm{B}_1Y(E)\to \mathrm{B}_1X(E)\to L(E)$, in particular, $\left[\mathrm{B}_1Y(E):L(E)\right]\geq 1$ and $\left[\mathrm{B}_1X(E):L(E)\right]\geq 1$.  On the other hand, $\left[\mathrm{B}_1Y(E):L(E)\right]\leq 1$ by Lemma~\ref{lem:lem12}; hence,
		\begin{equation*}
			\left[\mathrm{B}_1Y(E):L(E)\right]=\left[\mathrm{B}_1X(E):L(E)\right]=1\,.
		\end{equation*}
	\end{proof}
	
	\begin{corollary}
		$\mathrm{B}_1Y(E)\simeq M(E)$.
		\label{cor:cor14}
	\end{corollary}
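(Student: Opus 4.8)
The plan is to show that the right-exact functor $\mathrm{B}_1$ applied to the generalized Verma module $Y(E)=\Gamma^1 M(E)$ reproduces $M(E)$, by combining the adjunction of Proposition~\ref{prop:prop3} with the socle/multiplicity information from Corollary~\ref{cor:cor13}. First I would recall that $M(E)^\vee$ is the injective hull of $L(E)$ in $\mathcal{C}_{\bar\pp,\tt,n+2}$ with $\Soc M(E)^\vee = L(E)$, so by duality $M(E)$ has simple top $L(E)$ and, crucially, $M(E)$ is characterized among objects of $\mathcal{C}^\theta_{\bar\pp,\tt,n+2}$ as the projective cover of $L(E)$ — equivalently, $M(E)$ is the largest quotient of the "abstract" generalized Verma that is $\bar\pp$-locally finite with the given central character and Jordan-block bound. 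The key computation is then that the composition factors $[\mathrm{B}_1 Y(E):L(F)]$ from Corollary~\ref{cor:cor13} match exactly the composition factors of $M(E)$: indeed $[M(E):L(F)]=0$ for $|F|<|E|$ (since $M(E)$ is generated in $\tt$-degree $\leq -|E|+2$ — more precisely its $\nn$-generators sit below $|E|$) and $[M(E):L(E)]=1$.

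The main argument proceeds in three steps. First, produce a surjection $\mathrm{B}_1 Y(E)\twoheadrightarrow M(E)$: apply $\mathrm{B}_1$ to the surjection $M(E)\twoheadrightarrow L(E)$, i.e. to $Y(E)=\Gamma^1M(E)\twoheadrightarrow \Gamma^1L(E)=X(E)$ which is surjective since $\Gamma^1$ is exact (Proposition~\ref{prop:Zuck}), giving $\mathrm{B}_1 Y(E)\twoheadrightarrow \mathrm{B}_1 X(E)$; and by the adjunction $\Hom_\gg(\mathrm{B}_1 Y(E), M(E))\simeq \Hom_\gg(Y(E),\Gamma^1 M(E))=\Hom_\gg(Y(E),Y(E))$, so the identity on $Y(E)$ yields a canonical map $\mathrm{B}_1 Y(E)\to M(E)$, which is surjective because its image contains the top $L(E)$ of $M(E)$ and a right-exact functor sends top to top (or: by the factorization through $\mathrm{B}_1 X(E)\twoheadrightarrow L(E)$ one sees the cokernel has trivial top). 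Second, observe that both sides have the same finite length: by Corollary~\ref{cor:cor13} the composition multiplicities of $\mathrm{B}_1 Y(E)$ satisfy $[\mathrm{B}_1Y(E):L(F)]=0$ for $|F|<|E|$ and $=1$ for $F\simeq E$; to conclude we need the multiplicities for $|F|>|E|$ to agree with those of $M(E)$, and this follows from the adjunction applied in the form $[\mathrm{B}_1Y(E):L(F)]=\dim\Hom_\gg(\mathrm{B}_1Y(E),I(F))=\dim\Ext^1_{\gg,\tt}(Y(E),I(F))$ together with the analogous identity $[M(E):L(F)]$ computed from $\Hom_\gg(M(E),I(F)^{?})$ — or, more directly, from the BGG-type reciprocity $[M(E):L(F)] = [I(F):\overline W(E)]$ and the co-Verma filtration of Lemma~\ref{lem:lemma2}, matched against the Ext-computation of Lemma~\ref{lem:lem12} (which bounds $\dim\Ext^1(X,\overline W(D))$ by $\dim H_1(\nn,X)_{|D|}$ and hence, via $\Gamma^1$, relates it to $\nn$-homology of $M(E)$). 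Third, a surjection between two objects of $\mathcal{C}^\theta_{\bar\pp,\tt,n+2}$ of equal finite length is an isomorphism.

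The hard part will be Step 2: pinning down $[\mathrm{B}_1 Y(E):L(F)]$ for $|F|>|E|$. Corollary~\ref{cor:cor13} only handles $|F|\leq|E|$, so one must compute $\dim\Ext^1_{\gg,\tt}(Y(E),I(F))$ for the remaining $F$, and show it equals the number of times $\overline W(E)$ occurs in the co-Verma filtration of $I(F)$ — i.e. equals $[M(E):L(F)]$ by reciprocity in the thick parabolic category $\mathcal{O}$. This requires using the Shapiro-type reduction $\Ext^1_{\gg,\tt}(Y(E),\overline W(D))\simeq\Hom_{C(\tt)}(H_1(\nn,Y(E)),\bar D)$ from (\ref{eq:eq23}), together with a precise identification of $H_1(\nn,\Gamma^1M(E))$ via the spectral sequence (\ref{spectral}) — exactly the sort of computation performed in the proof of Lemma~\ref{lem:lem12}, but now tracking \emph{all} weights rather than just the bottom one. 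Alternatively, one may avoid this by invoking that $\mathrm{B}_1$ is right-exact with $\mathrm{B}_1 X(E)$ having simple top $L(E)$ (hence is a quotient of the projective cover $M(E)$), and $\mathrm{B}_1 Y(E)$ surjects onto it, so that establishing $\mathrm{B}_1 Y(E)\simeq M(E)$ reduces to showing $\mathrm{B}_1 Y(E)$ is projective in $\mathcal{C}^\theta_{\bar\pp,\tt,n+2}$ — which can be extracted from the adjunction $\Hom_\gg(\mathrm{B}_1 Y(E),-)\simeq\Hom_\gg(Y(E),\Gamma^1(-))$ and the exactness of $\Gamma^1$, showing $\Hom_\gg(\mathrm{B}_1Y(E),-)$ is exact on $\mathcal{C}^\theta_{\bar\pp,\tt,n+2}$. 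I would pursue this second route first, as it sidesteps the delicate weight bookkeeping, and fall back on the explicit $\nn$-homology computation only if the projectivity argument needs the equivalence it is trying to prove.
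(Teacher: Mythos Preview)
Your Step~1 (producing the canonical map $\varphi:\mathrm{B}_1Y(E)\to M(E)$ from adjunction and showing it is surjective because it hits the simple top $L(E)$) is correct and matches the paper exactly.

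The gap is in Step~2. Both of your proposed routes fail:

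\emph{Route B} requires $\Hom_\gg\big(Y(E),\Gamma^1(-)\big)$ to be exact on $\mathcal{C}^\theta_{\bar\pp,\tt,n+2}$. Since $\Gamma^1$ is exact, this would follow if $\Hom_\gg(Y(E),-)$ were exact on $\mathcal{C}^\theta_{\kk,n}$, i.e.\ if $Y(E)$ were projective there. But that is precisely what the equivalence you are trying to establish would give you; at this stage it is unknown, so the argument is circular (as you suspected). Relatedly, your claim in the first paragraph that ``$M(E)$ is the projective cover of $L(E)$ in $\mathcal{C}^\theta_{\bar\pp,\tt,n+2}$'' is false in general: $M(E)$ is projective only when $L(E)=M(E)$.

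\emph{Route A} --- computing $[\mathrm{B}_1Y(E):L(F)]$ for all $|F|>|E|$ via $H_1\big(\nn,Y(E)\big)$ and matching it to $[M(E):L(F)]$ by BGG reciprocity --- is in principle possible but far harder than necessary, and the tools in the paper up to this point do not yield it.

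The paper avoids both difficulties with a short structural argument. One already knows from Corollary~\ref{cor:cor13} that $[\mathrm{B}_1Y(E):L(F)]=0$ for $|F|<|E|$ and $[\mathrm{B}_1Y(E):L(E)]=1=[M(E):L(E)]$, so the kernel $N:=\ker\varphi$ satisfies $[N:L(F)]=0$ for all $|F|\le|E|$. If $N\neq 0$, every composition factor $L(F)$ of $N$ has $|F|>|E|$, hence every $\tt$-weight of $N$ is strictly greater than $|E|$. Then the $\tt$-weight-$|E|$ space of $\mathrm{B}_1Y(E)$ maps isomorphically onto $E\subset M(E)$ and is annihilated by $\bar\nn$ (there is nothing below it), so by the universal property of $M(E)=U(\gg)\otimes_{U(\bar\pp)}E$ the sequence $0\to N\to\mathrm{B}_1Y(E)\to M(E)\to 0$ splits. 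But $\TopM\mathrm{B}_1Y(E)\simeq L(E)$ is simple (again by adjunction: $\Hom_\gg(\mathrm{B}_1Y(E),L(F))\simeq\Hom_\gg(Y(E),X(F))$ vanishes for $F\not\simeq E$), so a nontrivial splitting is impossible. Hence $N=0$.

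In short: you do not need any information about $[\mathrm{B}_1Y(E):L(F)]$ for $|F|>|E|$; the vanishing for $|F|\le|E|$ from Corollary~\ref{cor:cor13} already forces the extension to split, which is the contradiction.
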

	
	\begin{proof}
		By the adjointness of $\mathrm{B}_1$ and $\Gamma^1$, we have a canonical nonzero homomorphism
		\begin{equation*}
			\varphi:\mathrm{B}_1Y(E)\to M(E)
		\end{equation*}
		induced by the identity homomorphism $Y(E)\to Y(E)$.  Note that $\TopM Y(E)$ is isomorphic to $X(E)$ by Proposition \ref{prop:Zuck}, b).  
Next, by the adjointness of $\mathrm{B}_1$ and 
$\Gamma^1$,
		\begin{equation*}
			\Hom_\gg\left(\mathrm{B}_1Y(E),L(F)\right)\simeq \Hom_\gg\big(Y(E),X(F)\big)\,,
		\end{equation*}
		and as $\Hom_\gg\big(Y(E),X(F)\big)\neq 0$ only for $F\simeq E$, we see that
		\begin{equation*}
			\TopM \mathrm{B}_1Y(E)\simeq L(E)\,.
		\end{equation*}
		
		Since $\varphi\neq 0$, $\varphi$ induces an isomorphism
		\begin{equation*}
			\TopM \mathrm{B}_1Y(E)\simeq L(E)=\TopM M(E)\,.
		\end{equation*}
		Consequently, $\varphi$ is surjective.  Let $N=\ker\varphi$. Since the exact sequence
$$0\to N\to \mathrm{B}_1Y(E)\to M(E)\to 0$$
does not split, the assumption $N\neq 0$ would imply $[N:L(F)]\neq 0$ for some $|F|<|E|$, i.e., a contradiction with Corollary~\ref{cor:cor13}. Hence, $N=0$ and $\varphi$ is an 
isomorphism.
	\end{proof}
	
	\begin{corollary}
		$\TopM \Gamma^1\mathrm{B}_1X(E)\simeq X(E)$.
		\label{cor:cor15}
	\end{corollary}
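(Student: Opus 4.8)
The plan is to deduce this corollary from the results already established in Section~\ref{sec:bijection}, most directly from Corollary~\ref{cor:cor14}. Since $Y(E) = \Gamma^1 M(E)$ and Corollary~\ref{cor:cor14} gives $\mathrm{B}_1 Y(E) \simeq M(E)$, I would first produce a nonzero homomorphism relating $\Gamma^1 \mathrm{B}_1 X(E)$ and $X(E)$. The natural candidate is the counit of the adjunction between $\mathrm{B}_1$ and $\Gamma^1$ established in Proposition~\ref{prop:prop3}: the identity map $\mathrm{B}_1 X(E) \to \mathrm{B}_1 X(E)$ corresponds under (\ref{eq:prop3}) to a canonical morphism $\mathrm{B}_1 X(E) \to \mathrm{B}_1 X(E)$ viewed adjointly, which unwinds to a canonical morphism $\varepsilon : \Gamma^1 \mathrm{B}_1 X(E) \to X(E)$ — no, more precisely, applying the adjunction with $M = \mathrm{B}_1 X(E)$ gives $\Hom_\gg(\mathrm{B}_1 X(E), \mathrm{B}_1 X(E)) \simeq \Hom_\gg(X(E), \Gamma^1 \mathrm{B}_1 X(E))$, so the identity yields the unit $\eta : X(E) \to \Gamma^1 \mathrm{B}_1 X(E)$, not the counit. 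To get at the \emph{top}, I would instead use the surjection $\mathrm{B}_1 Y(E) \twoheadrightarrow \mathrm{B}_1 X(E)$ from Corollary~\ref{cor:cor13}/\ref{cor:cor14} together with exactness of $\Gamma^1$.

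So the key steps, in order, are as follows. First, recall from Corollary~\ref{cor:cor14} that $\mathrm{B}_1 Y(E) \simeq M(E)$, hence $\Gamma^1 \mathrm{B}_1 Y(E) \simeq \Gamma^1 M(E) = Y(E)$. Second, the surjection $\pi : \mathrm{B}_1 Y(E) \twoheadrightarrow \mathrm{B}_1 X(E)$ (arising because $Y(E) \twoheadrightarrow X(E) = \TopM Y(E)$ by Proposition~\ref{prop:Zuck}, b), and $\mathrm{B}_1$ is right-exact by Corollary~\ref{corr:corr11}) gives, upon applying the exact functor $\Gamma^1$ (Proposition~\ref{prop:Zuck}, a)), a surjection $\Gamma^1 \pi : Y(E) \twoheadrightarrow \Gamma^1 \mathrm{B}_1 X(E)$. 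Third, since $\TopM Y(E) \simeq X(E)$ is simple, any nonzero quotient of $Y(E)$ has top $X(E)$; it remains only to check $\Gamma^1 \mathrm{B}_1 X(E) \neq 0$. That follows because $\Gamma^1 \mathrm{B}_1 X(E)$ surjects onto $\Gamma^1 L(E) = X(E) \neq 0$ — indeed by Corollary~\ref{cor:cor13} we have $\mathrm{B}_1 X(E) \twoheadrightarrow L(E)$ since $[\mathrm{B}_1 X(E) : L(E)] = 1$ and all other composition factors $L(F)$ have $|F| \geq |E|$, so $L(E)$ must sit on top; applying exact $\Gamma^1$ gives $\Gamma^1 \mathrm{B}_1 X(E) \twoheadrightarrow \Gamma^1 L(E) = X(E)$. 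Combining the second and third steps, $\Gamma^1 \mathrm{B}_1 X(E)$ is a nonzero quotient of $Y(E)$, so $\TopM \Gamma^1 \mathrm{B}_1 X(E) \simeq \TopM Y(E) \simeq X(E)$.

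The main obstacle — and the point most needing care — is the claim in the third step that $\mathrm{B}_1 X(E)$ has $L(E)$ as its top, i.e.\ that the composition factor $L(E)$ (which enters with multiplicity exactly $1$ by Corollary~\ref{cor:cor13}) actually appears as a quotient rather than buried in the radical. This should come out of the adjunction directly: $\Hom_\gg(\mathrm{B}_1 X(E), L(E)) \simeq \Hom_\gg(X(E), \Gamma^1 L(E)) = \Hom_\gg(X(E), X(E)) \neq 0$, exhibiting an explicit surjection $\mathrm{B}_1 X(E) \twoheadrightarrow L(E)$ (surjective because $L(E)$ is simple). Once that is in hand, exactness of $\Gamma^1$ and simplicity of $\TopM Y(E)$ finish the argument cleanly. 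A secondary subtlety is to confirm that right-exactness of $\mathrm{B}_1$ genuinely yields the surjection $\mathrm{B}_1 Y(E) \twoheadrightarrow \mathrm{B}_1 X(E)$ from the surjection $Y(E) \twoheadrightarrow X(E)$; this is immediate from Corollary~\ref{corr:corr11}, applied to the short exact sequence $0 \to K \to Y(E) \to X(E) \to 0$ where $K$ is the radical of $Y(E)$.
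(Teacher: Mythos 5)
Your argument is essentially the paper's: both use $\mathrm{B}_1 Y(E)\simeq M(E)$ (Corollary~\ref{cor:cor14}), apply right-exactness of $\mathrm{B}_1$ to the surjection $Y(E)\twoheadrightarrow X(E)$, then apply the exact functor $\Gamma^1$ to realize $\Gamma^1\mathrm{B}_1X(E)$ as a quotient of $Y(E)$, and conclude from $\TopM Y(E)\simeq X(E)$. Your explicit verification that $\Gamma^1\mathrm{B}_1X(E)\neq 0$ (via the adjunction surjection $\mathrm{B}_1X(E)\twoheadrightarrow L(E)$) is a small and correct addition that the paper leaves implicit, but the route is the same.
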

	
	\begin{proof}
		$\mathrm{B}_1$ is right-exact, hence the surjective homomorphism $Y(E)\to X(E)$ yields a surjective homomorphism $M(E)\simeq \mathrm{B}_1Y(E) \to \mathrm{B}_1X(E)$.  By applying $\Gamma^1$ we obtain a surjective homomorphism $Y(E)\to \Gamma^1\mathrm{B}_1X(E)$, hence $\TopM (\Gamma^1\mathrm{B}_1X(E))\simeq \TopM Y(E)\simeq X(E)$.
	\end{proof}
	
	\begin{corollary}
		$\mathrm{B}_1X(E)\simeq L(E)$.
		\label{cor:cor16}
	\end{corollary}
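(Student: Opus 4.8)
The plan is to deduce Corollary~\ref{cor:cor16} from the adjunction $\mathrm{B}_1\dashv\Gamma^1$ of Proposition~\ref{prop:prop3} by showing that its counit is an isomorphism at the simple object $L(E)$. Write $\epsilon\colon\mathrm{B}_1\Gamma^1L(E)=\mathrm{B}_1X(E)\to L(E)$ for this counit. Under the adjunction isomorphism
\begin{equation*}
\Hom_\gg\big(\mathrm{B}_1X(E),L(E)\big)\simeq\Hom_\gg\big(X(E),\Gamma^1L(E)\big)=\mathrm{End}_\gg X(E)=\CC\,,
\end{equation*}
$\epsilon$ corresponds to $\id_{X(E)}$, so $\epsilon\neq0$ and hence $\epsilon$ is surjective because $L(E)$ is simple. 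Put $K:=\ker\epsilon$; by Corollary~\ref{cor:cor1_of_prop2}, $\mathrm{B}_1X(E)$ lies in $\mathcal{C}_{\bar\pp,\tt,n+2}$, so $K$ has finite length. It remains to show $K=0$.

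The key step is to apply the \emph{exact} functor $\Gamma^1$ (Proposition~\ref{prop:Zuck}) to $0\to K\to\mathrm{B}_1X(E)\xrightarrow{\;\epsilon\;}L(E)\to0$, obtaining an exact sequence $0\to\Gamma^1K\to\Gamma^1\mathrm{B}_1X(E)\xrightarrow{\;\Gamma^1\epsilon\;}X(E)\to0$, and then to invoke the triangle identity of the adjunction: the composite
\begin{equation*}
X(E)=\Gamma^1L(E)\xrightarrow{\;\eta_{X(E)}\;}\Gamma^1\mathrm{B}_1\Gamma^1L(E)\xrightarrow{\;\Gamma^1\epsilon\;}\Gamma^1L(E)=X(E)
\end{equation*}
equals $\id_{X(E)}$, where $\eta$ is the unit. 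Thus $\Gamma^1\epsilon$ is a split epimorphism and the sequence splits: $\Gamma^1\mathrm{B}_1X(E)\simeq X(E)\oplus\Gamma^1K$. Now Corollary~\ref{cor:cor15} gives $\TopM\,\Gamma^1\mathrm{B}_1X(E)\simeq X(E)$, a simple module; since the top of a direct sum is the direct sum of the tops, and the top of a nonzero finite-length module is nonzero, we get $\Gamma^1K=0$. As $\Gamma^1$ is exact and sends each simple object of $\mathcal{C}_{\bar\pp,\tt,n+2}$ to a nonzero simple object, it sends every nonzero finite-length module to a nonzero module; hence $K=0$. Therefore $\epsilon$ is injective as well as surjective, i.e. $\mathrm{B}_1X(E)\simeq L(E)$.

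Since all the hard content has already been assembled in Corollaries~\ref{cor:cor13}--\ref{cor:cor15}, the only point that needs care here is purely formal: that $\mathrm{B}_1$ and $\Gamma^1$ do restrict to honest mutually adjoint functors between $\mathcal{C}_{\kk,n}$ and $\mathcal{C}_{\bar\pp,\tt,n+2}$ (so that the unit, counit and triangle identities are genuinely available), which is exactly Corollary~\ref{cor:cor1_of_prop2} together with Proposition~\ref{prop:prop3}, and that $\Gamma^1$ annihilates no nonzero object, which follows from its exactness and the simple-to-simple property of Proposition~\ref{prop:Zuck}. As an alternative to explicitly invoking the triangle identity, one may observe that the unit $\eta_{X(E)}\colon X(E)\to\Gamma^1\mathrm{B}_1X(E)$ is nonzero, hence injective since $X(E)$ is simple, so $X(E)\subseteq\Soc\,\Gamma^1\mathrm{B}_1X(E)$; combined with $\TopM\,\Gamma^1\mathrm{B}_1X(E)\simeq X(E)$ and $[\Gamma^1\mathrm{B}_1X(E):X(E)]=[\mathrm{B}_1X(E):L(E)]=1$ (Corollaries~\ref{cor:cor15} and~\ref{cor:cor13}, using exactness of $\Gamma^1$), this forces $\Gamma^1\mathrm{B}_1X(E)$ to have length one, and then $K=0$ as before.
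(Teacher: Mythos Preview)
Your proof is correct and follows essentially the same strategy as the paper's: both arguments establish that $\Gamma^1\mathrm{B}_1X(E)\simeq X(E)$ by exhibiting $X(E)$ as a direct summand and then invoking the indecomposability coming from Corollary~\ref{cor:cor15}, and both finish by observing that $\Gamma^1$ reflects isomorphisms (being exact and bijective on simples). The only difference is in how the splitting is obtained. The paper works with the unit $\alpha:X(E)\hookrightarrow\Gamma^1\mathrm{B}_1X(E)$ together with a surjection $\beta$ furnished by Corollary~\ref{cor:cor15}, and uses the multiplicity bound $[\Gamma^1\mathrm{B}_1X(E):X(E)]=1$ from Corollary~\ref{cor:cor13} to force $\beta\alpha\neq 0$; you instead start from the counit $\epsilon$, apply $\Gamma^1$, and invoke the triangle identity $\Gamma^1\epsilon\circ\eta_{X(E)}=\id_{X(E)}$ directly, which bypasses the need for Corollary~\ref{cor:cor13} at that point. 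This is a mild streamlining rather than a genuinely different route; your ``alternative'' paragraph is in fact almost identical to the paper's argument.
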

	
	\begin{proof}
		By Corollary~\ref{cor:cor13}, $\mathrm{B}_1X(E)\neq 0$.  We have an isomorphism
		\begin{equation*}
			\Hom_\gg\left(X(E),\Gamma^1\mathrm{B}_1X(E)\right) \simeq \Hom_\gg\left(\mathrm{B}_1X(E),\mathrm{B}_1X(E)\right)\,.
		\end{equation*}
		Hence, we have a nonzero (and therefore injective) homomorphism
		\begin{equation*}
			\alpha:X(E)\to\Gamma^1\mathrm{B}_1X(E)
		\end{equation*}
		corresponding to the identity homomorphism $\mathrm{B}_1X(E)\to\mathrm{B}_1X(E)$.  
		
		Once again, Corollary~\ref{cor:cor13} implies $\left[\mathrm{B}_1X(E):L(E)\right]=1$.  Since $\Gamma^1$ is exact and is a bijection on isomorphism classes of simple modules, we have 
		\begin{equation}
			\left[\Gamma^1\mathrm{B}_1X(E):X(E)\right]=1\,.
			\label{eq:cor16_0}
		\end{equation} 
		By Corollary~\ref{cor:cor15}, there is a surjective homomorphism $$\beta:\Gamma^1\mathrm{B}_1X(E)\to X(E)\,.$$  Equation (\ref{eq:cor16_0}) now implies that $\beta\alpha\neq 0$ and $\alpha\beta\neq 0$.  Thus, $X(E)$ is a direct summand of $\Gamma^1\mathrm{B}_1X(E)$.  But Corollary~\ref{cor:cor15} shows that $\Gamma^1\mathrm{B}_1X(E)$ is indecomposable.  We conclude that
		\begin{equation*}
			\Gamma^1\mathrm{B}_1X(E)\simeq X(E)=\Gamma^1L(E)\,.
		\end{equation*}
		As before, $\Gamma^1$ is exact and is a bijection on isomorphism classes of simple modules.  So, $\mathrm{B}_1X(E)$ is a simple $\gg$-module, and
		$
			\mathrm{B}_1X(E)\simeq L(E)
		$.
	\end{proof}
	
\section{Exactness of $\mathrm B_1$}
\label{sec:exactness}

The goal of this section is to prove the following.

	\begin{prop}\label{prop:exact}
		$\mathrm{B}_1:\mathcal{C}_{\kk,n}\to\mathcal{C}_{\bar\pp,\tt,n+2}$ is an exact functor.
		\label{prop:prop17}
	\end{prop}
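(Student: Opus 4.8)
The plan is to show $\mathrm{B}_1$ is exact by showing that its left derived functor $\mathrm{B}_2$ vanishes on $\mathcal{C}_{\kk,n}$, and since $\mathrm{B}_1$ is already right-exact (Corollary~\ref{corr:corr11}), it then suffices to check left-exactness, which follows once the connecting map $\mathrm{B}_2 \rightsquigarrow \mathrm{B}_1$ in the long exact sequence of derived functors is killed. More precisely, given a short exact sequence $0\to X'\to X\to X''\to 0$ in $\mathcal{C}_{\kk,n}$, the long exact sequence for $\mathrm{B}_\bullet$ reads $\cdots\to \mathrm{B}_2 X''\to \mathrm{B}_1 X'\to \mathrm{B}_1 X\to \mathrm{B}_1 X''\to 0$, so exactness of $\mathrm{B}_1$ is equivalent to the vanishing of the image of $\mathrm{B}_2 X''$; proving $\mathrm{B}_2 X=0$ for all $X\in\mathcal{C}_{\kk,n}$ is the cleanest sufficient condition and is what I would aim for.

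First I would reduce $\mathrm{B}_2$ to a computation involving $\Ext$-groups against the injective cogenerator. By Proposition~\ref{prop:prop1} together with Corollary~\ref{cor:cor2_of_prop2}, for $X\in\mathcal{C}_{\kk,n}$ we have $\Hom_\gg\big(\mathrm{B}_j^\theta X, I^\theta_{n+2}\big)\simeq \Ext^j_{\gg,\tt}\big(X, I^\theta_{n+2}\big)$, and since $I^\theta_{n+2}$ is an injective \emph{cogenerator} of $\mathcal{C}^{\theta,\mathrm{ind}}_{\bar\pp,\tt,n+2}$ and $\mathrm{B}_j^\theta X$ lies in $\mathcal{C}^\theta_{\bar\pp,\tt,n+2}$ (Corollary~\ref{cor:cor1_of_prop2}), the module $\mathrm{B}_j^\theta X$ is zero if and only if $\Ext^j_{\gg,\tt}\big(X,I^\theta_{n+2}\big)=0$. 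So the whole problem becomes: show $\Ext^2_{\gg,\tt}\big(X, I^\theta_{n+2}\big)=0$ for $X\in\mathcal{C}^\theta_{\kk,n}$ with $n\geq\Lambda$. Using the co-Verma filtration of $I^\theta_{n+2}$ (Corollary after Lemma~\ref{lem:lemma2}), this in turn reduces to showing $\Ext^2_{\gg,\tt}\big(X,\overline{W}(E)\big)=0$ for all relevant $E$, and by the Shapiro-Lemma computation in Proposition~\ref{prop:prop2}, formula~\eqref{eq:eq23}, this equals $\Hom_{C(\tt)}\big(H_2(\nn,X),\bar E\big)$. Hence it all comes down to showing that $H_2(\nn, X)$, while possibly nonzero, has no $\tt$-weights in the range $\geq n+2$ that can match an $|E|$ with $\overline{W}(E)\in\mathcal{C}^\theta_{\bar\pp,\tt,n+2}$ — i.e. that $H_2(\nn,X)_p=0$ for $p\geq n+2$.

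The heart of the argument is therefore a weight estimate on $H_2(\nn,X)$, and I expect this to be the main obstacle. The tool is the spectral sequence of Proposition~\ref{prop:spec}\,a): $E^1_i = H_0(\nn_\kk,X)\otimes\Lambda^i(\nn\cap\kk^\perp)\oplus H_1(\nn_\kk,X)\otimes\Lambda^{i-1}(\nn\cap\kk^\perp)$ converging to $H_i(\nn,X)$. Since $X\in\mathcal{C}_{\kk,n}$ has minimal $\kk$-type of dimension $>n$, Kostant's theorem for $\kk=\mathrm{sl}(2)$ forces the $\tt$-weights of $H_0(\nn_\kk,X)$ to be $\leq -n$ and those of $H_1(\nn_\kk,X)$ to be... (one must track this carefully, as in the proof of Lemma~\ref{lem:lem12}). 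Then $\Lambda^i(\nn\cap\kk^\perp)$ contributes $\tt$-weights at most $i\lambda_1$, but more usefully, for $i=2$ the top weight of $\Lambda^2(\nn\cap\kk^\perp)$ is at most $\lambda_1+\lambda_2$, not $2\lambda_1$. Combining: the $\tt$-weights appearing in $(E^1_2)_p$ are bounded above by roughly $-n+\lambda_1+\lambda_2$ (from the $H_0\otimes\Lambda^2$ summand) or $-n+\lambda_1$-ish (from $H_1\otimes\Lambda^1$, using the appropriate bound on $H_1(\nn_\kk,X)$). Since $n\geq\Lambda=\tfrac12(\lambda_1+\lambda_2)$, one gets $-n+\lambda_1+\lambda_2 \leq -n+2\Lambda \leq \Lambda < n+2$ (once $n$ is large enough; the precise bookkeeping with the $+2$ and the subtleties of which $\nn_\kk$-homology degree pairs with which exterior power is exactly the delicate part). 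Therefore $(E^1_2)_p=0$ for $p\geq n+2$, so $H_2(\nn,X)_p=0$ in that range, so $\Hom_{C(\tt)}\big(H_2(\nn,X),\bar E\big)=0$ for every $E$ with $|E|\geq n+2$, giving $\Ext^2_{\gg,\tt}(X,I^\theta_{n+2})=0$ and hence $\mathrm{B}_2 X=0$. Combined with right-exactness of $\mathrm{B}_1$ (Corollary~\ref{corr:corr11}) and the derived-functor long exact sequence, this yields exactness of $\mathrm{B}_1$. I would close by noting the parallel with the proof of Lemma~\ref{lem:lem12}, where the $n\geq\Lambda$ hypothesis enters in precisely the same way to kill low-weight contributions; here we need it to kill the degree-$2$ contribution entirely in the truncation range.
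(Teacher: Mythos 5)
Your high-level reduction agrees with the paper's setup: show $\mathrm{B}_2 X=0$, reduce this via Proposition~\ref{prop:prop1}, Corollary~\ref{cor:cor2_of_prop2}, and the co-Verma filtration of $I^\theta_{n+2}$ to the vanishing of $\Ext^2_{\gg,\tt}\bigl(X,\overline{W}(E)\bigr)\simeq\Hom_{C(\tt)}\bigl(H_2(\nn,X),\bar E\bigr)$, and attack that with the spectral sequence of Proposition~\ref{prop:spec}. But the weight estimate you then propose is false, and this is the heart of the matter. The $E^1$-term in degree $2$ is $H_0(\nn_\kk,X)\otimes\Lambda^2(\nn\cap\kk^\perp)\oplus H_1(\nn_\kk,X)\otimes\Lambda^1(\nn\cap\kk^\perp)$. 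The first summand is controllable: the $\tt$-weights of $H_0(\nn_\kk,X)$ are $\leq -(n+1)$, those of $\Lambda^2(\nn\cap\kk^\perp)$ are $\leq\lambda_1+\lambda_2=2\Lambda\leq 2n$, so this summand has weights $\leq n-1<n+2$. The second summand, however, is not controllable by $n\geq\Lambda$: Kostant's theorem for $\kk=\mathrm{sl}(2)$ gives that the $\tt$-weights of $H_1(\nn_\kk,X)$ are $\mu+2$ for the $\kk$-types $V_\kk(\mu)$ of $X$, hence $\geq n+3$ and \emph{unbounded above} (every nonzero $X\in\mathcal{C}_{\kk,n}$ has infinitely many $\kk$-types). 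Tensoring with $\Lambda^1(\nn\cap\kk^\perp)$, whose weights are strictly positive, only pushes these weights higher. So $(E^1_2)_p\neq 0$ for arbitrarily large $p\geq n+2$, and the inequality $n\geq\Lambda$ is irrelevant; you wrote that the $H_1\otimes\Lambda^1$ term is ``$-n+\lambda_1$-ish'', which conflates the behavior of $H_1$ with that of $H_0$. One would have to control the differentials of the spectral sequence, and there is no direct way to do so for general $X$.

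The paper's actual argument avoids exactly this obstruction. Instead of running the spectral sequence on a general $X$, it first proves $\mathrm{B}_2 Y(E)=0$ for the modules $Y(E)=\Gamma^1 M(E)$ (Lemma~\ref{lem:lem18}), using the Enright-style localization $D(E):=\mathcal{D}^\gg_e M(E)$ on which $e$ acts bijectively, so that $H_0(\nn_\kk,D(E))=H_1(\nn_\kk,D(E))=0$ and hence $H_\bullet(\nn,D(E))=0$. Combined with $Y(E)\simeq\mathcal{E}M(E)$ (Proposition~\ref{prop:isomf}, Corollary~\ref{cor:Verma}) and the $\nn$-freeness of $M(E)$, the computation of $H_2\bigl(\nn,Y(E)\bigr)_{|F|}$ is transferred through two short exact sequences to an $H_3\bigl(\nn,F(E)\bigr)$ contribution, and here the weights \emph{are} bounded above, because $F(E)=D(E)/\Gamma_{\mathbb{C}f}D(E)$ has $H_0(\nn_\kk,F(E))=0$ and the $\tt$-weights of $H_1(\nn_\kk,F(E))$ are $\leq 2-|E|$ (the unboundedness flips direction after the quotient by the $\mathbb{C}[f]$-finite part). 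Only then is $\mathrm{B}_2 X(E)=0$ deduced for simple $X(E)$ by an induction on the Bruhat height of the lowest weight of $L(E)$, using the exact sequence $0\to N(E)\to M(E)\to L(E)\to 0$, Lemma~\ref{lem:lem18}, Corollary~\ref{cor:cor14}, and Proposition~\ref{prop:prop11}. Both the localization argument and the Bruhat induction are missing from your proposal, and they cannot be replaced by the direct estimate you suggest.
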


Our main effort will go into proving
	
	\begin{lemma}
		$\mathrm{B}_2Y(E)=0$ for $X(E)\in\mathcal{C}_{\kk,n}$.
		\label{lem:lem18}
	\end{lemma}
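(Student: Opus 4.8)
The plan is to compute $\mathrm{B}_2 Y(E)$ by testing it against the injective cogenerator $I^\theta_{n+2}$, using the machinery assembled in Sections~\ref{sec:prelim} and~\ref{sec:conjugacy}. Recall that $Y(E)=\Gamma^1 M(E)$, and that $M(E)$ lives in $\mathcal{C}^\theta_{\bar\pp,\tt,n+2}$ for the relevant $\theta$, so only the $\theta$-component of $\mathrm{B}_2$ can be nonzero and it suffices to show $\mathrm{B}_2^\theta Y(E)=0$. Since $\mathrm{B}_2^\theta Y(E)$ lies in $\mathcal{C}_{\bar\pp,\tt,n+2}$ by Corollary~\ref{cor:cor1_of_prop2}, and $I^\theta_{n+2}$ is an injective cogenerator of $\mathcal{C}^{\theta,\text{\rm ind}}_{\bar\pp,\tt,n+2}$, it is enough to prove that $\Hom_\gg\big(\mathrm{B}_2^\theta Y(E), I^\theta_{n+2}\big)=0$. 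By Proposition~\ref{prop:prop1} together with Corollary~\ref{cor:cor2_of_prop2}, this Hom-space is isomorphic to $\Ext^2_{\gg,\tt}\big(Y(E), I^\theta_{n+2}\big)$. By Lemma~\ref{lem:lemma2} (the co-Verma filtration of $I^\theta_{n+2}$, via its corollary), it suffices in turn to show that $\Ext^2_{\gg,\tt}\big(Y(E),\overline{W}(D)\big)=0$ for every $D$ with $\overline{W}(D)\in\mathcal{C}^{\theta,\text{\rm ind}}_{\bar\pp,\tt,n+2}$.

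Next I would reduce this $\Ext^2$ to $\nn$-homology. By the Shapiro Lemma argument already used in the proof of Proposition~\ref{prop:prop2} (equations around~\refeq{eq:eq23}), we have
\begin{equation*}
	\Ext^2_{\gg,\tt}\big(Y(E),\overline{W}(D)\big)\simeq \Ext^2_{\pp,\tt}\big(Y(E),\bar D\big)\simeq \Hom_{C(\tt)}\big(H_2(\nn,Y(E)),\bar D\big)\,,
\end{equation*}
using injectivity of $\bar D$ as a $C(\tt)$-module. So the whole statement comes down to controlling $H_2(\nn,Y(E))$, and more precisely showing that its $\tt$-weights with $D$-weight $\geq n+2$ contribute nothing — ideally that $H_2(\nn,Y(E))_p=0$ for all $p\geq n+2$. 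This is where the spectral sequence of Proposition~\ref{prop:spec}, a) enters: its $E^1$-term is built from $H_0(\nn_\kk,Y(E))$ and $H_1(\nn_\kk,Y(E))$ tensored with exterior powers of $\nn\cap\kk^\perp$. Here one should use that $Y(E)=\Gamma^1 M(E)$ has minimal $\kk$-type $|E|-2$ (Proposition~\ref{prop:Zuck}, b)), so that the $\tt$-weights of $H_0(\nn_\kk,Y(E))$ are $\leq -(|E|-2)$, and Kostant's theorem for $\kk$ pins down $H_1(\nn_\kk,Y(E))$ in the relevant degrees much as in the proof of Lemma~\ref{lem:lem12}. The contribution to $H_2(\nn,-)$ through the spectral sequence involves $H_0(\nn_\kk,-)\otimes\Lambda^2(\nn\cap\kk^\perp)$ and $H_1(\nn_\kk,-)\otimes\Lambda^1(\nn\cap\kk^\perp)$; the $\tt$-weights of these are bounded above by $-(|E|-2)+2\lambda_1$ and by (something like) $|E|+\lambda_1$ respectively — but for the second piece one must use the precise location of $H_1$ and the differentials to kill it. The condition $n\geq\Lambda=\tfrac12(\lambda_1+\lambda_2)$, equivalently $|E|\geq n+2\geq 2+\tfrac{\lambda_1+\lambda_2}{2}$, is exactly what forces the overlap of "weight $\geq n+2$" with "weight $\leq$ (the bound from $\nn_\kk$-homology)" to be empty, just as in Lemma~\ref{lem:lem12}.

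The main obstacle I anticipate is the second summand in the $E^1$-term, $H_1(\nn_\kk,Y(E))\otimes\Lambda^1(\nn\cap\kk^\perp)$: its $\tt$-weights are \emph{not} automatically $<n+2$, since $H_1(\nn_\kk,Y(E))$ sits at weight $|E|\geq n+2$. To handle it I would argue that the only way this piece can survive into $H_2(\nn,Y(E))$ in degrees $p\geq n+2$ is blocked either by the differential $d^1$ emanating from it (landing in $H_0(\nn_\kk,Y(E))\otimes\Lambda^1(\nn\cap\kk^\perp)$ or out of $H_1(\nn_\kk,Y(E))\otimes\Lambda^2(\nn\cap\kk^\perp)$) or by a dimension/weight comparison, possibly invoking Proposition~\ref{prop:duality} to relate $H_\bullet(\nn,Y(E))$ in low degrees to $\Gamma^\bullet$ of $M(E)^\vee_\tt$ and hence to the already-understood structure of $\Gamma^1$ and $\Gamma^2$ on parabolic category $\mathcal{O}$. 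A cleaner route, if available, is to first establish the analogous vanishing $\Ext^2_{\gg,\tt}\big(Y(E),\overline{W}(D)\big)=0$ using the co-Verma filtration of $M(E)^\vee$ directly — i.e. transport the problem across the duality $(\bullet)^\vee$ to a statement about $\Gamma^0=\Gamma$ applied to $M(D)$-type objects, where the relevant $\nn_\kk$-homology is concentrated in a single degree by Kostant and the spectral sequence degenerates. I would try the duality approach first and fall back on the direct spectral-sequence bookkeeping (mirroring Lemma~\ref{lem:lem12}) if the duality identification of derived functors in degree $2$ proves awkward.
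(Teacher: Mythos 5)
Your reduction is sound as far as it goes: it is indeed enough to show $H_2\big(\nn,Y(E)\big)_{|F|}=0$ for all relevant $F$, and the Shapiro-lemma / co-Verma filtration reduction to $\nn$-homology is exactly the one the paper uses. You have also correctly located the obstacle: the $E^1$-term $H_1(\nn_\kk,Y(E))\otimes\Lambda^1(\nn\cap\kk^\perp)$ sits at $\tt$-weights near $|E|+\lambda_1\geq n+2$, so the direct spectral-sequence bookkeeping that worked for $H_1(\nn,X)$ in Lemma~\ref{lem:lem12} does not kill this term by weight considerations alone. That is a genuine gap, and neither of the two remedies you sketch is developed far enough to close it. In particular the duality route is unlikely to work as stated: Proposition~\ref{prop:duality} relates $\Gamma^1$ to $\Gamma^{2-1}=\Gamma^1$, so it does not trade $H_2(\nn,Y(E))$ for a $\Gamma^0$-type computation, and there is no analogous duality identity for the left-derived functors $\mathrm{B}_j$.

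The paper resolves the difficulty by a different device: it introduces an Enright-type functor $\mathcal{E}M:=\big(\Gamma_{\mathbb{C}f}\mathcal{D}_e^\gg(M)\big)/M$ built from the localization $U_e(\gg)$ at $e$, proves $\mathcal{E}\simeq\Gamma^1$ on $\mathcal{C}_{\bar\pp,\tt,n+2}$ (Proposition~\ref{prop:isomf}), and hence realizes $Y(E)\simeq\mathcal{E}M(E)$. Setting $D(E)=\mathcal{D}_e^\gg M(E)$ and $C(E)=\Gamma_{\mathbb{C}f}D(E)$, it then observes that $H_0(\nn_\kk,D(E))=H_1(\nn_\kk,D(E))=0$, so by the spectral sequence $H_\bullet(\nn,D(E))=0$ entirely; the short exact sequence $0\to C(E)\to D(E)\to F(E)\to 0$ trades $H_2\big(\nn,C(E)\big)_{|F|}$ for $H_3\big(\nn,F(E)\big)_{|F|}$, and this shift of one homological degree is exactly what makes the weight estimate work (the bound becomes $2-|E|+\lambda_1+\lambda_2<|F|$, with $H_0(\nn_\kk,F(E))=0$ removing the troublesome summand). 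Finally, $0\to M(E)\to C(E)\to Y(E)\to 0$ together with $H_1\big(\nn,M(E)\big)=0$ (since $M(E)$ is $\nn$-free) gives $H_2\big(\nn,Y(E)\big)_{|F|}=0$. In short, the missing idea in your proposal is to replace the direct computation of $H_\bullet(\nn,Y(E))$ by passing through the localized module $D(E)$, whose $\nn$-homology vanishes outright, and the auxiliary quotient $F(E)$, where the extra homological shift repairs the weight bound.
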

	
	Note that it suffices to show that $H_2\big(\nn,Y(E)\big)_{|F|}=0$ for all $X(E),X(F)\in\mathcal{C}_{\kk,n}$.  Indeed, 
the implication 
$$\left(H_2\big(\nn,Y(E)\big)_{|F|}=0\,\text{for all} \,X(E),X(F)\in\mathcal{C}_{\kk,n}\right)\,\Rightarrow \left(\mathrm{B}_2Y(E)=0\,\text{for all}\, X(E)\in\mathcal{C}_{\kk,n}\right)$$ follows from the following three facts:
	\begin{enumerate}
		\item $\Ext^2_{\gg,\tt}\big(Y(E),I(F)\big)\simeq \Hom_\gg\left(\mathrm{B}_2Y(E),I(F)\right)$;
		\item $I(F)/\overline{W}(F)$ has a co-Verma filtration with factors isomorphic to $\overline{W}\left(F'\right)$ for $\left|F'\right| < |F|$ (Lemma~\ref{lem:lemma2});
		\item $\dim\Ext^2_{\gg,\tt}\left(Y(E),\overline{W}(F)\right)=\dim\Hom_{C(\tt)}\Big(H_2\big(\nn,Y(E)\big),\bar{F}\Big)\leq \dim H_2\big(\nn,Y(E)\big)_{|F|}$,
see (\ref{eq:eq23}).
	\end{enumerate}
	
	To prove that $H_2\big(\nn,Y(E)\big)_{|F|}=0$ for all $X(E),X(F)\in\mathcal{C}_{\kk,n}$, we give another construction of 
the functor $\Gamma^1: \mathcal{C}_{\bar{\pp},\tt,n+2}\to\mathcal{C}_{\kk,n} $.  
Denote by $U_e(\gg)$ the enveloping algebra $U(\gg)$ localized by the multiplicative set $\left\{e^n\right\}_{n\in\mathbb{Z}_{\geq 1}}$.   The localized algebra $U_e(\kk)$ is a subalgebra of $U_e(\gg)$. For any $\gg$-module (resp., $\kk$-module) $M$, set
$$\mathcal D^\gg_e(M):=U_e(\gg)\otimes_{U(\gg)}M,\,\,\,\,\mathcal D^\kk_e(M):=U_e(\kk)\otimes_{U(\kk)}M\,.$$

\begin{lemma}\label{lem:localization} If $M$ is a $\gg$-module on which $e$ acts injectively, we have an isomorphism of $\kk$-modules
$${\Res}_\kk\mathcal D^\gg_e(M)\simeq \mathcal D^\kk_e(M).$$
  \end{lemma}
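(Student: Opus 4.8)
The plan is to prove the isomorphism $\Res_\kk \mathcal D^\gg_e(M) \simeq \mathcal D^\kk_e(M)$ by exhibiting an explicit $\kk$-module map and showing it is bijective, using the Ore localization structure together with the PBW theorem. First I would note that, since $e$ acts injectively on $M$, the Ore condition for the multiplicative set $\{e^n\}$ is satisfied in both $U(\gg)$ and $U(\kk)$ (the element $e$ is $\ad$-nilpotent, hence $\{e^n\}$ is a left and right denominator set in both enveloping algebras), so $\mathcal D^\gg_e(M)$ and $\mathcal D^\kk_e(M)$ are honest localizations: every element of $\mathcal D^\gg_e(M)$ can be written as $e^{-n}\otimes m$ for some $n\geq 0$ and $m\in M$, and similarly for $\mathcal D^\kk_e(M)$. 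The natural map $\mathcal D^\kk_e(M)\to \Res_\kk\mathcal D^\gg_e(M)$, $e^{-n}\otimes m\mapsto e^{-n}\otimes m$, is then a well-defined homomorphism of $\kk$-modules (it is the $U_e(\kk)$-linear extension of $M\to \mathcal D^\gg_e(M)$, and one checks it is compatible with the identifications $e^{-n}\otimes em = e^{-(n-1)}\otimes m$ on both sides), and it is injective because $e$ acts injectively on $M$ — if $e^{-n}\otimes m = 0$ in $\mathcal D^\gg_e(M)$ then $e^k m = 0$ for some $k$, forcing $m=0$, hence $e^{-n}\otimes m = 0$ already in $\mathcal D^\kk_e(M)$.

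The content of the lemma is therefore surjectivity: every element of $\mathcal D^\gg_e(M)$ lies in the image, i.e. can be represented in the form $e^{-n}\otimes m$ with $m\in M$. The key step is a PBW argument. Choose a PBW basis of $U(\gg)$ adapted to the decomposition into $U(\kk)$ and a complement: concretely, fix ordered bases so that monomials in $U(\gg)$ are products $u \cdot v$ where $u$ runs over a PBW basis of $U(\kk)$ (in the variables $e,h,f$) and $v$ runs over PBW monomials in a chosen vector-space complement of $\kk$ in $\gg$, with $e$ placed first. The point is that localizing at $e$ from the left, one uses the commutation relations to move negative powers of $e$ past the remaining generators: for any $x\in\gg$, $e^{-1}x = (x + [x,e^{-1}]\cdot e)\cdots$, more precisely $x e^{-n} = \sum_{j\geq 0} e^{-n-j}\,c_j(x)$ for suitable elements $c_j(x)\in U(\gg)$ obtained by iterated brackets with $e$ (a finite sum since $\ad e$ is nilpotent on $\gg$). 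Applying this repeatedly shows that any element $e^{-n}\otimes (P\cdot m)$ with $P\in U(\gg)$ can be rewritten as $e^{-N}\otimes m'$ for some $N\geq n$ and $m'\in M$ — i.e. after clearing enough powers of $e$, everything reduces to $M$ itself. This establishes that the comparison map is surjective, hence an isomorphism of $\kk$-modules.

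The main obstacle I expect is bookkeeping in the PBW/commutation argument to show surjectivity cleanly: one must verify that moving $e^{-n}$ to the left past arbitrary elements of $U(\gg)$ terminates and stays inside $U_e(\gg)\otimes_{U(\gg)}M$, and in particular that the identification $U_e(\gg)\otimes_{U(\gg)}M$ really does consist of symbols $e^{-n}\otimes m$ modulo the relations generated by $e^{-n}\otimes em = e^{-(n-1)}\otimes m$. The clean way to organize this is to invoke flatness of $U_e(\gg)$ as a right $U(\gg)$-module (Ore localizations are flat), which gives $\mathcal D^\gg_e(M) = U_e(\gg)\otimes_{U(\gg)} M$ with the expected normal form, and then to use that $U_e(\gg)$ is free as a left $U_e(\kk)$-module on the PBW monomials in the complement of $\kk$ — the same monomials that form a basis of $U(\gg)$ over $U(\kk)$ survive after localization because $e$ is central to nothing but commutes controllably. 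Granting this, $\Res_\kk \mathcal D^\gg_e(M) = U_e(\kk)\otimes_{U(\kk)}\big(U(\gg)\otimes_{U(\kk)}M\big)$, but since the complement-monomials act on $M$ already as elements of $U(\gg)$, one can absorb them and collapse the expression to $U_e(\kk)\otimes_{U(\kk)}M = \mathcal D^\kk_e(M)$; the subtle point requiring care is exactly this absorption, which is where the injectivity of $e$ on $M$ is used to avoid torsion obstructions.
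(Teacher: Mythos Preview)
Your proof is correct and follows essentially the same route as the paper's: construct the natural $\kk$-map $\mathcal D^\kk_e(M)\to\mathcal D^\gg_e(M)$, use injectivity of $e$ on $M$ for injectivity of the map, and use an Ore/PBW normal form for surjectivity. The paper packages the two steps more tersely---surjectivity via the single line $U_e(\gg)=U_e(\kk)S(\kk^\perp)$, and injectivity via the observation that $\mathcal D^\kk_e(M)$ is an essential extension of $M$ together with $\Ker\tilde\psi\cap M=0$---but these are exactly the facts your longer commutation and torsion discussion is unpacking; in particular your worry about ``absorption'' is already handled once you note (as you do in your first paragraph) that the Ore normal form writes every element of $U_e(\gg)$ as $e^{-n}u$ with $u\in U(\gg)$, so the later PBW bookkeeping is unnecessary.
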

\begin{proof} There is an embedding $\psi:M\hookrightarrow  D^\gg_e(M)$. 
By Frobenius reciprocity $\psi$ induces a morphism $\tilde\psi:D^\kk_e(M)\to  D^\gg_e(M)$. Since $U_e(\gg)=U_e(\kk)S(\kk^\perp)$, the morphism
$\tilde\psi$ is surjective. Let us show that $\tilde\psi$ is also injective. Since $e$ acts injectively on $M$, we see that $\mathcal{D}_e^\gg(M)$ is an essential extension of $M$.  Therefore, the fact that $\Ker\,\tilde\psi\cap M=0$ suffices to conclude that $\tilde\psi$ is injective.
\end{proof}

Suppose that a $\gg$-module $M$ is free over $\mathbb C[e]$ and locally finite over $\mathbb{C}[f]$. Then we have an embedding
$$M\hookrightarrow \Gamma_{\mathbb C f}\mathcal D^\gg_e(M)$$ 
where $\Gamma_{\mathbb{C}f}$ is the functor of $\mathbb{C}f$-finite vectors.  
Set
$$\mathcal{E}M:=(\Gamma_{\mathbb C f}\mathcal D^\gg_e(M))/M,$$
cf.~\cite{E}.
Since $M\in \mathcal{C}_{\bar{\pp},\tt,n+2}$ satisfies the above assumptions, we have constructed a new functor
$$\mathcal E:\mathcal{C}_{\bar{\pp},\tt,n+2}\to\mathcal{C}_{\kk,n}.$$

\begin{lemma}\label{character} If $M\in \mathcal{C}_{\bar{\pp},\tt,n+2}$, $n\geq 0$, then for some $\gamma(\mu)\in\mathbb Z_{\geq 0}$
$$\Res_{\kk} M\simeq\bigoplus_{\mu\geq n+2}M_\kk(\mu)^{\oplus\gamma(\mu)}$$
and
$$\Res_{\kk}(\mathcal EM)\simeq\bigoplus_{\mu\geq n+2}V_\kk(\mu-2)^{\oplus\gamma(\mu)},$$
where $M_\kk(\mu):=U(\kk)\otimes_{U(\kk\cap\bar\pp)}\mathbb C_\mu$ for an integral $\tt$-weight $\mu$.
\end{lemma}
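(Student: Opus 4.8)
The plan is to reduce both isomorphisms to elementary $\mathrm{sl}(2)$-representation theory, using Lemma~\ref{lem:localization} for the passage to the localization.

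\emph{Step 1 (the first isomorphism).} Since $M\in\mathcal{C}_{\bar\pp,\tt,n+2}$ is $\bar\pp$-locally finite and $\tt$-semisimple with all $\tt$-weights $\geq n+2\geq 2$, its restriction to $\kk$ is an $h$-semisimple $\mathrm{sl}(2)$-module, locally finite over $\kk\cap\bar\pp=\mathbb{C}h\oplus\mathbb{C}f$ (so $f$ acts locally nilpotently), with all $h$-weights positive and with finite-dimensional weight spaces --- the latter because $M=\bigoplus_\theta M^\theta$ with each $M^\theta$ lying in some $\mathcal{C}^{\theta,\ell}_{\bar\pp,\tt,n+2}$, hence embedding into a finite sum of copies of $I^{\theta,\ell}_{n+2}$, whose $\tt$-weight spaces are finite dimensional by Lemma~\ref{lem:lemma1}. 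Put $M^f:=\ker(f|_M)=\bigoplus_\mu M^f_\mu$ and $\gamma(\mu):=\dim M^f_\mu<\infty$. Because $M_\kk(\mu)$ is simple for every integral $\mu\geq 1$, each weight vector $v\in M^f_\mu$ generates a submodule $U(\kk)v\simeq M_\kk(\mu)$, and these maps assemble into a $\kk$-homomorphism $\Phi:\bigoplus_\mu M_\kk(\mu)\otimes_{\mathbb C}M^f_\mu\to M$. Its restriction to the $f$-kernel $\bigoplus_\mu\mathbb{C}v_\mu\otimes M^f_\mu$ of the source is the inclusion $M^f\hookrightarrow M$; since any nonzero $\kk$-submodule of the source, having weights bounded below, contains a vector of minimal weight, which lies in that $f$-kernel, $\Phi$ is injective. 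For surjectivity, set $Q:=M/\Im\Phi$ and suppose $Q\neq 0$; a weight vector $m\in M$ of minimal weight $\mu_0$ lifting a nonzero class of $Q$ has $fm\in\Im\Phi$, and $fm\neq 0$ (otherwise $m\in M^f\subseteq\Im\Phi$), which forces $\mu_0-2\geq n+2$, i.e.\ $\mu_0\geq 4$. A one-line computation in each $M_\kk(\mu)$ shows that $f$ maps the weight-$\mu_0$ space of $\Im\Phi\simeq\bigoplus_\mu M_\kk(\mu)^{\oplus\gamma(\mu)}$ onto its weight-$(\mu_0-2)$ space; correcting $m$ by a suitable $x\in(\Im\Phi)_{\mu_0}$ with $fx=fm$ yields $m-x\in M^f\subseteq\Im\Phi$, hence $m\in\Im\Phi$, a contradiction. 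Thus $\Res_\kk M\simeq\bigoplus_\mu M_\kk(\mu)^{\oplus\gamma(\mu)}$.

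\emph{Step 2 (the second isomorphism).} Since $e$ acts injectively on each $M_\kk(\mu)$, it acts injectively on $M$, so Lemma~\ref{lem:localization} gives $\Res_\kk\mathcal D_e^\gg(M)\simeq\mathcal D_e^\kk(M)\simeq\bigoplus_\mu(\mathcal D_e^\kk M_\kk(\mu))^{\oplus\gamma(\mu)}$. As $\Gamma_{\mathbb C f}$ depends only on the $f$-action, it commutes with $\Res_\kk$ and with direct sums, so it suffices to compute $\Gamma_{\mathbb C f}\mathcal D_e^\kk M_\kk(\mu)$ together with the image of $M_\kk(\mu)$ inside it. Explicitly $\mathcal D_e^\kk M_\kk(\mu)$ has basis $\{x_j=e^j v_\mu\}_{j\in\mathbb Z}$ with $hx_j=(\mu+2j)x_j$, $ex_j=x_{j+1}$, $fx_j=-j(\mu+j-1)x_{j-1}$; the coefficient vanishes exactly at $j=0$ and $j=1-\mu$, whence $x_j$ is $f$-nilpotent precisely for $j\geq 1-\mu$, so $\Gamma_{\mathbb C f}\mathcal D_e^\kk M_\kk(\mu)=\mathrm{span}\{x_j:j\geq 1-\mu\}$. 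This module is generated by the lowest-weight vector $x_{1-\mu}$ of weight $2-\mu$, and comparing its (one-dimensional) weight spaces with those of $M_\kk(2-\mu)$ identifies it with $M_\kk(2-\mu)$; under this identification the submodule $M_\kk(\mu)=\mathrm{span}\{x_j:j\geq 0\}$ is exactly the maximal proper submodule of $M_\kk(2-\mu)$, with quotient $V_\kk(\mu-2)$ (the elementary fact that $M_\kk(-m)$ has submodule $M_\kk(m+2)$ and quotient $V_\kk(m)$ for $m\geq 0$). Quotienting $\Gamma_{\mathbb C f}\mathcal D_e^\gg(M)$ by $M$ and restricting to $\kk$ now gives $\Res_\kk(\mathcal EM)\simeq\bigoplus_\mu V_\kk(\mu-2)^{\oplus\gamma(\mu)}$, with the same $\gamma$; the terms with $\mu<n+2$ are absent since $\gamma(\mu)=0$ there.

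I expect the main obstacle to be Step 1 --- organizing $\Res_\kk M$ as an \emph{honest} direct sum of lowest-weight Verma modules $M_\kk(\mu)$, i.e.\ the surjectivity of $\Phi$ via the weight-lowering/lifting argument. This is precisely where the truncation hypothesis (all $\tt$-weights $\geq n+2$, in particular $\geq 2$) is used, both to guarantee that the relevant $M_\kk(\mu)$ are simple and that $f$ is surjective between consecutive weight spaces. The localization computation in Step 2 is the representation-theoretic heart but is a short explicit calculation once Step 1 is available.
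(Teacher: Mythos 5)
Your proof is correct and follows the same route as the paper: the paper dismisses the first isomorphism as ``obvious'' (which your Step 1 supplies the details for, via the standard argument that an $h$-semisimple $\kk$-module with $f$ locally nilpotent, finite-dimensional weight spaces, and all $h$-weights $\geq 2$ decomposes as a direct sum of simple lowest-weight Vermas $M_\kk(\mu)$), and for the second isomorphism both you and the paper reduce via Lemma~\ref{lem:localization} to the same explicit check that $\Gamma_{\mathbb{C}f}\mathcal{D}_e^\kk M_\kk(\mu)/M_\kk(\mu)\simeq V_\kk(\mu-2)$.
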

\begin{proof} The first assertion is obvious and the second follows from Lemma \ref{lem:localization} and the straightforward check that
$$(\Gamma_{\mathbb C f}\mathcal D^\kk_e(M_\kk(\mu))/M_\kk(\mu))=V_\kk(\mu-2).$$ 
\end{proof}

\begin{corollary}\label{cor:restriction}  If $M\in \mathcal{C}_{\bar{\pp},\tt,n+2}$, $n\geq 0$, then
$$\Res_\kk(\mathcal EM)\simeq\Res_\kk\left(\Gamma^1M\right).$$
\end{corollary}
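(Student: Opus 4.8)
The plan is to show that both $\Res_\kk(\mathcal{E}M)$ and $\Res_\kk(\Gamma^1M)$ are isomorphic to $\bigoplus_{\mu\geq n+2}V_\kk(\mu-2)^{\oplus\gamma(\mu)}$, where the multiplicities $\gamma(\mu)$ are defined by $\Res_\kk M\simeq\bigoplus_{\mu\geq n+2}M_\kk(\mu)^{\oplus\gamma(\mu)}$ as in \refle{character}. For $\Res_\kk(\mathcal{E}M)$ this is exactly the second assertion of \refle{character}, so the whole task is to compute $\Res_\kk(\Gamma^1M)$.

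First I would reduce this to a purely $\mathrm{sl}(2)$ statement. The functor $\Gamma=\Gamma_{\kk,\tt}$ of $\kk$-finite vectors, and hence each right derived functor $\Gamma^i$, is computed by a standard $\tt$-relative (bar- or Koszul-type) resolution that involves only the $\kk$- and $\tt$-actions; consequently the $\kk$-module structure of $\Gamma^iM$ depends only on $\Res_{\kk,\tt}M$, and in fact $\Res_\kk(\Gamma^iM)\simeq\Gamma^i_\kk(\Res_{\kk,\tt}M)$, where $\Gamma^i_\kk$ denotes the $i$-th derived functor of $N\mapsto\{n\in N\mid\dim U(\kk)\cdot n<\infty\}$ on the category of $(\kk,\tt)$-modules (see \cite{PZ1}, \cite{KV}; this is also implicit in \cite{PZ5}). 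Since derived functors commute with finite direct sums, \refle{character} then gives $\Res_\kk(\Gamma^1M)\simeq\bigoplus_{\mu\geq n+2}\big(\Gamma^1_\kk M_\kk(\mu)\big)^{\oplus\gamma(\mu)}$, so it remains to prove that $\Gamma^1_\kk M_\kk(\mu)\simeq V_\kk(\mu-2)$ and $\Gamma^i_\kk M_\kk(\mu)=0$ for $i\neq 1$, whenever $\mu\geq n+2\geq 2$.

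For $\mu\geq 2$ the $\kk$-module $M_\kk(\mu)=U(\kk)\otimes_{U(\kk\cap\bar\pp)}\CC_\mu$ is simple and infinite-dimensional, so $\Gamma^0_\kk M_\kk(\mu)=0$; and since $\big(M_\kk(\mu)\big)^\vee_\tt\simeq M_\kk(\mu)$ (both are simple $\kk$-modules with one-dimensional $\tt$-weight spaces and lowest $\tt$-weight $\mu$), the duality of \refprop{prop:duality}, applied with $\kk$ in place of $\gg$, yields $\Gamma^2_\kk M_\kk(\mu)\simeq\big(\Gamma^0_\kk(M_\kk(\mu))^\vee_\tt\big)^\vee_\kk=0$. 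The remaining term $\Gamma^1_\kk M_\kk(\mu)$ is then identified with $V_\kk(\mu-2)$ by the explicit localization computation already carried out in the proof of \refle{character}, namely $\big(\Gamma_{\CC f}\mathcal{D}^\kk_e(M_\kk(\mu))\big)/M_\kk(\mu)\simeq V_\kk(\mu-2)$: for a lowest-weight $\kk$-module $N$ the completion functor $N\mapsto(\Gamma_{\CC f}\mathcal{D}^\kk_e(N))/N$ realizes $\Gamma^1_\kk$ (cf. \cite{E}, together with \refle{lem:localization}). Combining these, $\Res_\kk(\Gamma^1M)\simeq\bigoplus_{\mu\geq n+2}V_\kk(\mu-2)^{\oplus\gamma(\mu)}\simeq\Res_\kk(\mathcal{E}M)$.

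The step I expect to be the real obstacle is the passage from $\Gamma^i$ to $\Gamma^i_\kk$ — that at the level of $\kk$-module structure the derived Zuckerman functor for $(\gg,\tt)$-modules coincides with the one for $(\kk,\tt)$-modules. This is standard but must be anchored to a precise reference; alternatively, if the $\kk$-character of $\Gamma^1M$ for $M\in\mathcal{C}_{\bar\pp,\tt,n+2}$ is already recorded in \cite{PZ5}, the corollary follows immediately by comparison with \refle{character} and the reduction above can be bypassed entirely.
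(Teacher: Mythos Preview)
Your argument is correct and is exactly what the paper has in mind; the paper states the corollary without proof because the two ingredients you isolate --- the reduction $\Res_\kk\Gamma^i\simeq\Gamma^i_\kk\circ\Res_\kk$ and the $\mathrm{sl}(2)$ computation $\Gamma^1_\kk M_\kk(\mu)\simeq V_\kk(\mu-2)$ --- are standard (the first is a consequence of the explicit Koszul resolution used in the proof of \refprop{prop:isomf}, and the second is the classical Enright computation you cite). One cosmetic remark: where you write ``finite direct sums'' you in fact need the infinite direct sum of \refle{character}, but this is harmless since $\Gamma$ and the finite-length Koszul resolution commute with arbitrary direct sums.
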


\begin{corollary}\label{cor:fexact} The functor $\mathcal E:\mathcal{C}_{\bar{\pp},\tt,n+2}\to\mathcal{C}_{\kk,n}$ is exact.
\end{corollary}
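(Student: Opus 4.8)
The plan is to establish the two halves of exactness separately: left exactness of $\mathcal E$ by a purely formal diagram chase, and right exactness by a count of $\kk$-multiplicities built on Lemma~\ref{character} (equivalently Corollary~\ref{cor:restriction}). First observe that $\mathcal D^\gg_e(\cdot)=U_e(\gg)\otimes_{U(\gg)}(\cdot)$ is exact, being localization at the powers of the $\ad$-nilpotent element $e$, and that the functor $\Gamma_{\mathbb C f}$ of $\mathbb C f$-finite vectors is left exact; hence $G:=\Gamma_{\mathbb C f}\mathcal D^\gg_e$ is left exact. On $\mathcal C_{\bar\pp,\tt,n+2}$ every object is free over $\mathbb C[e]$ and locally finite over $\mathbb C[f]$, so the natural embedding $M\hookrightarrow GM$ is defined and $\mathcal E M=GM/M$, giving a short exact sequence of functors $0\to\mathrm{id}\to G\to\mathcal E\to0$. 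Given $0\to M'\to M\to M''\to0$ in $\mathcal C_{\bar\pp,\tt,n+2}$, apply the snake lemma to the commutative diagram with top row this sequence, bottom row the exact sequence $0\to GM'\to GM\to GM''$ (left exactness of $G$), and vertical maps the natural embeddings. Since the vertical maps are injective, the six-term snake sequence collapses to $0\to\mathcal E M'\to\mathcal E M\to\mathcal E M''$ exact; thus $\mathcal E$ is left exact.

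It remains to prove that $\mathcal E M\to\mathcal E M''$ is surjective. Here I would pass to $\kk$-modules. By Corollary~\ref{cor:restriction} we have $\Res_\kk(\mathcal E N)\simeq\Res_\kk(\Gamma^1 N)$ for every $N\in\mathcal C_{\bar\pp,\tt,n+2}$; in particular, for each $m$ the multiplicity of $V_\kk(m)$ in $\mathcal E N$ equals its multiplicity in $\Gamma^1 N$, which is finite (as $\Gamma^1 N\in\mathcal C_{\kk,n}$ is admissible) and, because $\Gamma^1$ is exact (Proposition~\ref{prop:Zuck}, a)), additive over short exact sequences in $\mathcal C_{\bar\pp,\tt,n+2}$. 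Concretely, writing $\Res_\kk N\simeq\bigoplus_{\mu\geq n+2}M_\kk(\mu)^{\oplus\gamma_N(\mu)}$ as in Lemma~\ref{character}, one has $\gamma_N(\mu)=\dim(\ker f|_N)_\mu$ and $f$ acts surjectively on each $M_\kk(\mu)$ (the structure constants $-k(\mu+k-1)$ are nonzero for $k\geq1$ when $\mu\geq n+2\geq2$), hence on every object of $\mathcal C_{\bar\pp,\tt,n+2}$; a snake-lemma argument for multiplication by $f$ then re-proves that $\gamma_M(\mu)=\gamma_{M'}(\mu)+\gamma_{M''}(\mu)$, and $\Res_\kk(\mathcal E N)\simeq\bigoplus_\mu V_\kk(\mu-2)^{\oplus\gamma_N(\mu)}$ shows the multiplicity of $V_\kk(m)$ in $\mathcal E N$ is $\gamma_N(m+2)<\infty$.

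Finally I would combine the two inputs. By left exactness the image of $\mathcal E M\to\mathcal E M''$ is isomorphic to $\mathcal E M/\mathcal E M'$, so for every $m$,
\begin{equation*}
\bigl[\,\mathrm{im}(\mathcal E M\to\mathcal E M''):V_\kk(m)\,\bigr]
=[\mathcal E M:V_\kk(m)]-[\mathcal E M':V_\kk(m)]
=[\mathcal E M'':V_\kk(m)],
\end{equation*}
all three quantities being finite. Since $\kk$ is reductive, $\mathcal E M''$ is a semisimple $\kk$-module, and a $\kk$-submodule of $\mathcal E M''$ whose multiplicity in each isotypic component equals the full (finite) multiplicity of $\mathcal E M''$ must be all of $\mathcal E M''$; hence $\mathcal E M\to\mathcal E M''$ is surjective, and $\mathcal E$ is exact. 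The main obstacle is precisely this right-exactness step: $\Gamma_{\mathbb C f}$ is only left exact and $R^1G$ does not vanish on $\mathcal C_{\bar\pp,\tt,n+2}$ (already on a single $M_\kk(\mu)$ one computes $R^1\Gamma_{\mathbb C f}\mathcal D^\kk_e(M_\kk(\mu))\simeq\mathbb C(f)/\mathbb C[f]\neq0$), so surjectivity of $\mathcal E M\to\mathcal E M''$ is not formal and must be wrung out of the exact additivity of $\kk$-multiplicities supplied by Lemma~\ref{character}.
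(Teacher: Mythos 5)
Your proof is correct, and since the paper states Corollary~\ref{cor:fexact} with no proof at all (it appears immediately after Lemma~\ref{character} and Corollary~\ref{cor:restriction}), it in fact supplies an argument the paper leaves implicit. Your decomposition is the natural one: left exactness is formal, coming from the short exact sequence of functors $0\to\mathrm{id}\to\Gamma_{\mathbb Cf}\mathcal D^\gg_e\to\mathcal E\to 0$ and the snake lemma (using that localization is exact and $\Gamma_{\mathbb Cf}$ is left exact), while the genuine content is the surjectivity of $\mathcal EM\to\mathcal EM''$, which you correctly extract from Lemma~\ref{character}: the $\kk$-multiplicities $\gamma_N(\mu)$ are additive in short exact sequences (either via your $\ker f$ computation, since $f$ acts surjectively on each $M_\kk(\mu)$ with $\mu\ge 2$, or more immediately from Corollary~\ref{cor:restriction} and the exactness of $\Gamma^1$), so the image has the full finite $\kk$-multiplicities of $\mathcal EM''$ and hence equals it. Your observation that $R^1\Gamma_{\mathbb Cf}$ does not vanish on $\mathcal D^\gg_e(M)$ is a fair warning that surjectivity is not automatic and really does require the multiplicity count.
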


\begin{prop}\label{prop:isomf} The functors $\mathcal E:\mathcal{C}_{\bar{\pp},\tt,n+2}\to\mathcal{C}_{\kk,n}$ and
$\Gamma^1:\mathcal{C}_{\bar{\pp},\tt,n+2}\to\mathcal{C}_{\kk,n}$ are isomorphic.
\end{prop}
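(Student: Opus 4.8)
The plan is to construct a natural transformation between $\mathcal{E}$ and $\Gamma^1$ and then show it is an isomorphism objectwise, using the fact (Corollary~\ref{cor:restriction}) that the two functors already agree after restriction to $\kk$. First I would observe that both functors fit into the same short exact sequence. For $M \in \mathcal{C}_{\bar{\pp},\tt,n+2}$ one has the embedding $M \hookrightarrow \Gamma_{\CC f}\mathcal{D}^\gg_e(M)$ with cokernel $\mathcal{E}M$; on the other hand, localizing at $e$ is exact and the derived functors of $\Gamma_{\CC f}$ (or of $\Gamma_{\kk,\tt}$) vanish in degrees $\geq 3$, so there should be a long exact sequence relating $\Gamma^0$, $\Gamma^1$ of $M$ to the (vanishing) cohomology of the $e$-localization. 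Concretely, the point is that $\Gamma_{\CC f}\mathcal{D}^\gg_e(M)$ is $\Gamma_{\kk,\tt}$-acyclic in positive degrees (it is $e$-torsion-free and $\kk$-locally finite modulo the image of $M$), so applying $\Gamma_{\kk,\tt}$ to $0 \to M \to \Gamma_{\CC f}\mathcal{D}^\gg_e(M) \to \mathcal{E}M \to 0$ and taking the long exact sequence in Zuckerman cohomology yields a connecting map $\mathcal{E}M = \Gamma^0(\mathcal{E}M) \to \Gamma^1 M$ (the relevant terms $\Gamma^0$ and $\Gamma^1$ of the acyclic middle term drop out). This gives the candidate natural transformation $\eta_M : \mathcal{E}M \to \Gamma^1 M$, and naturality is automatic from the functoriality of the long exact sequence.

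Next I would check that $\eta_M$ is an isomorphism of $\gg$-modules by checking it is an isomorphism of $\kk$-modules. By Corollary~\ref{cor:restriction} we already know $\Res_\kk \mathcal{E}M$ and $\Res_\kk \Gamma^1 M$ are abstractly isomorphic with the same (finite) $\kk$-multiplicities $\gamma(\mu)$ for each $\mu \geq n+2$. So it suffices to show $\eta_M$ is injective (or surjective) on each $\kk$-isotypic component; since each multiplicity space is finite-dimensional, injectivity forces bijectivity. Injectivity of $\eta_M$ on $\kk$-types should follow by unwinding the long exact sequence over the subalgebra $\kk$: after $\Res_\kk$, the localization $\mathcal{D}^\gg_e(M)$ becomes $\mathcal{D}^\kk_e(M)$ by Lemma~\ref{lem:localization}, the middle term is a sum of localized Verma-type modules $\mathcal{D}^\kk_e(M_\kk(\mu))$, and the analogous connecting map for $\kk$ identifies $V_\kk(\mu-2)$ inside the cokernel — exactly the computation $(\Gamma_{\CC f}\mathcal{D}^\kk_e(M_\kk(\mu))/M_\kk(\mu)) = V_\kk(\mu-2)$ used in Lemma~\ref{character}. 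Matching the $\gg$-level connecting map with the $\kk$-level one (they are compatible because the inclusion $\kk \hookrightarrow \gg$ induces a map of the corresponding short exact sequences and long exact sequences), one gets that $\eta_M$ restricted to $\kk$ is the identity on each $V_\kk(\mu-2)^{\oplus\gamma(\mu)}$, hence an isomorphism.

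The main obstacle I anticipate is the acyclicity claim: one must verify carefully that $\Gamma_{\CC f}\mathcal{D}^\gg_e(M)$ (equivalently, after the appropriate setup, the relevant intermediate module) has vanishing higher Zuckerman cohomology, so that the long exact sequence degenerates into precisely the isomorphism $\mathcal{E}M \xrightarrow{\sim} \Gamma^1 M$ with no spurious terms. This requires knowing that $e$-localized, $f$-locally-finite $\gg$-modules are $\Gamma$-acyclic — a statement that is essentially due to Enright (cf.~the reference \cite{E}) and underlies the whole Enright-completion philosophy; I would cite it in that form. A secondary technical point is to make the identification of the two connecting homomorphisms fully rigorous, i.e.\ to check that the isomorphism $\Res_\kk \mathcal{D}^\gg_e(M) \simeq \mathcal{D}^\kk_e(M)$ of Lemma~\ref{lem:localization} is compatible with the two short exact sequences defining $\mathcal{E}M$ and its $\kk$-analogue; this is a diagram chase but should be routine once the functorial framework is in place. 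Once acyclicity and compatibility are established, exactness of $\mathcal{E}$ (Corollary~\ref{cor:fexact}) is not even needed for the proof, though it serves as a consistency check since $\Gamma^1$ is exact on $\mathcal{C}_{\bar{\pp},\tt,n+2}$ by Proposition~\ref{prop:Zuck}.
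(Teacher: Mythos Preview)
Your approach is sound, and the natural transformation you construct is the same one the paper builds: the paper's map via the Koszul resolution \emph{is} the connecting homomorphism in the long exact sequence of Zuckerman cohomology applied to $0\to M\to \Gamma_{\mathbb{C}f}\mathcal{D}^\gg_e(M)\to\mathcal{E}M\to 0$. Where you and the paper diverge is in proving that this map is an isomorphism. The paper reduces to simple $M$ using exactness of both functors, then for simple $M$ argues by a short diagram chase that $\psi_M\neq 0$, whence $\psi_M$ is surjective onto the simple module $\Gamma^1 M$ and hence bijective by the $\kk$-character match of Corollary~\ref{cor:restriction}.

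Your long-exact-sequence route is actually cleaner than this, but you have misidentified the obstacle. You do \emph{not} need $\Gamma^1$ of the middle term $N=\Gamma_{\mathbb{C}f}\mathcal{D}^\gg_e(M)$ to vanish; you only need $\Gamma^0 N=0$, and that is immediate: $N$ sits inside $\mathcal{D}^\gg_e(M)$ where $e$ acts invertibly, so $e$ is injective on $N$ and $N$ has no nonzero finite-dimensional $\kk$-submodule (the paper itself records this as ``by construction, $\Gamma(\Gamma_{\mathbb{C}f}\mathcal{D}^e_\gg(M))=0$''). The long exact sequence then reads
\[
0=\Gamma^0 N\longrightarrow \mathcal{E}M\xrightarrow{\ \eta_M\ }\Gamma^1 M\longrightarrow\cdots,
\]
so $\eta_M$ is injective; since both sides have the same finite $\kk$-multiplicities by Corollary~\ref{cor:restriction}, $\eta_M$ is an isomorphism. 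There is no need to invoke higher acyclicity results from Enright, nor to carry out the compatibility chase between $\gg$-level and $\kk$-level connecting maps in your second paragraph.
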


\begin{proof} Let us start with the construction of a morphism of functors $\varphi:\mathcal E\to\Gamma^1$. Let $M\in \mathcal{C}_{\bar{\pp},\tt,n+2}$.
Then the exact sequence 
\begin{equation*}\label{sequence}
			0\to M\to \Gamma_{\mathbb Cf}\mathcal D^e_\gg(M) \overset{\pi_M}{\longrightarrow} \mathcal EM\to 0\,
		\end{equation*}
does not split over $\kk$, and therefore does not split over $\gg$.  Set 
$$ R^i(M):=\Gamma_\tt\Hom_{\mathbb C}\left(U(\gg)\otimes_{U(\tt)}\Lambda^i(\gg/\tt),M\right)$$  and let 
$$0\to M \xrightarrow{\partial_0} R^0(M)\xrightarrow{\partial_1} R^1(M)\xrightarrow{\partial_2} R^2(M)\xrightarrow{\partial_3}\dots$$
be the Koszul resolution of $M$ as introduced in Lemma 2.2 of \cite{Z}.

The complex $R^\bullet(M)$ is functorial with respect to $M$ and yields an injective resolution of $M$ in the category of $(\gg,\tt)$-modules.  Hence, we have a commutative diagram
\begin{equation*}
\begin{CD}
0@>>> M @>>>\Gamma_{\mathbb Cf}\mathcal D^e_\gg(M) @>>>\mathcal EM @>>>0\\
@VVV @V\operatorname{id}_MVV @V\eta_MVV @V\varphi_MVV@VVV\\
0@>>>M @>\partial_0>>R^0(M) @>\partial_1>>R^1(M)@>\partial_2>>R^2(M)
\end{CD}\notag
\end{equation*}
for some morphisms $\eta_M$ and $\varphi_M$, unique up to homotopy.  We recall from \cite{PZ5} that $\Gamma M=0$.  By construction, $\Gamma\left( \Gamma_{\mathbb Cf}\mathcal D^e_\gg(M)\right)=0$ and $\Gamma\mathcal{E}M=\mathcal{E}M$.  By applying $\Gamma$ to the above diagram, we obtain a new commutative diagram
\begin{equation*}
\begin{CD}
0@>>>0@>>>0@>>> \mathcal EM @>>>0\\
@VVV @VVV @VVV @V\Gamma\varphi_MVV @VVV\\
0@>>>0@>>> \Gamma R^0(M) @>\Gamma\partial_1>>\Gamma R^1(M) @>\Gamma\partial_2>>\Gamma R^2(M)\,.
\end{CD}\notag
\end{equation*}
The morphism $\Gamma\varphi_M$ induces a unique morphism $\psi_M:\mathcal{E}M\to\Gamma^1M$, by the definition of $\Gamma^1$.  Since our diagram is functorial in $M$, we obtain a morphism of functors $\psi:\mathcal{E}\to\Gamma^1$.

It remains to show that $\psi_M$ is an isomorphism for all $M\in\mathcal{C}_{\bar\pp,\tt,n+2}$.  Since both functors $\mathcal{E}$ and $\Gamma^1$ are exact, it is sufficient to check this for simple $M$.  Suppose that $\psi_M$ is nonzero.  Then, we have a surjective morphism $\psi_M:\mathcal{E}M\to\Gamma^1M$, since $\Gamma^1M$ is also simple.  But then, by Corollary~\ref{cor:restriction}, $\psi_M$ is an isomorphism.

On the other hand, assume $\psi_M=0$.  By a diagram chase, we obtain a nonzero morphism $\kappa_M:\mathcal{E}M\to R^0(M)$ such that $\partial_1 \kappa_M=\varphi_M$.  Moreover, $\im\kappa_M\simeq \im\eta_M$.  Because $\Gamma_{\mathbb Cf}\mathcal D^e_\gg(M)$ is an essential extension of $M$, $\eta_M$ is an injection, and hence $\Gamma\im\eta_M=\eta_M\Gamma\left( \Gamma_{\mathbb Cf}\mathcal D^e_\gg(M)\right)=0$.  On the other hand, 
$$\Gamma\im\kappa_M=\im\Gamma\kappa_M=\im\kappa_M\neq 0\,,$$
a contradiction.  Hence, $\psi_M\neq 0$.

%
\end{proof}

\begin{corollary}\label{cor:Verma} There is an isomorphism of $\gg$-modules $$Y(E)\simeq\mathcal E M(E).$$
\end{corollary}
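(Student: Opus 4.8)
The plan is to read this off immediately from Proposition~\ref{prop:isomf}. Recall that $Y(E)$ was defined as $\Gamma^1 M(E)$ under the standing assumption (as in Corollary~\ref{cor:cor13}) that $M(E)\in\mathcal{C}^\theta_{\bar\pp,\tt,n+2}\subseteq\mathcal{C}_{\bar\pp,\tt,n+2}$; in particular this requires $|E|\geq n+2$, which is exactly what makes $M(E)$ an object to which both $\mathcal E$ and $\Gamma^1$ apply. Proposition~\ref{prop:isomf} furnishes a natural isomorphism of functors $\psi:\mathcal E\to\Gamma^1$ on $\mathcal{C}_{\bar\pp,\tt,n+2}$. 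Evaluating $\psi$ at the object $M(E)$ then gives an isomorphism of $\gg$-modules
$$\psi_{M(E)}:\mathcal E M(E)\;\xrightarrow{\ \sim\ }\;\Gamma^1 M(E)=Y(E),$$
which is precisely the assertion of the corollary.

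I do not expect any genuine obstacle: the corollary is a direct specialization of Proposition~\ref{prop:isomf}, and all the substantive work already lies in the proof of that proposition — constructing $\psi$ via the Koszul resolution $R^\bullet$ of~\cite{Z}, reducing to simple $M$ by exactness of $\mathcal E$ and $\Gamma^1$, and ruling out $\psi_M=0$ by the essentiality/diagram-chase argument together with the $\kk$-character identification of Corollary~\ref{cor:restriction}. Should one wish to avoid invoking the full natural isomorphism, one could alternatively argue directly for $M=M(E)$: $\mathcal E M(E)$ and $\Gamma^1 M(E)$ have the same $\kk$-character by Corollary~\ref{cor:restriction}, and one compares their tops using $\TopM Y(E)\simeq X(E)$ from Proposition~\ref{prop:Zuck}~b); but since Proposition~\ref{prop:isomf} is already available, the one-line deduction above is the natural route. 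The only point worth flagging in the write-up is the hypothesis $M(E)\in\mathcal{C}^\theta_{\bar\pp,\tt,n+2}$, which is carried over implicitly from the definition of $Y(E)$.
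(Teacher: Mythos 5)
Your proposal is correct and matches the paper's (implicit) reasoning: the corollary is stated without proof immediately after Proposition~\ref{prop:isomf} precisely because it is the specialization $\psi_{M(E)}:\mathcal{E}M(E)\xrightarrow{\sim}\Gamma^1M(E)=Y(E)$ of the natural isomorphism established there. Your flagging of the standing hypothesis $M(E)\in\mathcal{C}^\theta_{\bar\pp,\tt,n+2}$ (equivalently $|E|\geq n+2$) is the right thing to note, and nothing further is needed.
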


We are now ready to give a proof of Lemma~\ref{lem:lem18}.

	\begin{proof}[Proof of Lemma~\ref{lem:lem18}] Set
$$D(E):=\mathcal D^\gg_e M(E),\quad C(E):=\Gamma_{\mathbb C  f}D(E)\,.$$
		From the explicit form of $D(E)$ as a $\kk$-module it is easy to check that 
		\begin{equation*}
			H_0\left(\nn_\kk,D(E)\right)=H_1\left(\nn_\kk,D(E)\right)=0\,.
		\end{equation*}
		By the spectral sequence of Proposition \ref{prop:spec} this implies 
		\begin{equation*}
			H_i\big(\nn,D(E)\big)=0\text{ for all }i\,.
		\end{equation*}
		The exact sequence
		\begin{equation*}
			0\to C(E)\to D(E)\to F(E)\to 0\,,
		\end{equation*}
		where $F(E):=D(E)/C(E)$, yields $H_2\big(\nn,C(E)\big)_{|F|}=H_3\big(\nn,F(E)\big)_{|F|}$.  It is easy to check that $H_0\left(\nn_\kk,F(E)\right)=0$, hence the input 
into $H_3\big(\nn,F(E)\big)$ in the spectral sequence (\ref{spectral}) comes from 
		\begin{equation}
			H_1\left(\nn_\kk,F(E)\right)\otimes\Lambda^2\left(\nn\cap\kk^\perp\right)\,.
			\label{eq:VHC}
		\end{equation}
		The maximum possible $\tt$-weight of $H_1\left(\nn_\kk,F(E)\right)$ is $2-|E|$, hence the maximum possible $\tt$-weight of (\ref{eq:VHC}) is $2-|E|+\lambda_1+\lambda_2$.  However, for any $F$ such that $X(F)\in \mathcal{C}_{\kk,n}$, we have $2-|E|+\lambda_1+\lambda_2<|F|$ as $|E|,|F|\geq \frac{\lambda_1+\lambda_2}{2}+2$.  We obtain $H_2\big(\nn,C(E)\big)_{|F|}=0$.  
		
		Next, we note that Corollary~\ref{cor:Verma} shows the existence of an exact sequence
		$$0\to M(E)\to C(E)\to Y(E)\to 0$$
		as $Y(E)\simeq \mathcal{E}M(E)$.  Since $M(E)$ is free as an $\nn$-module, $H_1\big(\nn,M(E)\big)=0$.  Together with $H_2\big(\nn,C(E)\big)_{|F|}=0$, this yields
		\begin{equation*}
			H_2\big(\nn,Y(E)\big)_{|F|}\simeq H_1\big(\nn,M(E)\big)_{|F|}=0
		\end{equation*}
		as $M(E)$ is free as an $\nn$-module.  The proof of Lemma~\ref{lem:lem18} is complete.
	\end{proof}

To prove Proposition \ref{prop:exact}, it now suffices to establish the following.	
	\begin{lemma}
		$\mathrm{B}_2X(E)=0$ for any $X(E)\in\mathcal{C}_{\kk,n}$.
		\label{lem:lem22}
	\end{lemma}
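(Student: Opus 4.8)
The plan is to fix a central character $\theta$ — since $\mathrm{B}_2X(E)=\mathrm{B}_2^\theta X(E)$ for the unique $\theta$ with $X(E)\in\mathcal{C}^\theta_{\kk,n}$ — and to argue by downward induction on $|E|$ over the finitely many simple objects of $\mathcal{C}^\theta_{\kk,n}$. For the base of the induction, suppose $|E|$ is maximal among $\{|F| : L(F)\in\mathcal{C}^\theta_{\bar\pp,\tt,n+2}\}$. By the PBW decomposition $\gg=\nn\oplus\bar\pp$, the module $M(E)=U(\gg)\otimes_{U(\bar\pp)}E$ has $E$ as its lowest $\tt$-weight space, so every composition factor $L(F)$ of $M(E)$ satisfies $|F|\geq|E|$, with equality only for the top factor $L(E)$, which occurs with multiplicity one. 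Maximality of $|E|$ forces all composition factors of $M(E)$ to have $|F|\leq|E|$ as well, hence $M(E)=L(E)$, so $Y(E)=\Gamma^1M(E)=\Gamma^1L(E)=X(E)$ and $\mathrm{B}_2X(E)=\mathrm{B}_2Y(E)=0$ by Lemma~\ref{lem:lem18}.

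For the inductive step, assume $\mathrm{B}_2X(F)=0$ for all $F$ with $|F|>|E|$ and $X(F)\in\mathcal{C}^\theta_{\kk,n}$. By the remark above, every composition factor of $\rad M(E)$ is of the form $L(F)$ with $|F|>|E|$. Applying the exact functor $\Gamma^1$ (Proposition~\ref{prop:Zuck}, a)) to $0\to\rad M(E)\to M(E)\to L(E)\to0$ yields a short exact sequence $0\to K\to Y(E)\to X(E)\to0$ in $\mathcal{C}^\theta_{\kk,n}$ with $K:=\Gamma^1(\rad M(E))$; its composition factors are the $X(F)=\Gamma^1L(F)$ with $|F|>|E|$. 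The long exact sequence for the derived functors $\mathrm{B}^\theta_\bullet$, together with $\mathrm{B}_2Y(E)=0$ (Lemma~\ref{lem:lem18}), $\mathrm{B}_1Y(E)\simeq M(E)$ (Corollary~\ref{cor:cor14}) and $\mathrm{B}_1X(E)\simeq L(E)$ (Corollary~\ref{cor:cor16}), gives an exact sequence $0\to\mathrm{B}_2X(E)\to\mathrm{B}_1K\xrightarrow{\delta}M(E)\to L(E)$, where the last map is $\mathrm{B}_1$ applied to the canonical surjection $Y(E)\to X(E)$. By the adjointness of $\mathrm{B}_1$ and $\Gamma^1$ (Proposition~\ref{prop:prop3}), this last map corresponds to the composite $Y(E)\to X(E)\xrightarrow{\alpha}\Gamma^1\mathrm{B}_1X(E)$, where $\alpha$ is the adjunction unit, which by Corollary~\ref{cor:cor16} is nonzero and hence injective; therefore $M(E)\to L(E)$ is nonzero, so surjective, and $\im\delta=\ker(M(E)\to L(E))=\rad M(E)$. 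Thus
\begin{equation*}
0\to\mathrm{B}_2X(E)\to\mathrm{B}_1K\xrightarrow{\delta}\rad M(E)\to0
\end{equation*}
is exact.

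To conclude $\mathrm{B}_2X(E)=0$ it now suffices to show $[\mathrm{B}_1K:L(G)]\leq[\rad M(E):L(G)]$ for every simple $L(G)$, because the displayed sequence forces $[\mathrm{B}_1K:L(G)]=[\mathrm{B}_2X(E):L(G)]+[\rad M(E):L(G)]$. Pick a composition series $0=N_0\subset N_1\subset\cdots\subset N_r=\rad M(E)$ with $N_i/N_{i-1}\simeq L(F_i)$, so each $|F_i|>|E|$. Applying $\Gamma^1$ and then $\mathrm{B}_1$ to $0\to N_{i-1}\to N_i\to L(F_i)\to0$, and using $\mathrm{B}_2X(F_i)=0$ from the induction hypothesis, we get exact sequences $0\to\mathrm{B}_1\Gamma^1N_{i-1}\to\mathrm{B}_1\Gamma^1N_i\to\mathrm{B}_1X(F_i)=L(F_i)$. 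Hence the images of the $\mathrm{B}_1\Gamma^1N_i$ in $\mathrm{B}_1K=\mathrm{B}_1\Gamma^1N_r$ form a filtration whose successive subquotients embed into the $L(F_i)$, so $[\mathrm{B}_1K:L(G)]\leq\sum_i[L(F_i):L(G)]=[\rad M(E):L(G)]$, as needed. Therefore $[\mathrm{B}_2X(E):L(G)]=0$ for all $G$, i.e.\ $\mathrm{B}_2X(E)=0$.

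I expect the delicate point to be the identification $\im\delta=\rad M(E)$, which depends both on correct bookkeeping with the $\mathrm{B}_1$–$\Gamma^1$ adjunction (to see that the induced map $M(E)\to L(E)$ is the canonical surjection) and on the structural fact that every composition factor of $\rad M(E)$ has strictly larger $\tt$-weight than $E$, which is what makes the downward induction well-founded; the multiplicity count in the last paragraph is then the mechanism that converts the inductive vanishing into the equality $\mathrm{B}_2X(E)=0$.
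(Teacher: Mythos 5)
Your proof is correct, and its skeleton matches the paper's: both arguments pivot on the exact sequence $0\to \Gamma^1(\rad M(E))\to Y(E)\to X(E)\to 0$, the long exact sequence for $\mathrm{B}_\bullet^\theta$, the vanishing $\mathrm{B}_2 Y(E)=0$ from Lemma~\ref{lem:lem18}, and the identifications $\mathrm{B}_1 Y(E)\simeq M(E)$ (Corollary~\ref{cor:cor14}) and $\mathrm{B}_1 X(E)\simeq L(E)$ (Corollary~\ref{cor:cor16}), with a well-founded induction over the finitely many simple objects of a fixed block $\mathcal{C}^\theta_{\bar\pp,\tt,n+2}$. Two things you do differently, however, are worth pointing out. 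First, you induct downward on $|E|$ rather than on the Bruhat height of the $\bb$-lowest weight; both are well-founded for the same reason (composition factors of $\rad M(E)$ have strictly larger $\tt$-weight of lowest weight space), and your base case ``$|E|$ maximal $\Rightarrow M(E)=L(E)$'' is a clean replacement of the paper's ``$\lambda$ dominant $\Rightarrow M(E)$ simple.'' Second, and more substantively, the paper closes the inductive step by asserting $\mathrm{B}_1\Gamma^1 N(E)\simeq N(E)$ ``by the induction assumption'' and then doing a diagram chase; as stated, that isomorphism requires an auxiliary length-induction argument on $N(E)$ (using the counit and right-exactness of $\mathrm{B}_1$) since the raw inductive hypothesis is only $\mathrm{B}_2 X(F)=0$. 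You bypass this entirely with a multiplicity count: from the exact sequence $0\to\mathrm{B}_2X(E)\to\mathrm{B}_1 K\to\rad M(E)\to 0$ and the filtration estimate $[\mathrm{B}_1 K:L(G)]\le[\rad M(E):L(G)]$, the vanishing of $\mathrm{B}_2 X(E)$ drops out immediately. This is a somewhat more self-contained way to finish the argument, as it does not invoke the natural transformation $\Delta$ (formally introduced only in Section~\ref{sec:endofproof}) or the five-lemma machinery; what it buys is a cleaner separation of ``bounding multiplicities'' from ``building natural isomorphisms.'' The one place you should state more carefully is the justification that $\mathrm{B}_1$ applied to the projection $Y(E)\to X(E)$ is surjective: your adjunction argument works, but the cleanest phrasing is that $\Hom_\gg(M(E),L(E))$ is one-dimensional (spanned by the canonical projection) and $\mathrm{B}_1(\pi)\ne 0$ because its adjoint $\eta_{X(E)}\circ\pi$ is a nonzero composite of a surjection with an injection.
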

	
	\begin{proof}
		We will prove the statement by induction on the Bruhat height on the $\bb$-lowest weight of $L(E)$ ($\bb\subseteq \pp$ is the Borel subalgebra we fixed in Section~\ref{sec:prelim}).
		
		If $\lambda$ is $\bb$-dominant (i.e. $\bar{\bb}-$antidominant), $Y(E)=X(E)$ and we are done.  For an arbitrary $\lambda$, we consider the exact sequences
		\begin{equation}
			0\to N(E) \overset{\nu}{\longrightarrow} M(E)\overset{\mu}{\longrightarrow}L(E)\to 0
			\label{eq:ses1}
		\end{equation}
		and
		\begin{equation}
			0\to\Gamma^1 N(E) \to  Y(E)\to X(E)\to 0\,.
			\label{eq:ses2}
		\end{equation}
		The long exact sequence corresponding to (\ref{eq:ses2}) is the top row of the following commutative diagram
		\begin{equation*}
			\includegraphics[width=15.2cm]{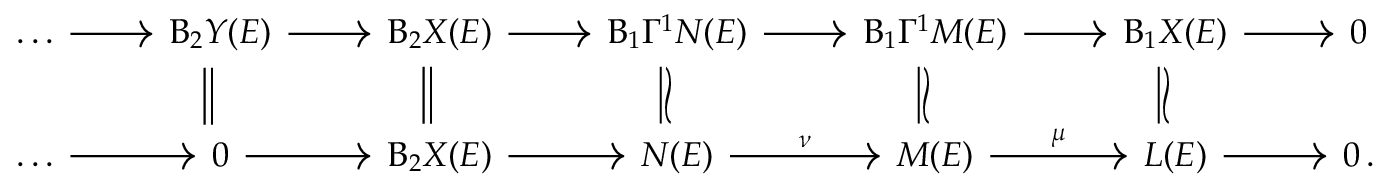}
		\end{equation*}
		The vertical isomorphisms are explained as follows:
		\begin{equation*}
			\mathrm{B}_2Y(E)=0\text{ by Lemma~\ref{lem:lem18}}\,,
		\end{equation*}
		\begin{equation*}
			\mathrm{B}_1\Gamma^1N(E)\simeq N(E)\text{ by the induction assumption}\,,
		\end{equation*}
		\begin{equation*}
			\mathrm{B}_1\Gamma^1M(E)\simeq M(E)\text{ by Corollary~\ref{cor:cor14}}\,,
		\end{equation*}
		and
		\begin{equation*}
			\mathrm{B}_1(X)\simeq L(E)\text{ by Proposition~\ref{prop:prop11}}\,.
		\end{equation*}
		The exactness of the bottom row of the diagram now yields $\mathrm{B}_2X(E)=0$, and Lemma~\ref{lem:lem22} is proved.  Proposition~\ref{prop:prop17} now follows.
	\end{proof}

\section{End of Proof of Theorem~\ref{main}}
\label{sec:endofproof}

	The results of Sections~\ref{sec:conjugacy}-\ref{sec:exactness} imply that, under the assumption $n\geq\Lambda$, the functors
	\begin{equation*}
		\Gamma^1:\mathcal{C}_{\bar{\pp},\tt,n+2}\rightsquigarrow\mathcal{C}_{\kk,n}
	\end{equation*}
	and
	\begin{equation*}
		\mathrm{B}_1:\mathcal{C}_{\kk,n}\rightsquigarrow\mathcal{C}_{\bar\pp,\tt,n+2}
	\end{equation*}
	are exact functors between finite-length abelian categories which induce mutually inverse bijections on isomorphism classes of simple objects. 
	
	The isomorphisms 
	\begin{equation*}
		\Hom_\gg\left(\mathrm{B}_1\Gamma^1 M,M\right)\simeq \Hom_\gg\left(\Gamma^1M,\Gamma^1M\right)
	\end{equation*}
	and
	\begin{equation*}
		\Hom_\gg\left(\mathrm{B}_1X,\mathrm{B}_1X\right)\simeq \Hom_\gg\left(X,\Gamma^1\mathrm{B}_1X\right)
	\end{equation*}
	for $X\in\mathcal{C}_{\kk,n}$ and $M\in \mathcal{C}_{\bar\pp,\tt,n+2}$ induce morphisms of functors 
	\begin{equation*}
		\Delta:B_1\circ \Gamma^1\rightsquigarrow \id_{\mathcal{C}_{\bar\pp,\tt,n+2}}
	\end{equation*}
	and
	\begin{equation*}
		\nabla:\Gamma^1\circ B_1\rightsquigarrow \id_{\mathcal{C}_{\kk,n}}\,.
	\end{equation*}
In addition, it follows from Section \ref{sec:bijection} that for any simple $M\in \mathcal{C}_{\bar\pp,\tt,n+2}$ the morphism 
$\Delta_M:B_1\Gamma^1 M\to M$ is an isomorphism.
	
We show now by induction on the length of objects that $\Delta_M$ is an isomorphism for any $M\in \mathcal{C}_{\bar\pp,\tt,n+2}$.  Consider the morphism $\Delta_M$, and let
	\begin{equation*}
		0 \to M_1 \to M \to M_2\to 0
	\end{equation*}
	be an exact sequence of modules in $\mathcal{C}_{\bar\pp,\tt,n+2}$ with $M_1\neq 0$ and $M_2\neq 0$.  Then, since both functors $\Gamma^1$ and 
$\mathrm{B}_1$ are exact, we have an exact sequence
	\begin{equation*}
		0\to \mathrm{B}_1\Gamma^1 M_1\to \mathrm{B}_1\Gamma^1M\to\mathrm{B}_1\Gamma^1M_2\to 0
	\end{equation*}
	in $\mathcal{C}_{\kk,n}$.  Since the lengths of $M_1$ and $M_2$ are less than the length of $M$, we can assume that we have a commutative diagram
	\begin{equation*}\label{diag}
	\includegraphics[width=10cm]{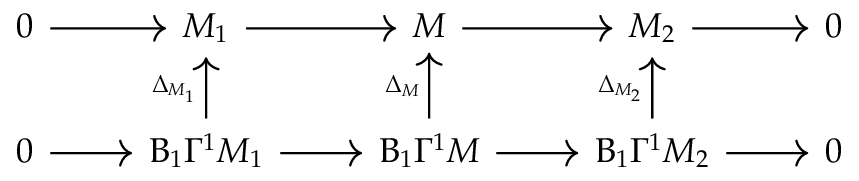}
	\end{equation*}
	where the left and the right vertical arrows are isomorphisms.  
By the short five lemma the middle vertical arrow $\Delta_{M}$ is an isomorphism too. 
	
	The case of $\nabla$ is analogous.

\section{Discussion and examples}
\label{sec:ex}
It is interesting to see when the functor $\mathrm{B_1}$ establishes an equivalence of the category $\mathcal{C}^\theta_{\kk,\Lambda}$ with the entire category
$\mathcal{C}^\theta_{\bar{\pp},\tt}$. This is equivalent to the question: for which central characters $\theta$  does the equality 
$\mathcal{C}_{\bar{\pp},\tt,\Lambda+2}^\theta= \mathcal{C}_{\bar{\pp},\tt}^\theta$ hold? 

Consider in more detail the case when $\gg$ is simple and $\kk$ is a principal subalgebra of $\gg$.
Here $h$ is a regular element of $\gg$ and $\pp=\bb$ is 
a Borel subalgebra. Let the simple roots of $\bb$ be $\alpha_1,\dots,\alpha_r\in\hh^*$, and $\beta$ be the highest root. 
Then $\beta=m_1\alpha_1+\dots+m_r\alpha_r$ for some positive integers $m_1,\dots,m_r$. 
Moreover, $\Lambda=\beta(h)-1$. We would like to find central characters $\theta$ such that 
$\mathcal{C}_{\bar{\bb},\tt,\Lambda+2}^\theta= \mathcal{C}_{\bar{\bb},\tt}^\theta$.
For a weight $\gamma\in\hh^*$ denote by $\theta_\gamma$ the central character of the Verma module $M(\gamma)=U(\gg)\otimes_{U(\bar\bb)}\mathbb C_\gamma$. The equality
$\theta_\gamma=\theta_\eta$ holds if and only if $\gamma-\rho$ and $\mu-\rho$ belong to the same orbit of the Weyl group, where $\rho$ is the half-sum of roots of $\bb$. 
This orbit contains a unique $\bb$-antidominant weight $\gamma-\rho$. Then $\gamma(h)<\eta(h)$ for any other $\eta=w(\gamma-\rho)+\rho$ on the Weyl group orbit. 
Therefore we need to find $\gamma$ such that $\gamma-\rho$ is antidominant and $\gamma(h)\geq \beta(h)+1$.

Let $h_1,\dots,h_r$ denote the simple coroots. Then $h=n_1h_1+\dots+n_rh_r$ for some positive integers $n_1,\dots,n_r$. We set
$\gamma_i:=\gamma(h_i)$. Since $\rho(h_i)=1$ for all $i=1,\dots,r$, the condition that $\gamma-\rho$ is antidominant can be written in the form
\begin{equation}\label{antidominant}
\gamma_i\leq 1\,\, \text{for all} \,\, i=1,\dots r.
\end{equation}
The equality $\alpha_i(h)=2$ shows that $\beta(h)=2\sum_{i=1}^r m_i$. Hence, the condition $\gamma(h)\geq\beta(h)+1$ is equivalent to
\begin{equation}\label{notrun}
\sum_{i=1}^r n_i\gamma_i\geq 1+2\sum_{i=1}^r m_i.
\end{equation}

Let $\Sigma(\gg)$ denote the set of weights satisfying conditions (\ref{antidominant}) and (\ref{notrun}). Clearly $\Sigma(\gg)$ is not empty
as soon as 
\begin{equation*}\label{notrun1}
\sum_{i=1}^r n_i\geq 1+2\sum_{i=1}^r m_i.
\end{equation*}
The latter inequality can be rewritten as
\begin{equation}\label{notrun1-1}
\rho(h)\geq 1+\beta(h).
\end{equation}

For example, let $\gg=\mathrm{sl}(r+1)$. Then $m_1=\dots=m_r=1$, hence $\beta(h)=1+2r$ and $\rho(h)=\frac{r(r+1)(r+2)}{6}$. Therefore (\ref{notrun1-1}) holds for $r\geq 3$.
\begin{prop} Let $\gg$ be a simple Lie algebra not equal to $\mathrm{sl}(2)$ or $\mathrm{sl}(3)$. Then $\Sigma(\gg)$ is not empty. If in addition $\gg\neq \mathrm{sp}(4)$, then 
$\Sigma(\gg)$ is infinite.
\end{prop}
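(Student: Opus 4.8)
The plan is to reduce the statement to the numerical inequality \eqref{notrun1-1}, $\rho(h)\ge 1+\beta(h)$, together with a criterion for when it is strict, and then to verify this by a short inspection of the classification of simple Lie algebras.

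First I would reformulate the conditions \eqref{antidominant} and \eqref{notrun}. For $\gamma\in\hh^*$ put $\gamma_i:=\gamma(h_i)$, so that $\gamma(h)=\sum_i n_i\gamma_i$; then \eqref{antidominant} reads $\gamma_i\le 1$ for all $i$, and \eqref{notrun} reads $\gamma(h)\ge 1+2\sum_i m_i=1+\beta(h)$. Since the principal semisimple element $h=2\rho^\vee$ (with $\rho^\vee$ the half-sum of positive coroots) is a positive integral combination of the simple coroots, every $n_i$ is strictly positive, so for each $\gamma$ satisfying \eqref{antidominant} we have $\gamma(h)=\sum_i n_i\gamma_i\le\sum_i n_i=\rho(h)$, with equality exactly for $\gamma=\rho$. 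Hence $\Sigma(\gg)\neq\varnothing$ if and only if $\rho(h)\ge 1+\beta(h)$, in which case $\rho\in\Sigma(\gg)$. Moreover, if $\rho(h)=1+\beta(h)$ then \eqref{antidominant} and \eqref{notrun} force $\gamma_i=1$ for every $i$, so $\Sigma(\gg)=\{\rho\}$ is a single point; while if $\rho(h)>1+\beta(h)$, then fixing $\gamma_i=1$ for $i\neq 1$ and letting $\gamma_1$ vary over the nondegenerate interval with endpoints $1-n_1^{-1}\bigl(\rho(h)-1-\beta(h)\bigr)$ and $1$ produces infinitely many pairwise distinct weights in $\Sigma(\gg)$. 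Thus it remains to show that $\rho(h)\ge 1+\beta(h)$ for every simple $\gg\neq\mathrm{sl}(2),\mathrm{sl}(3)$, with equality precisely when $\gg=\mathrm{sp}(4)$.

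For the numerical part I would use the two identities $1+\beta(h)=1+2\,\mathrm{ht}(\beta)=2h_{\mathrm{Cox}}-1$, where $h_{\mathrm{Cox}}$ is the Coxeter number and $\mathrm{ht}(\beta)=\sum_i m_i$, and $\rho(h)=\sum_i n_i=\langle\rho,2\rho^\vee\rangle=\sum_{\alpha\in\Phi^+}\langle\rho,\alpha^\vee\rangle$; as each term $\langle\rho,\alpha^\vee\rangle$ is a positive integer, this gives $\rho(h)\ge|\Phi^+|$. A direct comparison then yields the strict inequality $|\Phi^+|>2h_{\mathrm{Cox}}-1$ for $A_r,B_r,C_r,D_r$ with $r\ge 4$ and for $E_6,E_7,E_8,F_4$, so \eqref{notrun1-1} holds strictly in all of these cases. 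Using the low-rank coincidences $D_2\cong\mathrm{sl}(2)\oplus\mathrm{sl}(2)$ (not simple), $D_3\cong\mathrm{sl}(4)$, and $B_2\cong C_2\cong\mathrm{sp}(4)$, the only remaining simple algebras are $\mathrm{sl}(2)$, $\mathrm{sl}(3)$, $\mathrm{sl}(4)$, $\mathrm{sp}(4)$, $\mathfrak{so}(7)$, $\mathrm{sp}(6)$ and $G_2$; for each of these I would compute $\rho(h)$ directly from the Cartan matrix (for the simply-laced ones one may alternatively invoke $\rho(h)=\tfrac16 h_{\mathrm{Cox}}\dim\gg$, a consequence of the Freudenthal--de Vries strange formula), finding $\rho(h)=1<3$ for $\mathrm{sl}(2)$ and $\rho(h)=4<5$ for $\mathrm{sl}(3)$ (so that $\Sigma(\gg)=\varnothing$ there, consistent with the exclusions), $\rho(h)=10>7$ for $\mathrm{sl}(4)$, $\rho(h)=7=1+\beta(h)$ for $\mathrm{sp}(4)$ (the unique equality case), $\rho(h)=22>11$ for $\mathfrak{so}(7)$ and $\mathrm{sp}(6)$, and $\rho(h)=16>11$ for $G_2$. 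Together with the reduction above, this proves the proposition.

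I do not expect a genuine obstacle here: the argument is a reduction followed by a bounded, low-rank case check. The step that most requires care --- and where a careless argument would fail --- is the first: the \emph{equivalence} (not merely the sufficiency, which is already noted in the text) between $\Sigma(\gg)\neq\varnothing$ and \eqref{notrun1-1}, and the resulting dichotomy between a single point and an infinite family. Both rest on the fact that $\gamma\mapsto\gamma(h)$ attains its maximum over antidominant weights at $\gamma=\rho$, which uses the strict positivity of all the $n_i$ --- that is, precisely the hypothesis that $\kk$ is a \emph{principal} $\mathrm{sl}(2)$ --- and this same positivity is what singles out $\mathrm{sp}(4)$ as the sole equality case.
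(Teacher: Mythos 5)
Your proof is correct and reaches the same conclusion, but the mechanism for handling the large-rank cases is genuinely different from the paper's. You first make explicit the full trichotomy: since all $n_i>0$, the function $\gamma\mapsto\gamma(h)=\sum n_i\gamma_i$ attains its maximum over the half-space $\{\gamma_i\le 1\}$ precisely at $\gamma=\rho$, so $\Sigma(\gg)=\varnothing$, $\{\rho\}$, or an infinite set according as $\rho(h)<$, $=$, or $>1+\beta(h)$. The paper only states the sufficiency ("$\Sigma(\gg)$ is infinite as soon as the inequality is strict") and infers the singleton case for $B_2$ by direct inspection, so your explicit reduction is a modest clarification. The more substantive divergence is the criterion for strictness: the paper argues that if $\beta(h)\ge 8$, the sum of the positive $\tt$-weights other than $\beta(h)$ in the irreducible $\kk$-submodule generated by $e_\beta$ already exceeds $2+\beta(h)$, and then checks the remaining low-rank algebras directly. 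You instead observe $\rho(h)=\sum_{\alpha\in\Phi^+}\langle\rho,\alpha^\vee\rangle\ge|\Phi^+|$ and $1+\beta(h)=2h_{\mathrm{Cox}}-1$, reducing strictness to the purely combinatorial inequality $|\Phi^+|>2h_{\mathrm{Cox}}-1$, which holds type by type for rank $\ge 4$ and all exceptionals but $G_2$. Your version avoids the representation-theoretic input entirely and is slightly cleaner to verify, at the cost of a somewhat longer finite list to check by hand ($\mathrm{sl}(4)$, $\mathrm{sp}(4)$, $\mathfrak{so}(7)$, $\mathrm{sp}(6)$, $G_2$, versus the paper's $A_2$ and $B_2$, with $G_2$ and $A_3$ implicitly folded into the paper's earlier $\beta(h)\ge 8$ and $\mathrm{sl}(r+1)$ discussions). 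Both routes give the same low-rank data and single out $\mathrm{sp}(4)$ as the unique equality case. One small slip: $D_3\cong A_3$, which you wrote as $\mathrm{sl}(4)$ --- that is of course the same algebra, so no harm done.
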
	
\begin{proof} $\Sigma(\gg)$ is infinite as soon as the inequality (\ref{notrun}) is strict. We can further rewrite (\ref{notrun1-1}) as
\begin{equation}\label{notrun2}
\frac{1}{2}\sum_{\alpha\in\Delta^+\setminus \beta}\alpha(h)\geq 1+\frac{1}{2}\beta(h).
\end{equation}
If $\beta(h)\geq 8$, the inequality \ref{notrun2} is strict as in this case the sum of positive non-highest $\tt$-weights in the $\kk$ submodule generated by the 
highest root vector is greater than the highest $\tt$-weight. Therefore, the statement holds for all $\gg$ of rank greater than $2$ and for $\gg=G_2$.
For $\gg=B_2$ we have $\rho(h)=7$, $\beta(h)=6$, and hence $\Sigma(\gg)$ consists of one element: $\Sigma(\gg)=\{\rho\}$.  For $\gg=A_2$ we have $\Sigma(\gg)=\emptyset$. 
\end{proof}

Note that the set of integral weights lying in $\Sigma(\gg)$ is always finite since $\Sigma(\gg)$ is compact. Moreover, the cardinality of this finite
set grows with rank.

Using translation functors we can strengthen Theorem \ref{main} for certain central characters. Let us call a central character $\theta$ $\kk${\it-adapted} if
$\mathcal{C}_{\bar{\bb},\tt,\Lambda+2}^\theta= \mathcal{C}_{\bar{\bb},\tt}^\theta$. A central character $\tilde\theta$ is {\it weakly} $\kk${\it-adapted} if there exists a 
$\kk$-adapted character $\theta$ and a translation functor $T$ establishing an equivalence between the categories of $\gg$-modules admitting respective generalized central 
characters
$\theta$ and $\tilde\theta$. Recall that, if $\tilde\theta=\theta_\eta$ for some $\eta$ such that $\eta-\rho$ is antidominant and $\theta=\theta_{\gamma}$ for some 
$\gamma\in \Sigma(\gg)$, then $\gamma-\eta$ must be integral and the stabilizers of $\gamma-\rho$ and $\eta-\rho$ in the Weyl group of $\gg$ must be the same \cite{BG}.
\begin{corollary} Assume that $\tilde\theta$ is weakly $\kk$-adapted. Then
\begin{enumerate}[(a)]
\item $\Gamma^1L$ is simple for any simple module $L\in \mathcal{C}^{\tilde\theta}_{\bar{\bb},\tt}$.

\item Let $\Gamma^1\mathcal{C}^{\tilde\theta}_{\bar{\bb},\tt}$ denote the full subcategory of 
$\mathcal{C}_{\kk,\Lambda}^{\tilde\theta}$ consisiting of modules whose simple constituents are of form $\Gamma^1L$ for simple modules $L\in \mathcal{C}^{\tilde\theta}_{\bar{\bb},\tt}$. 
Then the functor 
$\mathrm{B}_1:\Gamma^1\mathcal{C}^{\tilde\theta}_{\bar{\bb},\tt}\to\mathcal{C}^\theta_{\bar{\bb},\tt}$
is an equivalence of categories inverse to $\Gamma^1$.
\end{enumerate}
\end{corollary}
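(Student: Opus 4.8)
The plan is to deduce the corollary from Theorem~\ref{main} by transporting the $\kk$-adapted case along a translation functor. Fix a $\kk$-adapted character $\theta$ and a translation functor $T$, together with a translation functor $T'$ in the opposite direction, realizing the equivalence of the $\tilde\theta$-block and the $\theta$-block that is built into the definition of ``weakly $\kk$-adapted''; thus $T$ has the shape $M\mapsto\mathrm{pr}_\theta(M\otimes F)$ for a finite-dimensional $\gg$-module $F$. Since $\theta$ is $\kk$-adapted we have $\mathcal{C}^\theta_{\bar\bb,\tt,\Lambda+2}=\mathcal{C}^\theta_{\bar\bb,\tt}$, so Theorem~\ref{main} with $n=\Lambda$ already gives mutually inverse equivalences $\Gamma^1\colon\mathcal{C}^\theta_{\bar\bb,\tt}\rightleftarrows\mathcal{C}^\theta_{\kk,\Lambda}\colon\mathrm{B}_1$ whose unit and counit are isomorphisms. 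Two auxiliary facts make the transport work. First, tensoring with a finite-dimensional module and projecting onto a generalized central character both preserve $\bar\bb$-local finiteness, $\tt$-semisimplicity, $\tt$-integrality, admissibility and finite length, so $T$ and $T'$ restrict to mutually quasi-inverse equivalences $\mathcal{C}^{\tilde\theta}_{\bar\bb,\tt}\rightleftarrows\mathcal{C}^\theta_{\bar\bb,\tt}$ and to an exact functor $\mathcal{C}^{\tilde\theta}_\kk\to\mathcal{C}^\theta_\kk$ (one uses here that $T$ is an equivalence on the corresponding blocks of $\gg$-mod, which is the theorem of Bernstein--Gelfand \cite{BG}). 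Second, $\Gamma^1$ and $\mathrm{B}_1$ commute with $T$ and $T'$: since $F$ is $\kk$-finite, $\,\cdot\,\otimes F$ is exact and carries injective $(\gg,\tt)$-modules to injectives (it is isomorphic to $\Hom_\CC(F^*,\,\cdot\,)$, whose left adjoint $\,\cdot\,\otimes F^*$ is exact), whence $\Gamma^i(M\otimes F)\simeq(\Gamma^iM)\otimes F$ naturally in $M$; and $\mathrm{pr}_{\theta'}$ commutes with $\Gamma^i$ because $\Gamma^i$ preserves generalized central characters. Feeding the same two observations into the construction of $\mathrm{B}_j$ (finitely generated projective resolutions, tensoring, the inverse limit over the size of the Jordan blocks of $\hh$, the cogenerators $I^{\theta',\ell}_{n+2}$) yields $\mathrm{B}_j\circ T\simeq T\circ\mathrm{B}_j$ on $\mathcal{C}_\kk$.

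With these in hand, part~(a) is immediate. For a simple $L\in\mathcal{C}^{\tilde\theta}_{\bar\bb,\tt}$ the module $TL$ is simple in $\mathcal{C}^\theta_{\bar\bb,\tt}=\mathcal{C}^\theta_{\bar\bb,\tt,\Lambda+2}$, so $\Gamma^1(TL)$ is simple by Proposition~\ref{prop:Zuck},~a). Since $\Gamma^1(TL)\simeq T(\Gamma^1L)$ and an equivalence of categories reflects simplicity, $\Gamma^1L$ is simple; applying $T'$ shows moreover that $\Gamma^1L\simeq T'\!\big(\Gamma^1(TL)\big)$ lies in the $\tilde\theta$-block of $\mathcal{C}_\kk$, so $\Gamma^1\mathcal{C}^{\tilde\theta}_{\bar\bb,\tt}$ is a well-defined full subcategory of $\mathcal{C}^{\tilde\theta}_\kk$, closed under subquotients and extensions (its objects are exactly those whose simple constituents are among the $\Gamma^1L$). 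That it sits inside $\mathcal{C}^{\tilde\theta}_{\kk,\Lambda}$, as stated, follows from the computation of the minimal $\kk$-type of $\Gamma^1$ on simple objects recorded in Proposition~\ref{prop:Zuck},~a).

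For part~(b) I would argue as follows. The functor $\mathrm{B}_1$ is defined on all of $\mathcal{C}_\kk$, carries $\mathcal{C}_{\kk,n}$ into $\mathcal{C}_{\bar\bb,\tt,n+2}$ (Corollary~\ref{cor:cor1_of_prop2}), and preserves generalized central characters, so it restricts to a functor $\Gamma^1\mathcal{C}^{\tilde\theta}_{\bar\bb,\tt}\to\mathcal{C}^{\tilde\theta}_{\bar\bb,\tt}$ (equivalently, one may use $T'\circ\mathrm{B}_1\circ T$, which agrees with $\mathrm{B}_1$ by the commutation above). To see that $\Gamma^1$ and $\mathrm{B}_1$ are mutually inverse here, transport the unit and counit from the $\theta$-block: for $M\in\mathcal{C}^{\tilde\theta}_{\bar\bb,\tt}$ the commutation relations give $T(\mathrm{B}_1\Gamma^1M)\simeq\mathrm{B}_1\Gamma^1(TM)\simeq TM$, hence $\mathrm{B}_1\Gamma^1M\simeq M$ since $T$ is faithful; and for $X\in\Gamma^1\mathcal{C}^{\tilde\theta}_{\bar\bb,\tt}$ the object $TX$ lies in $\mathcal{C}^\theta_{\kk,\Lambda}$ (its simple constituents are of the form $\Gamma^1(TL)$ with $TL\in\mathcal{C}^\theta_{\bar\bb,\tt,\Lambda+2}$, and $\mathcal{C}^\theta_{\kk,\Lambda}$ is a Serre subcategory of $\mathcal{C}_\kk$), so $T(\Gamma^1\mathrm{B}_1X)\simeq\Gamma^1\mathrm{B}_1(TX)\simeq TX$ and therefore $\Gamma^1\mathrm{B}_1X\simeq X$. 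All these isomorphisms are natural, being inherited from Theorem~\ref{main}, so $\Gamma^1$ and $\mathrm{B}_1$ are mutually inverse equivalences between $\mathcal{C}^{\tilde\theta}_{\bar\bb,\tt}$ and $\Gamma^1\mathcal{C}^{\tilde\theta}_{\bar\bb,\tt}$, which is~(b).

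The hard part will be the second commutation, $\mathrm{B}_j\circ T\simeq T\circ\mathrm{B}_j$. Its $\Gamma^1$-analogue is the classical tensor identity for Zuckerman functors, but $\mathrm{B}_j$ is assembled from the inverse limit over the size of the Jordan blocks of $\hh$ and from the injective cogenerators $I^{\theta,\ell}_{n+2}$, so one must check carefully that $\,\cdot\,\otimes F$ and $\mathrm{pr}_{\theta'}$ do not enlarge Jordan blocks, are compatible with the truncation at $n+2$ (and with the correct choice of truncation level on the non-$\kk$-adapted block), and commute with the inverse limit --- in other words, that each step of the construction of $\mathrm{B}_j$ is stable under translation. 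A secondary and essentially routine point is to verify that translation functors really do induce equivalences of the ``semi-thick'' blocks $\mathcal{C}^\bullet_{\bar\bb,\tt}$, i.e.\ that the Bernstein--Gelfand argument survives the relaxation from ordinary category $\mathcal{O}$.
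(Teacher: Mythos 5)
Your approach is the same one the paper uses: transport the equivalence of Theorem~\ref{main} from the $\kk$-adapted block to the weakly $\kk$-adapted one via translation functors, invoking the commutation of $\Gamma^1$ with translation (which the paper attributes to Proposition~2.6 and Corollary~2.8 of~\cite{Z}). The paper compresses this into a single commutative diagram, and reads the needed identity $T_1\circ\mathrm{B}_1\simeq\mathrm{B}_1\circ T_2$ off from Theorem~\ref{main} together with the $\Gamma^1$-square, rather than reproving a translation-commutation for $\mathrm{B}_1$ from its construction. Your more explicit route of establishing $\mathrm{B}_j\circ T\simeq T\circ\mathrm{B}_j$ directly from the definition of $\mathrm{B}_j$ (projective resolutions, truncation, Jordan-block bound, inverse limit) is plausible but, as you say yourself, is left unverified; in the paper's logic this step is unnecessary once one observes that on the $\kk$-adapted block $\mathrm{B}_1$ is the inverse of $\Gamma^1$ and that conjugating an equivalence by equivalences preserves inverses. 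Either way, identifying the two candidate inverses $\mathrm{B}_1$ and $T_1^{-1}\circ\mathrm{B}_1\circ T_2$ does require \emph{some} supplementary argument (e.g.\ an adjunction on the $\tilde\theta$-block or the commutation you sketch), and you are right to flag it.

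There is a second point you do not flag. You justify the inclusion $\Gamma^1\mathcal{C}^{\tilde\theta}_{\bar\bb,\tt}\subseteq\mathcal{C}^{\tilde\theta}_{\kk,\Lambda}$ by appealing to the minimal $\kk$-type computation of Proposition~\ref{prop:Zuck},~a). But that proposition is proved under the hypothesis $n\geq\Lambda$, i.e.\ for simple $L(E)$ with $|E|\geq\Lambda+2$, and precisely because $\tilde\theta$ is only \emph{weakly} $\kk$-adapted, the block $\mathcal{C}^{\tilde\theta}_{\bar\bb,\tt}$ may contain simples $L(E)$ with $|E|<\Lambda+2$ for which this computation does not directly apply. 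That $\Gamma^1L$ has $\kk$-types strictly greater than $\Lambda$ for all such $L$ is an implicit claim of the corollary and needs its own justification, most naturally again via the translation: transporting from $T_1L$ shows $\Gamma^1L$ is a direct summand of $\mathrm{pr}_{\tilde\theta}\bigl(\Gamma^1(T_1L)\otimes F^*\bigr)$, but since tensoring with $F^*$ can lower $\kk$-types this does not immediately give the bound, and some additional analysis is required. Finally, note that the codomain $\mathcal{C}^{\theta}_{\bar\bb,\tt}$ in part~(b) should presumably read $\mathcal{C}^{\tilde\theta}_{\bar\bb,\tt}$, since $\mathrm{B}_1$ preserves generalized central characters; you read it this way, which is the only interpretation consistent with the definition of $\mathrm{B}_1$.
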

\begin{proof}

	Both assertions follow from the following commutative diagram of functors
	\begin{equation*}
	 \includegraphics[width=6cm]{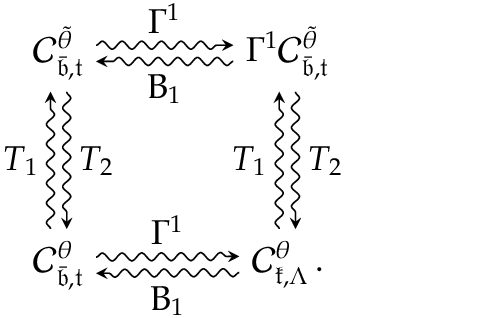}
	\end{equation*}
where $T_1,T_2$ are appropriate translation functors.  The commutativity of the diagram is a consequence of Theorem~\ref{main} and of the fact that the Zuckerman functor commutes with translation functors.  This latter fact is essentially a reformulation of Proposition 2.6 and Corollary 2.8 in~\cite{Z}.
\end{proof}

\section{An application}
\label{sec:appl} 
In this section $\kk$ is an arbitrary $\mathrm{sl}(2)$-subalgebra of $\gg$ and $n\geq \Lambda$.  By $\mathcal{C}^{\text{\rm ind}}_{{\kk},n}$ (respectively, $\mathcal{C}^{\text{\rm ind}}_{\bar\pp,\tt,n+2}$) we denote the category of inductive limits of objects from $\mathcal{C}_{\kk,n}$ (respectively, $\mathcal{C}^{\ell}_{\bar\pp,\tt,n+2}$).  Theorem \ref{main} implies the following

\begin{corollary} The functors $\Gamma^1$ and $\mathrm{B}_1$ induce mutually inverse equivalences of the categories $\mathcal{C}^{\text{\rm ind}}_{{\kk},n}$ and $\mathcal{C}^{\text{\rm ind}}_{\bar{\pp},\tt,n+2}$.
\end{corollary}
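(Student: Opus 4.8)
The plan is to derive this corollary formally from \refth{main} together with one soft observation: the functor $\Gamma^1$ commutes with filtered direct limits. Recall that $\mathcal{C}_{\kk,n}$ and $\mathcal{C}_{\bar\pp,\tt,n+2}$ are finite-length abelian categories, so that $\mathcal{C}^{\text{\rm ind}}_{\kk,n}$ and $\mathcal{C}^{\text{\rm ind}}_{\bar\pp,\tt,n+2}$ are their ind-completions: every object is a filtered direct limit $\displaystyle\lim_{\longrightarrow}X_i$ of objects $X_i$ from the corresponding finite-length category, and a morphism between two such limits is assembled from its restrictions to finite-length subobjects. Consequently any exact functor $F$ from one of these finite-length categories to the other has a canonical extension $\widehat F$ to the ind-completions given by $\widehat F\bigl(\displaystyle\lim_{\longrightarrow}X_i\bigr):=\displaystyle\lim_{\longrightarrow}F(X_i)$; moreover $\widehat{F'\circ F}=\widehat{F'}\circ\widehat F$, $\widehat{\id}=\id$, every natural transformation $F\Rightarrow F'$ extends to $\widehat F\Rightarrow\widehat{F'}$, and $\widehat F$ is an equivalence whenever $F$ is.

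First I would apply this to $F=\Gamma^1$ and $F=\mathrm{B}_1$, which by \refth{main} are mutually quasi-inverse exact functors between $\mathcal{C}_{\bar\pp,\tt,n+2}$ and $\mathcal{C}_{\kk,n}$, and to the natural isomorphisms $\Delta\colon\mathrm{B}_1\circ\Gamma^1\Rightarrow\id$ and $\nabla\colon\Gamma^1\circ\mathrm{B}_1\Rightarrow\id$ of \refsec{sec:endofproof}. This produces functors $\widehat{\Gamma^1}\colon\mathcal{C}^{\text{\rm ind}}_{\bar\pp,\tt,n+2}\rightsquigarrow\mathcal{C}^{\text{\rm ind}}_{\kk,n}$ and $\widehat{\mathrm{B}_1}\colon\mathcal{C}^{\text{\rm ind}}_{\kk,n}\rightsquigarrow\mathcal{C}^{\text{\rm ind}}_{\bar\pp,\tt,n+2}$ together with natural isomorphisms $\widehat{\mathrm{B}_1}\circ\widehat{\Gamma^1}\cong\id$ and $\widehat{\Gamma^1}\circ\widehat{\mathrm{B}_1}\cong\id$, i.e.\ mutually inverse equivalences. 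It remains only to check that $\widehat{\Gamma^1}$ coincides with the Zuckerman functor applied directly to inductive limits, equivalently that $\Gamma^1$ commutes with filtered colimits of objects of $\mathcal{C}_{\bar\pp,\tt,n+2}$. Here I would invoke \refprop{prop:isomf}, which identifies $\Gamma^1|_{\mathcal{C}_{\bar\pp,\tt,n+2}}$ with $\mathcal{E}$, and the formula $\mathcal{E}M=(\Gamma_{\mathbb C f}\mathcal{D}^\gg_e(M))/M$: the hypotheses ``$e$ acts injectively'' and ``$M$ is locally $\mathbb C f$-finite'' are stable under filtered colimits; the functor $\mathcal{D}^\gg_e(\cdot)=U_e(\gg)\otimes_{U(\gg)}(\cdot)$ commutes with all colimits, being a left adjoint; the functor $\Gamma_{\mathbb C f}$ of $\mathbb C f$-finite vectors commutes with filtered colimits, since a vector of a filtered colimit already lives at a finite stage and is $f$-locally-nilpotent there as soon as it is so in the colimit; and cokernels commute with colimits. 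Hence $\mathcal{E}$ commutes with filtered colimits, and the natural isomorphism $\mathcal{E}\cong\Gamma^1$ extends to the ind-level. (The same argument shows that $\Gamma=\Gamma_{\kk,\tt}$, and more generally the various localization-and-$\Gamma$ constructions of \refsec{sec:exactness}, commute with filtered colimits.) For $\mathrm{B}_1$ there is no competing ``direct'' definition on non-finitely-generated modules, so the induced functor is understood to be $\widehat{\mathrm{B}_1}$.

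I do not expect a genuine obstacle: everything beyond \refth{main} is formal ind-completion bookkeeping, and the only substantive point, the continuity of $\Gamma^1$, is made routine by \refprop{prop:isomf}. The step that deserves the most care is the identification of $\mathcal{C}^{\text{\rm ind}}_{\kk,n}$ and $\mathcal{C}^{\text{\rm ind}}_{\bar\pp,\tt,n+2}$, as defined (inductive limits of finite-length modules inside the ambient module categories), with the abstract ind-completions; this holds because the finite-length objects are finitely presented over the Noetherian algebra $U(\gg)$, hence compact, so that $\Hom$ out of a finite-length object commutes with filtered colimits. If a self-contained argument is preferred, the extension of $\Delta$ and $\nabla$ to the ind-categories and their invertibility there can be checked directly via the five lemma for filtered colimits instead of citing the general machinery.
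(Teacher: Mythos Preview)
Your proposal is correct and matches the paper's approach: the paper states the corollary with no proof beyond the sentence ``Theorem~\ref{main} implies the following,'' treating it as an immediate consequence of the equivalence on finite-length objects. You have simply supplied the routine ind-completion bookkeeping (and the check via \refprop{prop:isomf} that $\Gamma^1$ commutes with filtered colimits) that the paper leaves implicit.
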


Recall that if an abelian category $\mathcal C$ has enough injectives, then the global dimension $\operatorname{gdim}\mathcal C$ is defined as
$$\operatorname{gdim}\mathcal C=\operatorname{sup}_{M,N\in\mathcal C}\left\{i\in\mathbb Z_{\geq 0}\, \big|\, \Ext_{\mathbb C}^i(M,N)\neq 0\right\}.$$

Corollary~\ref{cor:cor2} implies that $\mathcal{C}^{\text{ind}}_{\bar\pp,\tt,n+2}$ (and consequently also $\mathcal{C}^{\text{ind}}_{\kk,n}$ by Theorem~\ref{main}) has enough injectives.  The goal of this section is to prove the following proposition.
\begin{prop}\label{prop:gd} We have
$$\operatorname{gdim}\mathcal C^{\text{\rm ind}}_{{\kk},n}=\operatorname{gdim}\mathcal C^{\text{\rm ind}}_{\bar{\pp},\tt,n+2}\leq 2\dim \nn+\dim\cc-1$$
($\nn$ and $\cc$ are subalgebras of $\gg$ depending on the pair $\gg,\kk$ only).
\end{prop}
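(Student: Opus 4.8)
The plan is to settle the equality by invoking the equivalence of categories, and then to prove the estimate by a double-complex argument that separates two sources of cohomological dimension: the parabolic induction $\pp\rightsquigarrow\gg$, responsible for at most $2\dim\nn=\dim(\gg/C(\tt))$, and the ``thick Levi'' category $\FF_{C(\tt),\tt}$, responsible for exactly $\dim\cc-1$. The equality $\operatorname{gdim}\mathcal C^{\text{\rm ind}}_{\kk,n}=\operatorname{gdim}\mathcal C^{\text{\rm ind}}_{\bar\pp,\tt,n+2}$ is immediate: by the Corollary just stated --- which is Theorem~\ref{main} transported to inductive completions --- $\Gamma^1$ and $\mathrm{B}_1$ are mutually inverse equivalences of these two categories, and an equivalence of abelian categories with enough injectives preserves all $\Ext$-groups.

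For the inequality, put $\mathcal D:=\mathcal C^{\text{\rm ind}}_{\bar\pp,\tt,n+2}$ and fix $M,N\in\mathcal D$. Since $C(\tt)$ is reductive in $\gg$, the relative Chevalley--Eilenberg complex with $p$-th term $Q_p:=U(\gg)\otimes_{U(C(\tt))}\big(\Lambda^p(\gg/C(\tt))\otimes M\big)$ is a resolution of $M$ by $(\gg,C(\tt))$-relatively projective modules of length $\dim(\gg/C(\tt))=2\dim\nn$. Choose a minimal injective resolution $N\to I^\bullet$ in $\mathcal D$; by the Corollary following Lemma~\ref{lem:lemma2} every injective of $\mathcal D$ is co-Verma filtered, and then Lemma~\ref{injectivecoVerma} gives that $\Res_{C(\tt)}I^q$ is a direct sum of modules $\bar F$, injective in $\FF_{C(\tt),\tt}$; since restriction is exact, $\Res_{C(\tt)}I^\bullet$ is an injective resolution of $\Res_{C(\tt)}N$ in $\FF_{C(\tt),\tt}$. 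Consider the double complex $K^{p,q}:=\Hom_\gg(Q_p,I^q)$, which is concentrated in $0\le p\le 2\dim\nn$. Computing its cohomology first in the $p$-direction, injectivity of the $I^q$ and exactness of $Q_\bullet\to M$ show the total cohomology of $K$ to be $\Ext^\bullet_{\mathcal D}(M,N)$. Computing it first in the $q$-direction, Frobenius reciprocity $\Hom_\gg(Q_p,I^q)\cong\Hom_{C(\tt)}\big(\Lambda^p(\gg/C(\tt))\otimes M,\Res_{C(\tt)}I^q\big)$ identifies the first page as $E_1^{p,q}=\Ext^q_{\FF_{C(\tt),\tt}}\big(\Lambda^p(\gg/C(\tt))\otimes M,\,N\big)$. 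Hence, granting $\operatorname{gdim}\FF_{C(\tt),\tt}\le\dim\cc-1$, the $E_1$-page vanishes outside $0\le p\le 2\dim\nn$, $0\le q\le\dim\cc-1$, so the total cohomology --- i.e. $\Ext^\bullet_{\mathcal D}(M,N)$ --- vanishes in degrees exceeding $2\dim\nn+\dim\cc-1$, which is the assertion.

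It therefore suffices to show $\operatorname{gdim}\FF_{C(\tt),\tt}=\dim\cc-1$. As $C(\tt)=\ss\oplus\cc$ with $\ss$ semisimple and $\cc$ its center, $U(C(\tt))=U(\ss)\otimes S(\cc)$; and since every object of $\FF_{C(\tt),\tt}$ is $\ss$-semisimple, the category factors as (semisimple $\ss$-modules) $\otimes\,\FF_{\cc,\tt}$, where $\FF_{\cc,\tt}$ is the category of $\tt$-integral, $\tt$-semisimple, locally finite $\cc$-modules. Splitting along $h$-eigenvalues and $(\cc/\tt)^*$-generalized eigenvalues exhibits $\FF_{\cc,\tt}$ as a product of copies of the category of $\mm$-torsion modules over $S(\cc/\tt)\cong\CC[y_1,\dots,y_{\dim\cc-1}]$; any such module has a minimal injective resolution (a local-cohomology Koszul complex) of length $\dim\cc-1$ whose terms remain $\mm$-torsion, so its injective dimension in $\FF_{\cc,\tt}$ is at most $\dim\cc-1$, while $\Ext^{\dim\cc-1}$ between residue fields is nonzero. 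Thus $\operatorname{gdim}\FF_{\cc,\tt}=\dim\cc-1$ and $\operatorname{gdim}\FF_{C(\tt),\tt}=\dim\cc-1$.

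The routine verifications are: exactness of the relative Koszul complex (standard, $C(\tt)$ being a Levi subalgebra of $\gg$); the identification of the total cohomology of $K$ with $\Ext_{\mathcal D}$ ($\mathcal D$ being a full subcategory with enough injectives and $I^\bullet$ lying in it); and that the terms $I^q$ of a minimal injective resolution really do restrict to injectives of $\FF_{C(\tt),\tt}$ --- for which one first reduces to $N$ of finite length, where Lemma~\ref{coVermafilt} applies to the finite socles, and then removes the finiteness hypotheses on $N$ and on $M$ using the commutation of $\Ext$ with filtered colimits exploited in Section~\ref{sec:conjugacy}. The genuinely delicate feature --- and the reason the statement is about $\mathcal C^{\text{\rm ind}}$ rather than $\mathcal C$ or $\mathcal C^{\theta,\ell}$ --- is the $\cc$-direction: the co-Verma modules $\overline{W}(E)$ have infinite length and $\overline{W}(E)/L(E)$ contains $L(E)$ with unbounded multiplicity, so a naive induction on composition length would not terminate; it is precisely the torsion / local-cohomology picture for $\FF_{C(\tt),\tt}$, with its finite global dimension, that controls this. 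I expect the step establishing that every injective of $\mathcal D$ restricts to an injective of $\FF_{C(\tt),\tt}$ --- equivalently, is co-Verma filtered in the appropriate inductive-limit sense --- to be the main obstacle.
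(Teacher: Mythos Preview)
Your approach has a genuine gap in the identification of the total cohomology of $K^{\bullet,\bullet}$ with $\Ext^\bullet_{\mathcal D}(M,N)$. You argue that ``injectivity of the $I^q$ and exactness of $Q_\bullet\to M$'' force the $p$-direction spectral sequence to collapse, but this does not follow: the $Q_p=U(\gg)\otimes_{U(C(\tt))}\big(\Lambda^p(\gg/C(\tt))\otimes M\big)$ are \emph{not} objects of $\mathcal D$ (they are not $\bar\pp$-locally finite), so injectivity of $I^q$ in $\mathcal D$ says nothing about exactness of $\Hom_\gg(Q_\bullet,I^q)$. What the $p$-direction actually computes at $E_1$ is $E_1^{p,q}=\Ext^p_{(\gg,C(\tt))}(M,I^q)$, and these relative Ext groups do not vanish for $p>0$. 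Indeed, for a single co-Verma module one has, via Shapiro and the injectivity of $\bar E$ in $\FF_{C(\tt),\tt}$,
\[
\Ext^p_{(\gg,C(\tt))}\big(M,\overline{W}(E)\big)\ \simeq\ \Ext^p_{(\pp,C(\tt))}(M,\bar E)\ \simeq\ \Hom_{C(\tt)}\big(H_p(\nn,M),\bar E\big),
\]
which is typically nonzero for $p>0$ (e.g.\ already for $M=L(E)$ with $|E|$ large). Since every injective of $\mathcal D$ is co-Verma filtered, the same nonvanishing propagates to $\Ext^p_{(\gg,C(\tt))}(M,I^q)$. Thus the first spectral sequence does not degenerate to the column $p=0$, and you have bounded the degree of some hyper-Ext object, but you have not related it to $\Ext^\bullet_{\mathcal D}(M,N)$.

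The paper avoids this difficulty by never leaving $\mathcal D$: it builds explicit finite right resolutions \emph{inside} $\mathcal C^{\text{ind}}_{\bar\pp,\tt,n+2}$ in three layers. First, $\overline{W}(E)$ has an injective resolution of length $\le\dim\nn$ (coming from a truncated Koszul resolution of $\bar E$ in $\FF_{\pp,\tt,n+2}$, then coinduced). Second, $W(E)=M(E)^\vee$ is resolved by co-Verma-filtered modules in length $\le\dim\cc-1$ (a Koszul complex in the $\cc/\tt$-direction, then coinduced); combined with the first step this gives $\Ext^i(M,W(E))=0$ for $i>\dim\nn+\dim\cc-1$. Third, each simple $L(E)$ is resolved by $W(F)$-filtered modules in length $\le\dim\nn$ (tensor the de~Rham/Koszul resolution of the trivial module by $L(E)$ and project to the block), yielding $\Ext^i(M,L(E))=0$ for $i>2\dim\nn+\dim\cc-1$. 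One then reduces arbitrary $N$ to a finite-length submodule. This three-step filtration argument captures the same ``$2\dim\nn$ from parabolic induction plus $\dim\cc-1$ from the thick Levi'' heuristic you articulate, but does so with resolutions whose terms genuinely live in $\mathcal D$, so that the Ext groups being bounded are the ones you want.
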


\begin{lemma}\label{lem:gd1} For every simple $C(\tt)$-module $E$ such that $|E|\geq n+2$, the module $\overline{W}(E)$ has an injective resolution in
$\mathcal C^{\text{\rm ind}}_{\bar{\pp},\tt,n+2}$ of length not 
greater than $\dim\nn$. Hence $\Ext_{\mathcal{C}_{\bar{\pp},\tt,n+2}}^i(M,\overline{W}(E))=0$ for any $M\in C^{\text{\rm ind}}_{\bar{\pp},\tt,n+2}$ and any $i>\dim \nn$.
\end{lemma}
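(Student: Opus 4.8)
The plan is to produce an explicit injective resolution of $\overline{W}(E)$ of length $\dim\nn$ inside $\mathcal{C}^{\theta,\text{\rm ind}}_{\bar{\pp},\tt,n+2}$, where $\theta$ is the central character of $L(E)$ (so that $\overline{W}(E)\in\mathcal{C}^{\theta,\text{\rm ind}}_{\bar{\pp},\tt,n+2}$); the vanishing of the higher $\Ext$'s will then be immediate, and restricting from $\mathcal{C}^{\text{\rm ind}}_{\bar{\pp},\tt,n+2}$ to the single block $\theta$ is harmless since $\Ext^i\big(M,\overline{W}(E)\big)=\Ext^i\big(M^\theta,\overline{W}(E)\big)$. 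Two remarks about $\bar E$ underlie the construction. First, by Lemma~\ref{small} the module $\bar E$ is injective in $\FF_{C(\tt),\tt}$, and hence so is $\bar E\otimes\Lambda^i(\nn^*)$ for every $i\ge 0$, the $C(\tt)$-module $\Lambda^i(\nn^*)$ being finite dimensional. Second, since $h\in\cc$ and $h-\lambda(h)\in I_\lambda^\ell$, the element $h$ acts on every $E^\ell$ — hence on $\bar E$, which by Lemma~\ref{small} is a direct limit of the $(E^\ell)^\vee$ — by the scalar $|E|$; thus $\bar E$ is $\tt$-pure of weight $|E|\ge n+2$.

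First I would pass from the one-step coinduction used in the proof of Lemma~\ref{lem:lemma1} to the full coinduced resolution attached to the nilradical $\nn\subset\pp$. Viewing $\bar E$ as a $\pp$-module on which $\nn$ acts by $0$, this is the complex
\[ 0\to \bar E\to \Gamma_\tt\pro^{\pp}_{C(\tt)}(\bar E)\to \Gamma_\tt\pro^{\pp}_{C(\tt)}\big(\bar E\otimes\nn^*\big)\to\cdots\to \Gamma_\tt\pro^{\pp}_{C(\tt)}\big(\bar E\otimes\Lambda^{\dim\nn}(\nn^*)\big)\to 0 . \]
Its exactness follows by restriction to $\nn$, where it is obtained by applying $\Hom_\CC(-,\bar E)$ to the Koszul resolution of the trivial $U(\nn)$-module; and each of its terms is injective in the category $\FF_{\pp,\tt}$ of $\pp$-modules whose restriction to $C(\tt)$ lies in $\FF_{C(\tt),\tt}$, because $\pro^{\pp}_{C(\tt)}$ and $\Gamma_\tt$ preserve injectivity while the coefficient modules $\bar E\otimes\Lambda^i(\nn^*)$ are injective in $\FF_{C(\tt),\tt}$. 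Thus this is an injective resolution of $\bar E$ in $\FF_{\pp,\tt}$ of length $\dim\nn$.

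Now I would transport it along the functor $\Phi(\,\cdot\,):=\big[\Gamma_\tt\pro^{\gg}_{\pp}\big((\,\cdot\,)_{\ge n+2}\big)\big]^\theta$, a composite of exact functors: the truncation $(\,\cdot\,)_{\ge n+2}$ is exact on $\tt$-semisimple $\pp$-modules (as $\nn$ raises $\tt$-weights), $\Gamma_\tt\pro^{\gg}_{\pp}$ is exact (since $U(\gg)$ is $U(\pp)$-free and taking $\tt$-weight vectors commutes with cokernels here), and $(\,\cdot\,)^\theta$ is exact; hence $\Phi$ is exact. Moreover, exactly as in the proof of Lemma~\ref{lem:lemma1} — truncation of an injective is injective in the truncated category, and $\pro^{\gg}_{\pp}$, $\Gamma_\tt$, $(\,\cdot\,)^\theta$ preserve injectivity — the functor $\Phi$ sends each coinduced injective $\Gamma_\tt\pro^{\pp}_{C(\tt)}\big(\bar E\otimes\Lambda^i(\nn^*)\big)$ of $\FF_{\pp,\tt}$ to a (possibly zero) injective object of $\mathcal{C}^{\theta,\text{\rm ind}}_{\bar{\pp},\tt,n+2}$, namely a finite direct sum of objects of the type $J(\,\cdot\,)$ from the proof of Lemma~\ref{lem:lemma2}. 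Finally, $\tt$-purity of $\bar E$ at weight $|E|\ge n+2$ gives $(\bar E)_{\ge n+2}=\bar E$, so that $\Phi(\bar E)=\big[\Gamma_\tt\pro^{\gg}_{\pp}(\bar E)\big]^\theta=\big[\overline{W}(E)\big]^\theta=\overline{W}(E)$. Applying the exact functor $\Phi$ to the complex displayed above therefore yields an injective resolution of $\overline{W}(E)$ in $\mathcal{C}^{\theta,\text{\rm ind}}_{\bar{\pp},\tt,n+2}$ of length at most $\dim\nn$.

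An injective resolution of length $\dim\nn$ bounds the injective dimension of $\overline{W}(E)$ in $\mathcal{C}^{\text{\rm ind}}_{\bar{\pp},\tt,n+2}$ by $\dim\nn$, which is precisely the asserted vanishing $\Ext^i\big(M,\overline{W}(E)\big)=0$ for all $M$ and $i>\dim\nn$. I expect the one genuinely technical point to be the verification that $\Phi$ really lands in $\mathcal{C}^{\theta,\text{\rm ind}}_{\bar{\pp},\tt,n+2}$ and preserves injectivity there — that is, rerunning the local-finiteness and injective-hull bookkeeping from the proofs of Lemmas~\ref{lem:lemma1} and~\ref{lem:lemma2} with $\bar E\otimes\Lambda^i(\nn^*)$, a finite direct sum of injective hulls of simple $C(\tt)$-modules, in place of the injective hull of a single simple $C(\tt)$-module. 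The exactness of $\Phi$ and the identification $\Phi(\bar E)=\overline{W}(E)$ — the latter being where the hypothesis $|E|\ge n+2$ enters — are routine by comparison.
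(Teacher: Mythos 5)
Your proposal is correct and takes essentially the same route as the paper: both resolve $\bar E$ in the truncated $\pp$-module category by the Koszul-type complex with terms of the form $\bigl(\Gamma_\tt\pro^{\pp}_{C(\tt)}(\bar E\otimes\Lambda^i\nn^*)\bigr)_{\geq n+2}$ (the paper writes this as $\bigl(\Gamma_\tt\Hom_{\mathbb C}(U(\pp)\otimes_{C(\tt)}\Lambda^i(\pp/C(\tt)),\bar E)\bigr)_{\geq n+2}$, which is the same thing), and then apply $\Gamma_\tt\pro^\gg_\pp$ to obtain an injective resolution of $\overline W(E)$ of length $\dim\nn$. The only cosmetic difference is that you additionally project onto the block $\theta$, which the paper dispenses with since it works in the unrestricted category $\mathcal C^{\text{\rm ind}}_{\bar\pp,\tt,n+2}$ throughout.
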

\begin{proof} Recall the category  
$\FF_{\pp,\tt,n+2}$ from Section \ref{sec:prelim}. 
In this category $\bar E$ has an injective resolution  with terms
$$R^i_{\bar\pp}(E):=\left(\Gamma_\tt\Hom_{\mathbb C}(U(\pp)\otimes_{C(\tt)}\Lambda^i(\pp /C(\tt)), \bar E)\right)_{\geq n+2}.$$
Then $\Gamma_\tt\pro^\gg_\pp R^i_{\bar\pp}(E)$ provides an injective resolution for $\overline{W}(E)$ in $\mathcal C^{\text{\rm ind}}_{\bar{\pp},\tt,n+2}$ of length at most $\dim \nn$.
\end{proof}

\begin{lemma}\label{lem:gd2} For every simple $C(\tt)$-module $E$, let $W(E):=M(E)^\vee=\Gamma_\tt\pro^\gg_\pp(E)$. Then there exists an acyclic complex
$$0\to W(E)\to S^0\to\dots\to S^{\dim \cc-1}\to 0$$
such that all $S^i$ admit co-Verma filtrations.
\end{lemma}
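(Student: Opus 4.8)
The plan is to produce the complex by applying the functor $\Gamma_\tt\pro^\gg_\pp$ to a finite injective co-resolution of $E$ inside $\FF_{C(\tt),\tt}$. Such a co-resolution exists of length exactly $\dim\cc-1$ because $h\in\cc$ acts semisimply on objects of $\FF_{C(\tt),\tt}$, so the only directions of $C(\tt)$ that still need to be resolved are those of $\cc/\CC h$. Concretely, by Lemma~\ref{small} we have $E\simeq S\otimes\CC_\lambda$ for a simple finite-dimensional $\ss$-module $S$ and a $\tt$-integral $\lambda\in\cc^*$, and the injective hull of $E$ in $\FF_{C(\tt),\tt}$ is $\bar E\simeq S\otimes\bar\CC_\lambda$, where $\bar\CC_\lambda=\lim\limits_{\longrightarrow}\,(\CC_\lambda\otimes S(\cc)/I_\lambda^\ell)^\vee$ is the injective hull of $\CC_\lambda$. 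I would fix a vector-space complement $\cc'$ to $\CC h$ in $\cc$, so $\dim\cc'=\dim\cc-1$; then $\Res_{\cc'}\bar\CC_\lambda$ is the injective hull of the residue field $\CC_{\lambda|_{\cc'}}=S(\cc')/\mm$ among locally finite $\cc'$-modules supported at the maximal ideal $\mm$ of $S(\cc')$ at the point $\lambda|_{\cc'}$.

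Next I would write down the co-resolution. Since $\cc'$ is abelian of dimension $\dim\cc-1$, the residue field $\CC_{\lambda|_{\cc'}}$ has the Koszul free resolution
\begin{equation*}
0\to S(\cc')\otimes\Lambda^{\dim\cc'}\cc'\to\dots\to S(\cc')\otimes\cc'\to S(\cc')\to\CC_{\lambda|_{\cc'}}\to0
\end{equation*}
over the regular local ring $S(\cc')_\mm$; dualizing via Matlis duality gives an injective co-resolution of $\CC_{\lambda|_{\cc'}}$ whose $i$-th term is a direct sum of $\binom{\dim\cc-1}{i}$ copies of $\bar\CC_\lambda$. Regarded as a complex of $\cc$-modules (with $h$ acting by the scalar $|E|$), this is an exact complex
\begin{equation*}
0\to\CC_\lambda\to\bar\CC_\lambda\otimes\Lambda^0\cc'\to\bar\CC_\lambda\otimes\Lambda^1\cc'\to\dots\to\bar\CC_\lambda\otimes\Lambda^{\dim\cc-1}\cc'\to0\,,
\end{equation*}
where $\cc$ acts trivially on each factor $\Lambda^i\cc'$ (in particular $h$ acts on $\Lambda^i\cc'$ by $0$). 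Tensoring over $\CC$ with $S$, which is both projective and injective in the category of locally finite $\ss$-modules, yields an injective co-resolution
\begin{equation*}
0\to E\to\bar E\otimes\Lambda^0\cc'\to\bar E\otimes\Lambda^1\cc'\to\dots\to\bar E\otimes\Lambda^{\dim\cc-1}\cc'\to0
\end{equation*}
of $E$ in $\FF_{C(\tt),\tt}$, each of whose terms is a finite direct sum of copies of $\bar E$ and lies in the single $\tt$-weight $|E|$ (since $\bar E$ does and $\Lambda^i\cc'$ has $\tt$-weight $0$).

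Finally I would apply $G:=\Gamma_\tt\pro^\gg_\pp$ and set $S^i:=G(\bar E\otimes\Lambda^i\cc')\simeq\overline{W}(E)\otimes\Lambda^i\cc'\simeq\overline{W}(E)^{\oplus\binom{\dim\cc-1}{i}}$. Being a finite direct sum of copies of $\overline{W}(E)$, each $S^i$ admits a co-Verma filtration (filter by partial sums, or apply Lemma~\ref{coVermafilt}). Since $E^\vee\simeq E$ we have $G(E)=\Gamma_\tt\pro^\gg_\pp(E)=M(E)^\vee=W(E)$, so it only remains to check that
\begin{equation*}
0\to W(E)\to S^0\to S^1\to\dots\to S^{\dim\cc-1}\to0
\end{equation*}
is exact. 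Now $\pro^\gg_\pp=\Hom_{U(\pp)}(U(\gg),-)$ is exact because $U(\gg)$ is free as a right $U(\pp)$-module by the PBW theorem, and for the remaining functor $\Gamma_\tt$ I would use that every term of the co-resolution of $E$ sits in the single $\tt$-weight $|E|$: under the PBW identification $\pro^\gg_\pp(N)\simeq\Hom_\CC(U(\bar\nn),N)$, the $\tt$-weight-$p$ space of $\Gamma_\tt\pro^\gg_\pp(N)$ for such an $N$ equals $\Hom_\CC(U(\bar\nn)_{p-|E|},N)$, which is an exact functor of $N$ as $U(\bar\nn)_{p-|E|}$ is finite dimensional. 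Hence $G$ sends the exact co-resolution of $E$ to a complex that is exact in every $\tt$-weight, and therefore exact. The main technical point is precisely this last exactness; the single-$\tt$-weight observation removes the a priori obstruction that $\Gamma_\tt$ is merely left exact, and everything else is routine verification.
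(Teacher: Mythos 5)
Your proof is correct and follows essentially the same route as the paper: co-resolve $E$ by a Koszul complex of injectives in the $\cc/\tt$ direction (the paper's $T^i(E)=\Hom_{\mathbb C}\left(S^\bullet(\cc/\tt)\otimes\Lambda^i(\cc/\tt),E\right)$ is, up to the identifications $\cc/\tt\simeq\cc'$ and $\Lambda^i\simeq(\Lambda^i)^*$, your $\bar E\otimes\Lambda^i\cc'$), and then apply $\Gamma_\tt\pro^\gg_\pp$. You have additionally spelled out the points the paper compresses — that each term is a finite direct sum of copies of $\bar E$ concentrated in the single $\tt$-weight $|E|$, that $S^i\simeq\overline{W}(E)^{\oplus\binom{\dim\cc-1}{i}}$ therefore admits a co-Verma filtration, and that exactness survives $\Gamma_\tt\pro^\gg_\pp$ because in each $\tt$-weight it reduces to $\Hom_{\CC}\left(U(\bar\nn)_{p-|E|},-\right)$ with $U(\bar\nn)_{p-|E|}$ finite dimensional.
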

\begin{proof} Let
$$T^i(E)=\Hom_{\mathbb C}\left(S^\bullet(\cc/\tt)\otimes\Lambda^i(\cc/\tt),E\right)\,.$$
Consider the exact complex of $C(\tt)$-modules
$0\to E\to T^0(E)\to T^1(E)\to\dots $
with usual Koszul differential and set
$S^i:=\Gamma_\tt\pro^\gg_\pp  T^i(E)$.
\end{proof}

\begin{corollary} \label{cor:gd1}  $\Ext_{\mathcal{C}_{\bar{\pp},\tt,n+2}}^i(M,W(E))=0$ for any $M\in \mathcal C^{\text{\rm ind}}_{\bar{\pp},\tt,n+2}$ and $i>\dim\nn+\dim\cc -1$. 
\end{corollary}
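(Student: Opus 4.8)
The plan is to deduce Corollary~\ref{cor:gd1} from Lemmas~\ref{lem:gd1} and~\ref{lem:gd2} by a standard d\'evissage followed by a dimension shift. Fix $M\in\mathcal{C}^{\text{\rm ind}}_{\bar{\pp},\tt,n+2}$; since this category has enough injectives by Corollary~\ref{cor:cor2}, the groups $\Ext^p(M,-)$, computed throughout in $\mathcal{C}^{\text{\rm ind}}_{\bar{\pp},\tt,n+2}$, are well defined. We may assume $|E|\geq n+2$, since otherwise $W(E)$ does not lie in the category under consideration.

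First I would record the following strengthening of Lemma~\ref{lem:gd1}: if $S$ admits a co-Verma filtration, then $\Ext^p(M,S)=0$ for all $p>\dim\nn$. This follows by induction on the length of a co-Verma filtration $0=S_0\subset S_1\subset\dots\subset S_t=S$, applying $\Hom_\gg(M,-)$ to the short exact sequences $0\to S_r\to S_{r+1}\to\overline{W}(D_r)\to 0$ and invoking Lemma~\ref{lem:gd1} for the factors $\overline{W}(D_r)$. Here one uses that every such factor satisfies $|D_r|\geq n+2$, which holds because the complex produced by Lemma~\ref{lem:gd2} is obtained by applying $\Gamma_\tt\pro^\gg_\pp$ to $C(\tt)$-modules all of whose $\tt$-weights equal $|E|\geq n+2$. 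By Lemma~\ref{lem:gd2} this vanishing applies to each term $S^j$ of the acyclic complex $0\to W(E)\to S^0\to\dots\to S^{\dim\cc-1}\to 0$.

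Next I would cut this acyclic complex into short exact sequences by setting $C^0:=W(E)$ and $C^j:=\im\bigl(S^{j-1}\to S^j\bigr)$ for $1\leq j\leq\dim\cc-1$, which gives short exact sequences $0\to C^j\to S^j\to C^{j+1}\to 0$ for $0\leq j\leq\dim\cc-2$, together with $C^{\dim\cc-1}=S^{\dim\cc-1}$. I would then prove by descending induction on $j$ that $\Ext^p(M,C^j)=0$ for all $p>\dim\nn+(\dim\cc-1-j)$. The base case $j=\dim\cc-1$ is the statement of the previous paragraph applied to $S^{\dim\cc-1}$. For the inductive step, applying $\Hom_\gg(M,-)$ to $0\to C^j\to S^j\to C^{j+1}\to 0$ yields an exact sequence $\Ext^{p-1}(M,C^{j+1})\to\Ext^p(M,C^j)\to\Ext^p(M,S^j)$; for $p>\dim\nn+(\dim\cc-1-j)$ the third term vanishes by the previous paragraph (since $p>\dim\nn$), and the first term vanishes by the inductive hypothesis (since $p-1>\dim\nn+(\dim\cc-2-j)$). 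Hence the middle term vanishes. Taking $j=0$ gives $\Ext^i(M,W(E))=0$ for $i>\dim\nn+\dim\cc-1$, which is the assertion of the corollary.

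This argument is routine homological algebra once Lemmas~\ref{lem:gd1} and~\ref{lem:gd2} are in hand. The two points deserving a little care are the bookkeeping of the degree shifts in the descending induction, and the verification that Lemma~\ref{lem:gd1} genuinely applies to every co-Verma factor appearing in the complex of Lemma~\ref{lem:gd2}, i.e.\ that no such factor lies below the truncation bound $n+2$; neither of these looks like a real obstacle.
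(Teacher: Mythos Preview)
Your proposal is correct and follows essentially the same approach as the paper: first extend Lemma~\ref{lem:gd1} to modules admitting a co-Verma filtration, apply this to each $S^j$ from Lemma~\ref{lem:gd2}, and then perform the standard dimension shift along the acyclic complex. The paper's proof compresses the last step into ``Hence the statement,'' whereas you have written out the descending induction explicitly; your additional check that all co-Verma factors satisfy $|D_r|=|E|\geq n+2$ is a welcome point of care that the paper leaves implicit.
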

\begin{proof} We note that, by Lemma \ref{lem:gd1}, $\Ext_{\bar{\pp},\tt,n+2}^i(M,N)=0$ for $i>\dim \nn$ and $N$ admitting co-Verma filtration. In particular,
$\Ext_{\mathcal{C}_{\bar{\pp},\tt,n+2}}^i(M,S^j)=0$ for $i>\dim \nn$ and $j=0,\dots,\dim\cc-1$. Hence the statement. 
\end{proof}

\begin{lemma}\label{lem:gd3}  For every $E$, $L(E)$ has a right resolution of length not greater than $\dim \nn$ by modules which admit finite
filtrations with succesive quotients isomorphic to $W(F)$.
\end{lemma}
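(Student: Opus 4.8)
The plan is to reduce to a single block, pass to the subcategory on which $\hh$ acts semisimply (a block of a parabolic category $\mathcal O$, where the usual highest‑weight machinery is available), and then induct on a Bruhat‑type length.

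First I would fix the central character $\theta$ of $L(E)$ (we may assume $|E|\geq n+2$, which is the only case the later application uses) and note that $L(E)$, the generalized Verma module $M(E)$, and every $W(F)$ with $|F|\geq n+2$ lie in the full subcategory $\mathcal C^{\theta,1}_{\bar\pp,\tt,n+2}$ on which $\hh$ acts semisimply. This category is a truncated block of the parabolic category $\mathcal O$ for the pair $(\gg,\bar\pp)$: a finite‑length highest‑weight category with standard objects the $M(F)$ and costandard objects the $W(F)$ (because composition factors of standards and costandards only move downward in the weight order, the condition $|F|\geq n+2$ cuts out an order ideal and leaves the standard/costandard objects unchanged). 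Its weight poset embeds in the Bruhat poset of minimal‑length coset representatives for $W_{C(\tt)}\backslash W$, whose maximal chains have length $|\Delta^+\setminus\Delta^+_{C(\tt)}|=\#\{\alpha\in\Delta^+:\alpha(h)>0\}=\dim\nn$. Hence it suffices to build, inside $\mathcal C^{\theta,1}_{\bar\pp,\tt,n+2}$, a coresolution of $L(E)$ of length $\leq\dim\nn$ by objects admitting finite filtrations with successive quotients among the $W(F)$, $|F|\geq n+2$; such a coresolution is a fortiori exact in $\mathcal C^{\text{ind}}_{\bar\pp,\tt,n+2}$ with $W$‑filtered terms, which is exactly what is used in the proof of \refprop{prop:gd} via \refcor{cor:gd1}.

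The engine is the short exact sequence $0\to L(E)\to W(E)\to N(E)^\vee\to 0$ obtained by dualizing $0\to N(E)\to M(E)\to L(E)\to 0$, where $N(E):=\rad M(E)$; here one uses $L(E)^\vee\simeq L(E)$ and $\Soc W(E)=L(E)$ (recalled in \refsec{sec:prelim}), and since $[M(E):L(E)]=1$ and the $\tt$‑weight‑$|E|$ space of $M(E)$ equals $E$, every composition factor $L(F)$ of $N(E)^\vee$ has $|F|>|E|$, i.e. lies strictly below $E$ in the weight poset. Now set $c(M):=\sup\{i:\Ext^i(M(F),M)\neq 0\text{ for some }F\}$, the $\Ext$ taken in $\mathcal C^{\theta,1}_{\bar\pp,\tt,n+2}$. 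One has $\Ext^{\geq 1}(M(F),W(G))=0$: this is the usual $\Ext$‑orthogonality of standards and costandards, and may also be checked directly, since by Shapiro $\Ext^i(M(F),W(G))\simeq\Ext^i_{\pp,\tt}(M(F),G)$, which collapses to $\Ext^i_{C(\tt)}(F,G)$ because $M(F)$ is $\nn$‑free, and the latter vanishes for $i>0$ as $C(\tt)$ is reductive with $\ss$ semisimple. It follows from the standard relative homological algebra of costandardly filtered modules that $c(M)$ is finite and equals the minimal length of a coresolution of $M$ by objects with finite $W(F)$‑filtration, that $c(W(G))=0$, and that $c$ is subadditive along short exact sequences.

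It then remains to induct on $\ell(E):=$ the length of a longest chain $E=F_0,F_1,\dots,F_k$ with $L(F_{i+1})$ a composition factor of $\rad M(F_i)$; each step strictly increases $|F_i|$ and strictly descends in the weight poset, so $\ell(E)\leq\dim\nn$ by the numerology above. If $\ell(E)=0$ then $\rad M(E)=0$, so $L(E)=M(E)$ is simple and hence $L(E)\simeq M(E)^\vee=W(E)$, giving $c(L(E))=0$. If $\ell(E)>0$, every composition factor $L(F)$ of $N(E)^\vee$ has $\ell(F)\leq\ell(E)-1$, so $c(L(F))\leq\ell(E)-1$ by the inductive hypothesis; running subadditivity along a composition series of $N(E)^\vee$ gives $c(N(E)^\vee)\leq\ell(E)-1$, and applying subadditivity to the engine sequence (using $c(W(E))=0$) gives $c(L(E))\leq c(N(E)^\vee)+1\leq\ell(E)\leq\dim\nn$. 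Unwinding $c(L(E))\leq\dim\nn$ produces the required coresolution. The one genuinely substantive ingredient is the combinatorial fact that the maximal chains of the weight poset have length at most $\dim\nn=\#\{\alpha\in\Delta^+:\alpha(h)>0\}$; everything else is routine highest‑weight bookkeeping, the main point requiring care being the identification of $\mathcal C^{\theta,1}_{\bar\pp,\tt,n+2}$ as a truncated highest‑weight category with the stated standard and costandard objects.
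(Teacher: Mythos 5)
Your proof is correct but takes a genuinely different route from the paper's. The paper's proof is short and constructive: it writes down the generalized BGG--Koszul resolution $0\to R^0\to\cdots\to R^{\dim\nn}\to 0$ of the trivial module, with $R^i=\Gamma_\tt\pro^\gg_\pp\Lambda^i(\gg/\pp)^*\simeq S^\bullet\big((\gg/\pp)^*\big)\otimes\Lambda^i(\gg/\pp)^*$ (the polynomial de~Rham complex on the big cell of $G/P$), whose terms visibly admit $W(F)$-filtrations; it then tensors this complex with $L(E)$ and projects onto the central-character block of $L(E)$. That yields a single explicit complex for every $E$. Your argument is instead homological: you descend to the $\hh$-semisimple truncated block $\mathcal{C}^{\theta,1}_{\bar\pp,\tt,n+2}$, identify it as a highest-weight category with standards $M(F)$ and costandards $W(F)$, and build the coresolution by induction on Bruhat height, the bound $\dim\nn$ coming from the chain length in the parabolic Bruhat poset $W_{C(\tt)}\backslash W$. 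What your route buys is conceptual clarity about where the $\dim\nn$ comes from; what it costs is reliance on a structural claim the paper never establishes (that $\mathcal{C}^{\theta,1}_{\bar\pp,\tt,n+2}$ is a highest-weight category in the semi-thick, $\tt$-integral-but-possibly-$\hh$-non-integral setting), and it proves the lemma only for $|E|\geq n+2$, which suffices for the application to \refprop{prop:gd} but is weaker than the statement as written. One further caveat: $\mathcal{C}^{\theta,1}_{\bar\pp,\tt,n+2}$ is \emph{not} extension-closed inside $\mathcal{C}^{\theta}_{\bar\pp,\tt,n+2}$, so the Yoneda $\Ext$ groups driving your induction live in a different exact category from the $(\gg,\tt)$-relative $\Ext$ computed by your Shapiro-lemma ``direct check''; you should invoke $\Ext$-orthogonality of standards against costandards as an intrinsic highest-weight fact, and present the Shapiro computation explicitly as a computation in $(\gg,\tt)\text{-mod}$ rather than as an alternative proof of the same vanishing.
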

\begin{proof} This is a standard fact about parabolic category $\mathcal O$. Indeed let $R^i:=\Gamma_\tt\pro^\gg_\pp \Lambda^i(\gg/\pp)^*$.
Then $R^i\simeq S^\bullet((\gg/\pp)^*)\otimes\Lambda^i(\gg/\pp)^*$. Consider the Koszul complex
$$0\to R^0\to R^1\to\dots\to R^{\dim \nn}\to 0$$
as the complex of polynomial differential forms on the open orbit of $\bar P$ on $G/P$.  It gives a resolution of the trivial module by modules which admit
finite filtrations with succesive quotients isomorphic to $W(F)$. To obtain a similar resolution for $L(E)$, we tensor the above resolution with $L(E)$
and project on the subcategory of modules with the central character of $L(E)$.
\end{proof}

\begin{corollary} \label{cor:gd2}  $\Ext^i_{\mathcal{C}_{\bar{\pp},\tt,n+2}}(M,L(E))=0$ for any $M, L(E)\in \mathcal C^{\text{\rm ind}}_{\bar{\pp},\tt,n+2}$, and $i>2\dim\nn+\dim\cc -1$. 
\end{corollary}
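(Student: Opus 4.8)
The plan is to combine Lemma~\ref{lem:gd3} with Corollary~\ref{cor:gd1} by a standard dimension-shifting argument. By Lemma~\ref{lem:gd3}, the simple module $L(E)$ fits into an exact complex
\begin{equation*}
0\to L(E)\to N^0\to N^1\to\dots\to N^{\dim\nn}\to 0
\end{equation*}
in $\mathcal{C}^{\text{\rm ind}}_{\bar{\pp},\tt,n+2}$ in which each $N^j$ carries a finite filtration whose successive quotients are of the form $W(F)$. First I would split this complex into the short exact sequences $0\to K^j\to N^j\to K^{j+1}\to 0$ for $j=0,\dots,\dim\nn-1$, where $K^0\cong L(E)$ and $K^{\dim\nn}=N^{\dim\nn}$. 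Since $\mathcal{C}^{\text{\rm ind}}_{\bar{\pp},\tt,n+2}$ has enough injectives (Corollary~\ref{cor:cor2}), the groups $\Ext^i_{\mathcal{C}_{\bar{\pp},\tt,n+2}}(M,-)$ fit into long exact sequences; feeding the short exact sequences above into these and shifting degrees successively from $j=0$ up to $j=\dim\nn$ yields
\begin{equation*}
\Ext^i_{\mathcal{C}_{\bar{\pp},\tt,n+2}}(M,L(E))=0\quad\text{as soon as}\quad \Ext^{i-j}_{\mathcal{C}_{\bar{\pp},\tt,n+2}}(M,N^j)=0\ \text{for all}\ j=0,\dots,\dim\nn.
\end{equation*}

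Next I would bound $\Ext^\bullet(M,N^j)$. Because $N^j$ has a \emph{finite} filtration with subquotients $W(F)$, a dévissage along this filtration (again via the long exact sequence of $\Ext^\bullet_{\mathcal{C}_{\bar{\pp},\tt,n+2}}(M,-)$) reduces the vanishing of $\Ext^k(M,N^j)$ to the vanishing of $\Ext^k(M,W(F))$ for the finitely many $F$ that occur. By Corollary~\ref{cor:gd1}, $\Ext^k_{\mathcal{C}_{\bar{\pp},\tt,n+2}}(M,W(F))=0$ for $k>\dim\nn+\dim\cc-1$, hence $\Ext^k_{\mathcal{C}_{\bar{\pp},\tt,n+2}}(M,N^j)=0$ for $k>\dim\nn+\dim\cc-1$ and every $j$. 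Taking $k=i-j$ in the implication displayed above, the needed condition $i-j>\dim\nn+\dim\cc-1$ must hold for all $0\le j\le\dim\nn$; the extreme case $j=\dim\nn$ gives $i>2\dim\nn+\dim\cc-1$, which is exactly the asserted range.

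I expect the only point requiring real care to be the bookkeeping inside the inductive completion: one must check that the resolution of Lemma~\ref{lem:gd3} and the $W(F)$-filtrations of the $N^j$ genuinely lie in $\mathcal{C}^{\text{\rm ind}}_{\bar{\pp},\tt,n+2}$ (so the truncation bound is respected along every term and every filtration step) and that the $\Ext$ long exact sequences are available there — both guaranteed by Corollary~\ref{cor:cor2} together with the explicit constructions in Lemmas~\ref{lem:gd1}--\ref{lem:gd3}. No separate treatment of a general $M$ in the inductive completion is needed, since the entire computation uses only the long exact sequences of the functor $\Ext^\bullet_{\mathcal{C}_{\bar{\pp},\tt,n+2}}(M,-)$ for a fixed $M$.
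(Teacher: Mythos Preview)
Your proposal is correct and matches the paper's intended argument: the corollary is stated without proof immediately after Lemma~\ref{lem:gd3}, and the evident derivation is precisely the dimension-shifting d\'evissage you spell out, combining the length-$\dim\nn$ resolution from Lemma~\ref{lem:gd3} with the bound of Corollary~\ref{cor:gd1} on the $W(F)$-subquotients. Your careful remark about checking that all terms remain in the truncated inductive completion is well taken; the paper does not comment on this point either.
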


Proposition \ref{prop:gd} follows from the last corollary since $\Ext^i_{\bar{\pp},\tt,n+2}(M,N)\neq 0$ implies $\Ext^i_{\mathcal{C}_{\bar{\pp},\tt,n+2}}(M,N')\neq 0$ for some submodule
$N'\subset N$ of finite length.

\end{document}